\numberwithin{equation}{section}
\def\nota{}
\def\question#1{}
\newcommand\caB{{\mathcal B}}
\newcommand\caC{{\mathcal C}}
\newcommand\caD{{\mathcal D}}
\newcommand\caE{{\mathcal E}}
\newcommand\caF{{\mathcal F}}
\newcommand\caI{{\mathcal I}}
\newcommand\caL{{\mathcal L}}
\newcommand\caM{{\mathcal M_\theta}}
\newcommand\caS{{\mathcal S}}
\newcommand\caO{{\mathcal O}}
\newcommand\caU{{\mathcal U}}
\newcommand\caZ{{\mathcal Z}}
\newcommand\wx{{\widetilde x}}
\newcommand\gone{{ \mathchoice {1\mskip-4mu\mathrm{l} } {1\mskip-4mu\mathrm{l} }{1\mskip-4.5mu\mathrm{l} } {1\mskip-5mu\mathrm{l}} }}
\newcommand\gR{{\mathbb R}}
\newcommand\gK{{\mathbb K}}
\newcommand\gT{{\mathbb T}}
\newcommand\gC{{\mathbb C}}
\newcommand\gP{{\mathbb P}}
\newcommand\gB{{\mathbb B}}
\newcommand\gN{{\mathbb N}}
\newcommand\gZ{{\mathbb Z}}
\newcommand\algzero{{\mathsf 0}}
\newcommand\superA{{\mathcal A}}
\newcommand\algA{{\mathbf A}}
\newcommand\ehH{\mathcal H}
\newcommand\modE{{\boldsymbol E}}
\newcommand\modF{{\boldsymbol F}}
\newcommand\modG{{\boldsymbol G}}
\newcommand\modW{\boldsymbol W}
\newcommand\kb{{\mathfrak b}}
\newcommand\kq{{\mathfrak q}}
\newcommand\kg{{\mathfrak g}}
\newcommand\eps{{\varepsilon}}
\newcommand\Ad{{\text{\textup{Ad}}}}
\newcommand\orth{{\text{\textup{orth}}}}
\newcommand\fois{\mathord{\cdot}}
\DeclareMathOperator{\tr}{sTr} 
\DeclareMathOperator{\Dom}{\mathsf{Dom}}
\newcommand\Ker{{\text{\textup{Ker}}}}
\newcommand\dd{{\text{\textup{d}}}}
\newcommand\Supp{{\text{\textup{Supp}}}}
\newcommand\norm{\mathord{\parallel}}
\newtheorem{Theorem}{Theorem}[section]
\newtheorem{theorem}[Theorem]{Theorem}
\newtheorem{proposition}[Theorem]{Proposition}
\newtheorem{lemma}[Theorem]{Lemma}
\newtheorem{corollary}[Theorem]{Corollary}
\newtheorem{example}[Theorem]{Example}
\newtheorem{remark}[Theorem]{Remark}
\newtheorem{definition}[Theorem]{Definition}
\theoremstyle{nonumberplain}
\newtheorem{proof}{Proof}
\renewenvironment{thebibliography}[1]
         {\section*{References}\frenchspacing\small
          \begin{list}{[\arabic{enumi}]}
         {\usecounter{enumi}\parsep=2pt\topsep 0pt
         \settowidth{\labelwidth}{[#1]}
         \leftmargin=\labelwidth\advance\leftmargin\labelsep
         \rightmargin=0pt\itemsep=1pt\sloppy}}{\end{list}}
\title{Deformation Quantization for Heisenberg Supergroup\footnote{Work
supported by the Belgian Interuniversity Attraction Pole (IAP) within the framework ``Nonlinear systems, stochastic processes, and statistical mechanics'' (NOSY).}}
\author{}
\date{}
\author{Pierre Bieliavsky$^a$, Axel de Goursac$^{a,}$\footnote{Corresponding author. Tel: +32 10 47 31 64. Fax: +32 10 47 25 30.} {} and Gijs Tuynman$^b$}
\begin{document}

\maketitle
\vspace*{-1cm}
\begin{center}
\textit{$^a$D\'epartement de Math\'ematiques, Universit\'e Catholique de Louvain,\\ Chemin du Cyclotron 2, 1348 Louvain-la-Neuve, Belgium\\
e-mail: \texttt{Pierre.Bieliavsky@uclouvain.be, axelmg@melix.net}}\\
\textit{$^b$Laboratoire Paul Painlev\'e, U.M.R. CNRS 8524 et UFR de Math\'ematiques, \\
Universit\'e de Lille I, 59655 Villeneuve d'Ascq Cedex, France\\
    e-mail: \texttt{Gijs.Tuynman@univ-lille1.fr}}\\
\end{center}%

\vskip 2cm

\begin{abstract}
We construct a non-formal deformation machinery for the actions of the Heisenberg supergroup analogue to the one developed by M. Rieffel for the actions of $\gR^d$. However, the method used here differs from Rieffel's one: we obtain a Universal Deformation Formula for the actions of $\gR^{m|n}$ as a byproduct of Weyl ordered Kirillov's orbit method adapted to the graded setting. To do so, we have to introduce the notion of C*-superalgebra, which is compatible with the deformation, and which can be seen as corresponding to noncommutative superspaces. We also use this construction to interpret the renormalizability of a noncommutative Quantum Field Theory.
\end{abstract}

\vskip 0.5cm

{\it Keywords:} deformation quantization; Heisenberg supergroup; harmonic analysis; Fr\'echet functional spaces; graded operator algebras; supermanifolds; renormalization

{\it Mathematics Subject Classification:} 53D55; 47L15; 46E10; 42B20; 58A50; 81T75
\vfill

\pagebreak
\tableofcontents
\pagebreak

\section{Introduction}

\subsection{Motivations}

Inspired by algebraic geometry, noncommutative geometry is a domain of mathematics which finds its origin in the correspondence between geometrical spaces and commutative algebras. More precisely, Gelfand's Theorem establishes an equivalence between the category of locally compact Hausdorff spaces and the category of commutative C${}^\ast$-algebras. In this way one can interpret noncommutative C${}^\ast$-algebras as the defining data of noncommutative topological spaces. From this point of view, the noncommutative extension of measure theory corresponds to the theory of von Neumann algebras, while the extension of Riemannian differential geometry corresponds to the theory of spectral triples. Noncommutative geometry, with this very rich way of thinking, has many applications in various areas of Mathematics and Physics \cite{Connes:1994}.

\medskip

A special class of noncommutative algebras, closely related to geometry, is provided by deformation quantization of Poisson manifolds. Deformation quantization, initiated by Bayen, Flato, Fronsdal, Lichnerowicz and Sternheimer \cite{Bayen:1978}, consists in introducing a noncommutative product on the algebra of smooth functions $C^\infty(M)$ on a Poisson manifold $M$ which is a deformation of the standard commutative product and which depends on a formal deformation parameter $\theta$ (see \cite{Kontsevich:2003} for the existence of such deformations). In the case of a symplectic Lie group $G$, the deformed product can be given by a Drinfeld twist \cite{Drinfeld:1989}. Since the algebra $C^\infty(G)$ carries a Hopf algebra structure, such a Drinfeld twist implements a Universal Deformation Formula: it deforms the whole category of module-algebras of $C^\infty(G)[[\theta]]$, the algebra of formal series of functions on $G$.

At the non-formal level (i.e. if the deformation parameter $\theta$ takes real values), one speaks about strict deformation quantization. In the case of an abelian group $G$, Rieffel has exhibited a universal twist on $C^\infty(G)$ \cite{Rieffel:1989}, which deforms also algebras on which $G$ is acting. This procedure has been extended to non-abelian K\"ahler Lie groups \cite{Bieliavsky:2002,Bieliavsky:2007,Bieliavsky:2010kg}.

\medskip

Quantum Field Theories (QFT) on noncommutative deformed spaces are a very interesting area of study, as they could exhibit new physical properties at high energy level (for instance the Planck scale \cite{Doplicher:1994tu}). On the Euclidean Moyal space (a deformation of $\gR^m$), a new type of divergence appears in the real $\phi^4$ QFT, called Ultraviolet-Infrared mixing. It is responsible for the non-renormalizability of the theory, which is very problematic in a physical context. It means that the Moyal deformation (which is universal for the action of $\gR^m$ on C${}^\ast$-algebras, as shown by Rieffel) is not universal for the $\phi^4$-action: renormalizability is not preserved when deforming.

However, Grosse and Wulkenhaar, by adding a new harmonic term to the scalar action, have solved this problem of Ultraviolet-Infrared mixing: the resulting action is renormalizable to all orders \cite{Grosse:2004yu}, in the two and four-dimensional case. Furthermore, this theory with a harmonic term has been mathematically interpreted within a superalgebraic formalism \cite{deGoursac:2008bd}. As we will see in this article, the superalgebra involved in this interpretation corresponds to a deformation of the Heisenberg supergroup.

\medskip

Coming from another direction, Supergeometry is a mathematical theory in which the objects are spaces involving, besides the usual commuting variables, also anticommuting variables. Supermanifolds, which are generalizations of usual manifolds to this anticommutative setting, can be constructed in two different but equivalent ways: the algebro-geometric approach developped by Berezin, Kostant, Leites \cite{Berezin:1976,Kostant:1977}, and the concrete approach of DeWitt \cite{DeWitt:1984} (see \cite{Tuynman:2005,Rogers:2007}). In both approaches, the algebra of functions on a supermanifold is a $\gZ_2$-graded (super)-commutative algebra. Note also that the development of this theory has been motivated by and can be applied to theoretical Physics; it suffices to think of Supersymmetry or of BRST quantization. Representations of the Heisenberg supergroup have been studied in the perspective of geometric quantization \cite{Tuynman:2010,Tuynman:2009}.

\medskip

From the noncommutative point of view, since supergeometry leads to graded commutative algebras, we could interpret noncommutative $\gZ_2$-graded algebras as corresponding to ``noncommutative superspaces''. Such an interpretation has been used in \cite{deGoursac:2008bd} for more general  gradings, but in a purely algebraic setting. Geometric tools like differential calculus and connections have indeed been introduced for graded associative algebras, as well as Hochschild cohomology in \cite{deGoursac:2009gh}. However, an analytical characterization in terms of operator algebras of the objects of Noncommutative Supergeometry was still missing. Note that $\gZ_2$-graded C${}^\ast$-algebras have been extensively studied some time ago \cite{Donovan:1970,Parker:1988}, but we will see in this paper that we have to introduce slightly different structures.

\medskip

In this article we construct a non-formal deformation quantization of the Heisenberg supergroup and establish a Universal Deformation Formula within this non-formal setting. The structure of C${}^\ast$-algebras will be shown not to be adapted to this deformation, forcing us to introduce the notions of C${}^\ast$-superalgebras and Hilbert superspaces, inspired both by operator algebras and geometrical examples. The structure of C${}^\ast$-superalgebra turns out to be compatible with the deformation, and can be seen as a definition of a ``noncommutative topological superspace'' (from the noncommutative geometry point of view). We apply the deformation to a certain class of compact supermanifolds and in particular to the supertorus. Finally, within the context of QFT, we prove that the above deformation (with an odd dimension) is universal also for the $\phi^4$-action.

\subsection{What is done in this paper}

\noindent In section 2 we start with reviewing the basic notions of supergeometry. We then go on introducing the notion of a Hilbert superspace adapted to our needs and an associated C${}^\ast$-superalgebra.
The latter (C${}^\ast$-superalgebra) is up to our knowledge not in the literature, while there already exist notions of Hilbert superspaces \cite{Rudolph:2000} but incompatible with our framework. We end by focusing on the Heisenberg supergroup.

\vspace{2mm}

\noindent In section 3 we start with developing  Kirillov's theory for Heisenberg {\sl super}groups. We pass to quantization within the setting of the Weyl ordering.
We then introduce the functional symbol spaces on which our oscillatory integral (within the $\gZ_2$-graded context) will be defined. These correspond to weighted
super-versions of Fr\'echet-valued Laurent Schwartz B-spaces defined on coadjoint orbits of the Heisenberg supergroup. Next, we use the oscillatory integral to prove that  Weyl's correspondence extends to the required symbol spaces.
Intertwining the operator composition under a super-version of the Berezin transformation, we then end by defining our $\gZ_2$-graded symbolic composition product \eqref{eq-prod-moy}.

\vspace{2mm}

\noindent In section 4 we prove that the symbol regularity established in section 3 allows us to compose smooth vectors of any given strongly continuous subisometric linear action
of the Heisenberg supergroup on a Fr\'echet algebra. The result of such a composition being again a smooth vector, one gets an associative deformed product on the smooth vectors. The latter being valid  for {\sl every} Fr\'echet algebra on which the Heisenberg supergroup acts, we call this abstract composition product formula a {\sl universal}
deformation formula. Starting with a C${}^\ast$-superalgebra, we then construct a compatible pre-C${}^\ast$-superalgebra structure on the deformed algebra of smooth vectors.

\vspace{2mm}

\noindent In section 5 we use what we have done in section 3 and 4 to define a deformation theory of compact trivial Heisenberg supermanifolds and focus on the case of the supertori. It is important to note that the natural notion of supercommutative operator algebra corresponding to compact trivial supermanifolds is indeed our notion of C${}^\ast$-superalgebra.

\vspace{2mm}

\noindent In section 6 we apply our construction to noncommutative renormalizable theories by showing that our universal deformation formula produces the Grosse-Wulkenhaar model when applied to the $\phi^4$-setting, which is not the case for Rieffel's (non-graded) construction. In other words, replacing in the commutative $\phi^4$-model the multiplication product by our graded deformed product yields directly the Grosse-Wulkenhaar model without adding a harmonic term. This provides an interpretation of the harmonic term responsible of the renormalization of this quantum field theory.

\section{Notions of Supergeometry}

The Heisenberg supergroup is a supermanifold with a smooth group law depending on an even symplectic form. In this section, we start giving, in subsection \ref{subsec-linsuper}, some basics about linear superalgebra, and in \ref{subsec-symplsuper} some properties of even symplectic forms on a superspace. The definition of a supermanifold and smooth superfunctions is recalled in \ref{subsec-superman}.

We then proceed to introduce some structures adapted to the space $L^2(M)$, for a ``trivial'' supermanifold $M$ (a notion that will be defined) such as a scalar product and a Hodge operation. Inspired by the space $L^2(M)$ equipped with these structures, we give a new definition of a Hilbert superspace in \ref{subsec-superhilbert} and show some of its properties. From this definition of a Hilbert superspace in mind, we introduce in \ref{subsec-cstsuper} the notion of a C${}^\ast$-superalgebra, which is adapted to describe operators on a Hilbert superspace as well as the space $L^\infty(M)$ for a trivial supermanifold $M$. This notion of a  C${}^\ast$-superalgebra will be also compatible with the deformation. 
Finally, we recall the definition of the Heisenberg supergroup, its Lie superalgebra, and its coadjoint orbits in \ref{subsec-heisenberg}.

\subsection{Linear superalgebra}
\label{subsec-linsuper}

We recall here some basic notions of linear superalgebra (see for example \cite{Tuynman:2005,Rogers:2007} for more details). We use in this article the concrete approach to Supergeometry, whose essence consists to replace the field $\gR$ of real numbers by a real supercommutative algebra.

Let $\superA=\superA_0\oplus\superA_1$ be a real supercommutative superalgebra such that $\superA/\mathcal{N}_{\superA}\simeq \gR$, where $\mathcal{N}_{\superA}$ is the set of all nilpotent elements of $\superA$. For instance, we can consider $\superA=\bigwedge V$, where $V$ is an real infinite-dimensional vector space. We choose such a superalgebra $\superA$ for the rest of the paper and we denote by $\gB:\superA\to\gR$ the quotient map, which is called the body map.
\begin{definition}
\begin{itemize}
\item $\modE$ is called a graded $\superA$-module if it is an $\superA$-module with decomposition $\modE=\modE_0\oplus\modE_1$, and such that $\forall i,j\in\gZ_2$, $\superA_i \modE_j\subset \modE_{i+j}$.
\item $\modF$ is a graded submodule of $\modE$ if it is a $\superA$-submodule of $\modE$, $\modF=\modF_0\oplus\modF_1$, and $\forall i\in\gZ_2$, $\modF_i=(\modF\cap \modE_i)$.
\end{itemize}
\end{definition}

\begin{proposition}
Let $\modE$ be a graded $\superA$-module and $\modF$ a graded submodule of $\modE$. Then the quotient $\modG=\modE/\modF$ has a natural structure of a graded $\superA$-module: if $\pi:\modE\to\modG$ denotes the canonical projection, then the grading is given by $\pi(x)\in \modG_i\Leftrightarrow \exists y\in\modF :\, x-y\in\modE_i$.
\end{proposition}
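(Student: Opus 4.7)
The plan is to define the candidate grading on $\modG$ explicitly as $\modG_i := \pi(\modE_i)$ and then verify in order: (i) this decomposition is a direct sum, (ii) it spans, (iii) it is compatible with the $\superA$-action, and (iv) it coincides with the characterization given in the statement.

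First I would observe that $\modG = \modG_0 + \modG_1$ is immediate: any $g = \pi(x) \in \modG$ decomposes using $x = x_0 + x_1$ with $x_i \in \modE_i$, and then $g = \pi(x_0) + \pi(x_1)$.

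The heart of the argument is the directness of the sum, and this is precisely where the graded submodule hypothesis $\modF_i = \modF \cap \modE_i$ is used. Suppose $\pi(x_0) = \pi(x_1)$ with $x_i \in \modE_i$. Then $x_0 - x_1 \in \modF$, so I decompose it as $x_0 - x_1 = f_0 + f_1$ with $f_i \in \modF_i \subset \modE_i$, and rewrite $x_0 - f_0 = x_1 + f_1$. The left-hand side lies in $\modE_0$ and the right-hand side in $\modE_1$, so the direct sum $\modE = \modE_0 \oplus \modE_1$ forces both to vanish; hence $x_0 = f_0 \in \modF$ and $x_1 = -f_1 \in \modF$, whence $\pi(x_0) = \pi(x_1) = 0$. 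This is the step I expect to be the only non-routine one; it would fail without the assumption $\modF_i = \modF \cap \modE_i$, which is exactly the reason this hypothesis is built into the definition of a graded submodule.

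Compatibility with the $\superA$-action is then a one-line check: for $a \in \superA_i$ and $g = \pi(x) \in \modG_j$ with $x \in \modE_j$, one has $a \cdot g = \pi(ax)$, and $ax \in \modE_{i+j}$ since $\modE$ is a graded module, so $a\cdot g \in \modG_{i+j}$.

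Finally, the stated characterization is obtained by unravelling the definition: if $\pi(x) \in \modG_i = \pi(\modE_i)$, then $\pi(x) = \pi(z)$ for some $z \in \modE_i$, so $y := x - z \in \modF$ satisfies $x - y = z \in \modE_i$; conversely, if such a $y \in \modF$ exists, then $\pi(x) = \pi(x-y) \in \pi(\modE_i) = \modG_i$. This closes the proof.
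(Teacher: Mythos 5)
Your proof is correct and complete: the paper states this proposition without proof, and your argument is the standard one that would be expected. The key step — deducing $\modG_0\cap\modG_1=\algzero$ by decomposing $x_0-x_1\in\modF$ as $f_0+f_1$ with $f_i\in\modF_i=\modF\cap\modE_i$ and comparing homogeneous components in $\modE=\modE_0\oplus\modE_1$ — is exactly where the graded-submodule hypothesis enters, and you have identified and used it correctly.
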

We recall that a graded $\superA$-module $\modE$ is free if and only if it admits a homogeneous $\superA$-basis; it is called of finite dimension if such a basis is finite. The number $n$ of the elements in a basis does not depend on the choice of a homogeneous basis, nor does the number $p$ of its even elements. We call $\dim(\modE)=p|(n-p)$ the graded dimension of $\modE$; it totally characterizes a free graded $\superA$-module.
\begin{remark}
A graded submodule of a free graded $\superA$-module does not have to be free. However, if it is free, it will be called a graded subspace.
\end{remark}

\begin{proposition}
\label{prop-linsuper-suppl}
Let $\modE$ be a free graded $\superA$-module of finite dimension and let $\modF$ be a graded subspace of $\modE$. Then $\modF$ admits a supplement in $\modE$:
\begin{equation*}
\exists \hat\modF\text{ graded subspace of }\modE,\quad \modE=\modF\oplus\hat\modF.
\end{equation*}
Moreover, $\modE/\modF$ is a free graded $\superA$-module isomorphic to $\modF$.
\end{proposition}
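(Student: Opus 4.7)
The plan is to transfer everything to ordinary linear algebra via the body map $\gB:\superA\to\gR$. For any free graded $\superA$-module $\modM$ of finite dimension $m|n$, one forms the real $\gZ_2$-graded vector space $\gB(\modM):=\modM/\mathcal{N}_\superA\modM$, which again has dimension $m|n$. The guiding principle, a super-version of Nakayama's lemma, is that a homogeneous family $(w_i)$ in such an $\modM$ is an $\superA$-basis if and only if the family of bodies $(\gB(w_i))$ is an $\gR$-basis of $\gB(\modM)$. I will invoke this principle freely, since it converts the problem into standard linear algebra over $\gR$.

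Writing $\dim(\modE)=p|q$ and $\dim(\modF)=p'|q'$, I would first fix a homogeneous $\superA$-basis $(x_1,\dots,x_{p'},y_1,\dots,y_{q'})$ of $\modF$, with the $x_i$ even and the $y_j$ odd, and show that the bodies $\gB(x_i),\gB(y_j)$ remain $\gR$-linearly independent in $\gB(\modE)$. Expressing the $x_i,y_j$ in a chosen homogeneous basis of $\modE$ produces a coordinate matrix $M$ whose even-even and odd-odd diagonal blocks are the only parts surviving under $\gB$ (the off-diagonal blocks have entries in $\superA_1\subset\mathcal{N}_\superA$ and therefore vanishing body); the $\superA$-linear independence of the rows of $M$ then transfers to the $\gR$-linear independence of the rows of $\gB(M)$ through a Nakayama-type argument carried out inside the finite-dimensional subalgebra of $\superA$ generated by the finitely many entries of $M$.

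Having secured independence in the body, I would complete $(\gB(x_i),\gB(y_j))$ to a homogeneous $\gR$-basis of $\gB(\modE)$ by appending even vectors $\bar u_1,\dots,\bar u_{p-p'}$ and odd vectors $\bar v_1,\dots,\bar v_{q-q'}$, and pick arbitrary homogeneous lifts $u_k\in\modE_0$, $v_l\in\modE_1$. The guiding principle then immediately gives that $(x_i,y_j,u_k,v_l)$ is a homogeneous $\superA$-basis of $\modE$. Defining $\hat\modF$ as the graded $\superA$-span of the $u_k$ and $v_l$ yields a free graded subspace of graded dimension $(p-p')|(q-q')$, and $\modE=\modF\oplus\hat\modF$ is immediate from the basis. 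The closing assertion, which I read as $\modE/\modF\cong\hat\modF$, follows because the restriction to $\hat\modF$ of the canonical projection $\modE\to\modE/\modF$ is an $\superA$-linear bijection of graded modules.

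The only delicate point is the first step, namely the passage from $\superA$-linear independence to $\gR$-linear independence of bodies; this is where the super-Nakayama ingredient is essential, and where the hypothesis that $\modF$ is a graded \emph{subspace} (i.e.\ free) rather than merely a graded submodule is used crucially, as already signalled by the Remark preceding the statement.
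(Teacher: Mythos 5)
The paper does not actually prove this proposition: it is stated as recalled background in the linear-superalgebra subsection, with the reader referred to the supergeometry literature (Tuynman, Rogers), so there is no in-paper argument to compare yours against. Your proof is correct and is essentially the standard one: reduce to the body via the basis-iff-body-is-basis principle, check that a homogeneous basis of the free submodule $\modF$ has independent bodies in $\gB(\modE)$, complete and lift. You also correctly read the final clause as $\modE/\modF\cong\hat\modF$ (the statement's ``isomorphic to $\modF$'' is a slip, as the paper's own later use of the proposition confirms). Two small remarks on the one delicate step. First, ``Nakayama-type'' is a slightly loose label for what is really an annihilator argument: if $\sum_i\lambda_i\gB(w_i)=0$ with $\lambda_i\in\gR$ not all zero, then $v=\sum_i\lambda_i w_i$ has zero annihilator in $\superA$ (some $\lambda_i$ is invertible and the $w_i$ are a basis of the free module $\modF$), whereas any nonzero element of $\mathcal{N}_\superA\modE$ is killed by a maximal nonzero product of its finitely many nilpotent coordinates --- this is exactly where your passage to the finite-dimensional subalgebra generated by the entries of $M$, in which the nil ideal becomes genuinely nilpotent, does the work. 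Spelling that out would close the only real gap in the sketch. Second, the guiding principle itself hides a nontrivial injectivity step (a surjection $\superA^{p|q}\to\modM$ between free modules of equal graded rank splits, and the complement has vanishing body, hence vanishes by Nakayama); since the paper treats the whole proposition as citable background, invoking the principle is fair, but it is the one external ingredient your argument genuinely depends on.
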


\subsection{Symplectic superalgebra}
\label{subsec-symplsuper}

In this subsection $\modE$ denotes a free graded $\superA$-module of finite dimension $\dim\modE=m|n$.
\begin{definition}
An even map $\omega:\modE\times\modE\to\superA$ is said to be a symplectic form if it is:
\begin{itemize}
\item bilinear: $\forall a\in\superA$, $\forall x,y\in\modE$, $\omega(ax,y)=a\omega(x,y)$ and $\omega(xa,y)=\omega(x,ay)$.
\item superskewsymmetric: $\forall x,y\in\modE$, $\omega(x,y)=-(-1)^{|x||y|}\omega(y,x)$.
\item non degenerate: ($\forall y\in\modE$ $\omega(x,y)=0$)$\Rightarrow$ $x=0$.
\end{itemize}
\end{definition}

\begin{proposition}
\label{prop-sympl-basis}
Let $\omega$ be a symplectic form on $\modE$. Then necessarily $m$ is even and there exists a homogeneous basis $(e_i,f_j,\theta_k,\eta_l)$ of $\modE$ (with $1\le i,j\le \frac m2$, $1\le k\le n_+$, and $1\le l\le n_-$, the elements $e_i$ and $f_j$ even and the elements $\theta_k$ and $\eta_l$ odd) satisfying
\begin{equation*}
\omega(e_i,f_j)=\delta_{ij},\qquad \omega(\theta_k,\theta_{k'})=2\delta_{kk'},\qquad \omega(\eta_l,\eta_{l'})=-2\delta_{ll'},
\end{equation*}
and the other relations vanishing. The numbers $n_+$ and $n_-$ depend only upon the symplectic form and not on the particular choice of the basis. Note that we have in particular $n=n_++n_-$.
\end{proposition}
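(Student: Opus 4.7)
The plan is to prove this by induction on the graded dimension $m+n$, reducing each inductive step to a classical Darboux/Sylvester statement at the body level and then lifting it over $\superA$.

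First, I would pass to the body. The quotient $\bar\modE := \modE/(\mathcal{N}_{\superA}\cdot\modE)$ is a real $\gZ_2$-graded vector space of dimension $m|n$. Since odd elements of $\superA$ square to zero (supercommutativity in characteristic $0$), $\superA_1\subseteq\mathcal{N}_\superA$; hence, $\omega$ being even, any mixed value $\omega(x,y)$ with $|x|\neq|y|$ lies in $\superA_1$ and has zero body. Consequently $\omega$ descends to a $\gR$-bilinear form $\bar\omega$ on $\bar\modE$ which splits as $\bar\omega_0\oplus\bar\omega_1$ along $\bar\modE_0\oplus\bar\modE_1$; the piece $\bar\omega_0$ is skew-symmetric while $\bar\omega_1$ is symmetric (two odd vectors contribute an extra minus sign in the superskewsymmetry rule). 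Non-degeneracy of $\omega$ means the $\superA$-linear flat map $\omega^\flat:\modE\to\modE^\ast$ is injective; since source and target are free of the same graded dimension, it is an isomorphism, which descends to an isomorphism of bodies, so $\bar\omega_0$ and $\bar\omega_1$ are both non-degenerate. The classical fact that a real non-degenerate skew-symmetric form requires even dimension forces $m$ to be even, and Sylvester's law of inertia attaches to $\bar\omega_1$ an invariant signature $(n_+,n_-)$ with $n_++n_-=n$.

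Next, I would construct the claimed basis by induction on $m+n$. If $m\geq 2$, I pick $e\in\modE_0$ with $\bar e\neq 0$; non-degeneracy of $\bar\omega_0$ produces $f'\in\modE_0$ with $\bar\omega_0(\bar e,\bar{f'})=1$, so $\omega(e,f')\in\superA_0$ is a unit (its body is $1$) and $f=f'/\omega(e,f')$ satisfies $\omega(e,f)=1$. If instead $n_+\geq 1$, I pick $\theta'\in\modE_1$ with $\bar\omega_1(\bar{\theta'},\bar{\theta'})>0$; the even element $\omega(\theta',\theta')/2$ then has positive body and admits a square root in $\superA_0$ through a terminating power series in its nilpotent part, so rescaling $\theta=\theta'/\sqrt{\omega(\theta',\theta')/2}$ achieves $\omega(\theta,\theta)=2$ exactly, and likewise for $\eta$ with $\omega(\eta,\eta)=-2$ if $n_-\geq 1$. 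Each procedure yields a graded subspace $\modF$ of type $2|0$ or $0|1$ on which $\omega|_\modF$ is non-degenerate.

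To continue the induction I would pass to the graded symplectic orthogonal $\modF^\perp=\{x\in\modE:\omega(\modF,x)=0\}$. Evenness of $\omega$ makes $\modF^\perp$ a graded submodule, and non-degeneracy of $\omega|_\modF$ yields the decomposition $\modE=\modF\oplus\modF^\perp$: for any $x\in\modE$ the projection $P(x)\in\modF$ defined by $\omega(\modF,P(x))=\omega(\modF,x)$ exists and is unique because the Gram matrix of $\omega|_\modF$ is invertible. Thus $\modF^\perp$ is free graded of dimension $(m-2)|n$ or $m|(n-1)$, and $\omega|_{\modF^\perp}$ is again non-degenerate, allowing the induction to proceed. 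The invariance of $(n_+,n_-)$ then follows from Sylvester's law applied to $\bar\omega_1$: any basis satisfying the stated relations descends to an orthogonal basis of $\bar\omega_1$ with $n_+$ vectors of square $+2$ and $n_-$ vectors of square $-2$, so these counts are determined by $\omega$ alone. The main obstacle I anticipate is the careful nilpotent rescaling needed to force $\omega(\theta,\theta)$ and $\omega(\eta,\eta)$ to the exact values $\pm 2$, together with verifying that the orthogonal projection interacts correctly with the $\superA$-bilinearity conventions in the mixed-parity cases.
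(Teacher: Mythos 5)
The paper states Proposition \ref{prop-sympl-basis} without proof (it is quoted as a standard fact of linear superalgebra), so there is no in-paper argument to compare against. Your strategy is the natural one: descend to the body $\bar\modE=\modE/\mathcal{N}_\superA\modE$, observe that the even--odd cross terms of $\omega$ die there while $\bar\omega_0$ is skew and $\bar\omega_1$ is symmetric, invoke the classical Darboux and Sylvester theorems over $\gR$, and lift back over $\superA$ by induction on $\dim\modE$, splitting off a non-degenerate $\modF$ of type $2|0$ or $0|1$ and passing to $\orth(\modF)$. The rescalings (units with body $1$, square roots of even elements with positive body via a terminating binomial series) and the splitting $\modE=\modF\oplus\orth(\modF)$ from the invertible Gram matrix of $\omega|_\modF$ are all sound, and Sylvester applied to $\bar\omega_1$ correctly gives the invariance of $(n_+,n_-)$.

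There is one step whose justification, as written, rests on a false general principle, even though the conclusion you draw is true. You claim that injectivity of $\omega^\flat:\modE\to\modE^\ast$ between free modules of the same graded dimension forces it to be an isomorphism; over a general (even local) ring this fails --- multiplication by $t$ on $k[[t]]$ is injective but not surjective. What rescues the statement here is the special feature of $\superA$ that every element is either invertible or nilpotent. A correct argument that non-degeneracy descends to the body: if $\bar x_0\neq 0$ were a homogeneous radical vector of $\bar\omega$, lift it to a homogeneous $x_0\in\modE$ with nonzero body (hence with at least one invertible coordinate in a basis $(v_j)$). All the finitely many homogeneous elements $\omega(x_0,v_j)$ are then nilpotent, so the ideal $J$ they generate satisfies $J^K=\algzero$ for some minimal $K$; choosing $a\in J^{K-1}\setminus\{0\}$ (or $a=1$ if $J=\algzero$) yields $ax_0\neq 0$ while $\omega(ax_0,\fois)=a\,\omega(x_0,\fois)\subset J^K=\algzero$, contradicting non-degeneracy. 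With that patch --- and noting that the same invertible-or-nilpotent dichotomy is what underlies your unit and square-root claims --- your proof goes through.
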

In this particular basis, the form $\omega$ can be represented by the matrix $\begin{pmatrix} 0 & I & 0 & 0 \\ -I & 0 & 0 & 0 \\ 0 & 0 & 2I & 0 \\ 0 & 0 & 0 & -2I\end{pmatrix}$, where $I$ denotes the unit matrix. In the sequel we will denote by $\omega_0$ the restriction of $\omega$ to the graded subspace generated by the even basis vectors and by $\omega_1$ the restriction of $\omega$ to the graded subspace generated by the odd basis vectors.

\begin{definition}
Let $\modF$ be a subset of $\modE$. We define the symplectic orthogonal of $\modF$ as
\begin{equation*}
\orth(\modF)=\{x\in\modE,\quad\forall y\in\modF,\,\omega(x,y)=0\}.
\end{equation*}
\end{definition}

\begin{proposition}
Let $\modF$ be a subset of $\modE$. The symplectic orthogonal of $\modF$ has the following properties.
\begin{enumerate}
\item $\orth(\modF)$ is a submodule of $\modE$.
\item If $\modF$ is a graded submodule of $\modE$, then $\orth(\modF)$ is one too.
\item If $\modF$ is a graded subspace of $\modE$ then the map $\varphi:\orth(\modF)\to(\modE/\modF)^\ast$, defined by $\varphi(x)=\omega(x,\fois)$, is an isomorphism.
\end{enumerate}
\end{proposition}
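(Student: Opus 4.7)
The three claims can be handled sequentially, each with a different technique. For part (1), bilinearity of $\omega$ gives the conclusion directly: if $x, x' \in \orth(\modF)$ and $a \in \superA$, then $\omega(ax+x', y) = a\omega(x,y) + \omega(x',y) = 0$ for every $y \in \modF$, so $\orth(\modF)$ is closed under $\superA$-linear combinations.

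For part (2), the essential point is that $\omega$ is even, so $\omega(\modE_i, \modE_j) \subseteq \superA_{i+j}$. Let $x = x_0 + x_1 \in \orth(\modF)$ with $x_i \in \modE_i$. Since $\modF$ is graded, it suffices to check that each $x_i$ annihilates every homogeneous element $f \in \modF_j$. But then the relation $\omega(x_0, f) + \omega(x_1, f) = 0$ decomposes in $\superA_j \oplus \superA_{j+1}$, and the two summands live in distinct $\gZ_2$-degrees, so each vanishes separately. Therefore $x_0, x_1 \in \orth(\modF)$, showing $\orth(\modF) = (\orth(\modF) \cap \modE_0) \oplus (\orth(\modF) \cap \modE_1)$.

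For part (3), I would proceed via the musical map $\omega^\flat : \modE \to \modE^*$ defined by $\omega^\flat(x)(y) = \omega(x,y)$. Picking the Darboux basis provided by Proposition \ref{prop-sympl-basis}, a direct computation shows that $\omega^\flat$ sends the basis vectors to scalar multiples of the corresponding dual basis vectors, hence $\omega^\flat$ is an even isomorphism of free graded $\superA$-modules. By construction, $x \in \orth(\modF)$ if and only if $\omega^\flat(x)$ annihilates $\modF$, so $\omega^\flat$ restricts to an isomorphism between $\orth(\modF)$ and the annihilator $\modF^\circ := \{ \lambda \in \modE^* : \lambda|_\modF = 0 \}$. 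Finally, Proposition \ref{prop-linsuper-suppl} provides a graded supplement $\hat\modF$ with $\modE = \modF \oplus \hat\modF$, inducing canonical identifications $(\modE/\modF)^* \simeq \hat\modF^* \simeq \modF^\circ$; chasing through the identifications shows the resulting composite is exactly the map $\varphi$ of the statement.

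The main subtlety lies in part (3), specifically in confirming that $\omega^\flat$ is a bijection in the graded setting: one must be careful with conventions defining the dual module $\modE^*$ (left versus right $\superA$-linearity and the attendant sign rules for odd elements). Working explicitly in the Darboux basis circumvents these concerns and reduces the verification to a finite linear-algebraic check, after which surjectivity follows from the existence of a graded supplement guaranteed by Proposition \ref{prop-linsuper-suppl}.
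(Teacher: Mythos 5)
Your proof is correct and follows essentially the same route as the paper: bilinearity for (1), parity separation in $\superA$ using that $\omega$ is even for (2), and for (3) the musical isomorphism coming from non-degeneracy combined with the graded supplement of Proposition \ref{prop-linsuper-suppl}. The only cosmetic difference is that in (3) you factor the argument through the annihilator $\modF^\circ$ and verify $\omega^\flat$ is bijective in the Darboux basis, whereas the paper checks injectivity and surjectivity of $\varphi$ directly via ${}^\sharp\omega$ and an extension-by-zero of $\psi$ along the supplement.
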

\begin{proof}
\begin{enumerate}
\item Let $x_1,x_2\in\orth(\modF)$. Then $\forall y\in\modF$, $\omega(x_1+x_2,y)=\omega(x_1,y)+\omega(x_2,y)=0$, and $\forall a\in\superA$, $\omega(ax_1,y)=a\omega(x_1,y)=0$.
\item Let $x\in\orth(\modF)$, with $x=x_0+x_1$, where $x_0\in\modE_0$ and $x_1\in\modE_1$. We want to show that $x_0$ and $x_1$ are in $\orth(\modF)$.

$\forall y_0\in\modF_0$, $0=\omega(x,y_0)=\omega(x_0,y_0)+\omega(x_1,y_0)$. Since $\omega$ is even, we deduce that $\omega(x_0,y_0)=\omega(x_1,y_0)=0$. In the same way, one has: $\forall y_1\in\modF_1$, $\omega(x_0,y_1)=\omega(x_1,y_1)=0$. It then follows by linearity that $\forall y\in\modF$, $\omega(x_0,y)=\omega(x_1,y)=0$.
\item By Proposition \ref{prop-linsuper-suppl}, if $\modF$ is a graded subspace of $\modE$, then there exists a graded subspace $\hat\modF$ which is a supplement to $\modF$ and $\modE/\modF$ is a free graded $\superA$-module isomorphic to $\hat\modF$.

$\forall y\in\modE$, $\forall z\in\modF$, $\forall x\in\orth(\modF)$, $\omega(x,y+z)=\omega(x,y)$, so that $\varphi$ is well-defined.

If $x\in\text{Ker}\varphi$, then $x\in\orth(\modF)$ and $\forall y\in\modE$, $\omega(x,y+\modF)=0$, so that $x=0$.

For $\psi\in(\modE/\modF)^\ast$ we define $\tilde\psi\in\modE^\ast$ by $\forall z\in\modF$, $\tilde\psi(z)=0$ and $\forall y\in\hat\modF$, $\tilde\psi(y)=\psi(y+F)$. Let $x_\psi={}^\sharp\omega(\tilde\psi)$, where ${}^\sharp\omega:\modE^\ast\to\modE$ exists since $\omega$ is non-degenerate. Then, $\forall y\in\modE$, $\tilde\psi(y)=\omega(x_\psi,y)$, and $\forall z\in\modF$, $\omega(x_\psi,z)=0$. We conclude that $x_\psi\in\orth(\modF)$ and $\psi=\varphi(x_\psi)$. Hence $\varphi$ is an isomorphism and $\orth(\modF)$ is a graded subspace of $\modE$ with the same graded dimension as $\hat\modF$.
\end{enumerate}
\end{proof}

\begin{definition}
Let $\modF$ be a graded subspace of $\modE$.
\begin{itemize}
\item $\modF$ is said to be isotropic if $\modF\subset\orth(\modF)$.
\item $\modF$ is said to be coisotropic if $\orth(\modF)\subset\modF$.
\item $\modF$ is said to be lagrangian if $\modF=\orth(\modF)$.
\item $\modF$ is said to be symplectic if $\modF\cap\orth(\modF)=\algzero$.
\end{itemize}
\end{definition}

\begin{proposition}
\label{prop-sympl-maxiso}
Let $\omega$ be a symplectic form on $\modE$ and let $\modF$ be a maximal isotropic graded subspace of $\modE$. Then there exists a homogeneous basis $(e_i,f_j,\theta_k,\eta_l)$ of $\modE$ satisfying Proposition \ref{prop-sympl-basis} and such that the vectors
\begin{equation*}
e_i\ ,\ \theta_k+\eta_k,\quad 1\le i\le {\textstyle\frac m2}, 1\le k\le \min(n_+,n_-)
\end{equation*}
form a basis for $\modF$. The graded dimension of $\modF$ thus is $\frac m2|\min(n_+,n_-)$; $\modF$ is Lagrangian if and only if $n_+=n_-$.
\end{proposition}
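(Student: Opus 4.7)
The plan is to run a super-Darboux construction: by induction on $\dim\modE$, I extract a symplectic pair adapted to $\modF$ at each stage, peel it off, and apply the induction hypothesis to its symplectic orthogonal. Two standing tools are needed throughout: (a) an element $a\in\superA_0$ is invertible iff $\gB(a)\ne 0$, so every ``normalization'' that works on the body lifts to $\superA$; and (b) $\gB(\omega)$ is non-degenerate on $\gB(\modE)$, restricting to a non-degenerate skew form on $\gB(\modE_0)\cong\gR^m$ and a non-degenerate symmetric form of signature $(n_+,n_-)$ on $\gB(\modE_1)\cong\gR^n$, for which the standard linear algebra gives maximal isotropic dimensions $m/2$ and $\min(n_+,n_-)$ respectively.

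\textbf{Even extraction.} If $\modF_0\ne\{\algzero\}$, choose $u\in\modF_0$ with $\gB(u)\ne 0$. Non-degeneracy on the body yields $w\in\modE_0$ with $\omega(u,w)\in\superA_0$ invertible; rescaling, $\omega(u,w)=1$. Since $\omega(u,u)=0$ by isotropy, $(u,w)$ generates a free rank-$2$ even symplectic sub-module, and the explicit formula $x=\omega(x,w)\,u-\omega(x,u)\,w+x'$ with $x'\in\orth(\superA u\oplus\superA w)$ gives the symplectic splitting $\modE=(\superA u\oplus\superA w)\oplus\orth(\superA u\oplus\superA w)$. For $x\in\modF$, isotropy forces $\omega(x,u)=0$, so the $w$-coefficient vanishes and $\modF=\superA u\oplus\modF'$ with $\modF':=\modF\cap\orth(\superA u\oplus\superA w)$. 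A strictly larger isotropic in the orthogonal would yield a strictly larger isotropic in $\modE$, so $\modF'$ is maximal isotropic in the smaller symplectic module and the induction hypothesis applies; the pair $(u,w)$ plays the role of a new $(e_i,f_i)$.

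\textbf{Odd extraction.} If $\modF_0=\{\algzero\}$ but $\modF_1\ne\{\algzero\}$, pick $u\in\modF_1$ with $\gB(u)\ne 0$; again $\omega(u,u)=0$. Body-level non-degeneracy of the symmetric odd form produces $w\in\modE_1$ with $\omega(u,w)\in\superA_0^\times$; rescale to $\omega(u,w)=1$ and replace $w$ by $w':=w-\tfrac12\omega(w,w)\,u$, which (using symmetry of $\omega$ on odd vectors and $\omega(u,u)=0$) yields $\omega(w',w')=0$ with $\omega(u,w')=1$ preserved. The change of basis $\theta:=u+w'$, $\eta:=u-w'$ satisfies $\omega(\theta,\theta)=2$, $\omega(\eta,\eta)=-2$, $\omega(\theta,\eta)=0$, producing a canonical odd pair of types $(+,-)$ whose sum $\theta+\eta=2u$ is, up to rescaling $u$, an element of $\modF$. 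The same symplectic splitting as before gives $\modF=\superA(\theta+\eta)\oplus\modF''$ with $\modF''$ maximal isotropic in the orthogonal, and the induction continues.

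\textbf{Termination and Lagrangian criterion.} The recursion halts when $\modF=\{\algzero\}$; Proposition \ref{prop-sympl-basis} applied to the remaining symplectic sub-module produces the leftover $\theta_k$ (for $k>q$) and $\eta_l$ (for $l>q$), where $q$ is the number of odd pairs extracted. The body-level bound forces $q\le\min(n_+,n_-)$ and maximality of $\modF$ gives equality, so $\dim\modF=\tfrac m2|\min(n_+,n_-)$. The Lagrangian claim follows from Proposition 2.3(3): $\dim\orth(\modF)=\tfrac m2|(n-\min(n_+,n_-))$, hence $\modF=\orth(\modF)$ iff $\min(n_+,n_-)=n-\min(n_+,n_-)$, i.e., $n_+=n_-$. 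The main obstacle is the supercommutative bookkeeping: verifying at each step that the extracted pair generates a free graded submodule, that the symplectic orthogonal is a genuine graded direct summand (not just a body-level complement), and that the Gram-Schmidt correction $w\mapsto w'$ (which requires $\tfrac12\in\superA_0$) preserves the canonical shape of the form; once these ring-level checks are made, the construction is a direct super-analogue of the classical Witt-Darboux extraction.
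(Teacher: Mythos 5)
Your proof is correct and takes essentially the same approach as the paper: the even part is handled by the classical Witt--Darboux extraction, and your odd-extraction step (converting a hyperbolic pair $(u,w')$ with $\omega(u,w')=1$, $\omega(u,u)=\omega(w',w')=0$ into $\theta=u+w'$, $\eta=u-w'$) is precisely the $2\times 2$ matrix identity that constitutes the whole of the paper's argument for the odd part. You have merely supplied the inductive bookkeeping and the ring-level checks that the paper's two-line proof leaves implicit.
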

\begin{proof}
For $n=0$, it was already known in the non-graded case. For the part $\omega_1$ of $\omega$ corresponding to the odd generators, we use the decomposition
\begin{equation*}
\begin{pmatrix} 2 & 0  \\ 0 & -2 \end{pmatrix}=\begin{pmatrix} 1 & 1  \\ 1 & -1 \end{pmatrix} \begin{pmatrix} 0 & 1  \\ 1 & 0 \end{pmatrix} \begin{pmatrix} 1 & 1  \\ 1 & -1 \end{pmatrix}
\end{equation*}
to obtain the result.
\end{proof}

\subsection{Supermanifolds}
\label{subsec-superman}

We refer the reader to \cite{DeWitt:1984,Berezin:1976,Tuynman:2005,Rogers:2007} for a complete exposition on supermanifolds. We just recall here some basic definitions.

We define the superspace $\gR^{m|n}=(\superA_0)^m\times(\superA_1)^n$ by using the $\gZ_2$-decomposition $\superA=\superA_0\oplus\superA_1$ of the superalgebra $\superA$. Then $\gR^{m|n}\simeq\modE_0$ for each free graded $\superA$-module $\modE$ with $m$ even generators and $n$ odd generators. The body map can be trivially extended to $\gB:\gR^{m|n}\to\gR^m$.

\begin{definition}
A subset $U$ of $\gR^{m|n}$ is called open if $\gB U$ is an open subset of $\gR^m$ and $U=\gB^{-1}(\gB U)$, namely $U$ is saturated with nilpotent elements. The topology associated to these open subsets is called the DeWitt topology. It is the coarsest topology on $\gR^{m|n}$ such that the body map is continuous. Endowed with this topology, a superspace is locally connected but not Hausdorff.
\end{definition}

The following Lemma allows us to make a connection between $\gR^{m|0}=(\superA_0)^m$ and $\gR^m$ at the level of smooth functions.
\begin{lemma}
\label{lem-superextofsmoothfunc}
To any smooth function $f\in C^\infty(\gR^m)$ one can associate the function $\tilde f:\gR^{m|0}\to\superA_0$ defined by the following prescription: $\forall x\in\gR^{m|0}=(\superA_0)^m$, with $x=x_0+n$, $x_0=\gB(x)\in\gR^m$ and $n\in\gR^{m|0}$ a nilpotent element, we have
\begin{equation*}
\tilde f(x)=\sum_{\alpha\in\gN^m}\frac{1}{\alpha !}\partial^\alpha f(x_0) n^\alpha,
\end{equation*}
with the usual notations for the multi-index $\alpha$. Note that the sum over $\alpha$ is finite due to the nilpotency of $n$.
\end{lemma}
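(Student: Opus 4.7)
The plan is to verify directly that the displayed formula is well-defined and produces an element of $\superA_0$ for every $x \in \gR^{m|0}$. Since the claim is purely pointwise (no regularity or compatibility assertion is made about $\tilde f$ as a function), the proof reduces to three checks at an arbitrary fixed $x$: (i) the decomposition $x = x_0 + n$ is canonical, (ii) the sum has only finitely many nonzero summands, and (iii) each summand lies in $\superA_0$.

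The first step I would carry out is extracting the body componentwise. Because $\superA/\mathcal N_{\superA} \simeq \gR$, the map $\gB$ restricts to an algebra morphism $\superA_0 \to \gR$, so $x_0 := \gB(x) \in \gR^m \subset \gR^{m|0}$ is uniquely determined; then $n := x - x_0 \in \gR^{m|0}$ has every component $n_i \in \mathcal N_{\superA} \cap \superA_0$, because $\gB(n_i) = \gB(x_i) - x_{0,i} = 0$. Second, I would invoke nilpotency of the $n_i$: for each $i$ choose an integer $N_i \ge 1$ with $n_i^{N_i} = 0$, so that $n^\alpha = n_1^{\alpha_1}\cdots n_m^{\alpha_m}$ vanishes whenever $\alpha_i \ge N_i$ for some $i$. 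Only the finitely many multi-indices in the box $\prod_{i=1}^m \{0,\dots,N_i-1\}$ survive, and the right-hand side becomes a finite $\superA$-linear combination of the Taylor coefficients $\partial^\alpha f(x_0)$. Third, since each $\partial^\alpha f(x_0) \in \gR \subset \superA_0$ and each $n_i$ is even, every product $n^\alpha$ and thus every summand $\tfrac{1}{\alpha!}\partial^\alpha f(x_0)\, n^\alpha$ belongs to $\superA_0$; hence $\tilde f(x) \in \superA_0$.

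No step here is genuinely difficult: the whole lemma is a well-definedness check, and the argument is routine once one realises that the Taylor-like formula is designed precisely so that the nilpotency of $n$ truncates it. The only mild subtlety worth flagging is that the superalgebra $\superA$ may be infinite-dimensional (as in the running example $\superA = \bigwedge V$), so one cannot invoke a uniform nilpotency index on $\mathcal N_{\superA}$; however, because the sum is performed at a \emph{single} fixed $x$ whose nilpotent part has only $m$ components, the $N_i$ exist individually and the truncation is rigorous. That is the sole point I would take care to mention explicitly in a full write-up.
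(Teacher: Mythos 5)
Your verification is correct, and it follows exactly the route the paper intends: the paper gives no separate proof of this lemma beyond the embedded remark that nilpotency of $n$ truncates the sum, and your three checks (canonical body decomposition via $\gB$, componentwise nilpotency forcing finiteness, evenness of each summand) are precisely the routine details being left to the reader. Your closing caveat about the absence of a uniform nilpotency index in an infinite-dimensional $\superA$ is a sensible point to flag and does not affect the argument.
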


We can now give a characterization of smooth functions on superspaces that could be used as a definition of such superfunctions.

\begin{definition}
Let $U$ be an open subset of $\gR^{m|n}$. A map $f:U\to\superA$ is said to be smooth on $U$, which we will write as $f\in C^\infty(U)$, if there exist unique functions $f_I\in C^\infty(\gB U)$ for all ordered subsets $I$ of $\{1,\dots,n\}$, such that $\forall (x,\xi)\in\gR^{m|n}$ ($x\in\gR^{m|0}$ and $\xi\in\gR^{0|n}$),
\begin{equation*}
f(x,\xi)=\sum_{I}\tilde{f_I}(x)\xi^I,
\end{equation*}
where $\xi^I$ denotes the ordered product of the corresponding coefficients. More precisely, if $I=\{i_1, \dots, i_k\}$ with $1 < i_1 < \cdots < i_k\le n$, then $\xi^I:=\prod_{i\in I}\xi^i \equiv \xi_{i_1} \cdot \xi_{i_2} \cdots \xi_{i_k}$. As a special case we define $\xi^\emptyset = 1$.
We extend this definition in the usual way to functions with values in a superspace.
\end{definition}

\begin{definition}
Let $M$ be a topological space.
\begin{itemize}
\item A chart of $M$ is a homeomorphism $\varphi:U\to W$, with $U$ an open subset of $M$ and $W$ an open subset of $\gR^{m|n}$, for $m,n\in\gN$.
\item An atlas of $M$ is a collection of charts $\caS=\{\varphi_i:U_i\to W_i,\, i\in I\}$ where $\bigcup_{i\in I}U_i=M$ and $\forall i,j\in I$, $\varphi_i\circ\varphi_j^{-1}\in C^\infty(\varphi_j(U_i\cap U_j),W_i)_0$.
\item If $M$ is endowed with an atlas, we define its body as:
\begin{equation*}
\gB M=\{y\in M,\, \exists \varphi_i\text{ with } y\in U_i\text{ and }\varphi_i(y)\in\gB W_i\},
\end{equation*}
and the body map $\gB:M\to\gB M$ on each subset $U_i$ by: $\gB_{|U_i}=\varphi_i^{-1}\circ \gB\circ \varphi_i$.
\item Endowed with an atlas such that $\gB M$ is a real manifold, $M$ is called a supermanifold. The condition on $\gB M$ means that this space is in particular a second countable (or paracompact) Hausdorff topological space, a condition that could not be imposed on $M$ itself as the topology is highly non-Hausdorff. 
\item Let $M$ be a supermanifold. A function $f$ on $M$ is called smooth, and we write $f\in C^\infty(M)$, if and only if for any chart $\varphi_i$ in an atlas for $M$, $f\circ\varphi_i^{-1}\in C^\infty(W_i)$.
\end{itemize}
\end{definition}

\begin{definition}
A Lie supergroup is a supermanifold $G$ which has a group structure for which the multiplication is a smooth map. As a consequence, the identity element of the supergroup has real coordinates (lies in $\gB G$), and the inverse map is also smooth.
\end{definition}

Batchelor's theorem allows us to get a better understanding of a supermanifold. It says that a supermanifold can be seen as a vector bundle over an ordinary (real) manifold.
\begin{theorem}[Batchelor]
\label{thm-superman-batchelor}
Let $M$ be a supermanifold of dimension $m|n$. Then, there exists an atlas $\caS=\{\varphi_i:U_i\to W_i,\, i\in \mathcal{I}\}$ of $M$ such that transition functions $(x_j,\xi_j)=(\varphi_j\circ\varphi_i^{-1})(x_i,\xi_i)$, for $i,j\in \mathcal{I}$, $x_{i,j}\in\gR^{m|0}$, $\xi_{i,j}\in\gR^{0|n}$, are of the form:
\begin{equation*}
x_j^a=f^a(x_i)\quad\text{and}\quad \xi_j^b=\sum_{c=1}^n g^b_c(x_i)\xi_i^c,
\end{equation*}
for each $a\in\{1,\dots,m\}$, $b\in\{1,\dots,n\}$, and for $f^a$ and $g^b_c$ real smooth functions.
\end{theorem}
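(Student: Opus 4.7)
The plan is to reduce the theorem to the classical statement that every supermanifold is, up to isomorphism, the one built from an exterior algebra bundle over its body. I would start from any atlas $\{(U_i,\varphi_i)\}_{i\in\mathcal{I}}$ of $M$ and expand each transition map $\varphi_j\circ\varphi_i^{-1}$ in powers of the odd coordinates, using the smoothness characterization of the previous section:
\begin{equation*}
x_j^a = f_{ji}^a(x_i) + \sum_{|I|\ge 2,\,|I|\text{ even}} h_{ji,I}^a(x_i)\,\xi_i^I,\quad \xi_j^b = \sum_{c=1}^n g_{ji,c}^b(x_i)\,\xi_i^c + \sum_{|I|\ge 3,\,|I|\text{ odd}} k_{ji,I}^b(x_i)\,\xi_i^I,
\end{equation*}
where each coefficient is the super-extension (Lemma \ref{lem-superextofsmoothfunc}) of a smooth real function. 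Setting the nilpotent variables to zero in the cocycle identity $(\varphi_k\circ\varphi_j^{-1})\circ(\varphi_j\circ\varphi_i^{-1})=\varphi_k\circ\varphi_i^{-1}$ shows that $\{f_{ji}^a\}$ is the cocycle defining the smooth structure already given on $\gB M$, while $\{g_{ji,c}^b\}$ is a $GL(n,\gR)$-valued \v{C}ech cocycle and hence defines a real vector bundle $E\to\gB M$ of rank $n$.

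Next I would introduce the supermanifold $M_0$ naturally associated to the total space of $\bigwedge E^*$, equipped with the atlas whose transition functions are exactly $x_j^a=\tilde f_{ji}^a(x_i)$ and $\xi_j^b=\sum_c \tilde g_{ji,c}^b(x_i)\,\xi_i^c$. This atlas is of the form demanded by the theorem by construction. The statement therefore reduces to producing an isomorphism of supermanifolds $\Psi:M\to M_0$ covering the identity of $\gB M$, because then the atlas of $M_0$ pulls back along $\Psi$ to the desired atlas of $M$.

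To build $\Psi$ I would use a partition of unity $\{\rho_i\}$ on $\gB M$ subordinate to $\{\gB U_i\}$, whose existence is guaranteed by the assumption that $\gB M$ is paracompact Hausdorff, combined with an induction on the $\xi$-degree of the nonlinear corrections $h_{ji,I}^a$ and $k_{ji,I}^b$. At each step, the lowest-degree surviving correction defines a \v{C}ech $1$-cocycle on $\gB M$ with values in a soft sheaf of smooth sections of a tensor bundle built from $E$ (and from $T\gB M$ for the even part); softness, implemented concretely by multiplication by the $\rho_i$'s, furnishes a $0$-cochain whose coboundary cancels the correction. Composing each $\varphi_i$ with the corresponding nilpotent diffeomorphism of $\gR^{m|n}$ then pushes the first nonvanishing correction to a strictly higher filtration level. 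Because odd monomials have degree at most $n$, the induction terminates after finitely many steps.

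The main obstacle is the cohomological bookkeeping of this induction: one has to verify carefully that at each stage the surviving correction is indeed a \v{C}ech cocycle (not merely a cochain) in the appropriate sheaf, and that the chart modifications needed to eliminate order $|I|$ do not resurrect corrections of order $\le|I|$ that had already been cleared. Once this is organized, for instance by first clearing the pure-odd part $\{g_{ji,c}^b,k_{ji,I}^b\}$ degree by degree to identify $M$ as a deformation of $M_0$ along even nilpotent directions, and then using the same partition-of-unity trick to clear the remaining $h_{ji,I}^a$, the isomorphism $\Psi$ assembles as a finite composition of the successive nilpotent corrections, proving the theorem.
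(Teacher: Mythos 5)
The paper does not actually prove this theorem: it quotes Batchelor's theorem as a classical result (with references to the supermanifold literature) and only remarks, in Remark \ref{rmk-complex-batchelor}, that ``the proof of Batchelor's theorem relies heavily on a partition of unity argument.'' Your sketch is precisely that standard argument, so it is consistent with the only indication of method the paper gives: extract the degree-$0$ and degree-$1$ parts of the transition functions to get the cocycles for $\gB M$ and for a rank-$n$ vector bundle $E$, build the split model from $\bigwedge E$, and then kill the higher-order corrections by induction on the odd filtration degree, using that the graded obstruction pieces are \v{C}ech $1$-cocycles valued in $C^\infty_{\gB M}$-module sheaves (hence fine/soft, hence with vanishing $H^1$), with termination guaranteed by nilpotency. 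The outline is correct; the one substantive step you defer --- checking that the leading correction at each stage is a cocycle modulo higher order, and that corrections by elements of the $k$-th filtration subgroup of the automorphism sheaf do not disturb the lower graded quotients --- is exactly the content of the usual proof and is resolved the way you indicate, so I would count this as a correct proof plan rather than a gap.
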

One can interpret the functions $f^a$ as transitions functions between charts of the manifold $\gB M$ and the functions $g^b_c$ as the components of a matrix-valued function of $\gB(U_i\cap U_j)$, which in turn can be interpreted as the transition functions of a vector bundle $\caE$ of rank $n$ on $\gB M$. Different atlases satisfying Batchelor's theorem give rise to equivalent vector bundles: the equivalence class of the vector bundle $\caE$ is completely determined by the supermanifold $M$. Conversely, to any vector bundle over an ordinary manifold one can associate a supermanifold in this way. Moreover, the algebra of smooth functions on $M$ is isomorphic to the algebra of smooth sections of the exterior bundle of the vector bundle $\caE \to \gB M$:
\begin{equation*}
C^\infty(M)\simeq \Gamma^\infty(\bigwedge\caE).
\end{equation*}

\begin{remark}
\label{rmk-complex-batchelor}
As the proof of Batchelor's theorem relies heavily on a partition of unity argument, it is not generally valid for complex supermanifolds. If it applies, one speaks of a split complex supermanifold, but there do exist non-split complex supermanifolds.
\end{remark}

From now on, instead of looking at $\superA$ valued functions, we will consider ``complex'' valued smooth superfunctions, i.e., with values in $\superA_\gC=\superA\otimes\gC = \superA \oplus i\,\superA$. However, we will denote the space of all complex smooth super functions still by $C^\infty(M)$. Note that the complex conjugation on $\superA_\gC$ is given by: $\forall a\in\superA$, $\forall\lambda\in\gC$,
\begin{equation*}
\overline{a\otimes \lambda}= a\otimes\overline{\lambda},
\end{equation*}
and satisfies: $\forall a,b\in\superA_\gC$, $\overline{a\fois b}=\overline{a}\fois\overline{b}=(-1)^{|a||b|}\overline{b}\fois\overline{a}$.
\begin{lemma}
\label{lem-superman-frechet}
Let $M$ be a supermanifold of dimension $m|n$ and $U$ be an open subset of $M$. Then, the smooth superfunctions $C^\infty(U)$ form a complex $\gZ_2$-graded Fr\'echet algebra.
\end{lemma}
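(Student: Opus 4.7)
The plan is to reduce everything to the classical fact that $C^\infty$ of an open subset of $\gR^m$ is a Fréchet algebra. By the definition of a smooth superfunction, for any chart $\varphi_\alpha: U_\alpha \to W_\alpha \subset \gR^{m|n}$ with $U_\alpha \subset U$, the assignment $f \mapsto (f_I)_{I \subset \{1,\ldots,n\}}$ induces an isomorphism of complex vector spaces
\[
C^\infty(U_\alpha) \;\simeq\; \bigoplus_{I \subset \{1,\ldots,n\}} C^\infty(\gB U_\alpha, \gC),
\]
a finite direct sum of $2^n$ copies of the standard complex Fréchet space on $\gB U_\alpha$. This already yields a Fréchet space structure at the chart level.

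Next I would produce a countable family of seminorms on $C^\infty(U)$ for a general $U$. Since $\gB M$ is second countable Hausdorff, I may choose a countable atlas $\{\varphi_i:U_i\to W_i\}_{i\in\gN}$ of $M$ subordinate to $U$, and for each $i$ a countable compact exhaustion $K_{i,j}\subset\gB U_i$. For each $i,j\in\gN$, each multi-index $\alpha\in\gN^m$ and each $I\subset\{1,\ldots,n\}$, define
\[
p_{i,j,\alpha,I}(f)\;=\;\sup_{x\in K_{i,j}}\bigl|\partial^\alpha (f\circ\varphi_i^{-1})_I(x)\bigr|.
\]
These form a countable separating family, so $C^\infty(U)$ becomes locally convex, Hausdorff and metrizable. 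Completeness follows from the completeness of each $C^\infty(\gB U_i,\gC)$: a Cauchy sequence in $C^\infty(U)$ yields Cauchy sequences of components $f_I^{(k)}$ in every chart, whose limits paste together into a smooth superfunction by the uniqueness of the local decomposition.

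The $\gZ_2$-grading is read off from the parity of $|I|$: set $C^\infty(U)_p=\{f:f_I=0\text{ whenever }|I|\not\equiv p\pmod 2\}$. This is coordinate-independent because, by Batchelor's theorem (Theorem \ref{thm-superman-batchelor}), transition functions act $\gC$-linearly on the odd variables with smooth even coefficients, and hence preserve the total $\xi$-parity. Continuity of the pointwise product reduces to the local formula $(fg)_K=\sum_{I\sqcup J=K}\varepsilon(I,J)\,f_I g_J$, with $\varepsilon(I,J)=\pm 1$ the sign reordering $\xi^I\xi^J$, combined with the Leibniz rule; this yields standard bilinear estimates of the form $p_{i,j,\alpha,K}(fg)\le C\sum_{\beta+\gamma=\alpha,\,I\sqcup J=K}p_{i,j,\beta,I}(f)\,p_{i,j,\gamma,J}(g)$. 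The only point requiring genuine care is the invariance of both the grading and the Fréchet structure under changes of chart, which is exactly what Batchelor's theorem delivers; the remaining steps are a direct transcription of the classical real-manifold argument.
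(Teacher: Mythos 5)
Your proof is correct and follows essentially the same route as the paper: both reduce the statement to the classical Fr\'echet algebra structure on $C^\infty$ of the body via the component decomposition $f=\sum_I f_I\xi^I$, with Batchelor's theorem (or, as in fact already suffices, the evenness of the transition functions) guaranteeing chart-independence of the grading and of the topology. The only difference is that you write out the standard seminorm construction and the completeness argument explicitly, whereas the paper identifies $C^\infty(U)$ with $\Gamma^\infty(\bigwedge\caE_U)$ and cites Dieudonn\'e for the Fr\'echet structure on section spaces --- the resulting seminorms coincide with yours.
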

\begin{proof}
By Batchelor's theorem there exists a vector bundle $\caE_U\to\gB U$ of rank $n$ such that $C^\infty(U)\simeq \Gamma^\infty(\bigwedge\caE_U)$. Then, the property $(\ast\ast)$ of p.230 of reference \cite{Dieudonne3:1970} applied to the bundle $\bigwedge\caE_U\to\gB U$ ensures that there exists a frechetic Hausdorff topology on $C^\infty(U)$. The $\gZ_2$-grading of $C^\infty(U)$ corresponds to the one defined by the exterior algebra in $\Gamma^\infty(\bigwedge\caE_U)$ and is therefore compatible with the product.

An explicit formula for the semi-norms used to construct the Frechet structure, equivalent to formula (17.1.1) on page 227 of \cite{Dieudonne3:1970}, is given by
\begin{equation*}
p_{s,K}(f) = \sup_{x\in \gB K, \vert \nu\vert\le s} \Vert (D^\nu f)(x)\Vert
\ ,
\end{equation*}
where $K$ is a compact subset of a coordinate chart and where $D^\nu$ is a multi-derivation including derivatives with respect to the odd coordinates. Taking derivatives with respect to the odd coordinates and then restricting to the body of $K$ implies that the odd coordinates are set to zero. In this way one recovers the components $f_I$ of the function $f$ as given in \ref{lem-superextofsmoothfunc}, which are the (local) components of the vector bundle $\bigwedge\caE_U\to\gB U$. This explicit formula shows at the same time that the Frechet structure does not depend on the particular choice for the vector bundle  $\caE_U\to\gB U$.
\end{proof}

\begin{definition}
\label{def-superman-trivial}
In the sequel we will say that a supermanifold $M$ of dimension $m\vert n$ is trivial if the vector bundle $\caE\to \gB M$ associated to $M$ is (isomorphic to) the trivial bundle $\caE \cong \gB M \times \gR^n$. This is equivalent to saying that $M$ is isomorphic as a supermanifold to the direct product $M_o \times \gR^{0\vert n}$, where $M_o$ is the supermanifold of dimension $m\vert 0$, which is completely determined by $\gB M$.
\end{definition}

\medskip

In the rest of this subsection $M$ will denote a trivial supermanifold of dimension $m|n$. We also assume that $\gB M$ is endowed with a volume form, which we will use for integration. Then, as for a superspace, we have an identification:
\begin{equation*}
C^\infty(M)\simeq C^\infty(\gB M)\otimes\bigwedge \gR^n,
\end{equation*}
and we define $L^2(M)= L^2(\gB M)\otimes\bigwedge \gR^n$, using the above identification.
\begin{remark}
\label{rmk-superman-triv}
In fact, the above definition of $L^2(M)$ is appropriate in the sheaf approach \cite{Berezin:1976,Kostant:1977}, but not in the concrete approach: elements of these spaces are not defined as superfunctions on $M$ (Lemma \ref{lem-superextofsmoothfunc} does not apply to non-smooth functions). 
\end{remark}

We now recall the definition of Berezin integral for odd variables: if $f\in C^\infty(M)$, then we have
\begin{equation*}
\int\dd\xi\, f(x,\xi)=f_{\{1,..,n\}}(x).
\end{equation*}
In this paper we will mean by integration on $M$ (a trivial supermanifold) the process of Berezin integration over the odd variables end usual integration (with respect to the volume form) on $\gB M$. Note that it is the Berezinian and not the Jacobian which appears in the change of variables formula in (super)integration (see \cite{Rogers:2007}).

In order to relate (Berezin) integration on $M$ with the space $L^2(M)$ defined above, we need some structure.
Let $\{\theta^i\}$ be a basis of $\gR^n$. For any (ordered) subset $I=\{i_1, \dots, i_k\}\subset \{1, \dots, n\}$ (thus with $1\le i_1< \cdots <i_k\le n$) we define $\vert I\vert=k$ to be the cardinal of $I$ and we define $\theta^I=\bigwedge_{i\in I}\theta^i\equiv \theta^{i_1} \wedge\cdots \wedge \theta^{i_k} \in \bigwedge \gR^n$. For $I=\emptyset$ we define $\theta^\emptyset = 1\in \bigwedge^0 \gR^n \cong \gR$. For any two (ordered) subsets $I = \{i_1, \dots, i_l\}$ and $J = \{j_1, \dots, j_\ell\}$ of $\{1,\dots,n\}$ we define $\eps(I,J)$ to be zero if $I\cap J\neq\emptyset$ and $(-1)$ to the power the number of transpositions needed to put $i_1, \dots, i_k, j_1, \dots, j_\ell$ into order if $I\cap J=\emptyset$. The quantity $\eps$ verifies the relations
\begin{equation*}
\eps(I,J)=(-1)^{|I||J|}\eps(J,I),\qquad \eps(I,J\cup K)=\eps(I,J)\eps(I,K)\text{ if }J\cap K=\emptyset.
\end{equation*}
It follows that the product in $\bigwedge \gR^n$ is given by: $\theta^I\fois\theta^J=\eps(I,J)\theta^{I\cup J}$. Since the ``products'' $\theta^I$ form a basis of $\bigwedge \gR^n$ when $I$ runs through all (ordered) subsets of $\{1, \dots, n\}$, we can define a supersymmetric non-degenerate scalar product by the formula
\begin{equation}
\langle\theta^I,\theta^J\rangle=\eps(I,J)\delta_{J,\complement I},\label{eq-superman-superprod}
\end{equation}
which satisfies: $\langle\theta^I,\theta^J\rangle=(-1)^{|I||J|}\langle\theta^J,\theta^I\rangle$.

We also introduce the Hodge operation: $\ast\theta^I=\eps(I,\complement I)\theta^{\complement I}$, which allows us to deduce a symmetric positive definite scalar product from the supersymmetric one:
\begin{equation}
\left(\theta^I,\theta^J\right)=\langle\theta^I,\ast\theta^J\rangle=\delta_{I,J}.\label{eq-superman-posprod}
\end{equation}

These scalar products can be extended in a natural way to $L^2(M)\simeq L^2(\gB M)\otimes\bigwedge \gR^n$. The supersymmetric one corresponds to integration with the Lebesgue measure (with respect to the volume form) and Berezin integration. And indeed, in the given identification we have $L^2(\gB M) \otimes \bigwedge \gR^n \ni f=\sum_I f_I\otimes\theta^I \cong f(x,\xi)=\sum_I f_I(x)\xi^I \in L^2(M)$. Then for $f,g\in L^2(M)$, we have
\begin{align}
&\langle f,g\rangle=\int\dd x\,\dd\xi\, \overline{f(x,\xi)}g(x,\xi)=\sum_I\eps(I,\complement I)\int\dd x\overline{f_I(x)}g_{\complement I}(x),\nonumber\\
&\left(f,g\right)=\int\dd x\,\dd\xi\, \overline{f(x,\xi)}(\ast g)(x,\xi)=\sum_I\int\dd x\overline{f_I(x)}g_{I}(x).\label{eq-superman-scalprod}
\end{align}
The second scalar product, which is hermitian positive definite, allows us to define the $L^2$-norm: $\norm f\norm=\sqrt{\left( f,f\right)}$.

\begin{lemma}
\label{lem-superman-exp}
For $\xi,\xi_0\in\gR^{0|n}$ and $\alpha$ a complex parameter, $i\alpha\,\xi\fois\xi_0\in\superA_\gC\simeq \gR^{1|0}\otimes\gC$. So the extension of the exponential function to this element is defined (see Lemma \ref{lem-superextofsmoothfunc}) and we have:
\begin{equation*}
e^{i\alpha\,\xi\fois\xi_0}=\sum_J (i\alpha)^{|J|}(-1)^{\frac{|J|(|J|-1)}{2}}\xi^J\xi_0^J,
\end{equation*}
where the sum is over all (ordered) subsets $J$ of $\{1,\dots,n\}$.
\end{lemma}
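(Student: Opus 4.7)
The plan is to unfold the power series of the exponential directly. Since $\xi$ and $\xi_0$ have purely odd components, the product $\xi\fois\xi_0=\sum_{i=1}^n \xi^i\xi_0^i$ is a sum of products of pairs of odd elements, hence lies in $\superA_0$ with vanishing body. Consequently $i\alpha\,\xi\fois\xi_0$ is a nilpotent even element of $\superA_\gC$, and Lemma \ref{lem-superextofsmoothfunc} applied to $\exp$ produces the (automatically finite) expansion
\[
e^{i\alpha\,\xi\fois\xi_0}=\sum_{k\ge 0}\frac{(i\alpha)^k}{k!}(\xi\fois\xi_0)^k.
\]

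The main task is then to rewrite $(\xi\fois\xi_0)^k$ in terms of the ordered monomials $\xi^J\xi_0^J$. Expanding the $k$-th power produces a sum over $k$-tuples $(i_1,\dots,i_k)\in\{1,\dots,n\}^k$ of monomials of the form $\xi^{i_1}\xi_0^{i_1}\cdots\xi^{i_k}\xi_0^{i_k}$. I would reorder each such monomial into $\xi^{i_1}\cdots\xi^{i_k}\,\xi_0^{i_1}\cdots\xi_0^{i_k}$ by a short induction on $k$: assuming the result for $k-1$, one must move $\xi^{i_k}$ past the block $\xi_0^{i_1}\cdots\xi_0^{i_{k-1}}$ of $k-1$ odd elements, which introduces $(-1)^{k-1}$, and combining with the inductively accumulated $(-1)^{(k-1)(k-2)/2}$ gives the total sign $(-1)^{k(k-1)/2}$.

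Next, since $\superA_1$ is supercommutative and its elements square to zero, the rearranged monomial vanishes unless the indices $i_1,\dots,i_k$ are pairwise distinct. For every ordered subset $J=\{j_1<\cdots<j_k\}$ of $\{1,\dots,n\}$ and every $k$-tuple indexing $J$, letting $\sigma\in S_k$ be the permutation sending $(i_1,\dots,i_k)$ to $(j_1,\dots,j_k)$, one has $\xi^{i_1}\cdots\xi^{i_k}=\mathrm{sign}(\sigma)\,\xi^J$ and $\xi_0^{i_1}\cdots\xi_0^{i_k}=\mathrm{sign}(\sigma)\,\xi_0^J$, so these two signs cancel and each tuple contributes exactly $\xi^J\xi_0^J$. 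There are $k!$ such tuples for a fixed $J$ of size $k$, the factor $k!$ cancels with the $1/k!$ in the exponential series, and grouping the resulting expression by $J$ yields
\[
e^{i\alpha\,\xi\fois\xi_0}=\sum_J (i\alpha)^{|J|}(-1)^{|J|(|J|-1)/2}\xi^J\xi_0^J.
\]
The only delicate point is to keep the two independent sources of signs clearly separated --- the interleaving sign $(-1)^{k(k-1)/2}$ coming from untangling the $\xi$'s from the $\xi_0$'s, and the paired permutation signs $\mathrm{sign}(\sigma)^2=1$ coming from reordering each family of odd factors --- but once this bookkeeping is made the identity follows immediately.
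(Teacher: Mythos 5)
Your proof is correct, and it follows exactly the computation the paper itself relies on: the lemma is stated without proof in the text, but the identical expansion of $(\xi\fois\xi_0)^k$ — summing over $k$-tuples, untangling the interleaved factors to produce the sign $(-1)^{k(k-1)/2}$, observing that the two permutation signs cancel, and letting the resulting $k!$ absorb the $1/k!$ of the exponential series — appears verbatim in the proof of Theorem \ref{thm-product-form} in Appendix \ref{sec-finealg}. Your sign bookkeeping checks out (the induction $(k-1)(k-2)/2+(k-1)=k(k-1)/2$ is right), so nothing is missing.
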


\subsection{Hilbert superspaces}
\label{subsec-superhilbert}

There are many possibilities to define the notion of a Hilbert superspace \cite{Rudolph:2000}. We will use one which is compatible with the description of the space $L^2(M)$ given in subsection \ref{subsec-superman} for a trivial supermanifold $M$ (see Definition \ref{def-superman-trivial}). We will see in the sequel that it is also compatible with deformation quantization.

Let $\ehH=\ehH_0\oplus\ehH_1$ be a complex $\gZ_2$-graded vector space, endowed with a scalar product\footnote{this scalar product is chosen left-antilinear and right-linear by convention.} $\left(\fois,\fois\right)$ such that $\ehH$ is a Hilbert space\footnote{the scalar product $\left(\fois,\fois\right)$ is therefore assumed to be hermitian positive definite.} and $\left(\ehH_0,\ehH_1\right)=\algzero$.
Let $J\in\caB(\ehH)$ be a homogeneous operator of degree $n\in\gZ_2$ satisfying, for all homogeneous $x\in\ehH$,
\begin{equation}
J^2(x)=(-1)^{(n+1)|x|}x,\qquad J^\ast(x)=(-1)^{(n+1)|x|}J(x),\label{eq-superhilbert-defj}
\end{equation}
where $J^\ast$ is the adjoint operator of $J$ with respect to the scalar product $\left(\fois,\fois\right)$.
Then the sesquilinear product $\langle\cdot,\cdot\rangle$ on $\ehH$ defined by:
\begin{equation}
\langle x,y\rangle=\left( J(x),y\right),\label{eq-superhilbert-superscal}
\end{equation}
is non-degenerate and superhermitian: $\langle x,y\rangle=(-1)^{|x||y|}\overline{\langle y,x\rangle}$ for homogeneous $x,y$. Note that the operator $J$ is unitary for both scalar products.
\begin{definition}
\label{def-superhilbert}
A $\gZ_2$-graded Hilbert space $\ehH$, endowed with an operator $J$ of degree $n$ satisfying \eqref{eq-superhilbert-defj}, will be called a Hilbert superspace of parity $n$. We will denote it by $(\ehH,J,n)$.
\end{definition}

\begin{example}
\label{ex-superhilbert}
\begin{itemize}
\item In subsection \ref{subsec-superman} we have defined the space $\ehH=\bigwedge \gR^n$. If we endow it with the Hodge operation $J=\ast$, and with both scalar products \eqref{eq-superman-superprod} and \eqref{eq-superman-posprod}, it becomes a Hilbert superspace of parity $n$ mod $2$; an orthonormal basis is given by $(\theta^I)$.
\item For $M$ a trivial supermanifold of dimension $m|n$, the space $L^2(M)=L^2(\gB M)\otimes\bigwedge \gR^n$, endowed with the operator $J=\ast_\xi$ and its two scalar products \eqref{eq-superman-scalprod}, is a Hilbert superspace of parity $n$ mod $2$.
\end{itemize}
\end{example}

\begin{remark}
Any Hilbert superspace of parity $1$ is a Krein space (see \cite{Bognar:1974}).
\end{remark}
\begin{proof}
For $n=1$ we have $J^2=\text{id}$ and $J^\ast=J$, but also $\langle\ehH_0,\ehH_0\rangle=\langle\ehH_1,\ehH_1\rangle=\algzero$. Let us define the spaces $\ehH_+=\Ker(J-\text{id})$ and $\ehH_-=\Ker(J+\text{id})$. Then any $x\in\ehH$ can be written as $x=x_++x_-$ with $x_+=\frac12(x+J(x))\in\ehH_+$ and $x_-=\frac 12(x-J(x))\in\ehH_-$. For this decomposition we have the equalities
\begin{align*}
\langle x_+,x_+\rangle &={\textstyle\frac 12}((x,x)+(x,J(x)),& \langle x_+,x_-\rangle &=0,\\
\langle x_-,x_-\rangle &={\textstyle\frac 12}(-(x,x)+(x,J(x)),& \langle x_-,x_+\rangle &=0,
\end{align*}
which shows that $\ehH$ is a Krein space.
\end{proof}

\begin{remark}
Let $\ehH=\ehH_0\oplus\ehH_1$ be a Hilbert superspace of parity $0$. Then, $\ehH_0$ is a Krein space and $\langle \ehH_0,\ehH_1\rangle=\algzero$.
\end{remark}
\begin{proof}
For $n=0$ we define $J_0=J_{|\ehH_0}$ and $J_1=J_{|\ehH_1}$, which  satisfy, $\forall x_0\in\ehH_0$, $\forall x_1\in\ehH_1$,
\begin{equation*}
J_0^2(x_0)=x_0,\quad J_0^\ast(x_0)=J(x_0),\qquad J_1^2(x_1)=-x_1,\quad J_1^\ast(x_1)=-J_1(x_1).
\end{equation*}
It follows that $J_0$ and $J_1$ are unitary, that $J_0$ is selfadjoint and that $J_1$ is antiselfadjoint. Hence both are diagonalizable: $+1$ and $-1$ are the eigenvalues of $J_0$ corresponding to the decomposition $\ehH_0=\ehH_0^+\oplus\ehH_0^-$, and $+i$ and $-i$ are the eigenvalues of $J_1$ corresponding to the decomposition $\ehH_1=\ehH_1^+\oplus\ehH_1^-$.

More precisely,
$\forall x_0\in\ehH_0$, $\forall x_1\in\ehH_1$, we have $x_0=x_0^++x_0^-$ and $x_1=x_1^++x_1^-$, with $x_0^+=\frac12(x_0+J(x_0))\in\ehH_0^+$, $x_0^-=\frac12(x_0-J(x_0))\in\ehH_0^-$, $x_1^+=\frac12(x_1-iJ(x_1))\in\ehH_1^+$ and $x_1^-=\frac12(x_1+iJ(x_1))\in\ehH_1^-$. With these decompositions we can compute:
\begin{align*}
\langle x_0,x_1\rangle&=0 & &\\
\langle x_0^+,x_0^+\rangle &={\textstyle\frac 12}((x_0,x_0)+(x_0,J(x_0)),& \langle x_0^+,x_0^-\rangle &=0,\\
\langle x_0^-,x_0^-\rangle &={\textstyle\frac 12}(-(x_0,x_0)+(x_0,J(x_0)),& \langle x_0^-,x_0^+\rangle &=0, \\
\langle x_1^+,x_1^+\rangle &={\textstyle\frac i2}(-(x_1,x_1)+i(x_1,J(x_1)),& \langle x_1^+,x_1^-\rangle &=0, \\
\langle x_1^-,x_1^-\rangle &={\textstyle\frac i2}((x_1,x_1)+i(x_1,J(x_1)),& \langle x_1^-,x_1^+\rangle &=0,
\end{align*}
which shows the result.
\end{proof}

\begin{remark}
\label{rmk-superhilbert-op}
One can endow the space $\caB(\ehH)$ of bounded operators (continuous linear maps) on a Hilbert superspace $\ehH$ with the following $\gZ_2$-grading: $f\in\caB(\ehH)$ is homogeneous of degree $i\in\gZ_2$ if $\forall j\in\gZ_2$ $f(\ehH_j)\subset\ehH_{i+j}$. Then $\caB(\ehH)$ is a $\gZ_2$-graded algebra.
\end{remark}
\begin{proof}
Let us introduce the parity operator $\gP:\ehH\to\ehH$ by $\gP = \pi_0 - \pi_1$ (with $\pi_i$ the canonical projection $\pi_i:\ehH \to \ehH_i$) or equivalently $\gP x=(-1)^{|x|}x$ for all homogeneous $x\in\ehH$.  It is a bounded operator since its operator norm $\norm \gP\norm$ can be shown to be $1$. Then, for any $f\in\caB(\ehH)$ we define the maps $f_i:\ehH\to\ehH$, $i=0,1$ by 
\begin{equation*}
f_0(x)={\textstyle\frac12}(f(x)+\gP f(\gP x)),\qquad f_1(x)={\textstyle\frac12}(f(x)-\gP f(\gP x)).
\end{equation*}
They satisfy $f(x)=f_0(x)+f_1(x)$ and $f_i$ maps $\ehH_j$ into $\ehH_{i+j}$. It follows that $f_0$ and $f_1$ are bounded operators, so that $\caB(\ehH)$ is a $\gZ_2$-graded vector space. It is straightforward to see that the grading is compatible with the composition of operators.
\end{proof}

\begin{proposition}
\label{prop-superhilbert-adjoint}
Let $(\ehH,J,n)$ be a Hilbert superspace of parity $n$ and let $\caB(\ehH)$ be the space of its bounded (continuous linear) operators (with respect to the positive definite scalar product). For any $T\in\caB(\ehH)$, there exists a superadjoint $T^\dag\in\caB(\ehH)$ (with respect to the scalar product $\langle\fois,\fois\rangle$), i.e., $\forall x,y\in\ehH$,
\begin{equation*}
\langle T^\dag(x),y\rangle=(-1)^{|T||x|}\langle x,T(y)\rangle.
\end{equation*}
An explicit expression is given by
\begin{equation}
T^\dag(x)=(-1)^{(n+1)(|T|+|x|)+|T||x|}JT^\ast J(x).\label{eq-superhilbert-dagast}
\end{equation}
\end{proposition}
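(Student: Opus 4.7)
The plan is to verify the explicit formula \eqref{eq-superhilbert-dagast} directly, by computing $\langle T^\dag(x),y\rangle$ and checking it equals $(-1)^{|T||x|}\langle x,T(y)\rangle$. It suffices to treat homogeneous $T$ and homogeneous $x,y$ (extending by bilinearity); since the ordinary positive-definite scalar product satisfies $(\ehH_0,\ehH_1)=\algzero$, the ordinary adjoint $T^\ast$ with respect to $(\fois,\fois)$ has the same parity as $T$ (one reads this off from $(T^\ast(x),y)=(x,T(y))$). So the parity of $T^\ast J(x)$ is $|T|+|x|+n$, and $T^\dag$ as defined by \eqref{eq-superhilbert-dagast} is again a bounded operator on $\ehH$ of the same parity as $T$, since $J$ is bounded and the composition of bounded operators is bounded.

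Now to the core computation. Using $\langle u,v\rangle=(J(u),v)$ and the identity $J^2(z)=(-1)^{(n+1)|z|}z$ applied to the homogeneous element $z=T^\ast J(x)$ of parity $|T|+|x|+n$, I would compute
\begin{equation*}
J\,T^\dag(x)=(-1)^{(n+1)(|T|+|x|)+|T||x|}\,J^2T^\ast J(x)=(-1)^{(n+1)(|T|+|x|)+|T||x|+(n+1)(|T|+|x|+n)}\,T^\ast J(x).
\end{equation*}
The exponent collapses modulo $2$: the two $(n+1)(|T|+|x|)$ terms cancel and $(n+1)n$ is even (as a product of consecutive integers), so what remains is $(-1)^{|T||x|}$. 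Therefore
\begin{equation*}
\langle T^\dag(x),y\rangle=(J\,T^\dag(x),y)=(-1)^{|T||x|}(T^\ast J(x),y)=(-1)^{|T||x|}(J(x),T(y))=(-1)^{|T||x|}\langle x,T(y)\rangle,
\end{equation*}
which is precisely the superadjoint property.

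For uniqueness, if $S\in\caB(\ehH)$ also satisfies $\langle S(x),y\rangle=(-1)^{|T||x|}\langle x,T(y)\rangle$ for all $x,y$, then $\langle (S-T^\dag)(x),y\rangle=0$ for all $y$, and the non-degeneracy of $\langle\fois,\fois\rangle$ (which follows from \eqref{eq-superhilbert-superscal} together with the bijectivity of $J$ granted by \eqref{eq-superhilbert-defj}) forces $S=T^\dag$. There is no genuine obstacle here: the only subtle point is the bookkeeping of signs, and the crucial simplification is the identity $n(n+1)\equiv 0\pmod 2$, which makes the $J^2$ produced when $J$ meets $JT^\ast J$ invisible in the final sign.
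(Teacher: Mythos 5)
Your proof is correct and follows essentially the same route as the paper's: both arguments reduce to the identity $\langle u,v\rangle=(J(u),v)$ together with the relations $J^2(z)=(-1)^{(n+1)|z|}z$ and $(T^\ast u,v)=(u,Tv)$, the only difference being that you verify the stated formula for $T^\dag$ directly while the paper derives it by substituting $x'=J(x)$; your sign bookkeeping, including the key simplification $n(n+1)\equiv 0\pmod 2$, checks out. The added uniqueness remark via non-degeneracy of $\langle\fois,\fois\rangle$ is a harmless bonus not present in the paper.
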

Moreover, its operator norm satisfies: $\norm T^\dag\norm=\norm T^\ast\norm=\norm T\norm$ (where $T^\ast$ denotes the adjoint operator with respect to the positive definite scalar product).

\begin{proof}
For any $T\in\caB(\ehH)$ we have the equality of degree $|T^\ast|=|T|$, and $\forall x,y\in\ehH$, $(x,Ty)=(T^\ast x,y)$. It follows that we have $(-1)^{(n+1)|x|}(J^2x,Ty)=(-1)^{(n+1)(|T|+|x|)}(J^2T^\ast x,y)$, which means that we have $\langle Jx,Ty\rangle=(-1)^{(n+1)|T|}$ $\langle JT^\ast x,y\rangle$. By writing $x'=J(x)$ (i.e. $x=(-1)^{(n+1)|x|}J(x')$), we obtain the result. The unitarity of $J$ gives the property on the operator norm.
\end{proof}

\begin{definition}
Let $(\ehH^{(1)}, J_1,n_1)$ and $(\ehH^{(2)},J_2,n_2)$ be two Hilbert superspaces. By a morphism of Hilbert superspaces between $\ehH^{(1)}$ and $\ehH^{(2)}$ we will mean a continuous linear map $\Phi:\ehH^{(1)}\to\ehH^{(2)}$ of degree 0 satisfying: $\forall x,y\in\ehH^{(1)}$,
\begin{equation*}
\forall x,y\in\ehH^{(1)}\quad:\quad
\left(\Phi(x),\Phi(y)\right)=\left(x,y\right)\quad\text{ and }\quad \Phi\circ J_1=J_2\circ \Phi.
\end{equation*}
If a morphism $\Phi$ exists, it is necessarily injective and the parities of $\ehH^{(1)}$ and $\ehH^{(2)}$ must be equal. Moreover, $\Phi$ also is unitary with respect to the associated superhermitian scalar products \eqref{eq-superhilbert-superscal} of $\ehH^{(1)}$ and $\ehH^{(2)}$:
\begin{equation*}
\forall x,y\in\ehH^{(1)}\quad:\quad
\langle\Phi(x),\Phi(y)\rangle=\langle x,y\rangle.
\end{equation*}
\end{definition}

\begin{proposition}
\label{prop-superhilbert-tensprod}
Let $(\ehH^{(1)}, J_1,n_1)$ and $(\ehH^{(2)},J_2,n_2)$ be two Hilbert superspaces.
\begin{itemize}
\item (construction of the direct sum) If the parities are equal, $n_1=n_2=n$, then the direct sum: $\ehH=\ehH^{(1)}\oplus\ehH^{(2)}$, endowed with the homogeneous operator $J=J_1+J_2$ of degree $n$ is a Hilbert superspace of parity $n$.
\item  (construction of the tensor product)
We will denote by $\ehH=\ehH^{(1)}\otimes\ehH^{(2)}$ the completion of the algebraic tensor product with respect to the scalar product given by 
\begin{equation*}
\forall x_1,y_1\in\ehH^{(1)} \ \forall x_2,y_2\in\ehH^{(2)}\quad:\quad
\left(x_1\otimes x_2,y_1\otimes y_2\right)=(x_1,y_1)(x_2,y_2).
\end{equation*}
It is a $\gZ_2$-graded Hilbert space with respect to the total degree. We can endow it with the structure of a Hilbert superspace of parity $n=n_1+n_2$ by defining the operator $J\in\caB(\ehH)$ of degree $n$ by:
\begin{equation*}
J(x_1\otimes x_2)=(-1)^{(n_1+|x_1|)|x_2|}J_1(x_1)\otimes J_2(x_2).
\end{equation*}
This operator satisfies the conditions \eqref{eq-superhilbert-defj}, and the associated superhermitian scalar product has the following natural property:
\begin{equation*}
\langle x_1\otimes x_2,y_1\otimes y_2\rangle=(-1)^{|x_2||y_1|}\langle x_1,y_1\rangle\langle x_2,y_2\rangle.
\end{equation*}
\end{itemize}
\end{proposition}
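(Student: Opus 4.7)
The plan is to verify, in each of the two constructions, the defining axioms \eqref{eq-superhilbert-defj} of a Hilbert superspace together with the orthogonality condition $(\ehH_0,\ehH_1)=\algzero$; the direct sum case is essentially routine, while the tensor product case reduces to a careful sign bookkeeping.

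For the direct sum, I would endow $\ehH=\ehH^{(1)}\oplus\ehH^{(2)}$ with the grading $\ehH_i=\ehH^{(1)}_i\oplus\ehH^{(2)}_i$ and the orthogonal-sum scalar product, which makes $(\ehH_0,\ehH_1)=\algzero$ immediate. Since $n_1=n_2=n$, the sum $J=J_1+J_2$ is homogeneous of degree $n$. Writing a homogeneous $x\in\ehH$ as $x^{(1)}+x^{(2)}$ with $x^{(i)}\in\ehH^{(i)}_{|x|}$, both identities in \eqref{eq-superhilbert-defj} follow termwise from the corresponding hypotheses on $J_1$ and $J_2$, since the signs depend only on the common parity $|x|$.

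For the tensor product, I would begin with the standard Hilbert completion of the algebraic tensor product equipped with the prescribed scalar product, and endow it with the total-degree $\gZ_2$-grading; the orthogonality $(\ehH_0,\ehH_1)=\algzero$ factorises through the analogous orthogonalities on each factor. Defining $J$ on simple homogeneous tensors by the stated formula and extending bilinearly, the operator differs from the bona fide unitary $J_1\otimes J_2$ only by $\pm$ signs attached to the grading decomposition; consequently $J$ is bounded, in fact unitary for $(\cdot,\cdot)$, and extends to the completion as a homogeneous operator of degree $n_1+n_2$.

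The three remaining identities, $J^2(x)=(-1)^{(n+1)|x|}x$, $J^\ast(x)=(-1)^{(n+1)|x|}J(x)$, and the compatibility formula for $\langle\cdot,\cdot\rangle$, all reduce to Koszul-type sign manipulations on simple homogeneous tensors, which by linearity suffices. For $J^2$, composing the defining formula with itself and invoking $J_i^2(x_i)=(-1)^{(n_i+1)|x_i|}x_i$ produces, after a mod-$2$ simplification of the accumulated exponent, precisely $(-1)^{(n_1+n_2+1)(|x_1|+|x_2|)}$. For $J^\ast$, I would compute $(J(x_1\otimes x_2),y_1\otimes y_2)$, transfer each $J_i$ to the right-hand factor via $(J_iz_i,w_i)=(z_i,J_i^\ast w_i)$, and exploit the fact that $(J_i(x_i),y_i)$ vanishes unless $|x_i|=n_i+|y_i|$ to rewrite the prefactor in terms of the $|y_i|$ alone; a similar mod-$2$ reduction yields the required identity $J^\ast=(-1)^{(n+1)|\cdot|}J$. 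The compatibility formula for the superhermitian form is handled in the same spirit, starting from $\langle\cdot,\cdot\rangle=(J(\cdot),\cdot)$ and using again the matching of degrees forced by non-vanishing of each factor inner product.

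The main obstacle is therefore not conceptual but combinatorial: all three mod-$2$ computations must come out correctly, and this is precisely what fixes the specific sign $(-1)^{(n_1+|x_1|)|x_2|}$ in the definition of $J$. The calculations are short but easy to misexecute, so I would lay them out systematically, treating the two parities $n_1, n_2$ and the four degrees $|x_1|, |x_2|, |y_1|, |y_2|$ as independent variables in $\gZ_2$ until the final simplification step.
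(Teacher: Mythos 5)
Your proposal is correct and follows the same route as the paper, which simply records the result as ``a direct consequence of the conditions in Definition \ref{def-superhilbert}'' and leaves the verification to the reader. Your sign bookkeeping is accurate: the exponents for $J^2$, for $J^\ast$ (using that $(J_i(x_i),y_i)\neq 0$ forces $|y_i|=n_i+|x_i|$), and for the superhermitian compatibility all reduce correctly mod $2$, so your outline fills in exactly the computation the paper omits.
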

\begin{proof}
A direct consequence of the conditions in Definition \ref{def-superhilbert}
\end{proof}

\subsection{\texorpdfstring{C${}^\ast$-superalgebras}{C* superalgebras}}
\label{subsec-cstsuper}

In this subsection we define the notion of C${}^\ast$-superalgebras, appropriate for deformation quantization. To do so, we will in fact follow the example of the function space $L^\infty(M)$ acting by (usual) multiplication on the Hilbert superspace $L^2(M)$.

We start by recalling that, in the non-graded case, if $\gB M$ is a smooth manifold endowed with a volume form, then the map $\mu:L^\infty(\gB M)\to\caB(L^2(\gB M))$, defined by
\begin{equation}
\forall f\in L^\infty(\gB M)\ \forall \varphi\in L^2(\gB M)\quad:\quad
\mu_f(\varphi)=f\fois\varphi,\label{eq-cstsuper-mu}
\end{equation}
is an isometric C${}^\ast$-algebras morphism for the essential-sup norm $\norm\fois\norm_\infty$ of $L^\infty(\gB M)$.

\begin{definition}
\label{def-supercst}
\begin{itemize}
\item Let $\algA$ be a complex $\gZ_2$-graded algebra. By a superinvolution on $\algA$ we will mean a homogeneous antilinear map of degree 0 on $\algA$, denoted by ${}^\dag$, satisfying
\begin{equation*}
\forall a,b\in\algA \quad:\quad
(a^\dag)^\dag=a\quad\text{ and }\quad (a\fois b)^\dag=(-1)^{|a||b|}b^\dag\fois a^\dag.
\end{equation*}
If $\algA$ is a $\gZ_2$-graded Banach algebra, we will also require that $\norm a^\dag\norm=\norm a\norm$ for all $a\in\algA$.
\item A C${}^\ast$-superalgebra $\algA$ is a superinvolutive $\gZ_2$-graded Banach algebra which can be isometrically represented on a Hilbert superspace $(\ehH,J,n)$ by a map $\rho:\algA\to\caB(\ehH)$ of degree 0, satisfying\footnote{where the ${}^\dag$ on the RHS is the superadjoint defined in Proposition \ref{prop-superhilbert-adjoint}.} $\rho(a^\dag)=\rho(a)^\dag$ for all $a\in\algA$.
\item A morphism of C${}^\ast$-superalgebras is an isometric superinvolutive algebra-morphism of degree 0 between two C${}^\ast$-superalgebras.
\end{itemize}
\end{definition}

Note that Remark \ref{rmk-superhilbert-op} and Proposition \ref{prop-superhilbert-adjoint} tell us that the algebra of bounded operators on a Hilbert superspace satisfies the axioms of a C${}^\ast$-superalgebra.

A C${}^\ast$-superalgebra can be seen as a complete, closed for the superinvolution, graded subalgebra of a $\gZ_2$-graded C${}^\ast$-algebra endowed with a continuous superinvolution, where the relation between involution and superinvolution is of the type \eqref{eq-superhilbert-dagast}. Note that the subalgebra is not required to be closed under the involution of the C${}^\ast$-algebra.

\begin{remark}
Using the notations of Definition \ref{def-supercst}, any C${}^\ast$-superalgebra $\algA$ can be seen as a subalgebra of the C${}^\ast$-subalgebra $\tilde\algA$ of $\caB(\ehH)$ generated by $\rho(\algA)$, $J$ and the parity operator $\gP$ introduced in the proof of Remark \ref{rmk-superhilbert-op}. Moreover, we have the following relations:
\begin{align*}
&\forall T\in\tilde\algA,\quad \gP T=(-1)^{|T|}T\gP,\\
&\gP^2=\gone,\qquad \gP^\dag=(-1)^n\gP,\qquad J^2=\gP^{n+1},\qquad J^\dag=J\gP.
\end{align*}
\end{remark}

\begin{example}
\label{ex-supercst-linf}
\begin{itemize}
\item Any C${}^\ast$-algebra $\algA$ is a C${}^\ast$-superalgebra, whose even part is $\algA$ and whose odd part is $\{0\}$.
\item The algebra $\algA=\bigwedge \gR^n$ acting on $\ehH=\bigwedge \gR^n$ by multiplication: $\theta^I\fois\theta^J=\eps(I,J)\theta^{I\cup J}$ (see subsection \ref{subsec-superman} and Example \ref{ex-superhilbert}) is a C${}^\ast$-superalgebra. Here we have the relations
\begin{align*}
&(\theta^I)^\dag=\theta^I,\qquad (\theta^I)^\ast(\theta^J)=\eps(I,J\setminus I)\theta^{J\setminus I}\text{ for }I\subset J,\\
& \norm\theta^I\norm=1,\qquad (\theta^I)^\dag\fois\theta^I=0,\qquad \norm(\theta^I)^\ast\fois\theta^I\norm=\norm\theta^I\norm^2.
\end{align*}
It follows in particular that the operator $(\theta^I)^\ast$ is not in the algebra $\algA\subset\caB(\ehH)$, only in $\caB(\ehH)$.
\item Copying \eqref{eq-cstsuper-mu} to the case of a trivial supermanifold $M$ of dimension $m|n$ (see Definition \ref{def-superman-trivial}) gives us a map $\mu:L^\infty(M)\to\caB(\ehH)$, where $\ehH=L^2(M)$ (see Example \ref{ex-superhilbert}). This map $\mu$ is an isometric representation of the supercommutative C${}^\ast$-superalgebra $\algA=L^\infty(M)$. One can show that the operator norm of $\mu(f)$ (for $f\in L^\infty(M)$) on $\caB(\ehH)$ is given by
\begin{equation*}
\norm \mu(f)\norm=\sum_I \norm f_I\norm_\infty \norm\theta^I\norm=\sum_I\norm f_I\norm_\infty.
\end{equation*}
One can also show that $\mu(f)^\dag=\mu(\overline f)$ and $\norm \overline{f}\norm=\norm f\norm$.
\end{itemize}
\end{example}

\begin{lemma}
If $\Phi:\ehH_1\to\ehH_2$ is a Hilbert superspace isomorphism, then the map $\widetilde\Phi:\caB(\ehH_1)\to\caB(\ehH_2)$ defined by $\caB(\ehH_1)\ni T\mapsto \widetilde\Phi(T)=\Phi\circ T\circ\Phi^{-1}$is a C${}^\ast$-superalgebra morphism .
\end{lemma}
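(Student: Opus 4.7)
The plan is to verify, in order, the four conditions that make $\widetilde\Phi$ a morphism of C${}^\ast$-superalgebras: linearity and multiplicativity, the fact that it has degree $0$, isometry for the operator norm, and compatibility with the superinvolutions ${}^\dag$ on $\caB(\ehH_1)$ and $\caB(\ehH_2)$.

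Linearity of $\widetilde\Phi$ is immediate from its definition, and multiplicativity is a two-line verification: $\widetilde\Phi(TS) = \Phi T S \Phi^{-1} = (\Phi T \Phi^{-1})(\Phi S \Phi^{-1}) = \widetilde\Phi(T)\widetilde\Phi(S)$. For the degree, recall from the definition of a morphism of Hilbert superspaces that $\Phi$ has degree $0$, so $\Phi$ and $\Phi^{-1}$ map $\ehH^{(1)}_i$ to $\ehH^{(2)}_i$ and vice versa; hence if $T$ has degree $i$, the operator $\Phi\circ T \circ \Phi^{-1}$ sends $\ehH^{(2)}_j$ into $\ehH^{(2)}_{i+j}$, showing that $\widetilde\Phi$ has degree $0$ and preserves the $\gZ_2$-grading. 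Isometry follows from the fact that $\Phi$ is unitary with respect to the positive definite scalar products; in particular, $\Phi^{-1}$ is bounded with $\norm\Phi\norm=\norm\Phi^{-1}\norm=1$, and for any $T\in\caB(\ehH^{(1)})$ one has $\norm\widetilde\Phi(T)\norm\le\norm T\norm$, while the same argument applied to $\widetilde{\Phi^{-1}}$ gives the reverse inequality.

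The slightly more delicate point, which I expect to be the main obstacle, is the preservation of the superinvolution. The key observation is that, since $\Phi$ is unitary with respect to the positive definite scalar products, one has $\Phi^{-1}=\Phi^\ast$, and consequently the ordinary adjoint satisfies $(\widetilde\Phi(T))^\ast=(\Phi T \Phi^{-1})^\ast=\Phi\, T^\ast\,\Phi^{-1}=\widetilde\Phi(T^\ast)$. Moreover, both Hilbert superspaces have the same parity $n=n_1=n_2$, and the intertwining condition $\Phi\circ J_1=J_2\circ \Phi$ gives $J_2=\Phi J_1 \Phi^{-1}$ and hence $J_2 \widetilde\Phi(T)\, J_2=\widetilde\Phi(J_1 T J_1)$. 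Plugging these into the explicit formula \eqref{eq-superhilbert-dagast} for the superadjoint, and using that $|\widetilde\Phi(T)|=|T|$, yields
\begin{equation*}
\widetilde\Phi(T)^\dag=(-1)^{(n+1)(|T|+|x|)+|T||x|}\,J_2\,\widetilde\Phi(T)^\ast\,J_2=\widetilde\Phi\bigl((-1)^{(n+1)(|T|+|x|)+|T||x|}J_1 T^\ast J_1\bigr)=\widetilde\Phi(T^\dag).
\end{equation*}

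Combining these four verifications shows that $\widetilde\Phi$ is a degree-$0$, isometric, superinvolutive algebra morphism between the C${}^\ast$-superalgebras $\caB(\ehH_1)$ and $\caB(\ehH_2)$, which is exactly the definition of a morphism of C${}^\ast$-superalgebras given in Definition \ref{def-supercst}.
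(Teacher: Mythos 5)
Your proof is correct and complete; the paper in fact states this lemma without supplying a proof, and your verification of the four requirements of Definition \ref{def-supercst} (algebra morphism, degree $0$, isometry, compatibility with ${}^\dag$) is the natural argument. The only point deserving an explicit word is that the sign in \eqref{eq-superhilbert-dagast} depends on the degree $|x|$ of the vector it acts on, so to move it across $\Phi$ one uses $|\Phi^{-1}(x)|=|x|$ (because $\Phi$ has degree $0$) — this is implicit in your computation and poses no gap.
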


\subsection{Heisenberg supergroup}
\label{subsec-heisenberg}

Let $\modE$ be a free graded $\superA$-module of finite dimension $m|n$ endowed with an even symplectic form $\omega$. As in Proposition  \ref{prop-sympl-basis}, we denote by $\omega_0$ and $\omega_1$ the restriction of $\omega$ to the subspace generated by the even, respectively odd, basis vectors.
\begin{definition}
The Heisenberg superalgebra associated to $(\modE,\omega)$ is given by the $\superA$-module $\kg=\modE\oplus\superA Z$, where $Z$ is an even generator, and by the relations:
\begin{equation*}
\forall x,y\in\modE\ \forall a,b\in\superA\quad:\quad [x+aZ,y+bZ]=\omega(x,y)Z.
\end{equation*}
It is a $\superA$-Lie algebra of dimension $m+1|n$. Its center is given by  $\caZ(\kg)=\superA Z=\kg'$, with $\kg'=[\kg,\kg]$.
\end{definition}

As in the non-graded case, the Heisenberg supergroup $G$ is homeomorphic to the even part of the Heisenberg algebra $\kg_0$. Its group law can be computed from the Baker-Campbell-Hausdorff formula:
\begin{equation*}
\forall x,y\in\modE_0 \ \forall a,b\in\superA_0
\quad:\quad
(x+aZ)\fois(y+bZ)=x+y+(a+b+\frac 12\omega(x,y))Z.
\end{equation*}
$G$ is a non-abelian group with neutral element $0+0Z$ and inverse given by: $(x+aZ)^{-1}=-x-aZ$. Note that the Heisenberg supergroup as defined above is a particular case of ``Heisenberg like groups'' treated in \cite{Tuynman:2010}.

We define a linear isomorphism from $\kg$ to its (right) dual $\kg^\ast$, denoted as $x+aZ\mapsto {}^b(x+aZ)$, by the formula: 
\begin{equation*}
\forall x,y\in\modE \ \forall a,b\in\superA
\quad:\quad
{}^b(x+aZ)(y+bZ)=\omega(x,y)+ab.
\end{equation*}
Using this isomorphism, the adjoint and coadjoint actions are given by the formula (with $y\in\modE$, $b\in\superA$ and  $x\in\modE_0$, $a\in\superA_0$ so $x+aZ\in \kg_0=G$)
\begin{align*}
& \Ad_{(x+aZ)}(y+bZ)=y+bZ+[x+aZ,y+bZ]=y+(b+\omega(x,y))Z,\\
& \Ad^\ast_{(x+aZ)}{}^b(y+bZ)={}^b(y-bx+bZ).
\end{align*}
It follows that the coadjoint orbit $\caO_\zeta=\{\Ad^\ast_g\zeta,\,g\in G\}$ of $\zeta={}^b(y+bZ)\in\gB\kg^\ast$ is in bijection (via the isomorphism ${}^b$) with the even part $\modE_0$ of the module $\modE$, provided $b\neq 0$. For $b=0$, the coadjoint orbit $\caO_\zeta$ is a single point.

\section{Deformation quantization}
\label{sec-defqu}

To perform the deformation quantization of the Heisenberg supergroup, we adapt the general method of \cite{Bieliavsky:2010kg} to the graded setting.
First, in subsection \ref{subsec-induc}, by using Kirillov's orbits method, we associate to each coadjoint orbit $\caO_\zeta$ (with $\zeta\in \gB\kg^\ast$) a unitary induced representation of the Heisenberg supergroup. Note that the unitarity of the representation refers to the supersymmetric scalar product, not the positive definite one.
Using this induced representation, we define in \ref{subsec-qu} a first quantization map on the supergroup which is operator valued. The fact that we need the odd Fourier transform in this definition is an important difference with the non-graded case \cite{Bieliavsky:2010kg}. We then define a second quantization map, which associates bounded operators to functions in $L^1(M)$ (with $M$ a quotient of the supergroup).
In \ref{subsec-prel} we give some details of the functional spaces used in this theory; in particular we prove the existence of a resolution of the identity and we give the definition of a supertrace.

Next we want to enlarge the quantization map on $L^1(M)$ to be defined also on $\caB^1(M)$ (smooth functions all of whose derivatives are bounded), and to allow these functions to take their values in a Fr\'echet space $E$. That is why we introduce, in Appendix \ref{sec-funcspaces}, the symbol calculus $\caB^\mu_E(M)$ and, in \ref{subsec-integ}, the notion of an oscillating integral which allows us to give a meaning to the integral of non Lebesgue-integrable functions (using a phase and integrations by parts).
With this oscillating integral we extend in \ref{subsec-ext} the quantization map to $\caB^\mu_E(M)$, but the images are now in general unbounded operators. However, for $\mu=1$ and a Fr\'echet algebra $\algA$, this quantization map on $\caB^1_\algA(M)$ maps to bounded operators.
Finally, in \ref{subsec-product}, we define the deformed product on $\caB^1_\algA(M)$ which corresponds to the composition of operators via the quantization map, and we give some properties of this product.

\subsection{Unitary induced representation}
\label{subsec-induc}

In order to construct a unitary induced representation of the Heisenberg supergroup using Kirillov's orbits method \cite{Kirillov:1976}, we let $\zeta_0=a_0{}^bZ$ be a non-zero fixed element in $\kg^\ast$, where we assume that $a_0$ is real.
\begin{definition}
A polarization of $\zeta_0$ is a maximal free graded Lie subalgebra $\kb$ of $\kg$ such that $\delta_{\zeta_0}(\kb\times\kb)=\algzero$, where $\delta_{\zeta_0} : \kg\times \kg \to \superA$ is defined by
\begin{equation*}
\forall x,y\in\modE \ \forall a,b\in\superA
\quad:\quad
\delta_{\zeta_0}(x+aZ,y+bZ)=\langle\zeta_0,[x+aZ,y+bZ]\rangle=a_0\omega(x,y).
\end{equation*}
\end{definition}
In our case, any polarization $\kb$ of $\zeta_0$ is of the following form:
\begin{equation*}
\kb=\modW\oplus\superA Z,
\end{equation*}
where $\modW$ is a maximal isotropic subspace of $\modE$ (see Proposition \ref{prop-sympl-maxiso}). $\kb$ is an abelian $\superA$-Lie subalgebra of $\kg$ and an ideal. Since it admits a (non unique) supplement $\kq$ in $\kg$ (which we now fix once and for all), the short exact sequence of graded $\superA$-modules
\begin{gather*}
\xymatrix@1@C=25pt{{\algzero} \ar[r] & {\kb} \ar[r] & {\kg} \ar[r] & {\kg/\kb} \ar[r] & {\algzero}}
\end{gather*}
is split. Note that it is also a short exact sequence of $\superA$-Lie algebras. However, as such it is not (necessarily) split (unlike the non-graded case), because, even though $\kq$ is a graded subspace of $\kg$, it is in general not a lagrangian subspace nor a Lie subalgebra.

We now define $B=\exp(\kb_0)\simeq\kb_0$ and $Q=\exp(\kq_0)\simeq G/B\simeq \kq_0$ and we note that $Q$ is not a subgroup of $G$. Let $\chi:B\to \superA_\gC=\superA\otimes\gC$ be defined by 
\begin{equation*}
\forall w\in\modW_0 \ \forall a\in\superA_0
\quad:\quad
\chi(w+aZ)=e^{i\zeta_0(w+aZ)}=e^{ia_0a}.
\end{equation*}
It is a unitary character of $B$ ($\gB\text{Im}\chi\subset U(1)$), canonically associated to $\zeta_0$. Let us construct the representation of $G$ induced by $\chi$. 

From now on we will assume that the odd dimension of $B$ is zero, which means that $\omega_1$ is positive definite or negative definite on the body of the space generated by the odd generators of $\modE$ (see Proposition \ref{prop-sympl-maxiso}). Thus, an element of $B$ is of the form $(w,0,a)$ with $w\in\modW_0$ and $a\in\superA_0$ while an element of $Q$ is of the form $(x,\xi,0)$ with $x$ an even linear combination of even generators of $\kq$, and $\xi$ an odd linear combination of odd generators of $\kq$. Moreover, the map $Q\times B\to G$ given by group multiplication is a global diffeomorphism.

The group $G$ acts on itself by left translations, and we look at the left regular action $\lambda$ on the $B$-equivariant functions on $G$:
\begin{align*}
&C^\infty(G)^{B}=\{\hat\varphi:G\to\superA_\gC\ C^\infty,\quad \forall g\in G,\,\forall b\in B\ :\  \hat\varphi(gb)=\chi(b^{-1})\hat\varphi(g)\},
\\
&\forall g\in G,\,\forall \hat\varphi\in C^\infty(G)^{B}\quad:\quad \lambda_g\hat\varphi=L^\ast_{g^{-1}}\hat\varphi=\hat\varphi(g^{-1}\fois).
\end{align*}
Note that the space $C^\infty(G)^{B}$ itself is not invariant under the left regular action, because nilpotent constants (coming from elements of $G$) are not $C^\infty$ functions. However, tensoring with $\superA$ gives us the space $C^\infty(G)^{B}\otimes\superA$, which is invariant under the left regular action.
\begin{proposition}
The map $\lambda:G\times (C^\infty(G)^{B}\otimes\superA)\to C^\infty(G)^{B}\otimes\superA$ is an action. When we restrict the function $\lambda_g\hat\varphi$ to $Q\subset G$, we get the following formula: $\forall g\in G$, $\forall\hat\varphi\in C^\infty(G)^{B}$, $\forall q_0\in Q$,
\begin{equation*}
(\lambda_g\hat\varphi)(q_0)\nota=e^{ia_0(a+\omega_0(x-x_0,w)+\frac 12\omega_1(\xi,\xi_0))}\hat\varphi(x_0-x,\xi_0-\xi,0)\nota
\end{equation*}
where $g=q\fois b$, $q=(x,\xi,0)\in Q$, $b=(w,0,a)\in B$ and $q_0=(x_0,\xi_0,0)$. Remember that $\omega_0$ and $\omega_1$ are the diagonal parts of $\omega$ (see Proposition \ref{prop-sympl-basis}).
\end{proposition}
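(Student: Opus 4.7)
My plan is to establish the action property first, then derive the restriction formula by factoring $g^{-1}q_0$ as $q_1\cdot b_1 \in Q\cdot B$ and applying the $B$-equivariance of $\hat\varphi$. The action property is essentially formal: from $\lambda_g\hat\varphi = \hat\varphi(g^{-1}\,\fois\,)$ one immediately reads $\lambda_{g_1g_2} = \lambda_{g_1}\lambda_{g_2}$, and the $B$-equivariance is stable under left translation since $(\lambda_g\hat\varphi)(hb) = \hat\varphi(g^{-1}hb) = \chi(b^{-1})\hat\varphi(g^{-1}h) = \chi(b^{-1})(\lambda_g\hat\varphi)(h)$; smoothness of the translated function is automatic.

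For the restriction formula, I start from the BCH-type product law. Writing $g = q\fois b = (x+\xi+w) + (a+\tfrac12\omega_0(x,w))Z$ and $g^{-1} = -(x+\xi+w) - (a+\tfrac12\omega_0(x,w))Z$, the product with $q_0 = x_0+\xi_0 \in Q$ yields
\begin{equation*}
g^{-1}q_0 = (x_0-x-w) + (\xi_0-\xi) + \kappa\,Z,
\end{equation*}
where
\begin{equation*}
\kappa = -a - \tfrac12\omega_0(x,w) - \tfrac12\omega_0(x,x_0) - \tfrac12\omega_1(\xi,\xi_0) - \tfrac12\omega_0(w,x_0).
\end{equation*}
Here all cross pairings of $\omega$ between even and odd vectors are dropped; this is justified by the block-diagonal shape of $\omega$ in the standard symplectic basis of Proposition~\ref{prop-sympl-basis}, which in particular annihilates $\omega(x,\xi_0)$, $\omega(w,\xi_0)$ and $\omega(\xi,x_0)$.

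Next I look for $q_1 = (x_0-x,\xi_0-\xi,0)\in Q$ and $b_1 = (-w,0,\alpha)\in B$ satisfying $q_1b_1 = g^{-1}q_0$. A second BCH computation gives $q_1b_1 = (x_0-x-w)+(\xi_0-\xi)+(\alpha - \tfrac12\omega_0(x_0-x,w))Z$, hence $\alpha = \kappa + \tfrac12\omega_0(x_0-x,w)$. The key simplification now relies on the structural input that $\omega_0(x,x_0)=0$: since by hypothesis the odd dimension of $\modW$ is zero, $\modW=\modW_0$ is Lagrangian in $\modE_0$, and one picks once and for all the supplement $\kq$ so that its even part $\hat\modW_0$ is also Lagrangian (possible by standard symplectic linear algebra applied to $\modE_0$). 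Because $x,x_0\in\hat\modW_0$, the offending term drops out; combining with $\omega_0(w,x_0) = -\omega_0(x_0,w)$ yields $\alpha = \omega_0(x_0-x,w) - a - \tfrac12\omega_1(\xi,\xi_0)$. Finally, $b_1^{-1} = (w,0,-\alpha)$ gives $\chi(b_1^{-1})=e^{-ia_0\alpha}$, and the $B$-equivariance $\hat\varphi(g^{-1}q_0)=\hat\varphi(q_1b_1)=\chi(b_1^{-1})\hat\varphi(q_1)$ produces the announced prefactor after using $-\omega_0(x_0-x,w)=\omega_0(x-x_0,w)$. The only delicate step is the Lagrangian choice of $\hat\modW_0$: without it a spurious $\tfrac12\omega_0(x,x_0)$ would obstruct the final identification; the remainder is sign bookkeeping with BCH and the definition of $\chi$.
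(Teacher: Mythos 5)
Your proof is correct and follows essentially the same route as the paper's: both reduce to factoring $g^{-1}q_0$ into its $Q\cdot B$ components and applying the $B$-equivariance of $\hat\varphi$, and both hinge on the same two structural facts — the even part of $\kq$ is chosen Lagrangian so that $\omega_0(x,x_0)=0$, while $\omega_1(\xi,\xi_0)$ survives because $\kq$ is not Lagrangian in the odd directions. The only difference is bookkeeping: the paper first moves $b^{-1}$ to the right via conjugation (using normality of $B$) and then corrects $q^{-1}q_0$ by the central element $\beta=(0,0,\tfrac12\omega_1(-\xi,\xi_0))$, whereas you compute $g^{-1}q_0$ in a single BCH step and solve directly for the $B$-factor $b_1=(-w,0,\alpha)$; both land on the same phase.
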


\begin{proof}
If $q=(x,\xi,0)$, $b=(w,0,a)$ and $q_0=(x_0,\xi_0,0)$,
\begin{equation*}
\lambda_{qb}\hat\varphi(q_0)\nota=\hat\varphi(b^{-1}q^{-1}q_0)=\hat\varphi(q^{-1}q_0C_{q_0^{-1}q}(b^{-1})),
\end{equation*}
where $C_{q_0^{-1}q}(b^{-1})=q_0^{-1}qb^{-1}q^{-1}q_0\in B$ because $B$ is a normal subgroup of $G$. However, unlike to the non-graded case, $q^{-1}q_0\notin Q$. In the above notation,
\begin{equation*}
q^{-1}q_0=(-x,-\xi,0)\fois(x_0,\xi_0,0)=\left(-x+x_0,-\xi+\xi_0,\frac12\omega_0(-x,x_0)+\frac12\omega_1(-\xi,\xi_0)\right)=(q_0-q)\fois\beta,
\end{equation*}
where $\beta=(0,0,\frac12\omega_1(-\xi,\xi_0))$. Indeed, $\omega_0(x,x_0)=0$ since the part involving only the even generators of $\kq$ is lagrangian (with respect to the even generated part of $\kg$), see Proposition \ref{prop-sympl-maxiso}. But there is no reason that $\omega_1(\xi,\xi_0)=0$, as $\kq$ is not lagrangian. By taking this into account, we obtain:
\begin{equation*}
\lambda_{qb}\hat\varphi(q_0)=\chi(C_{q_0^{-1}q}(b)\beta^{-1})\hat\varphi(q_0-q),
\end{equation*}
and $\chi(C_{q_0^{-1}q}(b)\beta^{-1})=e^{ia_0(a+\omega_0(x-x_0,w)+\frac 12\omega_1(\xi,\xi_0))}$.
\end{proof}

We already said that the group multiplication $Q\times B\to G$ is a global diffeomorphism. It follows that there is an isomorphism between $C^\infty(Q)$ and $C^\infty(G)^{B}$, which we will denote as $\tilde\varphi\mapsto\hat\varphi$, and which is given by $\hat\varphi(q\fois b)=\chi(b^{-1})\tilde\varphi(q)$ (with $q\in Q$ and $b\in B$). Transferring the action $\lambda$ to $C^\infty(Q)$ we obtain an action $\tilde U:G\times (C^\infty(Q)\otimes\superA)\to C^\infty(Q)\otimes\superA$, which has, using notation as above, the following explicit form:
\begin{equation}
(\tilde U(g)\tilde\varphi)(q_0)=e^{ia_0(a+\omega_0(x-x_0,w)+\frac 12\omega_1(\xi,\xi_0))}\tilde\varphi(q_0-q).
\label{eq-induc-expr}
\end{equation}

Now define $\caD(Q)\simeq\caD(\gB Q)\otimes \bigwedge \gR^n$, where $n$ is the odd dimension of $Q$, and $\caD(\gB Q)$ is the space of complex-valued functions on $\gB Q$ with compact support. Using the expression \eqref{eq-induc-expr}, it is not hard to see that $\caD(Q)\otimes\superA$ is an invariant subspace of $C^\infty(Q)\otimes\superA$ under the action $\tilde U$. In this way we obtain the following induced representation of $G$:
\begin{equation*}
\tilde U:G\to\caL_\superA(\caD(Q)\otimes\superA),
\end{equation*}
where $\caL_\superA$ is the space of $\superA$-linear maps. This induced representation is unitary for the supersymmetric scalar product.
\begin{proposition}
For $\tilde\varphi,\tilde\psi\in\caD(Q)$, one has: $\forall g\in G$,
\begin{equation*}
\langle\tilde U(g)\tilde\varphi,\tilde U(g)\tilde\psi\rangle=\langle\tilde\varphi,\tilde\psi\rangle.
\end{equation*}
\end{proposition}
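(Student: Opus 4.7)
The plan is to substitute the explicit expression \eqref{eq-induc-expr} into the definition \eqref{eq-superman-scalprod} of the supersymmetric scalar product, observe that the phase factor cancels against its complex conjugate because it is an even element of $\superA$, and then reduce the resulting integral to $\langle\tilde\varphi,\tilde\psi\rangle$ via translation invariance of super-integration. Multiplicativity of $\tilde U$ means it would actually suffice to check the identity separately for $g\in B$ and for $g\in Q$, but carrying out the general computation directly is hardly more work.

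Writing $g=q\fois b$ with $q=(x,\xi,0)\in Q$ and $b=(w,0,a)\in B$, and setting
\begin{equation*}
\Phi(x_0,\xi_0):=a+\omega_0(x-x_0,w)+\tfrac12\omega_1(\xi,\xi_0),
\end{equation*}
formula \eqref{eq-induc-expr} reads $(\tilde U(g)\tilde\varphi)(q_0)=e^{ia_0\Phi(x_0,\xi_0)}\tilde\varphi(q_0-q)$. The crucial observation is that $\Phi(x_0,\xi_0)\in\superA_0$: the first two summands are manifestly in $\superA_0$, while $\omega_1(\xi,\xi_0)$ is a sum of products of pairs of odd elements of $\superA$, hence even. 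Since the complex conjugation on $\superA_\gC=\superA\otimes\gC$ acts trivially on $\superA$, we have $\overline{e^{ia_0\Phi(x_0,\xi_0)}}=e^{-ia_0\Phi(x_0,\xi_0)}$.

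Using the rule $\overline{c\fois d}=\overline{c}\fois\overline{d}$ valid when at least one factor is even, and the fact that the even phase $e^{\pm ia_0\Phi}$ supercommutes with every element of $\superA_\gC$, the integrand of the scalar product collapses to
\begin{equation*}
\overline{(\tilde U(g)\tilde\varphi)(q_0)}\fois(\tilde U(g)\tilde\psi)(q_0)=e^{-ia_0\Phi}\fois e^{ia_0\Phi}\fois\overline{\tilde\varphi(q_0-q)}\fois\tilde\psi(q_0-q)=\overline{\tilde\varphi(q_0-q)}\fois\tilde\psi(q_0-q).
\end{equation*}
Thus $\langle\tilde U(g)\tilde\varphi,\tilde U(g)\tilde\psi\rangle=\int\dd x_0\,\dd\xi_0\,\overline{\tilde\varphi(q_0-q)}\tilde\psi(q_0-q)$.

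One then applies the change of variables $(x_0,\xi_0)\mapsto(x_0+x,\xi_0+\xi)$: the even component is an ordinary translation of the Lebesgue measure on $\gB Q$, while the odd component has unit Berezinian and, by translation invariance of the Berezin integral, leaves the super-integral unchanged. This yields $\langle\tilde\varphi,\tilde\psi\rangle$ and completes the proof. The main subtlety to be checked carefully is the evenness and $\superA$-realness of the exponent $\Phi$: without this, the complex conjugation would introduce additional sign or nilpotent contributions from the superalgebra grading and the phase would fail to cancel.
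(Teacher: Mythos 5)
Your proof is correct and follows essentially the same route as the paper: substitute the explicit formula \eqref{eq-induc-expr}, note that the exponent (in particular $\omega_1(\xi,\xi_0)$) is even and real in $\superA$ so that the phase and its conjugate cancel, and conclude by the translation $(x_0,\xi_0)\mapsto(x_0+x,\xi_0+\xi)$. Your additional remarks on the $\superA$-realness of $\Phi$ and the unit Berezinian of the odd translation simply make explicit what the paper leaves implicit.
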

\begin{proof}
\begin{multline*}
\langle\tilde U(g)\tilde\varphi,\tilde U(g)\tilde\psi\rangle=\int\dd x_0\dd\xi_0\, e^{-ia_0(a+\omega_0(x-x_0,w)+\frac 12\omega_1(\xi,\xi_0))}\overline{\tilde\varphi(x_0-x,\xi_0-\xi)}\\
e^{ia_0(a+\omega_0(x-x_0,w)+\frac 12\omega_1(\xi,\xi_0))}\tilde\psi(x_0-x,\xi_0-\xi)
\end{multline*}
The quantity $\omega_1(\xi,\xi_0)$ is even and thus commutes with all other terms. By a change of variables $x_0\mapsto x_0+x$, $\xi_0\mapsto \xi_0+\xi$ the result is obtained.
\end{proof}
We see that the induced representation $\tilde U$ of the Heisenberg supergroup constructed with the Kirillov's orbits method is naturally unitary with respect to the supersymmetric scalar product $\langle\fois,\fois\rangle$. This structure, and thus the notion of Hilbert superspace (see subsection \ref{subsec-superhilbert}) appears to be adapted for the harmonic analysis on the Heisenberg supergroup.

\subsection{Quantization}
\label{subsec-qu}

In the sequel of this paper we will assume that we have chosen the particular homogeneous basis of $\modE$ described in Proposition \ref{prop-sympl-basis}. This means in particular that we have $\omega_0(x,w)=x\fois w$ and $\omega_1(\xi_1,\xi_2)=2\xi_1\fois\xi_2$ for all $ (x_i,\xi_i)\in Q$ and all $ w\in\modW_0$. Adding a free parameter $\alpha$ to the (even and odd) Fourier transform, we obtain the following expressions for the Fourier Transform of the function constant $1$:
\begin{align}
&\int\dd w\ e^{2ia_0\alpha\omega_0(x,w)}=r_0\alpha^{-\frac m2}\delta(x)\label{eq-qu-fourier}\\
&\caF_\alpha 1(\xi_0)=\int\dd\xi\ e^{-\frac{ia_0}{2}\alpha\omega_1(\xi,\xi_0)}=r_1\alpha^n\xi_0^{\{1,\dots, n\}},\nonumber
\end{align}
where $r_0=\left(\frac{\pi}{a_0}\right)^{\frac m2}$ and $r_1=(ia_0)^n(-1)^{\frac{n(n+1)}{2}}$.

Let us now consider the $\superA$-linear map $\sigma:\kg\to\kg$ defined by 
\begin{equation*}
\forall x\in\modE\ \forall a\in\superA
\quad:\quad
\sigma(x+aZ)=-x+aZ
\ . 
\end{equation*}
It is an involutive automorphism of $\kg$ (and of $G\simeq\kg_0$). The pullback $\sigma^\ast:C^\infty(G)^{B}\to C^\infty(G)^{B}$ is defined by $(\sigma^\ast\hat\varphi)(g)=\hat\varphi(\sigma(g))$ (for $g\in G$ and $\hat\varphi\in C^\infty(G)^{B}$). Using the isomorphism $C^\infty(G)^{B}\cong C^\infty(Q)$ we obtain an involution, denoted by the same symbol, $\sigma^\ast:C^\infty(Q)\to C^\infty(Q)$. We then introduce the operator $\Sigma=\caF_\alpha\sigma^\ast$ as $\sigma$ followed by an odd Fourier transform given by
\begin{equation*}
\forall \tilde\varphi\in C^\infty(Q)\ \forall (x_0,\xi_0)\in Q
\quad:\quad
(\Sigma\tilde\varphi)(x_0,\xi_0)=\gamma\int\dd\xi_1\ e^{-\frac{ia_0\alpha}{2}\omega_1(\xi_1,\xi_0)}\tilde\varphi(-x_0,\xi_1),
\end{equation*}
where we interpret $\alpha$ as a free parameter and $\gamma$ as a fixed complex constant to be determined later. This operator $\Sigma$ satisfies the relations
\begin{align*}
&\Sigma^2=\gamma^2 r_1\alpha^n \text{id}
\\
\tilde\varphi,\tilde\psi\in C^\infty(Q)
\quad:\quad
&
\langle\Sigma\tilde\varphi,\Sigma\tilde\psi\rangle=|\gamma|^2r_1\alpha^n(-1)^{n(|\tilde\varphi|+1)}\langle\tilde\varphi,\tilde\psi\rangle,
\end{align*}
where $n$ is the odd dimension of $Q$ (or of $\modE$ since we assumed $B$ has no odd dimension). By choosing  $\gamma=\frac{(-1)^n}{r_0r_1(1+\alpha)^n}$ (this will be explained in Equation \eqref{eq-ext-condgamma}), we have in particular $\overline\gamma=(-1)^n\gamma$ (see Equation \eqref{eq-qu-fourier} for the definition of $r_0$,$r_1$). Then, we denote $r=\gamma^2 r_1\alpha^n$, and we obtain:
\begin{equation*}
\Sigma^2=r\gone,\qquad \Sigma^\dag=r\Sigma.
\end{equation*}

If we now define $K\subset G$ as the subgroup invariant under $\sigma$, $K=G^\sigma=\{g\in G,\,\sigma(g)=g\}$, then, $K=\superA_0 Z$, and $M:=G/K\simeq\modE_0$ is a supermanifold. With these preparations we can finally define the quantization map $\Omega:G\to\caL_\superA(\caD(Q)\otimes\superA)$ by 
\begin{equation*}
\forall g\in G
\quad:\quad
\Omega(g)=\tilde U(g)\Sigma\ \tilde U(g^{-1}).
\end{equation*}
\begin{proposition}
\label{prop-qu-omega}
The explicit formula for the quantization map is given by:
\begin{equation*}
\bigl(\Omega(q\fois b)\tilde\varphi\bigr)(q_0)= \gamma\int\dd\xi_1\ e^{ia_0(2\omega_0(x-x_0,w)+\frac12\omega_1(\xi,\xi_0)-\frac{\alpha}{2}\omega_1(\xi_1,\xi_0)-\frac12(\alpha+1)\omega_1(\xi,\xi_1))} \tilde\varphi(2x-x_0,\xi+\xi_1),
\end{equation*}
for any $\tilde\varphi\in\caD(Q)$, $q=(x,\xi,0)\in Q$, $q_0=(x_0,\xi_0,0)\in Q$ and $b=(w,0,a)\in B$. It has the following properties:
\begin{enumerate}
\item $\Omega$ is constant on the left classes of $K$.
\item For any $g\in G$ we have $\Omega(g)^\dag=r\,\Omega(g)$.
\end{enumerate}
\end{proposition}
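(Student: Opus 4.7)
The plan is to establish the explicit formula by direct substitution, then deduce the two properties as relatively formal consequences of the relations already established for $\Sigma$ and $\tilde U$.

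First, I would compute $\Omega(g)\tilde\varphi(q_0) = \tilde U(g)\bigl(\Sigma\bigl(\tilde U(g^{-1})\tilde\varphi\bigr)\bigr)(q_0)$ by three successive applications. Writing $g = q\fois b$ with $q=(x,\xi,0)$ and $b=(w,0,a)$, one checks by decomposing $g^{-1} = b^{-1}q^{-1}$ back into $Q\times B$ form that
\begin{equation*}
(\tilde U(g^{-1})\tilde\varphi)(y_0,\eta_0) = e^{ia_0(-a + \omega_0(y_0,w) - \frac12\omega_1(\xi,\eta_0))}\tilde\varphi(y_0+x,\eta_0+\xi).
\end{equation*}
Feeding this into the definition of $\Sigma$ from \ref{subsec-qu} produces an integral over $\xi_1$ with phase $ia_0\bigl(-a-\omega_0(x_0,w)-\tfrac12\omega_1(\xi,\xi_1)-\tfrac\alpha2\omega_1(\xi_1,\xi_0)\bigr)$ and amplitude $\tilde\varphi(x-x_0,\xi+\xi_1)$. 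Applying $\tilde U(g)$ (which shifts $q_0\to q_0-q$ and multiplies by the cocycle $e^{ia_0(a+\omega_0(x-x_0,w)+\frac12\omega_1(\xi,\xi_0))}$), the $a$-terms cancel, the $\omega_0$ contributions combine to $2\omega_0(x-x_0,w)$, and the argument of $\tilde\varphi$ becomes $(2x-x_0,\xi+\xi_1)$. What remains is to assemble the $\omega_1$-terms: combining $\tfrac{\alpha}{2}\omega_1(\xi_1,\xi)$ coming from shifting $\xi_0\mapsto\xi_0-\xi$ with the already present $-\tfrac12\omega_1(\xi,\xi_1)$, and using the antisymmetry of $\omega_1$ on odd vectors with odd coefficients (which follows from super-skewsymmetry applied to homogeneous elements with odd scalar coefficients, via $|\xi|=|\xi_1|=0$ once the coefficients are absorbed), gives exactly the coefficient $-\tfrac12(\alpha+1)\omega_1(\xi,\xi_1)$ stated in the proposition.

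For property 1, I would observe that $K=\superA_0 Z$ lies in the center of $G$ and is contained in $B$, and the character $\chi$ just evaluates to $e^{ia_0 a}$ on $(0,0,a)\in K$. Consequently $\tilde U(k)$ is scalar multiplication by $e^{ia_0 a}$, hence commutes with every operator and in particular with $\Sigma$. For $k\in K$, writing $(gk)^{-1}=k^{-1}g^{-1}$ one therefore has
\begin{equation*}
\Omega(gk) = \tilde U(g)\tilde U(k)\,\Sigma\,\tilde U(k^{-1})\tilde U(g^{-1}) = \tilde U(g)\,\Sigma\,\tilde U(g^{-1}) = \Omega(g).
\end{equation*}

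For property 2, I would use that $\tilde U$ is unitary with respect to the supersymmetric scalar product (as proved just above in \ref{subsec-induc}), so $\tilde U(g)^\dag = \tilde U(g^{-1})$, together with the identity $\Sigma^\dag = r\,\Sigma$ derived from the choice of $\gamma$. Since $\tilde U(g)$ has degree $0$, the super-Leibniz rule for $\dag$ on a triple product reduces to reversal of order without extra signs, giving
\begin{equation*}
\Omega(g)^\dag = \tilde U(g^{-1})^\dag\,\Sigma^\dag\,\tilde U(g)^\dag = \tilde U(g)\,(r\Sigma)\,\tilde U(g^{-1}) = r\,\Omega(g).
\end{equation*}
I expect the only real technical point to be step one: keeping the signs straight when shifting $\xi_0\mapsto\xi_0-\xi$ inside the $\Sigma$-kernel and combining the three resulting $\omega_1$-terms. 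Everything else is essentially formal, relying on centrality of $K$ and the already established relations $\Sigma^\dag=r\Sigma$ and unitarity of $\tilde U$.
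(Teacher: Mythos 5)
Your proposal is correct and follows essentially the same route as the paper: the explicit formula is obtained by unwinding $\tilde U(g)\Sigma\tilde U(g^{-1})$ exactly as you describe (your sign bookkeeping for the $\omega_1$-terms checks out, since elements of $\gR^{0|n}\subset\modE_0$ are even so $\omega_1$ is genuinely antisymmetric on them), and property 2 is derived, as in the paper, from $\Sigma^\dag=r\Sigma$ together with unitarity of $\tilde U$. The only cosmetic difference is property 1, which the paper dispatches by noting the explicit formula does not depend on $a$, while you argue via centrality of $K$ and the fact that $\tilde U(k)$ is a scalar; both are valid and equally immediate.
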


\begin{proof}
\begin{enumerate}
\item This is immediate, as the explicit expression for $\Omega(q\fois b)$ does not depend on $a$.
\item This is a direct consequence of the analogous property for $\Sigma$ and the unitarity of $\tilde U$.
\end{enumerate}
\end{proof}

Let us now introduce the set $M$ of all left classes of $K$: $M=G/K$, which is a trivial supermanifold of dimension $m|n$ (see Definition \ref{def-superman-trivial}). By the point 1. of Proposition \ref{prop-qu-omega} the map $\Omega$ descends to this quotient, giving a map $\Omega:M\to \caL_\superA(\caD(Q)\otimes\superA)$, which we will still denote by the same symbol. If, for any $x\in M$, we introduce the symmetry $s_x:M\to M$ by $s_x(y)=x\sigma(x^{-1}\fois y)$ for all $y\in M$, then, $(M,s)$ becomes a symmetric space, namely:
\begin{equation*}
s_x^2=\text{id}_M\text{ and } s_x s_y s_x=s_{s_x(y)}.
\end{equation*}

\begin{proposition}
For any $\alpha\notin\{-1,0\}$, the map $\Omega:M\to\caL_\superA(\caD(Q)\otimes\superA)$ is a (twisted by $r$) representation of the symmetric space $(M,s)$, meaning that we have, for all $x,y\in M$,
\begin{enumerate}
\item $\Omega(x)^2=r\,\text{id}$.
\item $\Omega(x)\Omega(y)\Omega(x)=r\,\Omega(s_x(y))$.
\end{enumerate}
\end{proposition}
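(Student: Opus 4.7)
Property 1 is immediate: since $\tilde U$ is a representation of $G$,
\begin{equation*}
\Omega(g)^2 = \tilde U(g)\,\Sigma\,\tilde U(g^{-1})\,\tilde U(g)\,\Sigma\,\tilde U(g^{-1}) = \tilde U(g)\,\Sigma^2\,\tilde U(g^{-1}) = r\cdot\text{id},
\end{equation*}
using the previously established identity $\Sigma^2 = r\cdot\text{id}$.

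For Property 2, my plan is to reduce the problem to the intertwining identity
\begin{equation*}
\Sigma\,\tilde U(g) = \tilde U(\sigma(g))\,\Sigma,\qquad g\in G,
\end{equation*}
equivalently $\Sigma\,\tilde U(g)\,\Sigma = r\,\tilde U(\sigma(g))$, since $\Sigma^2 = r\cdot\text{id}$ and $r\neq 0$. Granting this intertwining, the symmetric-space relation follows from a formal manipulation: expand
\begin{equation*}
\Omega(x)\Omega(y)\Omega(x) = \tilde U(x)\,\Sigma\,\tilde U(x^{-1}y)\,\Sigma\,\tilde U(y^{-1}x)\,\Sigma\,\tilde U(x^{-1}),
\end{equation*}
apply the intertwining to $\Sigma\,\tilde U(x^{-1}y)$, then use $\Sigma^2 = r\cdot\text{id}$, then apply the intertwining once more to $\tilde U(y^{-1}x)\,\Sigma$, and finally invoke the identities $s_x(y) = x\sigma(x^{-1}y)$ and $s_x(y)^{-1} = \sigma(y^{-1}x)\,x^{-1}$ (both following from $\sigma$ being an involutive group automorphism). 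The outcome is $r\,\tilde U(s_x(y))\,\Sigma\,\tilde U(s_x(y)^{-1}) = r\,\Omega(s_x(y))$.

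To prove the intertwining, I would use $\Sigma = \gamma\,\caF_\alpha\,\sigma^*$ together with $\sigma^*\,\tilde U(g) = \tilde U(\sigma(g))\,\sigma^*$ (immediate from $\sigma$ being a group automorphism) and the short identity $\sigma^*\,\caF_\alpha\,\sigma^* = (-1)^n\,\caF_\alpha$ (a Berezin change of variable $\xi\mapsto-\xi$ combined with the superskew-symmetry of $\omega_1$). This reduces the task to analysing $\caF_\alpha\,\tilde U(\sigma(g))\,\caF_\alpha$. From the explicit formula \eqref{eq-induc-expr}, the $\xi_0$-dependence of $\tilde U(g)\,\tilde\varphi$ appears only through the phase $\exp(\tfrac{ia_0}{2}\omega_1(\xi,\xi_0))$ and the odd translation $\xi_0\mapsto\xi_0-\xi$; these two operations are dual under $\caF_\alpha$ up to $\alpha$- and $(1+\alpha)$-dependent scalings. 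A super-integration using Lemma \ref{lem-superman-exp} and \eqref{eq-qu-fourier} assembles them into the claimed intertwining, with the scalar normalizations combining into $r = \gamma^2 r_1\alpha^n$ precisely because of the tuning $\gamma = (-1)^n/(r_0 r_1(1+\alpha)^n)$; the exclusion $\alpha\notin\{-1,0\}$ is required for $\gamma$ to be defined and for $\caF_\alpha$ to be invertible.

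The main obstacle lies in this last computation: tracking the $\gZ_2$-signs from permuting odd integration variables and from the superskew-symmetry of $\omega_1$, and checking that the Berezin-integration normalizations conspire to give exactly the twist $r$. A direct alternative, avoiding the abstract intertwining, would substitute the explicit formula from Proposition \ref{prop-qu-omega} three times into $\Omega(x)\Omega(y)\Omega(x)\,\tilde\varphi(q_0)$, execute two of the three odd Berezin integrations, and then perform the change of variable $\eta_1\mapsto\eta_1-(\xi_1-\xi_2)$ in the remaining integral to recover the single-$\Omega$ form of $\Omega(s_x(y))\,\tilde\varphi(q_0)$; this route faces essentially the same bookkeeping as the abstract one.
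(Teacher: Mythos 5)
Your treatment of Property 1 is correct and coincides with the paper's one-line argument. The problem lies in your main route to Property 2: the intertwining identity $\Sigma\,\tilde U(g)=\tilde U(\sigma(g))\,\Sigma$ on which it rests is false in the graded setting, for \emph{every} value of $\alpha$. Indeed, writing $g=(x,\xi,0)\fois(w,0,a)$ and combining \eqref{eq-induc-expr} with the explicit formula for $\Sigma$, the change of variable $\xi_1\mapsto\xi_1+\xi$ gives
\begin{equation*}
\bigl(\Sigma\tilde U(g)\tilde\varphi\bigr)(x_0,\xi_0)=\gamma\, e^{ia_0(a+\omega_0(x+x_0,w))}\,e^{-\frac{ia_0\alpha}{2}\omega_1(\xi,\xi_0)}\int\dd\xi_1\, e^{ia_0\bigl(-\frac{\alpha}{2}\omega_1(\xi_1,\xi_0)+\frac12\omega_1(\xi,\xi_1)\bigr)}\tilde\varphi(-x_0-x,\xi_1),
\end{equation*}
whereas
\begin{equation*}
\bigl(\tilde U(\sigma(g))\Sigma\tilde\varphi\bigr)(x_0,\xi_0)=\gamma\, e^{ia_0(a+\omega_0(x+x_0,w))}\,e^{-\frac{ia_0}{2}\omega_1(\xi,\xi_0)}\int\dd\xi_1\, e^{ia_0\bigl(-\frac{\alpha}{2}\omega_1(\xi_1,\xi_0)-\frac{\alpha}{2}\omega_1(\xi,\xi_1)\bigr)}\tilde\varphi(-x_0-x,\xi_1).
\end{equation*}
The even parts match, but equality of the odd phases would require $\alpha=1$ (from the $\omega_1(\xi,\xi_0)$ terms) and simultaneously $\alpha=-1$ (from the $\omega_1(\xi,\xi_1)$ terms). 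This is exactly the point you flag as ``the main obstacle'': under $\caF_\alpha$, odd translation by $\xi$ and multiplication by the phase $e^{\frac{ia_0}{2}\omega_1(\xi,\fois)}$ are exchanged with \emph{different} scalings ($\alpha$ versus $1/\alpha$), so $\caF_\alpha\,\tilde U(h)\,\caF_\alpha^{-1}$ is not of the form $\tilde U(h')$ for any $h'$. In the non-graded case the identity does hold because there $\Sigma=\sigma^\ast$; it is precisely the extra odd Fourier transform that breaks it, and no tuning of $\gamma$ can repair a mismatch inside the oscillatory phases.

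Consequently the formal manipulation that follows, while algebraically correct granting the lemma, rests on a false premise, and your main route cannot be completed as stated. What a correct proof must establish instead is the weaker conjugation relation at the level of $\Omega$ itself, $\Sigma\,\Omega(h)\,\Sigma=r\,\Omega(\sigma(h))$ (equivalently, the cancellation of the mismatched odd phases in the specific combination $\Omega(x)\Omega(y)\Omega(x)$). That is the content of the ``direct alternative'' you mention in your last sentence: substituting the explicit formula of Proposition \ref{prop-qu-omega} three times, carrying out the odd Berezin integrations and an appropriate odd change of variables. This direct computation is in fact the paper's entire proof of Property 2, so you should promote that alternative to the main argument and actually carry it out.
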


\begin{proof}
To show 1, it suffices to note that for $g\in x\cdot K$, we have 
\begin{equation*}
\Omega(g)^2=\tilde U(g)\Sigma\tilde U(g^{-1})\tilde U(g)\Sigma\tilde U(g^{-1})=r\,\text{id}
\end{equation*}
The identity 2 is a direct computation using the expression of Proposition \ref{prop-qu-omega}.
\end{proof}

Let us now ``extend'' the quantization map $\Omega$ to functions on $M$, by defining $\Omega:\caD(M)\to\caL_\superA(\caD(Q)\otimes\superA)$ by
\begin{equation*}
\forall f\in\caD(M)
\quad:\quad
\Omega(f)=\int_M\dd z\, f(z)\Omega(z).
\end{equation*}
For any $\tilde\varphi\in\caD(Q)$ and $q_0\in Q$, the explicit expression is given by 
\begin{multline}
\bigl(\Omega(f)\tilde\varphi\bigr)(q_0)=\gamma\int_M\dd x\dd\xi\dd w f(x,\xi,w)
\\
\int\dd\xi_1\ e^{ia_0(2\omega_0(x-x_0,w)+\frac12\omega_1(\xi,\xi_0)-\frac{\alpha}{2}\omega_1(\xi_1,\xi_0)-\frac12(\alpha+1)\omega_1(\xi,\xi_1))}
\tilde\varphi(2x-x_0,\xi+\xi_1).\label{eq-qu-omf}
\end{multline}
Since $a_0\in\gR^\ast$ and since $f$ has a compact support with respect to the variable $x\in\gB Q$, it follows that $\Omega(f)\tilde\varphi\in\caD(Q)$. Contrary to the case of the map $\Omega$ on $M$, where $\Omega(x)\tilde\varphi$ does not lie in $\caD(Q)$ but only in $\caD(Q)\otimes\superA$, here there is no need to take the tensor product by $\superA$. This means in particular that we can consider the quantization map as a map $\Omega:\caD(M)\to\caL(\caD(Q))$. Note that the degree of $\Omega(f)$ (with respect to the $\gZ_2$ grading) is the same as the degree of $f$.

\begin{lemma}
\label{lem-qu-cont}
For all $f\in\caD(M)$ there exists a constant $C_f\in\gR^\ast_+$ such that \begin{equation*}
\forall\tilde\varphi\in\caD(Q)
\quad:\quad
\left(\Omega(f)\tilde\varphi,\Omega(f)\tilde\varphi\right)\leq C_f\left(\tilde\varphi,\tilde\varphi\right).
\end{equation*}
\end{lemma}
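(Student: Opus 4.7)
The plan is to reduce the boundedness of $\Omega(f)$ to a finite sum of Schur-type estimates on the even part $L^2(\gB Q)$. Starting from formula \eqref{eq-qu-omf}, observe that the only place where $w\in\modW_0$ appears is in the exponential factor $e^{2ia_0\omega_0(x-x_0,w)}$. Hence integrating $f$ against this oscillatory factor in $w$ produces a partial Fourier transform
\begin{equation*}
\hat f(x,\xi,p)=\int_{\modW_0}\dd w\ e^{-ia_0\omega_0(p,w)}\,f(x,\xi,w)
\end{equation*}
evaluated at $p=-2(x-x_0)$. Because $f\in\caD(M)$ is compactly supported in $(x,\xi,w)$, the function $\hat f$ is compactly supported in $x$ and Schwartz in $p$; moreover $\hat f$ is a polynomial in $\xi$ of the form $\sum_I\hat f_I(x,p)\xi^I$ with each $\hat f_I$ compactly supported in $x$ and Schwartz in $p$.

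Next I would perform the change of variable $y=2x-x_0$ to put the result in standard integral-operator form, and expand the odd exponential $e^{i\Phi_{\mathrm{odd}}}$ (which depends polynomially on the finitely many odd generators $\xi,\xi_0,\xi_1$) using Lemma \ref{lem-superman-exp}. Writing $\tilde\varphi(y,\eta)=\sum_J\tilde\varphi_J(y)\eta^J$, the integration over the odd variables $\xi_1$ (which is a Berezin integral, hence a purely algebraic operation picking out a top-degree component) then collapses to a \emph{finite} sum, and one obtains
\begin{equation*}
\bigl(\Omega(f)\tilde\varphi\bigr)(x_0,\xi_0)=\sum_{I,J,L}\xi_0^L\int_{\gB Q}\dd y\ K^L_{I,J}(x_0,y)\,\tilde\varphi_J(y),
\end{equation*}
where each kernel $K^L_{I,J}(x_0,y)$ is a constant multiple of $\hat f_I\!\bigl(\tfrac{y+x_0}{2},-a_0(y-x_0)\bigr)$, up to combinatorial factors coming from the expansion of the odd phase.

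Each such kernel is jointly smooth in $(x_0,y)$, compactly supported in the variable $(y+x_0)/2$ and Schwartz in $y-x_0$. In particular, the quantities
\begin{equation*}
M^L_{I,J}:=\max\Bigl(\sup_{x_0}\int|K^L_{I,J}(x_0,y)|\,\dd y,\ \sup_{y}\int|K^L_{I,J}(x_0,y)|\,\dd x_0\Bigr)
\end{equation*}
are finite, so by Schur's test the operator $T^L_{I,J}:L^2(\gB Q)\to L^2(\gB Q)$ with kernel $K^L_{I,J}$ is bounded with operator norm at most $M^L_{I,J}$.

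Finally, because the positive definite scalar product on $\caD(Q)$ satisfies $(\tilde\varphi,\tilde\varphi)=\sum_J\int|\tilde\varphi_J(y)|^2\,\dd y$ (see \eqref{eq-superman-scalprod}) and the basis $(\xi_0^L)$ is orthonormal for this product, one obtains by Minkowski's inequality on the finite sum over $I,J$
\begin{equation*}
(\Omega(f)\tilde\varphi,\Omega(f)\tilde\varphi)=\sum_L\sum_J\int\Bigl|\sum_I(T^L_{I,J}\tilde\varphi_J)(x_0)\Bigr|^2\dd x_0\leq C_f\,(\tilde\varphi,\tilde\varphi),
\end{equation*}
with $C_f$ a finite constant built from the $M^L_{I,J}$. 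The only mildly delicate point is the combinatorial bookkeeping of the odd variables in the expansion of $e^{i\Phi_{\mathrm{odd}}}$; the analytic content is elementary because all odd integrals are finite sums and the even integral operators are controlled by Schur's test thanks to the compact support of $f$ and the Schwartz decay of $\hat f$ in the Fourier-dual direction.
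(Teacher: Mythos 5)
Your proof is correct, but it takes a genuinely different route from the paper's. The paper never Fourier-transforms in $w$: it expands $\left(\Omega(f)\tilde\varphi,\Omega(f)\tilde\varphi\right)$ into a finite sum of quintuple integrals over $(x_0,x,w,x',w')$, applies Cauchy--Schwarz only to the $x_0$-integral of the two $\tilde\varphi$ factors, and bounds everything else brutally by $\Vert f_{I_1}\Vert_1\Vert f_{I_2}\Vert_1$; the oscillatory factor $e^{2ia_0\omega_0(\cdot,w)}$ is simply discarded under the absolute value. You instead exploit that oscillation: the partial Fourier transform in $w$ turns $\Omega(f)$ into a finite matrix of integral operators whose kernels are compactly supported along $(y+x_0)/2$ and Schwartz along $y-x_0$ (Paley--Wiener), and Schur's test finishes the job. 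Your version gives a cleaner structural picture (the kernels are even Hilbert--Schmidt), but there is a trade-off worth noting: your Schur constants $M^L_{I,J}$ are controlled by the decay of $\hat f_I$ in the dual variable, hence by $w$-derivatives of $f$, whereas the paper's cruder argument yields $C_f\leq C^2\Vert f\Vert_1^2$ --- and it is precisely this $L^1$-continuity that the immediately following corollary uses to extend $\Omega$ to $L^1(M)$. So for the lemma as stated both proofs work, but for the corollary one would still need the paper's estimate (or a separate observation that $\sup_p|\hat f_I(u,p)|\leq\Vert f_I(u,\cdot)\Vert_{L^1}$ does not suffice without decay in $p$). One cosmetic slip: in your final display the sum over $J$ should sit inside the squared modulus together with the sum over $I$; your appeal to Minkowski on the finite sum over $I,J$ shows you intend this, so it does not affect the argument.
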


\begin{proof}
Using formula \eqref{eq-qu-omf} we compute:
\begin{multline*}
\left(\Omega(f)\tilde\varphi,\Omega(f)\tilde\varphi\right) =
\gamma^2\int\dd x_0\dd\xi_0\dd x\dd\xi\dd w \overline{f(x,\xi,w)}\dd\xi_1
\\
e^{-ia_0(2\omega_0(x-x_0,w)+\frac12\omega_1(\xi,\xi_0)-\frac{\alpha}{2}\omega_1(\xi_1,\xi_0)-\frac12(\alpha+1)\omega_1(\xi,\xi_1))}\overline{\tilde\varphi(2x-x_0,\xi+\xi_1)}
\\
\ast_{\xi_0}\int\dd x'\dd\xi'\dd w' f(x',\xi',w')\dd\xi_2 
\\
e^{ia_0(2\omega_0(x'-x_0,w')+\frac12\omega_1(\xi',\xi_0)-\frac{\alpha}{2}\omega_1(\xi_2,\xi_0)-\frac12(\alpha+1)\omega_1(\xi',\xi_2))} \tilde\varphi(2x'-x_0,\xi'+\xi_2),
\end{multline*}
where $\ast_{\xi_0}$ is the Hodge operation with respect to the variable $\xi_0$. Expanding this formula and integrating over the odd variables $\xi$, $\xi'$, $\xi_1$, $\xi_2$ and $\xi_0$ gives us the formula
\begin{multline*}
\left(\Omega(f)\tilde\varphi,\Omega(f)\tilde\varphi\right) =\sum_{I_1,I_2,J_1,J_2}\alpha(I_1,I_2,J_1,J_2) \int\dd x_0\dd x\dd w\dd x'\dd w' \overline{f_{I_1}(x,w)}f_{I_2}(x',w')\\
e^{2ia_0(\omega_0(x'-x_0,w')-\omega_0(x-x_0,w))}\overline{\tilde\varphi_{J_1}(2x-x_0)}\tilde\varphi_{J_2}(2x'-x_0).
\end{multline*}
where $I_1,I_2,J_1,J_2$ are subsets of $\{1,\dots,n\}$ with some constraints between them that we do not write explicitly, and $\alpha(I_1,I_2,J_1,J_2)$ is a real number independent of $f$ and $\tilde\varphi$. Let us now denote by $|\alpha|$ the supremum (maximum) of all $|\alpha(I_1,I_2,J_1,J_2)|$. By using the triangular and the Cauchy-Schwartz inequalities, we obtain
\begin{align*}
\left(\Omega(f)\tilde\varphi,\Omega(f)\tilde\varphi\right) &\leq\sum_{I_1,I_2,J_1,J_2}|\alpha| \int\dd x\dd w\dd x'\dd w' |f_{I_1}(x,w)f_{I_2}(x',w')| \norm\tilde\varphi_{J_1}\norm_2\norm\tilde\varphi_{J_2}\norm_2\\
&\leq\sum_{I_1,I_2,J_1,J_2}|\alpha| \norm f_{I_1}\norm_1 \norm f_{I_2}\norm_1 \norm\tilde\varphi_{J_1}\norm_2\norm\tilde\varphi_{J_2}\norm_2
\end{align*}
But $\left(\tilde\varphi,\tilde\varphi\right)=\sum_I\norm\tilde\varphi_I\norm_2^2\geq \sup_I \norm\tilde\varphi_I\norm_2^2$, hence
\begin{equation*}
\left(\Omega(f)\tilde\varphi,\Omega(f)\tilde\varphi\right)\leq C_f\left(\tilde\varphi,\tilde\varphi\right),
\end{equation*}
with $C_f=\sum_{I_1,I_2,J_1,J_2}|\alpha| \norm f_{I_1}\norm_1 \norm f_{I_2}\norm_1$.
\end{proof}

\begin{corollary}
The quantization map $\Omega:\caD(M)\to\caL(\caD(Q))$ can be continuously extended in a unique way to a map
\begin{equation*}
\Omega:L^1(M)\to\caB(L^2(Q)),
\end{equation*}
where we recall that $L^2(Q)$ is a Hilbert superspace (see subsection \ref{subsec-superhilbert}).
\end{corollary}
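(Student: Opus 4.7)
The plan is to apply the Bounded Linear Transformation theorem twice, using the estimate already furnished by Lemma \ref{lem-qu-cont}. First, for a fixed $f\in\caD(M)$, the inequality $\left(\Omega(f)\tilde\varphi,\Omega(f)\tilde\varphi\right)\leq C_f\left(\tilde\varphi,\tilde\varphi\right)$ for all $\tilde\varphi\in\caD(Q)$ says that $\Omega(f):\caD(Q)\to\caD(Q)$ is continuous for the $L^2$-norm $\norm\tilde\varphi\norm=\sqrt{(\tilde\varphi,\tilde\varphi)}$ with operator norm at most $\sqrt{C_f}$. Since $\caD(Q)$ is dense in the Hilbert superspace $L^2(Q)$ (which is complete for this norm by definition), the operator $\Omega(f)$ extends uniquely to an element of $\caB(L^2(Q))$; the extension inherits the $\gZ_2$-degree of the original operator, since each $\Omega(f)$ is homogeneous of the same degree as $f$ and the decomposition $L^2(Q)=L^2(Q)_0\oplus L^2(Q)_1$ is closed.

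Second, I would extract from the explicit form of $C_f$ in Lemma \ref{lem-qu-cont} a bound that is linear in an $L^1$-norm of $f$. The expression
\[
C_f=\sum_{I_1,I_2,J_1,J_2}|\alpha(I_1,I_2,J_1,J_2)|\,\norm f_{I_1}\norm_1\norm f_{I_2}\norm_1
\]
is a finite sum (there are at most $(2^n)^4$ subsets involved), so if $|\alpha|$ denotes the maximum of the coefficients one has
\[
C_f\leq |\alpha|\cdot 2^{2n}\Bigl(\sum_I\norm f_I\norm_1\Bigr)^2=C\,\norm f\norm_{L^1(M)}^2,
\]
where $\norm f\norm_{L^1(M)}=\sum_I\norm f_I\norm_1$ is the natural $L^1$-norm on $L^1(M)\simeq L^1(\gB M)\otimes\bigwedge\gR^n$ associated to the identification used throughout the paper. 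Taking square roots then gives $\norm\Omega(f)\norm_{\caB(L^2(Q))}\leq\sqrt{C}\,\norm f\norm_{L^1(M)}$, i.e. $\Omega$ is a bounded linear map from $(\caD(M),\norm\cdot\norm_{L^1(M)})$ into the Banach space $\caB(L^2(Q))$.

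Finally, $\caD(M)$ is dense in $L^1(M)$ (by density of $\caD(\gB M)$ in $L^1(\gB M)$ and finite-dimensionality of $\bigwedge\gR^n$) and $\caB(L^2(Q))$ is complete, so the standard BLT argument yields a unique continuous extension $\Omega:L^1(M)\to\caB(L^2(Q))$. The substantive work has already been done in Lemma \ref{lem-qu-cont}, whose proof required the explicit expansion of \eqref{eq-qu-omf}, performing the Berezin integrations in the odd variables and keeping track of the combinatorial coefficients; once the key $L^2$-boundedness estimate is available, extraction of the linear $L^1$-dependence and the density argument are routine.
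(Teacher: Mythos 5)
Your proposal is correct and follows essentially the same route as the paper: first extend each $\Omega(f)$ from $\caD(Q)$ to $L^2(Q)$ by density, then observe that the constant $C_f$ from Lemma \ref{lem-qu-cont} is bounded by a constant times $\norm f\norm_1^2$, and finally extend $\Omega$ itself by density of $\caD(M)$ in $L^1(M)$. The only differences are cosmetic (your explicit $2^{2n}$ bookkeeping of the combinatorial constant versus the paper's $C^2=\sum_{J_1,J_2}|\alpha|$).
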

\begin{proof}
We have shown in Lemma \ref{lem-qu-cont} that for any $f\in\caD(M)$, $\Omega(f)$ is a continuous operator on $\caD(Q)$. Since $\caD(Q)$ is dense in $L^2(Q)$, we can extend $\Omega(f)$ in a unique way to a bounded operator on $L^2(Q)$. Moreover, if we denote by $\norm f\norm_1$ the norm of $f\in L^1(M)=L^1(\gB M)\otimes\bigwedge \gR^n$:
\begin{equation*}
\norm f\norm_1=\sum_I\int_{\gB M}\dd x|f_I(x)|=\sum_I\norm f_I\norm_1
\end{equation*}
then it is immediate from the proof of Lemma \ref{lem-qu-cont} that the constant $C_f$ verifies $C_f\leq C^2\norm f\norm_1^2$, with $C^2=\sum_{J_1,J_2}|\alpha|$. It follows that we have, for any $\tilde\varphi\in L^2(Q)$,
\begin{equation*}
\sqrt{\left(\Omega(f)\tilde\varphi,\Omega(f)\tilde\varphi\right)}\leq C\sqrt{\left(\tilde\varphi,\tilde\varphi\right)}\norm f\norm_1,
\end{equation*}
which means that $\Omega$ is a continuous map on $\caD(M)$ for the topology of $L^1(M)$ and for the operator topology of $\caB(L^2(Q))$. Since $\caD(M)$ is dense in $L^1(M)$, we can extend $\Omega$ in a unique way to a continuous map: $\Omega:L^1(M)\to\caB(L^2(Q))$.
\end{proof}

\subsection{Preliminaries concerning functional spaces}
\label{subsec-prel}

We start this subsection with a result concerning Schwartz functions on $Q$, where the space $\caS(Q)$ of these functions is defined (as usual) by $\caS(Q) = \caS(\gB Q) \otimes \bigwedge \gR^n$ with $n$ still the odd dimension of $Q$ and $\caS(\gB Q)$ the standard space of Schwartz functions (recall that $\gB Q$ is a (real) vector space).

In the rest of this section, we will use the notation for the variables:
\begin{equation}
x\in Q_o,\quad \xi\in\gR^{0|n},\quad q=(x,\xi)\in Q,\quad w\in\modW_0,\quad z=(x,\xi,w)\in M,\quad y=(\gB x,\gB w)\in \gB M,\label{eq-prel-var}
\end{equation}
where we recall that $M\simeq M_o\times\gR^{0|n}\simeq Q\times\modW_0$, $Q\simeq Q_o\times\gR^{0|n}$ and that the odd dimension of $\modW_0$ is zero.

\begin{lemma}
\label{lem-prel-schw}
For any $\tilde\varphi\in\caS(Q)$ and any $(x,\xi,w)\in M$, we define the function $\tilde\varphi_{(x,\xi,w)}=\tilde U((x,\xi,0)\fois(w,0,0))\tilde\varphi$. Then, for any $\tilde\psi\in L^2(Q)$, the map $(x,\xi,w)\mapsto \langle\tilde\varphi_{(x,\xi,w)},\tilde\psi\rangle$ belongs to $\caS(M)$.
\end{lemma}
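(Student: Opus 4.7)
The plan is to compute the pairing $F(x,\xi,w) := \langle \tilde\varphi_{(x,\xi,w)}, \tilde\psi\rangle$ explicitly from \eqref{eq-induc-expr}, reduce it to a finite sum of scalar oscillatory integrals, and then derive the Schwartz estimates on each component via oscillatory integration by parts.

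First, I would substitute $g = (x,\xi,0)\fois(w,0,0)$ (so that $a=0$) into \eqref{eq-induc-expr} to obtain
\begin{equation*}
\tilde\varphi_{(x,\xi,w)}(x_0,\xi_0)=e^{ia_0\bigl((x-x_0)\cdot w+\tfrac12\omega_1(\xi,\xi_0)\bigr)}\,\tilde\varphi(x_0-x,\xi_0-\xi),
\end{equation*}
plug this into the super scalar product \eqref{eq-superman-scalprod}, and change variables $x_0\mapsto x_0+x$, $\xi_0\mapsto \xi_0+\xi$. Using $\omega_1(\xi,\xi)=0$ (since $\xi$ is odd), this yields, up to conjugation signs,
\begin{equation*}
F(x,\xi,w)=\int \dd x_0\,\dd\xi_0\, e^{ia_0\,x_0\cdot w}\,e^{-\tfrac{ia_0}{2}\omega_1(\xi,\xi_0)}\,\overline{\tilde\varphi(x_0,\xi_0)}\,\tilde\psi(x_0+x,\xi_0+\xi).
\end{equation*}
Expanding $\tilde\varphi$, $\tilde\psi$, and the odd exponential (Lemma \ref{lem-superman-exp}) in the basis $\{\xi_0^I\}$ of $\bigwedge\gR^n$ and performing the Berezin integral in $\xi_0$ reduces $F$ to a finite polynomial in $\xi$, whose coefficients are ordinary scalar integrals of the form
\begin{equation*}
F_{A,B}(x,w)=\int_{\gB Q}\dd x_0\, e^{ia_0\,x_0\cdot w}\,\overline{\tilde\varphi_A(x_0)}\,\tilde\psi_B(x_0+x),
\end{equation*}
with $\tilde\varphi_A\in\caS(\gB Q)$ and $\tilde\psi_B\in L^2(\gB Q)$. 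Since the $\xi$-coefficients are nilpotent and finitely many, it suffices to show that each $F_{A,B}$ lies in $\caS(\gB M)$.

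The Schwartz estimates on $F_{A,B}$ would then be obtained by oscillatory integration by parts. Rapid decay and smoothness in $w$ follow from the identity $w^\alpha e^{ia_0 x_0\cdot w}=(ia_0)^{-|\alpha|}\partial_{x_0}^\alpha e^{ia_0 x_0\cdot w}$ and $|\alpha|$ integrations by parts in $x_0$, using the Schwartz regularity of $\tilde\varphi_A$ together with Cauchy--Schwarz and the translation-invariance of $\|\tilde\psi_B\|_{L^2}$. Smoothness in $x$ and $x$-derivative estimates are handled via $\partial_x \tilde\psi_B(x_0+x)=\partial_{x_0}\tilde\psi_B(x_0+x)$ followed by integration by parts to transfer $x_0$-derivatives onto the smooth factors $e^{ia_0 x_0\cdot w}$ and $\overline{\tilde\varphi_A}$. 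The main obstacle is polynomial control in $x$: a direct Cauchy--Schwarz bound on $x^\beta F_{A,B}(x,w)$ would a priori require polynomial moments of $\tilde\psi_B$, which are not available for general $\tilde\psi\in L^2$. The decisive point is therefore to distribute powers of $x$ carefully using $x=(x_0+x)-x_0$ and Leibniz, combined with further oscillatory integrations by parts against $e^{ia_0 x_0\cdot w}$, so that all polynomial and differential weights are ultimately carried by the Schwartz window $\tilde\varphi$, leaving only translation-invariant $L^2$ norms of $\tilde\psi$ to be estimated at the end.
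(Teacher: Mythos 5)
Your overall strategy is the paper's own: reduce $\langle\tilde\varphi_{(x,\xi,w)},\tilde\psi\rangle$ by Berezin integration to finitely many scalar oscillatory integrals $F_{A,B}(x,w)=\int\dd x_0\,e^{ia_0 x_0\cdot w}\,\overline{\tilde\varphi_A(x_0)}\,\tilde\psi_B(x_0+x)$ and then integrate by parts against the phase (the paper inserts powers of $O_{x_0}=\frac{1}{1+w^2}(1-\frac{1}{a_0^2}\Delta_{x_0})$, which fixes $e^{ia_0\omega_0(x_0,w)}$, and tests $D^\beta F$ against polynomials instead of estimating pointwise; that difference is cosmetic). The genuine problem is the final step, which you have correctly located but not solved. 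Writing $x^\beta=\bigl((x_0+x)-x_0\bigr)^\beta$ and expanding produces terms carrying $(x_0+x)^{\beta'}\tilde\psi_B(x_0+x)$, i.e.\ exactly the polynomial moments of $\tilde\psi_B$ that you just observed are unavailable for $\tilde\psi\in L^2$; no further integration by parts against $e^{ia_0x_0\cdot w}$ removes them, since differentiating the phase in $x_0$ only produces powers of $w$, while trading powers of $x_0$ for $\partial_w$ is not available in a pointwise-in-$w$ estimate. The same obstruction already undermines your decay-in-$w$ step: after $w^\alpha e^{ia_0x_0\cdot w}=(ia_0)^{-|\alpha|}\partial_{x_0}^\alpha e^{ia_0x_0\cdot w}$ and integration by parts, Leibniz necessarily places $x_0$-derivatives on $\tilde\psi_B(x_0+x)$, which do not exist for a general element of $L^2(\gB Q)$.

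This is not something a cleverer rearrangement can fix. For $n=0$ the map in question is, up to the unimodular factor $e^{-ia_0 x\cdot w}$, the cross-ambiguity (windowed Fourier) transform of $\tilde\psi$ with Schwartz window $\tilde\varphi$, and it is classical that this lies in $\caS(\gB M)$ if and only if $\tilde\psi$ is itself Schwartz; a $\tilde\psi$ with a jump already gives only $O(1/|w|)$ decay at $x=0$. So the statement for arbitrary $\tilde\psi\in L^2(Q)$ is not provable; the paper's own proof hides the same assumption in its last line, where the integrand is declared ``Schwartz in $x$ and $x_0$'' after $O_{x_0}^k$ has acted on $\overline{\tilde\varphi_I(x_0)}\tilde\psi_J(x_0+x)$. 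Both your argument and the paper's close immediately once one assumes $\tilde\psi\in\caS(Q)$ — then $x^\beta$ is absorbed by $(x_0+x)^\beta$ against $\tilde\psi_B$ and $x_0^\beta$ against $\tilde\varphi_A$, and every integration by parts is legitimate — and that is the only situation in which the lemma is subsequently used (Theorem \ref{thm-prel-resol}). For merely square-integrable $\tilde\psi$ the honest conclusion is boundedness and continuity (or square-integrability over $M$, via the orthogonality relations), not membership in $\caS(M)$.
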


\begin{proof}
We start with the computation
\begin{align*}
\langle\tilde\varphi_{(x,\xi,w)},\tilde\psi\rangle &=\int\dd x_0\dd\xi_0 e^{-ia_0(\omega_0(x-x_0,w)+\frac 12\omega_1(\xi,\xi_0))} \overline{\tilde\varphi(x_0-x,\xi_0-\xi)}\tilde\psi(x_0,\xi_0)\\
&=\sum_{I,J,K}\alpha(I,J,K)\int\dd x_0 e^{-ia_0\omega_0(x-x_0,w)}\overline{\tilde\varphi_I(x_0-x)}\tilde\psi_J(x_0)\xi^K,
\end{align*}
where, using notation as in the proof of Lemma \ref{lem-qu-cont}, we integrated over $\xi_0$ (and just as in the proof of Lemma  \ref{lem-qu-cont}, there are some constraints among $I$, $J$ and $K$).

Let us now consider the quantity $\caI$ defined by
\begin{equation*}
\caI=\int_{\gB M}\dd x\dd w \left(D_{(x,w)}^\beta \int\dd x_0 e^{-ia_0\omega_0(x-x_0,w)}\overline{\tilde\varphi_I(x_0-x)}\tilde\psi_J(x_0)\right)P(x,w),
\end{equation*}
where $P$ is an arbitrary polynomial function and $\beta$ a multi-index. If we can show that $\vert \caI\vert<\infty$, then we will have proved the lemma, as it shows that the map $(x,\xi,w)\mapsto \langle\tilde\varphi_{(x,\xi,w)},\tilde\psi\rangle$ is Schwartz. To do so, we start with the obvious observation that we have
\begin{equation*}
\caI=\int\dd x\dd w\dd x_0 \left(D_{(x,w)}^\beta e^{ia_0\omega_0(x_0,w)}\overline{\tilde\varphi_I(x_0)}\tilde\psi_J(x_0+x)\right)P(x,w).
\end{equation*}
We then introduce the operator $O_{x_0}=\frac{1}{1+w^2}(1-\frac{1}{a_0^2}\Delta_{x_0})$, which has the property
$O_{x_0}(e^{ia_0\omega_0(x_0,w)})=e^{ia_0\omega_0(x_0,w)}
$, simply because $\partial_{x_0} e^{ia_0\omega_0(x_0,w)}=ia_0 we^{ia_0\omega_0(x_0,w)}$. Inserting the $k$-th power of this operator on the exponential and integrating by parts gives us:
\begin{equation*}
\caI=\int\dd x\dd w\dd x_0 \left(D_{(x,w)}^\beta e^{ia_0\omega_0(x_0,w)}O^k_{x_0}(\overline{\tilde\varphi_I(x_0)}\tilde\psi_J(x_0+x))\right)P(x,w).
\end{equation*}
We then note that we have the equality
\begin{multline*}
D_{(x,w)}^\beta \left(e^{ia_0\omega_0(x_0,w)}O^k_{x_0}(\overline{\tilde\varphi_I(x_0)}\tilde\psi_J(x_0+x))\right)=\\
\frac{1}{(1+w^2)^k}\sum_{\gamma_i}b_{\gamma_i}(x_0,w) e^{ia_0\omega_0(x_0,w)} \partial_{x_0}^{\gamma_1}\overline{\tilde\varphi_I(x_0)}\partial_{x_0}^{\gamma_2} \partial_{x}^{\gamma_3}\tilde\psi_J(x_0+x),
\end{multline*}
for multi-indices $\gamma_i$ (a finite sum) and functions $b_{\gamma_i}(x_0,w)$ which are bounded by a polynomial in $x_0$ and bounded in $w$. We thus can make the estimation
\begin{equation*}
|\caI|\leq\sum_{\gamma_i}\int\dd x\dd w\dd x_0\frac{|b_{\gamma_i}(x_0,w)P(x,w)|}{(1+w^2)^k}|\partial_{x_0}^{\gamma_1}\overline{\tilde\varphi_I(x_0)}\partial_{x_0}^{\gamma_2} \partial_{x}^{\gamma_3}\tilde\psi_J(x_0+x)|
\ .
\end{equation*}
This will be finite whenever $k$ is greater than $\dim(\gB M)+1+d$ ($d$ the degree of $P$ in $w$) because with such a $k$ the integrand will be integrable in $w$ and because the integrand is Schwartz in $x$ and $x_0$.
\end{proof}

Using the same notation and the same arguments, one can show that the function $(x,\xi,w)\mapsto(\tilde\varphi_{(x,\xi,w)},\tilde\psi)$ also belongs to $\caS(M)$. See also \cite{Bieliavsky:2010kg} for the analog in the non-graded case. Let us now show that there exists a resolution of the identity in this context.

\begin{theorem}[Resolution of the identity]
\label{thm-prel-resol}
There exists a constant $C\in\gC^\ast$ such that 
\begin{equation}
\forall \tilde\varphi\in\caS(\gB Q)\ ,\ \forall \tilde\psi\in\caS(Q)\ ,\ \forall q_1\in Q
\quad:\quad
\int_M\dd z\ \langle\tilde\varphi_{z},\tilde\psi\rangle \tilde\varphi_{z}(q_1)=C\norm\tilde\varphi\norm_2^2\tilde\psi(q_1).\label{eq-prel-resol}
\end{equation}
\end{theorem}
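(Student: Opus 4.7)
The plan is to expand both sides explicitly and exploit the fact that $\tilde\varphi$ depends only on the even variable. First I would substitute into the left-hand side the formula obtained in the proof of Lemma~\ref{lem-prel-schw},
\begin{equation*}
\langle\tilde\varphi_z,\tilde\psi\rangle=\int\dd x_0\dd\xi_0\,e^{-ia_0(\omega_0(x-x_0,w)+\frac12\omega_1(\xi,\xi_0))}\,\overline{\tilde\varphi(x_0-x)}\,\tilde\psi(x_0,\xi_0),
\end{equation*}
together with $\tilde\varphi_z(q_1)=e^{ia_0(\omega_0(x-x_1,w)+\frac12\omega_1(\xi,\xi_1))}\,\tilde\varphi(x_1-x)$, in which $\tilde\varphi$ no longer carries any odd argument. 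This way the full integrand factorises into a $w$-factor, an odd factor depending only on $\xi,\xi_0,\xi_1$, and an even factor in $\overline{\tilde\varphi(x_0-x)}\tilde\varphi(x_1-x)$ and the components $\tilde\psi_J(x_0)$.

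The key is then to integrate in an order that telescopes the exponentials. I would first perform the $w$-integration: only the factor $\exp\bigl(ia_0\omega_0(x_0-x_1,w)\bigr)$ depends on $w$, so formula \eqref{eq-qu-fourier} (with $\alpha=1/2$) gives a Dirac delta $r_0\,2^{m/2}\,\delta(x_0-x_1)$. The $x_0$-integration then collapses and forces $x_0=x_1$ throughout. Next I would carry out the Berezin integration over $\xi_0$: expanding $\exp(-\tfrac{ia_0}{2}\omega_1(\xi,\xi_0))$ by Lemma~\ref{lem-superman-exp} and integrating against $\tilde\psi(x_1,\xi_0)=\sum_J\tilde\psi_J(x_1)\,\xi_0^J$ selects $K=\complement J$ and yields a polynomial in $\xi$ of the form $\sum_J\tilde\psi_J(x_1)\,c_J\,\xi^{\complement J}$ for explicit constants $c_J$. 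Multiplying by the remaining factor $\exp(\tfrac{ia_0}{2}\omega_1(\xi,\xi_1))$ and performing the Berezin integration over $\xi$ reconstitutes $\tilde\psi(x_1,\xi_1)$: indeed, the two $\xi$-integrations realise an odd Fourier transform and its inverse, so each component $\tilde\psi_J(x_1)$ comes back multiplied by $\xi_1^J$ and by one and the same odd normalisation constant.

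What then remains is the purely even integral
\begin{equation*}
\int_{Q_o}\dd x\;\overline{\tilde\varphi(x_1-x)}\,\tilde\varphi(x_1-x),
\end{equation*}
which, after the change of variable $x\mapsto x_1-x$, equals $\norm\tilde\varphi\norm_2^2$. Gathering the constant coming from the even Fourier step with the odd normalisation constant from the double Berezin integration yields the announced universal constant $C\in\gC^\ast$.

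The main obstacle in this plan is bookkeeping: one must track carefully the signs $\eps(I,J)$, the factors $(ia_0)^{|\cdot|}$ and the halves coming from $\tfrac12\omega_1$, so as to verify that the constant produced by the two odd integrations is independent of the multi-index $J$. Once that uniformity is secured, the components recombine into a single scalar multiple of $\tilde\psi(q_1)$ rather than into a $J$-weighted combination, and the factorisation $C\,\norm\tilde\varphi\norm_2^2\,\tilde\psi(q_1)$ is immediate.
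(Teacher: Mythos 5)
Your proposal is correct and follows essentially the same computation as the paper: expand both factors explicitly, use the even Fourier transform \eqref{eq-qu-fourier} to produce $r_0 2^{m/2}\delta(x_0-x_1)$, let the two odd (Berezin) integrations reconstitute $\tilde\psi(x_1,\xi_1)$, and the surviving even integral gives $\norm\tilde\varphi\norm_2^2$. The only organizational difference is that the paper integrates over $\xi$ first, so the combined odd phase $e^{-\frac{ia_0}{2}\omega_1(\xi,\xi_0-\xi_1)}$ immediately yields the odd delta $r_1(\xi_0-\xi_1)^{\{1,\dots,n\}}$, which sidesteps the $J$-uniformity bookkeeping you flag as the main obstacle.
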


\begin{proof}
By Lemma \ref{lem-prel-schw} the integrand is Schwartz on $M$, so the integral is well defined. Denoting $z=(x,\xi,w)\in M$ and $q_1=(x_1,\xi_1)\in Q$, we then compute: 
\begin{multline*}
\int_M\dd x\dd\xi\dd w\ \langle\tilde\varphi_{(x,\xi,w)},\tilde\psi\rangle \tilde\varphi_{(x,\xi,w)}(x_1,\xi_1)
\\
=\int\dd x\dd\xi\dd w\dd x_0\dd \xi_0 e^{-ia_0 (\omega_0(x_1-x_0,w)+\frac 12\omega_1(\xi,\xi_0-\xi_1))}
\overline{\tilde\varphi(x_0-x)}\tilde\psi(x_0,\xi_0)\tilde\varphi(x_1-x).
\end{multline*}
Using Equations \eqref{eq-qu-fourier} we then obtain
\begin{align*}
\int_M\dd x\dd\xi\dd w\ 
&
\langle\tilde\varphi_{(x,\xi,w)},\tilde\psi\rangle \tilde\varphi_{(x,\xi,w)}(x_1,\xi_1)
\\
&
=r_1(-1)^n\int\dd x\dd w\dd x_0\dd \xi_0 (\xi_0-\xi_1)^{\{1,\dots,n\}}e^{-ia_0\omega_0(x_1-x_0,w)}\overline{\tilde\varphi(x_0-x)}
\\
&
\qquad\kern8cm\tilde\psi(x_0,\xi_0)\tilde\varphi(x_1-x)\\
&=r_1(-1)^n\int\dd x\dd w\dd x_0 e^{-ia_0\omega_0(x_1-x_0,w)}\overline{\tilde\varphi(x_0-x)}\tilde\psi(x_0,\xi_1)\tilde\varphi(x_1-x).\\
\end{align*}
Since $\int \dd we^{-ia_0\omega_0(x_1-x_0,w)}=r_0 2^{\frac m2}\delta(x_1-x_0)$, we get the announced result:
\begin{equation*}
\int_M\dd x\dd\xi\dd w\ \langle\tilde\varphi_{(x,\xi,w)},\tilde\psi\rangle \tilde\varphi_{(x,\xi,w)}(x_1,\xi_1)=C\int\dd x\overline{\tilde\varphi(x)}\tilde\varphi(x)\tilde\psi(x_1,\xi_1),
\end{equation*}
with $C=r_0r_1 2^{\frac m2}(-1)^n$.
\end{proof}

For any $\tilde\varphi\in\caS(\gB Q)$ the map $\caS(Q)\to L^2(Q)$ defined by
\begin{equation*}
\tilde\psi\mapsto \int_M\dd z\, \tilde\varphi_z \langle \tilde\varphi_z,\tilde\psi\rangle= C\norm\tilde\varphi\norm_2^2 \tilde\psi,
\end{equation*}
is a dilation, hence continuous. It thus can be extended to $L^2(Q)$.

Note that the resolution of the identity only uses functions $\tilde\varphi$ defined on the body of $Q$. Note also that for $\tilde\varphi\in\caS(\gB Q)$ and $\forall z\in M$, the function $\tilde\varphi_z$ is even but that it may contain nilpotent elements, and thus in general belongs to $\caS(Q)\otimes\superA$, not to $\caS(Q)$. Note however that we have 
\begin{equation}
\forall T\in\caB(L^2(Q))\ ,\ \forall\tilde\varphi\in\caS(\gB Q)\ ,\ \forall (x,w)\in\gB M)
\quad:\quad
\int\dd\xi \langle\tilde\varphi_{(x,\xi,w)},T\tilde\varphi_{(x,\xi,w)}\rangle\in\gC.\label{eq-prel-deftr}
\end{equation}

\begin{definition}[Supertrace]
\label{def-prel-supertrace}
Let $\tilde\varphi\in\caS(\gB Q)$ have a non-vanishing norm. We define the trace-class operators as those (unbounded) operators of $L^2(Q)$, $T\in\caO(L^2(Q))$, satisfying
\begin{itemize}
\item $\caS(Q)\subset \Dom(T)$. Since $\caS(Q)$ is dense in $L^2(Q)$, the adjoint $T^\ast$ of $T$ with respect to the positive definite scalar product exists.
\item $\caS(Q)\subset \Dom(T^\ast)$.
\item $\int\dd x\dd w\ |\int\dd\xi\,\langle\tilde\varphi_{(x,\xi,w)},T\tilde\varphi_{(x,\xi,w)}\rangle|$ is finite. Note that it makes sense due to Equation \eqref{eq-prel-deftr}.
\end{itemize}
For $T$ a trace-class operator, we then define its supertrace by
\begin{equation}
\tr(T)=\frac{1}{\norm\tilde\varphi\norm_2^2}\int_M\dd z\ \langle\tilde\varphi_z,T\tilde\varphi_z\rangle.\label{eq-prel-supertr}
\end{equation}
\end{definition}

\begin{proposition}
The supertrace has the following properties:
\begin{enumerate}
\item It is independent of $\tilde\varphi\in\caS(\gB Q)$ used (twice) in its definition.
\item If $T_1$ and $T_2$ are trace-class operators such that $T_1T_2$ and $ T_2T_1$ are also trace-class, then we have
\begin{equation*}
\tr(T_1T_2)=(-1)^{|T_1||T_2|}\tr(T_2T_1).
\end{equation*}
\end{enumerate}
\end{proposition}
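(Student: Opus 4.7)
Both parts rest on the resolution of the identity~\eqref{eq-prel-resol} together with the superadjoint relation of Proposition~\ref{prop-superhilbert-adjoint}.

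\emph{Independence of $\tilde\varphi$.} Given two admissible test functions $\tilde\varphi_1,\tilde\varphi_2\in\caS(\gB Q)$, the idea is to expand $T\tilde\varphi_{1,z}$ by the resolution of the identity associated with $\tilde\varphi_2$,
\begin{equation*}
T\tilde\varphi_{1,z}=\frac{1}{C\|\tilde\varphi_2\|_2^2}\int_M dz'\,\tilde\varphi_{2,z'}\,\langle\tilde\varphi_{2,z'},T\tilde\varphi_{1,z}\rangle,
\end{equation*}
and substitute into $\int_M dz\,\langle\tilde\varphi_{1,z},T\tilde\varphi_{1,z}\rangle$. Fubini is justified by the Schwartz decay obtained via Lemma~\ref{lem-prel-schw} together with the hypothesis $\caS(Q)\subset\Dom(T)\cap\Dom(T^\ast)$. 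Transferring $T$ across the scalar product by Proposition~\ref{prop-superhilbert-adjoint} and super-commuting the two resulting scalars brings the inner $z$-integral into the form $\int dz\,\langle T^\dag\tilde\varphi_{2,z'},\tilde\varphi_{1,z}\rangle\langle\tilde\varphi_{1,z},\tilde\varphi_{2,z'}\rangle$, which a second application of~\eqref{eq-prel-resol} (now for $\tilde\varphi_1$) collapses to $C\|\tilde\varphi_1\|_2^2\langle T^\dag\tilde\varphi_{2,z'},\tilde\varphi_{2,z'}\rangle$. One more use of Proposition~\ref{prop-superhilbert-adjoint} turns this into $C\|\tilde\varphi_1\|_2^2\langle\tilde\varphi_{2,z'},T\tilde\varphi_{2,z'}\rangle$ (the two parity signs of degree $|T|\cdot|\tilde\varphi_{2,z'}|$ cancel), and what remains is exactly $\|\tilde\varphi_1\|_2^2\,\tr_{\tilde\varphi_2}(T)$.

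\emph{Super-cyclicity.} Inserting the resolution of the identity between $T_1$ and $T_2$ inside $\tr(T_1T_2)=\frac{1}{\|\tilde\varphi\|_2^2}\int dz\,\langle\tilde\varphi_z,T_1T_2\tilde\varphi_z\rangle$ yields
\begin{equation*}
\tr(T_1T_2)=\frac{1}{C\|\tilde\varphi\|_2^4}\int_M dz\int_M dz'\,\langle\tilde\varphi_z,T_1\tilde\varphi_{z'}\rangle\langle\tilde\varphi_{z'},T_2\tilde\varphi_z\rangle,
\end{equation*}
and analogously for $\tr(T_2T_1)$ with $T_1,T_2$ interchanged. A Fubini-type swap plus the relabelling $z\leftrightarrow z'$ in the second expression reduces the cyclicity statement to the super-commutation of the two $\superA_\gC$-valued matrix elements $\langle\tilde\varphi_z,T_1\tilde\varphi_{z'}\rangle$ and $\langle\tilde\varphi_{z'},T_2\tilde\varphi_z\rangle$. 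Each of these inherits the $\gZ_2$-parity of the corresponding $T_i$ (since $\tilde\varphi$ has pure parity and $\tilde U$ is of degree~$0$), so reordering them produces precisely the sign $(-1)^{|T_1||T_2|}$.

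\emph{Main difficulty.} The bulk of the work lies in tracking the parity signs. Although $\tilde\varphi\in\caS(\gB Q)$ is purely even, $\tilde\varphi_z$ acquires nilpotent components when the odd part of $z$ acts through $\tilde U$, so that $\tilde\varphi_z$ lives in $L^2(Q)\otimes\superA$ rather than in $L^2(Q)$. Consequently the matrix elements $\langle\tilde\varphi_z,T\tilde\varphi_{z'}\rangle$ are genuinely $\superA_\gC$-valued and super-commute only up to a sign; verifying that these signs cancel in the independence statement and combine to yield exactly $(-1)^{|T_1||T_2|}$ in the cyclicity statement is the delicate bookkeeping the proof requires. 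The analogous (non-graded) computation carried out in~\cite{Bieliavsky:2010kg} provides the template.
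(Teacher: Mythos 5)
Your argument coincides with the paper's: part 1 is the same chain --- resolution of the identity (Theorem \ref{thm-prel-resol}) with the second test function, transfer of $T$ by the superadjoint of Proposition \ref{prop-superhilbert-adjoint}, a second resolution of the identity with the first test function, and one more superadjoint --- and for part 2 the paper itself only records the one-line recipe (two resolutions of the identity plus both superadjoints) that your double--matrix-element computation implements. The only point to watch in the bookkeeping you defer is that $\langle\tilde\varphi_z,T\tilde\varphi_{z'}\rangle$ has $\superA$-parity $|T|+n$ rather than $|T|$ (the pairing $\langle\cdot,\cdot\rangle$ ends with a Berezin integration), so the compensating signs must also come from moving these $\superA_\gC$-valued coefficients through $T_1$ and through the remaining Berezin integrals; this is exactly the verification the paper likewise leaves implicit.
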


\begin{proof}
\begin{enumerate}
\item For $T\in\caB^1(L^2(Q))$ and $\tilde\varphi,\tilde\psi\in\caS(\gB Q)$ we compute, 
\begin{align*}
\tr(T)&=\frac{1}{\norm\tilde\varphi\norm^2_2}\int\dd z\dd q_0\ \overline{\tilde\varphi_z(q_0)}T\tilde\varphi_z(q_0)\\
&=\frac{1}{C\norm\tilde\varphi\norm^2_2\norm\tilde\psi\norm^2_2}\int\dd z\dd q_0\dd z_1\dd q_2\  \overline{\tilde\varphi_z(q_0)}\overline{\tilde\psi_{z_1}(q_2)}T\tilde\varphi_z(q_2)\tilde\psi_{z_1}(q_0)\\
&=\frac{1}{C\norm\tilde\varphi\norm^2_2\norm\tilde\psi\norm^2_2}\int\dd z_1\dd q_2\dd z\dd q_0\  \overline{\tilde\varphi_z(q_0)}\tilde\psi_{z_1}(q_0)\tilde\varphi_z(q_2) T^\dag\overline{\tilde\psi_{z_1}}(q_2)\\
&=\frac{1}{\norm\tilde\psi\norm^2_2}\int\dd z_1\dd q_2\ \tilde\psi_{z_1}(q_2) T^\dag\overline{\tilde\psi_{z_1}}(q_2)\\
&=\frac{1}{\norm\tilde\psi\norm^2_2}\int\dd z_1\dd q_2\ \overline{\tilde\psi_{z_1}(q_2)}T\tilde\psi_{z_1}(q_2),
\end{align*}
where we used Theorem \ref{thm-prel-resol} successively with $\tilde\psi$ and $\tilde\varphi$. We also used that there exists a superadjoint of $T$: the adjoint $T^\ast$ exists and has a domain containing $\caS(Q)$, so as the superadjoint $T^\dag$ (see Proposition \ref{prop-superhilbert-adjoint}). 

\item To prove the second property, one uses exactly the same method as for the first: two resolutions of the identity and the use of both superadjoints $T_1^\dag$ and $T_2^\dag$.
\end{enumerate}
\end{proof}

\begin{lemma}
Let $T\in\caB^1(L^2(Q))$ be an operator admitting a kernel, i.e. there exists a measurable function $K:Q\times Q\to\gC$ such that $\forall \tilde\psi\in L^2(Q)$, $\forall q_0\in Q$, we have $T\tilde\psi(q_0)=\int_Q\dd q\ K(q,q_0)\tilde\psi(q)$. Then its supertrace is given by
\begin{equation*}
\tr(T)=C\int_Q\dd q\ K(q,q),
\end{equation*}
where $C$ is the constant defined in Theorem \ref{thm-prel-resol}.
\end{lemma}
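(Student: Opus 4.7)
The plan is to substitute the kernel expression for $T$ into the defining formula \eqref{eq-prel-supertr} of the supertrace and then collapse the $z$-integration by invoking the resolution of the identity (Theorem \ref{thm-prel-resol}).

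Concretely, fix a test vector $\tilde\varphi\in\caS(\gB Q)$ of non-zero norm. Expanding $(T\tilde\varphi_z)(q_0)=\int_Q\dd q\, K(q,q_0)\tilde\varphi_z(q)$ and using the integral form of the superhermitian pairing given by \eqref{eq-superman-scalprod}, we obtain
\begin{equation*}
\langle\tilde\varphi_z,T\tilde\varphi_z\rangle=\int_{Q\times Q}\dd q\,\dd q_0\ K(q,q_0)\,\overline{\tilde\varphi_z(q_0)}\,\tilde\varphi_z(q).
\end{equation*}
Plugging this into \eqref{eq-prel-supertr} and interchanging the $z$- and $(q,q_0)$-integrations by Fubini yields
\begin{equation*}
\tr(T)=\frac{1}{\norm\tilde\varphi\norm_2^2}\int_{Q\times Q}\dd q\,\dd q_0\ K(q,q_0)\,h(q_0,q),
\end{equation*}
where I have abbreviated $h(q_0,q):=\int_M\dd z\ \overline{\tilde\varphi_z(q_0)}\,\tilde\varphi_z(q)$.

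The decisive step is to evaluate the kernel $h$. For any fixed $q\in Q$, set $\tilde\psi(q_0):=K(q,q_0)$; then $\langle\tilde\varphi_z,\tilde\psi\rangle=\int\dd q_0\,\overline{\tilde\varphi_z(q_0)}\,K(q,q_0)$. Theorem \ref{thm-prel-resol} evaluated at $q_1=q$ reads, after exchanging the order of the $z$ and $q_0$ integrations,
\begin{equation*}
\int_Q\dd q_0\ K(q,q_0)\,h(q_0,q)=C\norm\tilde\varphi\norm_2^2\,K(q,q).
\end{equation*}
Integrating this relation in $q$ and substituting into the previous expression for $\tr(T)$, the factor $\norm\tilde\varphi\norm_2^2$ cancels and one obtains the announced formula $\tr(T)=C\int_Q\dd q\,K(q,q)$.

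The main technical obstacle will be justifying the Fubini exchanges and verifying that Theorem \ref{thm-prel-resol} applies to the (generally non-Schwartz) test function $\tilde\psi=K(q,\fois)$. As indicated right after Theorem \ref{thm-prel-resol}, the underlying dilation estimate extends \eqref{eq-prel-resol} by continuity to all of $L^2(Q)$; the membership $K(q,\fois)\in L^2(Q)$ for a.e.\ $q$, together with the absolute integrability needed to invoke Fubini, should be extracted from the trace-class assumptions collected in Definition \ref{def-prel-supertrace}, which already guarantee the finiteness of $\int_{\gB M}\dd x\,\dd w\,|\int\dd\xi\,\langle\tilde\varphi_z,T\tilde\varphi_z\rangle|$.
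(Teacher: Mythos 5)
The paper states this lemma without any proof, so there is nothing to compare against; your argument (substitute the kernel into the defining formula \eqref{eq-prel-supertr}, exchange integrals, and collapse the $z$-integration with the resolution of the identity of Theorem \ref{thm-prel-resol} applied to $\tilde\psi=K(q,\fois)$ at $q_1=q$) is clearly the intended one, and it is correct at the level of rigour the paper itself operates at. The one genuine subtlety, which you partly flag, is that the $L^2$-extension of \eqref{eq-prel-resol} only holds for almost every $q_1$, while you need it on the diagonal $q_1=q$ before integrating in $q$ --- the usual caveat for ``trace equals integral of the kernel over the diagonal'', which requires some regularity of $K$ (or an a.e.\ argument) that neither you nor the paper's statement makes precise; this does not invalidate the approach.
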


\subsection{Integration}
\label{subsec-integ}

We now consider a complex Fr\'echet space $(E,|\fois|_j)$ with its family of seminorms indexed by $j\in\gN$. We refer the reader to Appendix \ref{sec-funcspaces} for the definitions of a weight $\mu$, of the functional spaces $\caD_E(M)$, $L^1_E(M)$ and $\caB^\mu_E(M)$, and of the $E$-valued integral. With these notions we observe that, since $L^1_E(M)=L^1(M)\widehat{\otimes}E$, the quantization map can be extended by the projective tensor product \cite{Grothendieck:1966}:
\begin{equation*}
\Omega:L^1_E(M)\to\caB(L^2(Q))\widehat{\otimes} E.
\end{equation*}

We then attack the notion of the oscillating integral, which allows to integrate functions which are not Lebesgue-integrable (for example functions in $\caB^\mu_E(M)$ when $\mu\notin L^1(\gB M)$), by using the phase and integration by parts. See also \cite{Bieliavsky:2010kg} in the non-graded case.

\begin{theorem}
\label{thm-reg-oscil}
The functional $\caD_E(M)\to E$, called an oscillating integral and defined by
\begin{equation}
f\mapsto \int_M\dd x\dd\xi\dd w\ e^{2ia_0\omega_0(x,w)}f(x,\xi,w),
\label{eq-reg-oscil}
\end{equation}
can be extended to a continuous linear functional $\caB^\mu_E(M)\to E$, for any weight $\mu$ bounded by a polynomial function.

Moreover, there is only one such extension which is continuous for the topology induced on $\caB^{\mu}_E(M)$ by the topology of $\caB^{\mu'}_E(M)$ for any weight $\mu'>\mu$ bounded by a polynomial function such that $\lim_{|y|\to\infty}\frac{\mu(y)}{\mu'(y)}$ $=0$.
\end{theorem}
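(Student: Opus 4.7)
The plan is to extend the functional in \eqref{eq-reg-oscil} by integration by parts against a differential operator that preserves the phase, which is the standard device for oscillatory integrals. Since the phase $e^{2ia_0\omega_0(x,w)}$ depends only on the even variables $(x,w)\in\gB M$, I would first perform the Berezin integration over the odd variables $\xi$. For $f\in\caB^\mu_E(M)$ with components $f_I(x,w)\in E$, this yields $\int\dd\xi\, f = f_{\{1,\dots,n\}}$, a smooth $E$-valued function on $\gB M$ whose derivatives are still controlled by the weight $\mu$. This reduces the problem to constructing an extension of $g\mapsto \int_{\gB M} e^{2ia_0 x\cdot w}\, g(x,w)\, \dd x\, \dd w$ from the space of compactly supported test functions to the corresponding even symbol space, exactly as in the non-graded setting of \cite{Bieliavsky:2010kg}.

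Next I would introduce the differential operators
\begin{equation*}
L_x=\frac{1}{1+|w|^2}\left(1-\frac{1}{4a_0^2}\Delta_x\right),\qquad L_w=\frac{1}{1+|x|^2}\left(1-\frac{1}{4a_0^2}\Delta_w\right),
\end{equation*}
each of which satisfies $L_x(e^{2ia_0 x\cdot w})=L_w(e^{2ia_0 x\cdot w})=e^{2ia_0 x\cdot w}$. Denoting by $L_x^t$ and $L_w^t$ their formal transposes, iterated integration by parts for $f\in\caD_E(M)$ gives, for every $k\in\gN$,
\begin{equation*}
\int_{\gB M} e^{2ia_0 x\cdot w}\, f_{\{1,\dots,n\}}(x,w)\, \dd x\, \dd w\;=\;\int_{\gB M} e^{2ia_0 x\cdot w}\, (L_x^t)^k(L_w^t)^k f_{\{1,\dots,n\}}(x,w)\, \dd x\, \dd w.
\end{equation*}
A direct computation shows that $(L_x^t)^k(L_w^t)^k$ is a differential operator of order $4k$ whose coefficients are products of polynomials in $x,w$ with negative factors $(1+|x|^2)^{-k}(1+|w|^2)^{-k}$. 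Hence, when $\mu$ is dominated by a polynomial of degree $d$, choosing $k$ large enough that $2k$ exceeds $d+\dim\gB M$ makes the right-hand side absolutely convergent in each Fr\'echet seminorm $|\fois|_j$ of $E$ for arbitrary $f\in\caB^\mu_E(M)$. I would define the extension by this right-hand side, check independence of $k$ by a further IBP on $\caD_E(M)$, and deduce continuity from the explicit seminorm bound $|I_\mu(f)|_j\le C_j\sum_{|\nu|\le 4k} p_{j,\nu}(f)$, which involves only finitely many defining seminorms of $\caB^\mu_E(M)$.

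For the uniqueness statement, I would argue by density of $\caD_E(M)$ in $\caB^\mu_E(M)$ for the weaker topology induced by $\caB^{\mu'}_E(M)$. Given a cut-off $\chi_R\in\caD(\gB M)$ equal to $1$ on $\{|y|\le R\}$ and supported in $\{|y|\le 2R\}$, the product $\chi_R f$ (with $\chi_R$ extended trivially in $\xi$) lies in $\caD_E(M)$; for each defining seminorm of $\caB^{\mu'}_E(M)$ the remainder $f-\chi_R f$ is controlled by $\sup_{|y|\ge R}\mu(y)/\mu'(y)$, which tends to $0$ as $R\to\infty$ by hypothesis. Two extensions continuous in the $\caB^{\mu'}_E$-topology must therefore agree on $\caD_E(M)$ and hence on all of $\caB^\mu_E(M)$.

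The main obstacle will be the bookkeeping in the second paragraph: verifying that after $k$ applications of the transpose operators the coefficients of the resulting order-$4k$ operator, once paired with the $\mu$-bounded derivatives of $f$, genuinely produce an integrand that is absolutely integrable against Lebesgue measure on $\gB M$ uniformly in the Fr\'echet seminorms of $E$. The precise balance between the polynomial growth of the symbol, the decay $(1+|x|^2)^{-k}(1+|w|^2)^{-k}$ generated by the IBPs, and the dimension of $\gB M$ must be matched to the polynomial bound on $\mu$; this is where the hypothesis that $\mu$ is polynomially bounded enters in an essential way.
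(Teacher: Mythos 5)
Your proposal is correct and follows essentially the same strategy as the paper: regularize the integral by parts against differential operators that leave the phase $e^{2ia_0\omega_0(x,w)}$ invariant, use the polynomial bound on $\mu$ to make the resulting integrand absolutely convergent in each seminorm of $E$, and obtain uniqueness from the density of $\caD_E(M)$ in $\caB^{\mu}_E(M)$ for the topology induced by $\caB^{\mu'}_E(M)$ via cut-offs. The only (harmless) cosmetic differences are that you split the regularizing operator into separate $x$- and $w$-factors where the paper uses the single operator $(1-\Delta_{(x,w)})\bigl(\tfrac{1}{1+x^2+w^2}\,\cdot\,\bigr)$, and that your density argument uses plain cut-offs $\chi_R f$ where the paper uses the sequence $\tfrac{\mu}{\mu'}\chi_p f$.
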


\begin{proof}
For this result, we may assume, without loss of generality, that we have $2a_0=1$, which simplifies the notation.
\begin{itemize}
\item Let $f\in\caD_E(M)$. We will use the basis of Proposition \ref{prop-sympl-basis}, so that $\omega_0$ is given by the expression $\omega_0(x,w)=xw$ (usual scalar product). If we define the operator $O$ by 
\begin{equation*}
(O\fois f)(x,\xi,w)=(1-\Delta_{(x,w)}) \Bigl(\frac{1}{1+x^2+w^2}f(x,\xi,w)\Bigr)
\ ,
\end{equation*}
then, as in the proof of Lemma \ref{lem-prel-schw}, an integration by parts gives
\begin{equation}
\int_M\dd x\dd\xi\dd w\ e^{i\omega_0(x,w)}f(x,\xi,w)= \int_M\dd x\dd\xi\dd w\ e^{i\omega_0(x,w)}(O^k\fois f)(x,\xi,w),
\label{eq-reg-oscil1}
\end{equation}
for any $k\in\gN$. Moreover, it is not hard to show that there exists functions $b^\alpha\in\caB^1_\gC(\gB M)$ such that
\begin{equation*}
O^k=\frac{1}{(1+x^2+w^2)^k}\sum_\alpha b^\alpha(x,w) D^\alpha.
\end{equation*}

\item Since $\mu$ is bounded by a polynomial function, there exists $N\in\gN$ and a constant $C>0$ such that $\forall (x,w)\in \gB M$, $\mu(x,w)\leq C(1+x^2+w^2)^N$. Hence for any $f\in\caB^\mu_E(M)$ and $I=\{1,\dots,n\}$, we have
\begin{equation*}
|O^k\fois f_I(x,w)|_j\leq \frac{1}{(1+x^2+w^2)^k}\sum_\alpha|b^\alpha(x,w)| C_{j,\alpha,I}\mu(x,w)\leq C'(1+x^2+w^2)^{N-k},
\end{equation*}
with $C'$ a positive constant and $C_{j,\alpha,I}$ as in Definition \ref{def-mu-bounded-functions}. It follows that $O^k\fois f\in L^1_E(M)$, provided $k-N\geq \dim(\gB M)+1$. As a consequence, the right hand-side of \eqref{eq-reg-oscil} does exist for $f\in\caB^\mu_E(M)$.

\item Since $|D^\alpha f|_{j,\gamma}=\sup_{y\in\gB M}\{\frac{1}{\mu(y)}\sum_I|D^\gamma D^\alpha f_I(y)|_j\}=|f|_{j,\gamma+\alpha}$, it follows that the map $D^\alpha:\caB^\mu_E(M)\to\caB^\mu_E(M)$ is continuous.

\item Let $f\in\caB^1_\gC(M)$ and $F\in\caB^\mu_E(M)$. By using the Leibniz rule and the equality $(f\fois F)_I=\sum_{J\subset I}\eps(J,\complement^I_J)f_JF_{\complement^I_J}$, we can compute:
\begin{align*}
|f\fois F|_{j,\alpha}&=\sup_{y\in\gB M}\{\frac{1}{\mu(y)}\sum_I|D^\alpha(\sum_{J\subset I}\eps(J,\complement^I_J)f_J(y)F_{\complement^I_J}(y))|_j\}\\
&=\sup_{y\in\gB M}\{\frac{1}{\mu(y)}\sum_I|\sum_{(D^\alpha),J\subset I}\eps(J,\complement^I_J)D^\alpha_{(1)}f_J(y)D^\alpha_{(2)}F_{\complement^I_J}(y)|_j\}\\
&\leq\sup_{y\in\gB M}\{\frac{1}{\mu(y)}\sum_{(D^\alpha),I,J\subset I} |D^\alpha_{(1)}f_J(y)||D^\alpha_{(2)}F_{\complement^I_J}(y)|_j\}\\
&\leq\sum_{(D^\alpha)}\sup_{y\in\gB M}\{\frac{1}{\mu(y)}\sum_{J,K} |D^\alpha_{(1)}f_J(y)||D^\alpha_{(2)}F_{K}(y)|_j\}\\
&\leq\sum_{(D^\alpha)}|f|_{D^\alpha_{(1)}}\sup_{y\in\gB M}\{\frac{1}{\mu(y)}\sum_{K} |D^\alpha_{(2)}F_{K}(y)|_j\}\leq \sum_{(D^\alpha)}|f|_{D^\alpha_{(1)}}|F|_{j,D^\alpha_{(2)}},
\end{align*}
where we have used the Hopf algebra approach\slash notation of the Leibniz rule and the fact that $\sum_J|D^\alpha_{(1)}f_J(y)|\leq |f|_{D^\alpha_{(1)}}$. This tells us that the product of functions map $\caB^1_\gC(M)\times\caB^\mu_E(M)\to\caB^\mu_E(M)$ is continuous.

\item In exactly the same way one can show that the map $\caB^{\mu'}_\gC(M)\times\caB^\mu_E(M)\to\caB^{\mu\mu'}_E(M)$ taking the product of two functions is continuous. Moreover, from the above expression of $O^k$, one deduces that the map $O^k:\caB^\mu_E(M)\to\caB^{\frac{\mu(x,w)}{(1+x^2+w^2)^k}}_E(M)$ is continuous as it consists of products and derivations.

\item Let $\mu'\in L^1(\gB M)$ be a weight and $f\in\caB^{\mu'}_E(M)\subset L^1_E(M)$. Then:
\begin{equation*}
|\int_M\dd y\dd\xi f(y,\xi)|_j\leq \int_M\dd y\dd\xi |f(y,\xi)|_j\leq \int_{\gB M}\dd y |f|_{j,1}\mu'(y)=\norm\mu'\norm_1|f|_{j,1}.
\end{equation*}
It follows that the integration map $\caB^{\mu'}_E(M)\to E$ is continuous. By choosing $\mu'(x,w)=\frac{\mu(x,w)}{(1+x^2+w^2)^k}\in L^1_E(M)$ and assuming that $k-N\geq \dim(\gB M)+1$, we thus have shown that the map $\caB^\mu_E(M)\to E$ given by
\begin{equation*}
f\mapsto \int_M\dd x\dd\xi\dd w\ e^{i\omega_0(x,w)}(O^k\fois f)(x,\xi,w)
\end{equation*}
is continuous. Moreover, it is an extension of \eqref{eq-reg-oscil} due to Equation \eqref{eq-reg-oscil1}.

\item We now will prove that $\caB^\mu_E(M)\subset\text{adh}_{\caB^{\mu'}_E(M)}(\caD_E(M))$ when $\mu'>\mu$ is another weight bounded by a polynomial function and satisfying $\lim_{|y|\to\infty}\frac{\mu(y)}{\mu'(y)}=0$. For any $ f\in\caB^{\mu'}_E(M)$ and for any semi-norm $|\cdot |_{j,D^\alpha}$ on $\caB^\mu_E(M)$, we have
\begin{align*}
|\frac{\mu}{\mu'}f|_{j,D^\alpha} &=\sup_{y\in\gB M}\{\frac{1}{\mu(y)}\sum_I|D^\alpha(\frac{\mu(y)}{\mu'(y)}f_I(y))|_j\}\\
&\leq \sup_{y\in\gB M}\{\frac{1}{\mu(y)}\sum_{I,(D^\alpha)}|D^\alpha_{(1)}\mu(y)||D^\alpha_{(2)}\frac{1}{\mu'(y)}||D^\alpha_{(3)}f_I(y)|_j\}\\
&\leq \sum_{(D^\alpha)}\sup_{y\in\gB M}\{\frac{1}{\mu(y)}\sum_{I}C_{D^\alpha_{(1)}}C_{D^\alpha_{(2)}}\frac{\mu(y)}{\mu'(y)}|D^\alpha_{(3)}f_I(y)|_j\}\\
&\leq \sum_{(D^\alpha)}C_{D^\alpha_{(1)}}C_{D^\alpha_{(2)}}|f|_{j,D^\alpha_{(3)}},
\end{align*}
where $|f|_{j,D^\alpha_{(3)}}$ is a semi-norm of $\caB^{\mu'}_E(M)$. Moreover, $|D^\alpha_{(1)}\mu|<C_{D^\alpha_{(1)}}\mu$ because $\mu$ is a weight, and $D^\alpha_{(2)}\frac{1}{\mu'}<C_{D^\alpha_{(2)}}\frac{1}{\mu'}$ because $D^\alpha_{(2)}\frac{1}{\mu'}$ is a term of the form $\frac{1}{\mu'}$ times a sum of products of bounded terms $\frac{D^\beta\mu'}{\mu'}$. We thus have shown the equivalence $f\in\caB^{\mu'}_E(M)\Leftrightarrow \frac{\mu}{\mu'}f\in\caB^\mu_E(M)$.

Now let $\chi_p:\gB M\to[0,1]$ be a smooth function with compact support and taking the value 1 on the ball of radius $p\in\gN^\ast$ in $\gB M$.
Then the limit $\lim_{p\to\infty}(\frac{\mu}{\mu'}\chi_p)=\frac{\mu}{\mu'}$ is uniform since $\lim\frac{\mu}{\mu'}=0$. It follows that the sequence $(\frac{\mu}{\mu'}\chi_p f)$ in $\caD_E(M)$ converges (for $p\to\infty$) to $\frac{\mu}{\mu'}f$.
\end{itemize}
\end{proof}

\subsection{Extension of the quantization map}
\label{subsec-ext}

\begin{lemma}
\label{lem-reg-trans}
If $\mu$ is a weight bounded by a polynomial, then there exists a weight $\tilde\mu>\mu$ also bounded by a polynomial such that
\begin{equation*}
\forall f\in\caB^\mu_E(M),\quad\forall y\in\gB M
\quad:\quad 
L_y^\ast f\in\caB^{\tilde\mu}_E(M),
\end{equation*}
where $L_y$ denotes translation over $y$.
\end{lemma}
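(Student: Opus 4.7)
The plan is to reduce the lemma to the elementary fact that polynomials grow at most exponentially slowly after translation by a fixed vector. Since $\mu$ is bounded by a polynomial, there exist constants $C>0$ and $N\in\gN$ such that $\mu(y')\le C(1+|y'|^2)^N$ for all $y'\in\gB M$. I will take as candidate
\begin{equation*}
\tilde\mu(y') \;=\; (1+C)\bigl(1+|y'|^2\bigr)^{N},
\end{equation*}
(or any larger such polynomial weight), which is plainly $>\mu$ pointwise, bounded by a polynomial, and satisfies the defining inequality $|D^\beta\tilde\mu|\le C_\beta\tilde\mu$ for any multi-index $\beta$ (this is a standard computation for radial polynomial weights), hence is genuinely a weight.

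The next step is to observe that translation commutes with differentiation and with taking odd components: if $f\in\caB^\mu_E(M)$ with components $f_I\in C^\infty(\gB M,E)$, then $(L_y^\ast f)_I=L_y^\ast f_I$ and $D^\gamma(L_y^\ast f_I)(y')=(D^\gamma f_I)(y'+y)$. Using the defining bound of $\caB^\mu_E(M)$ (see Definition in Appendix~\ref{sec-funcspaces}), one gets for every seminorm index $j$, every multi-index $\gamma$ and every $I\subset\{1,\dots,n\}$,
\begin{equation*}
|D^\gamma(L_y^\ast f)_I(y')|_j \;=\;|D^\gamma f_I(y'+y)|_j\;\le\; C_{j,\gamma,I}\,\mu(y'+y).
\end{equation*}

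The only remaining inequality to verify is the ``separation'' inequality
\begin{equation*}
\mu(y'+y)\;\le\; K(y)\,\tilde\mu(y'),
\end{equation*}
for some constant $K(y)$ depending only on $y$. Using the polynomial bound on $\mu$ and the elementary estimate $1+|y'+y|^2\le 2(1+|y|^2)(1+|y'|^2)$, one obtains
\begin{equation*}
\mu(y'+y)\;\le\; C\bigl(1+|y'+y|^2\bigr)^N \;\le\; C\,2^N\bigl(1+|y|^2\bigr)^N\bigl(1+|y'|^2\bigr)^N\;\le\;K(y)\,\tilde\mu(y'),
\end{equation*}
with $K(y):=\dfrac{C\,2^N(1+|y|^2)^N}{1+C}$. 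Putting the two inequalities together gives $|D^\gamma(L_y^\ast f)_I(y')|_j\le K(y)C_{j,\gamma,I}\,\tilde\mu(y')$, which is exactly the bound needed to conclude $L_y^\ast f\in\caB^{\tilde\mu}_E(M)$.

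I do not expect any serious obstacle here: once $\mu$ is polynomially bounded, the translation invariance of the derivatives and the sub-multiplicativity of $(1+|y'|^2)^N$ under addition handle everything at once, and the odd variables play no role since $L_y$ acts only on $\gB M$. The only minor subtlety is that the weight $\tilde\mu$ must be chosen independently of $y$, but this is automatic from the argument above: the $y$-dependence is entirely absorbed into the (finite) constant $K(y)$.
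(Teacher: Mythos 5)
Your proof is correct and follows essentially the same route as the paper: commute $L_y^\ast$ with $D^\gamma$, invoke the defining bound of $\caB^\mu_E(M)$ at the translated point, and then use a sub-multiplicativity estimate $\mu(y'+y)\le K(y)\tilde\mu(y')$ with a $y$-independent weight $\tilde\mu>\mu$. The only difference is that the paper merely asserts the existence of such a $\tilde\mu$ satisfying $\mu(y+y_0)\le\tilde\mu(y)\tilde\mu(y_0)$, whereas you construct it explicitly as $(1+C)(1+|y'|^2)^N$ and verify the inequality via $1+|y'+y|^2\le 2(1+|y|^2)(1+|y'|^2)$, which is a welcome addition of detail rather than a different argument.
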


\begin{proof}
For any $y,y_0\in\gB M$ we have
\begin{equation*}
|D^\alpha L_y^\ast f_I(y_0)|_j=|L_y^\ast D^\alpha f_I(y_0)|_j=|D^\alpha f_I(y+y_0)|_j\leq C_{j,\alpha,I}\mu(y+y_0).
\end{equation*}
Moreover, there exists a weight $\tilde\mu>\mu$ bounded by a polynomial such that for all $ y,y_0\in\gB M$ we have $\mu(y+y_0)\leq \tilde\mu(y)\tilde\mu(y_0)$. Note that $\tilde\mu$ is independent of $y$ and $y_0$. Then, introducing $C'=C_{j,\alpha,I}\tilde\mu(y)$, we obtain $|D^\alpha L_y^\ast f_I(y_0)|_j\leq C'\tilde\mu(y_0)$. We thus have shown $L_y^\ast f\in\caB^{\tilde\mu}_E(M)$.
\end{proof}

\begin{theorem}
\label{thm-reg-quant}
Let $\mu$ be a weight bounded by a polynomial and $\tilde\eta\in\caD(Q)$. Then:
\begin{enumerate}
\item For any $f\in\caB^\mu_E(M)$ and $z\in M$, the element $\left(\Omega(f)\tilde\eta,\tilde\eta_z\right)$ is well defined in $E$.

\item The map $z\mapsto \left(\Omega(f)\tilde\eta,\tilde\eta_z\right)$ is in $\caS_E(M)$.

\item The linear map $\caB^\mu_E(M)\to\caS_E(M)$ given by $f\mapsto \left(\Omega(f)\tilde\eta,\tilde\eta_{\fois}\right)$ is continuous.

\item For any $y\in\gB M$, the map $\left(\Omega(L_y^\ast f)\tilde\eta,\tilde\eta_{\fois}\right)$ is in $\caS_E(M)$, and the map $y\mapsto \left(\Omega(L_y^\ast f)\tilde\eta,\tilde\eta_{\fois}\right)$ is in $\caB^{\tilde\mu}_{\caS_E(M)}(\gB M)$, where $\tilde\mu$ is the weight associated to $\mu$ by Lemma \ref{lem-reg-trans}.
\end{enumerate}
\end{theorem}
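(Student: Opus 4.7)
The plan is to reduce all four assertions to an analysis of the matrix element $(\Omega(f)\tilde\eta, \tilde\eta_z)$ on $\caD_E(M)$, and then to extend by continuity using the oscillating integral of Theorem \ref{thm-reg-oscil}. First, combining the explicit formula of Proposition \ref{prop-qu-omega} for $\Omega(f)\tilde\eta$, the expression \eqref{eq-induc-expr} for $\tilde\eta_z = \tilde U(z)\tilde\eta$, and the positive definite scalar product \eqref{eq-superman-scalprod}, I would write for $f \in \caD_E(M)$
$$
(\Omega(f)\tilde\eta, \tilde\eta_z) \;=\; \int_M dz'\, e^{2ia_0\omega_0(x', w')}\, f(z')\, K_{\tilde\eta}(z', z),
$$
where $z' = (x', \xi', w') \in M$ and $K_{\tilde\eta}$ is obtained by integrating out the auxiliary variables $q_0 = (x_0, \xi_0)$ and $\xi_1$. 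Since $\tilde\eta \in \caD(Q)$ is compactly supported in $\gB Q$, the $x_0$-integral is genuinely Lebesgue and the odd integrations are finite Berezinian sums. A suitable change of variables in $x_0$ isolates the phase $e^{2ia_0\omega_0(x', w')}$ of the form appearing in Theorem \ref{thm-reg-oscil}, absorbing all remaining phases (depending smoothly on $z$) into $K_{\tilde\eta}$.

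The central technical step is to analyse the kernel $K_{\tilde\eta}$. Imitating Lemma \ref{lem-prel-schw}, I would show that $K_{\tilde\eta}$ is jointly smooth, Schwartz in $z$ uniformly on compacts of $z'$, and that for every pair of multi-indices $(\alpha, \beta)$ and every polynomial $P$ in $z$ one has a polynomial bound
$$
\sup_{z \in M}\, |P(z)\, D^\beta_z D^\alpha_{z'} K_{\tilde\eta}(z', z)| \;\leq\; Q_{\alpha, \beta, P}(z'),
$$
for some polynomial $Q_{\alpha, \beta, P}$. The argument is the one already used in Lemma \ref{lem-prel-schw} and in Theorem \ref{thm-reg-oscil}: the compact support of $\tilde\eta$ supplies $L^1$ control in $x_0$, and integration by parts using an operator of the type $O_{x_0} = (1 + x_0^2)^{-1}(1 - a_0^{-2}\Delta_{x_0})$ in the oscillating variable $x_0$ yields arbitrary polynomial decay in $z$.

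Given these kernel bounds, points (1), (2) and (3) all follow from Theorem \ref{thm-reg-oscil}. For each fixed $z$, multiplication by $K_{\tilde\eta}(\cdot, z)$ is continuous $\caB^\mu_E(M) \to \caB^\mu_E(M)$ (by the argument used in the proof of Theorem \ref{thm-reg-oscil}), so the oscillating integral produces $(\Omega(f)\tilde\eta, \tilde\eta_z) \in E$, giving (1). Since $D^\beta_z$ and multiplication by polynomials in $z$ act only on $K_{\tilde\eta}$ and commute with the oscillating integral, the joint bounds above imply that $z \mapsto (\Omega(f)\tilde\eta, \tilde\eta_z)$ lies in $\caS_E(M)$ with all of its Schwartz seminorms continuously bounded by seminorms of $f$ in $\caB^\mu_E(M)$; this is precisely (2) and (3).

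For point (4), I apply (3) with the weight $\tilde\mu$ in place of $\mu$ to $L_y^\ast f$, which lies in $\caB^{\tilde\mu}_E(M)$ by Lemma \ref{lem-reg-trans}. The same lemma provides the quantitative estimate bounding its seminorms by $\tilde\mu(y)$ times those of $f$, with constants independent of $y$; since $D^\alpha_y L_y^\ast f = L_y^\ast D^\alpha f$, the same bound extends to all $y$-derivatives. Composing with (3) for the weight $\tilde\mu$ yields the $\caB^{\tilde\mu}_{\caS_E(M)}(\gB M)$-membership. The main obstacle will be the second step: establishing the joint bounds on $K_{\tilde\eta}$ in a uniform way, so that the polynomial growth in $z'$ (inherited from the $\omega_0$-phase before regularization) and the Schwartz decay in $z$ (coming from the compact support of $\tilde\eta$) combine consistently for all the derivatives required in the final weighted seminorm estimate.
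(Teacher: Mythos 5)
There is a genuine gap at the central step, and it sits exactly where you predicted the ``main obstacle'' would be. The factorization $e^{2ia_0\omega_0(x'-x_0,w')}=e^{2ia_0\omega_0(x',w')}\,e^{-2ia_0\omega_0(x_0,w')}$ leaves inside your kernel $K_{\tilde\eta}(z',z)$ the residual oscillation $e^{-2ia_0\omega_0(x_0,w')}$, coupling the inner integration variable $x_0$ to the outer variable $w'$. Each $w'$-derivative of $K_{\tilde\eta}$ therefore produces a factor $x_0$, and on the support of $\overline{\tilde\eta_J(2x'-x_0)}\,\tilde\eta_K(x_0-x)$ one has $|x_0|\sim|x|\sim 2|x'|$; hence $|D^\alpha_{z'}K_{\tilde\eta}(z',z)|$ grows like $(1+|x'|)^{|\alpha|}$, i.e.\ the degree of your polynomial $Q_{\alpha,\beta,P}$ increases with $|\alpha|$. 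This is fatal for the black-box use of Theorem \ref{thm-reg-oscil}: membership in $\caB^{\mu'}_E(M)$ requires a \emph{single} weight $\mu'$ valid for all derivative orders (only the constants $C_{j,\alpha,I}$ may depend on $\alpha$), and the proof of that theorem hinges on taking $k$ arbitrarily large while the degree $N$ of the weight stays fixed. For $f\fois K_{\tilde\eta}(\fois,z)$ the $k$-fold regularization gains $(1+x'^2+w'^2)^{-k}$ but the derivatives of order $\le 2k$ cost $(1+|x'|)^{2k}$, so $O^k(fK_{\tilde\eta})$ never becomes integrable uniformly in $z$: your points (2)--(3) do not follow. (Point (1) survives, because for fixed $z$ the kernel is compactly supported in $x'$, so the offending factors are $z$-dependent constants.)

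This is precisely why the paper does \emph{not} keep $f(z')$ as a clean factor: it performs the change of variables $x_1\mapsto x_1+x_0$, after which the extracted phase $e^{-2ia_0\omega_0(x_1,w_1)}$ is exact --- no residual $x_0$--$w_1$ coupling remains --- so the $O^k_{(x_1,w_1)}$ derivatives only hit $\overline{f_I(x_1+x_0,w_1)}$ and $\overline{\tilde\eta_J(2x_1+x_0)}$ and produce the \emph{fixed} weight $\tilde\mu(x_1,w_1)$ (times $\tilde\mu(x_0,0)$, tamed at the very end by a further change of variables and the compact support of $\tilde\eta$). The price is that $f$ gets translated by $x_0$, which destroys exactly the product structure $f(z')K_{\tilde\eta}(z',z)$ your reduction is built on; Lemma \ref{lem-reg-trans} is what controls these translates. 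To repair your argument you would either have to adopt this change of variables (abandoning the kernel formulation for points (1)--(3)) or prove a strengthened version of Theorem \ref{thm-reg-oscil} for amplitudes whose symbol estimates degrade with the derivative order, which the paper's statement does not provide. Your treatment of point (4), by contrast, is fine and matches the paper's once (3) is available.
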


\begin{proof}
Remember that we use the notation \eqref{eq-prel-var} for the variables.
\begin{enumerate}
\item As in the proof of Lemma \ref{lem-qu-cont}, we integrate over the odd variables and obtain\footnote{as before, we do not write the constraints between the subsets $I,J,K,L$.} (using the change of variable $x_1\mapsto x_1+x_0$):
\begin{align*}
\left(\Omega(f)\tilde\eta,\tilde\eta_z\right) =&\int_Q\dd x_0\dd\xi_0\ \overline{(\Omega(f)\tilde\eta)(x_0,\xi_0)}\ast_{\xi_0}\tilde\eta_z(x_0,\xi_0)\\
=&\gamma\int \dd x_0\dd\xi_0\dd x_1\dd\xi_1\dd w_1\overline{f(x_1,\xi_1,w_1)}\dd\xi_2\\
& e^{-ia_0(2\omega_0(x_1-x_0,w_1)+\frac12\omega_1(\xi_1,\xi_0) -\frac{\alpha}{2}\omega_1(\xi_2,\xi_0)-\frac{\alpha+1}{2}\omega_1(\xi_1,\xi_2))}\\
&\overline{\tilde\eta(2x_1-x_0,\xi_1+\xi_2)} \ast_{\xi_0}\left[e^{ia_0(\omega_0(x-x_0,w)+\frac 12\omega_1(\xi,\xi_0))}\tilde\eta(x_0-x,\xi_0-\xi)\right].
\end{align*}
\begin{align}
\left(\Omega(f)\tilde\eta,\tilde\eta_z\right)=&\sum_{I,J,K,L}\alpha(I,J,K,L)\int\dd x_0\dd x_1\dd w_1\ \overline{f_I(x_1,w_1)}e^{-2ia_0\omega_0(x_1-x_0,w_1)} \overline{\tilde\eta_J(2x_1-x_0)}\nonumber\\
&\quad e^{ia_0\omega_0(x-x_0,w)}\tilde\eta_K(x_0-x)\xi^L\nonumber\\
=&\sum_{I,J,K,L}\alpha(I,J,K,L)\int\dd x_0\dd x_1\dd w_1\ \overline{f_I(x_1+x_0,w_1)}e^{-2ia_0\omega_0(x_1,w_1)} \overline{\tilde\eta_J(2x_1+x_0)}\nonumber\\
& \quad e^{ia_0\omega_0(x-x_0,w)}\tilde\eta_K(x_0-x)\xi^L\ .
\label{eq-reg-matr}
\end{align}

But $\overline{\tilde\eta_J}$ is bounded, $x_0\mapsto\tilde\eta_K(x_0-x)$ is a smooth function with compact support, and (using Lemma \ref{lem-reg-trans}) there exists a weight $\tilde\mu$ independent of $x_0$ and bounded by a polynomial such that $(x_1,w_1)\mapsto \overline{f_I(x_1+x_0,w_1)}$ is in $\caB^{\tilde\mu}_E(\gB M)$. Hence the function
\begin{equation*}
(x_1,w_1)\mapsto \int\dd x_0\ \overline{f_I(x_1+x_0,w_1)}\overline{\tilde\eta_J(2x_1+x_0)} e^{ia_0\omega_0(x-x_0,w)}\tilde\eta_K(x_0-x)
\end{equation*}
is in $\caB^{\tilde\mu}_E(\gB M)$. By using Theorem \ref{thm-reg-oscil}, we conclude that the element $\left(\Omega(f)\tilde\eta,\tilde\eta_z\right)$ belongs to $E$ (see Appendix \ref{sec-funcspaces} for the definition of the $E$-valued integral).

\item 
We now recall the definition of the operator $O_{(x_1,w_1)}$ used in the proof of Theorem \ref{thm-reg-oscil}, but in the variables $(x_1,w_1)$: $O_{(x_1,w_1)}f = (1-\frac1{4a_0^2}\Delta_{(x_1,w_1)})(\frac{1}{1+x_1^2+w_1^2}f)$. We insert the $k$-th power of the operator $(\frac{1}{1+x_1^2+w_1^2})(1-\frac1{4a_0^2}\Delta_{(x_1,w_1)})$ on the phase in \eqref{eq-reg-matr} (on which it is the identity) and we integrate by parts. This gives us terms involving factors of the form $O_{(x_1,w_1)}^k(\overline{ f_I \, \tilde \eta} )$. Just as in the proof of Theorem \ref{thm-reg-oscil} one can show that
\begin{multline*}
O^k_{(x_1,w_1)}(\overline{f_I(x_1+x_0,w_1)}\overline{\tilde\eta_J(2x_1+x_0)})=\\
\frac{1}{(1+x_1^2+w_1^2)^k}\sum_{\gamma_i}b_{\gamma_i}(x_1,w_1)\partial_{(x_1,w_1)}^{\gamma_1} \overline{f_I(x_1+x_0,w_1)}\partial_{x_1}^{\gamma_2}\overline{\tilde\eta_J(2x_1+x_0)},
\end{multline*}
for multi-indices $\gamma_i$ (a finite sum) and bounded functions $b_{\gamma_i}(x_0,w)$.
Now remember that, in order to investigate the nature of the function $(x,\xi,w)=z\mapsto \left(\Omega(f)\tilde\eta,\tilde\eta_z\right)$, we have to investigate the coefficients (functions of $x$ and $w$) with respect to an expansion into powers of $\xi$. What we thus have obtained so far, introducing the abbreviations $\hat f(x_1+x_0,w_1)=\partial_{(x_1,w_1)}^{\gamma_1} \overline{f_I(x_1+x_0,w_1)}$ (which is in $\caB^\mu_E(\gB M)$) and $\hat\eta(2x_1+x_0)=\partial_{x_1}^{\gamma_2}\overline{\tilde\eta_J(2x_1+x_0)}$ (which is in $\caD(\gB Q)$), is that these coefficients are finite linear combinations of terms of the form:
\begin{equation*}
\int\dd x_0\dd x_1\dd w_1 \frac{b_{\gamma_i}(x_1,w_1)}{(1+x_1^2+w_1^2)^k} e^{-2ia_0\omega_0(x_1,w_1)}\hat f(x_1+x_0,w_1)\hat\eta(2x_1+x_0)e^{ia_0\omega_0(x-x_0,w)}\tilde\eta_K(x_0-x).
\end{equation*}
Let us now use the operator $O_{x_0}$ for the variable $x_0$ just as in the proof of Lemma \ref{lem-prel-schw}: $O_{x_0}=(1-\frac1{a_0^2}\Delta_{x_0})\frac{1}{1+w^2}$ (note that this is not an exact equivalent of the operator $O_{(x_1,w_1)}$ because here the function $1/(1+w^2)$ commutes with the Laplacian). As it acts as the identity on the phase $e^{ia_0\omega_0(x-x_0,w)}$, we can insert the $l$-th power of it and integrate by parts. We can also prove (just as before) the equality
\begin{multline*}
O^l_{x_0}(\hat f(x_1+x_0,w_1)\hat\eta(2x_1+x_0)\tilde\eta_K(x_0-x))=\\
\frac{1}{(1+w^2)^l}\sum_{\delta_i}c_{\delta_i}\partial_{x_0}^{\delta_1}\hat f(x_1+x_0,w_1) \partial_{x_0}^{\delta_2}\hat\eta(2x_1+x_0) \partial_{x_0}^{\delta_3}\tilde\eta_K(x_0-x),
\end{multline*}
for multi-indices $\delta_i$ (a finite sum) and constants $c_{\delta_i}$. 
We now introduce yet another set of abbreviation by writing $F(x_1+x_0,w_1)=\partial_{x_0}^{\delta_1}\hat f(x_1+x_0,w_1)$ (which is in $\caB_E^\mu(\gB M)$), $\varphi(2x_1+x_0)=\partial_{x_0}^{\delta_2}\hat\eta(2x_1+x_0)$ (which is in $\caD(\gB Q)$), $\psi(x_0-x)=\partial_{x_0}^{\delta_3}\tilde\eta_K(x_0-x)$ (which is also in $\caD(\gB Q)$) and $b(x_1,w_1)=c_{\delta_i}b_{\gamma_i}(x_1,w_1)$ (which is in $\caB^1(\gB M)$). With these abbreviations the coefficients of $\left(\Omega(f)\tilde\eta,\tilde\eta_z\right)$ with respect to the powers of the variables $\xi$ are finite linear combinations of terms of the form:
\begin{multline*}
I(x,w)=\int\dd x_0\dd x_1\dd w_1 \frac{b(x_1,w_1)}{(1+x_1^2+w_1^2)^k(1+w^2)^l} e^{-2ia_0\omega_0(x_1,w_1)}e^{ia_0\omega_0(x-x_0,w)}\\
F(x_1+x_0,w_1)\varphi(2x_1+x_0)\psi(x_0-x).
\end{multline*}
To prove that $\left(\Omega(f)\tilde\eta,\tilde\eta_.\right)$ is a Schwartz function, we will show that the quantities $|I_{P,\beta}|_j$ are finite, where $P$ is any polynomial function on $\gB M$, $\beta$ any multi-index and $I_{P,\beta}$ defined as
\begin{equation*}
I_{P,\beta}=\int\dd x\dd w P(x,w) D^\beta_{(x,w)}I(x,w),
\end{equation*}
simply because a function is bounded if its derivative is integrable. We start by giving the explicit expression of $I_{P,\beta}$:
\begin{multline*}
I_{P,\beta}=\int\dd x\dd w \dd x_0\dd x_1\dd w_1 P(x,w) D^\beta_{(x,w)}\Big(\frac{b(x_1,w_1)}{(1+x_1^2+w_1^2)^k(1+w^2)^l} e^{-2ia_0\omega_0(x_1,w_1)}\\
e^{ia_0\omega_0(x-x_0,w)} F(x_1+x_0,w_1)\varphi(2x_1+x_0)\psi(x_0-x)\Big),
\end{multline*}
Next, as before, we write
\begin{multline*}
D^\beta_{(x,w)}\Big(\frac{1}{(1+w^2)^l} e^{ia_0\omega_0(x-x_0,w)}\psi(x_0-x)\Big)=\\
\frac{1}{(1+w^2)^l}\sum_\lambda d_\lambda(x-x_0,w)\partial_x^\lambda\psi(x_0-x) e^{ia_0\omega_0(x-x_0,w)},
\end{multline*}
with $\lambda$ a multi-index and $d_\lambda$ a function bounded by a polynomial of degree $|\beta|$. Using this expression we obtain
\begin{multline*}
I_{P,\beta}=\sum_\lambda\int\dd x\dd w \dd x_0\dd x_1\dd w_1 \frac{P(x,w)b(x_1,w_1)d_\lambda(x-x_0,w)}{(1+x_1^2+w_1^2)^k(1+w^2)^l} e^{-2ia_0\omega_0(x_1,w_1)}\\
e^{ia_0\omega_0(x-x_0,w)} F(x_1+x_0,w_1)\varphi(2x_1+x_0)\partial_x^\lambda\psi(x_0-x).
\end{multline*}
It follows from Lemma \ref{lem-reg-trans} that $|F(x_1+x_0,w_1)|_j\leq |F|_{j,0}^{(\tilde\mu)}\tilde\mu(x_1,w_1)\tilde\mu(x_0,0)$, hence $|F|_{j,0}^{(\tilde\mu)}\leq |F|_{j,0}^{(\mu)}$. By performing successively the changes of variables: $x_0\mapsto x_0-2x_1$ and $x\mapsto x_0-2x_1-x$, we obtain
\begin{multline*}
|I_{P,\beta}|_j\leq\sum_\lambda\int\dd x\dd w \dd x_0\dd x_1\dd w_1 |F|_{j,0}^{(\mu)} \frac{|\tilde\mu(x_1,w_1)\tilde\mu(x_0,0)P(x_0-2x_1-x,w)|}{(1+x_1^2+w_1^2)^k(1+w^2)^l}\\
|b(x_1,w_1)d_\lambda(-2x_1-x,w)\varphi(x_0)\partial_x^\lambda\psi(x)|.
\end{multline*}
Using appropriate values for $k$ and $l$, and because $\varphi$ and $\psi$ are functions with compact support, it follows that the above expression is finite, so that $\left(\Omega(f)\tilde\eta,\tilde\eta_.\right)$ is indeed a Schwartz function.

\item From 2 we deduce that $|\left(\Omega(f)\tilde\eta,\tilde\eta_{\fois}\right)|_{\nu,j,\beta}$ (which is bounded by terms of the same form but with seminorms $|\fois|'_{\nu,j,\beta}$, see Appendix \ref{sec-funcspaces}) is bounded by terms of the form $|f|_{j,\alpha}$, which proves continuity.

\item If we replace $f$ by $L_y^\ast f$ in 3 and if we use $\mu(y+y_0)\leq \tilde\mu(y)\tilde\mu(y_0)$ (see Lemma \ref{lem-reg-trans}), we obtain: 
\begin{equation*}
\forall\nu,j,\beta
\quad:\quad
|\left(\Omega(L_y^\ast f)\tilde\eta,\tilde\eta_{\fois}\right)|_{\nu,j,\beta}\leq \sum_{\alpha} C_\alpha \tilde\mu(y)|f|^{(\mu)}_{j,\alpha},
\end{equation*}
where the sum over the multi-indices $\alpha$ is finite, and the $C_\alpha$ are positive constants.
\end{enumerate}
\end{proof}

In the rest of this section we will no longer consider an arbitrary Fr\'echet space $E$, but only the particular case of a complex Fr\'echet algebra\footnote{This means in particular that the product is continuous for the topology determined by the seminorms. We do not assume that each seminorm is submultiplicative.} $(\algA,|\fois|_j)$. We also assume that $\algA$ is endowed with a continuous involution denoted by $a\mapsto\overline a$. On the space $L^2(Q)\otimes \algA$ we will use the topology defined by the seminorms $|\cdot|_j^{(h)}$ defined as 
\begin{equation*}
\forall\tilde\phi\in L^2(Q)\otimes \algA
\quad:\quad
|\tilde\phi|_j^{(h)}:=|\int_M\dd q\ \overline{\tilde\phi(q)}\ast_\xi\tilde\phi(q)|_j^{\frac12},
\end{equation*}
where $\ast_\xi$ denotes the Hodge operation with respect to the variable $\xi$ (see subsection \ref{subsec-superman}). Its completion will be denoted by $L^2(Q)\otimes_{(h)} \algA$ where we added $(h)$ to differentiate this completion from other types of topological tensor products (see \cite{Grothendieck:1966}).

A priori, for any $f\in\caB^\mu_\algA(M)$, the map $\Omega(f)$ is an unbounded operator from $L^2(Q)$ to $L^2(Q)\otimes_{(h)} \algA$. Theorem \ref{thm-reg-quant} shows that $\caS(Q)\subset \text{Dom}(\Omega(f))$. In fact, we will prove that for $\mu=1$ ($f\in\caB^1_\algA(M)$), the map $\Omega(f)$ is a bounded operator. We thus consider the space $\caL(L^2(Q),L^2(Q)\otimes_{(h)} \algA)$ of continuous linear applications, endowed with the topology of bounded convergence (see \cite{Bourbaki5:1966}).

\begin{theorem}
\label{thm-reg-contin}
The quantization map
\begin{equation*}
\Omega:\caB^1_\algA(M)\to \caL(L^2(Q),L^2(Q)\otimes_{(h)} \algA)
\end{equation*}
is continuous.
\end{theorem}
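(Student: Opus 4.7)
The plan is to prove a Calder\'on--Vaillancourt type estimate by transferring $\Omega(f)$ to an integral operator on $L^2(M)$ via the resolution of the identity, then bounding it via Schur's test using the rapid decay of matrix coefficients supplied by Theorem~\ref{thm-reg-quant}.

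First I would fix a function $\tilde\eta \in \caS(\gB Q)$ with $\Vert\tilde\eta\Vert_2 = 1$, and use Theorem~\ref{thm-prel-resol} to define the isometric embedding $V\colon L^2(Q)\hookrightarrow L^2(M)$, $(V\tilde\psi)(z) = C^{-1/2}\langle\tilde\eta_z,\tilde\psi\rangle$. Its adjoint $V^\ast$ is integration against the coherent states $\tilde\eta_z$. The composite $V\,\Omega(f)\,V^\ast$ is then an integral operator on $L^2(M)$ with $\algA$-valued kernel $K_f(z_2,z_1) = C^{-1}\bigl(\Omega(f)\tilde\eta_{z_1},\tilde\eta_{z_2}\bigr)$, and because $V$ is isometric, controlling $V\Omega(f)V^\ast$ on $L^2(M)$ is equivalent to controlling $\Omega(f)$ on $L^2(Q)$.

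Next I would exploit covariance of the quantization. From the definition $\Omega(z)=\tilde U(z)\Sigma\tilde U(z)^{-1}$ one has $\tilde U(g)\,\Omega(z)\,\tilde U(g)^{-1}= \Omega(g\cdot z)$, and integrating yields $\tilde U(g)\,\Omega(f)\,\tilde U(g)^{-1}= \Omega(T_gf)$, where $T_g$ denotes the translation induced by the left action of $G$ on $M=G/K$ (the measure being bi-invariant since $K$ is central). Applied with $g$ a lift of $z_1\in M$, this rewrites
\begin{equation*}
K_f(z_2,z_1) = C^{-1}\bigl(\Omega(T_{z_1^{-1}}f)\tilde\eta,\;\tilde U(z_1)^{-1}\tilde\eta_{z_2}\bigr),
\end{equation*}
which, after absorbing the phase coming from the chosen section $M\hookrightarrow G$ into the coherent state, is of the form $(\Omega(L_y^\ast f)\tilde\eta,\tilde\eta_{z'})$ appearing in Theorem~\ref{thm-reg-quant}(4), with translation parameter $y=\gB(z_1)$ and $z'$ essentially $z_2-z_1$ in the body. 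Since we work with $\mu\equiv 1$, the weight $\tilde\mu$ of Lemma~\ref{lem-reg-trans} can also be taken equal to $1$; Theorem~\ref{thm-reg-quant}(4) then supplies, for every $N\in\gN$ and every seminorm $|\cdot|_j$ on $\algA$, a continuous seminorm $P_{j,N}$ on $\caB^1_\algA(M)$ with
\begin{equation*}
|K_f(z_2,z_1)|_j \;\leq\; P_{j,N}(f)\,\bigl(1+|\gB(z_2)-\gB(z_1)|\bigr)^{-N}.
\end{equation*}

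Finally I would apply the scalar Schur test componentwise in $\algA$. Choosing $N$ strictly larger than $\dim\gB M$ makes $(1+|\gB(z_2)-\gB(z_1)|)^{-N}$ integrable in either variable of $M$ with a translation-invariant bound, so both suprema $\sup_{z_1}\int_M |K_f(z_2,z_1)|_j\,dz_2$ and $\sup_{z_2}\int_M |K_f(z_2,z_1)|_j\,dz_1$ are dominated by a continuous seminorm of $f$. Schur's test then yields $|V\Omega(f)V^\ast F|_j^{(h)} \leq P_{j,N}(f)\,\Vert F\Vert_2$ for all $F\in L^2(M)$, and pulling back by the isometry $V$ gives $|\Omega(f)\tilde\psi|_j^{(h)} \leq P_{j,N}(f)\,\Vert\tilde\psi\Vert_2$. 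This is exactly the desired continuity of $\Omega$ into $\caL(L^2(Q),L^2(Q)\otimes_{(h)}\algA)$ for the topology of bounded convergence. The main obstacle is the covariance step: one must verify that left translation on $G$ descends cleanly to $M=G/K$ under the chosen section and that the odd variables in $\tilde\eta_{z_2}$ cooperate with the covariance, so that the matrix coefficient really falls within the scope of Theorem~\ref{thm-reg-quant}(4); once this is in place, the $\caB^1$-hypothesis on $f$ (which makes $\tilde\mu\equiv 1$ admissible) is precisely what allows the Schur estimate to be uniform in $z_1$.
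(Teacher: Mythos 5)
Your proposal is correct and follows essentially the same route as the paper: the authors also reduce to the coherent-state matrix coefficients $(\tilde\eta_{y_1},\Omega(f_J)\tilde\eta_{y_2})$ via a double application of the resolution of the identity, use the covariance identity $(\tilde\eta_{y_1},\Omega(f_J)\tilde\eta_{y_2})=(\tilde\eta_{y_2^{-1}y_1},\Omega(L^\ast_{y_2}f_J)\tilde\eta)$ together with Theorem~\ref{thm-reg-quant}(4) and the fact that $\mu=1$ forces $\tilde\mu=1$ to get the off-diagonal decay, and then conclude by the Cauchy--Schwarz estimate that is exactly the Schur test you invoke. The only differences are presentational (they run the Schur argument directly on the double integral rather than conjugating by the isometry $V$, and they handle the odd variables by expanding into the components indexed by $I,J,K$, which is the bookkeeping you flag as the remaining point to check).
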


\begin{proof}
First, note that $\left(\tilde\varphi,\Omega(f)\tilde\psi\right)=\langle\tilde\varphi,\ast\Omega(f)\tilde\psi\rangle=(-1)^{|\tilde\varphi|(n+1)}\langle\ast\tilde\varphi,\Omega(f)\tilde\psi\rangle$, so that it suffices to consider $\langle\tilde\varphi,\Omega(f)\tilde\psi\rangle$ ($\tilde\varphi\mapsto (-1)^{|\tilde\varphi|(n+1)} \ast\tilde\varphi$ is a bijection). We then write $z_i=(x_i,\xi_i,w_i)\in M$ and $y_i=(x_i,w_i)\in \gB M$ and we denote by $(\fois,\fois)$ the scalar product of functions on $\gB M$. With these notations we use the resolution of the identity (see Theorem \ref{thm-prel-resol}) on $\langle\tilde\varphi,\Omega(f)\tilde\psi\rangle$ giving
\begin{align*}
\langle\tilde\varphi,\Omega(f)\tilde\psi\rangle &=\frac{1}{C^2\norm\tilde\eta\norm_2^4}\int\dd z_1\dd z_2\langle\tilde\varphi,\tilde\eta_{z_1}\rangle\langle\tilde\eta_{z_1},\Omega(f)\tilde\eta_{z_2}\rangle\langle\tilde\eta_{z_2},\tilde\psi\rangle\\
&=\sum_{I,J,K}\frac{\alpha(I,J,K)}{C^2\norm\tilde\eta\norm_2^4}\int\dd y_1\dd y_2 (\tilde\varphi_I,\tilde\eta_{y_1})(\tilde\eta_{y_1},\Omega(f_J)\tilde\eta_{y_2})(\tilde\eta_{y_2},\tilde\psi_K),\\
\end{align*}
where $I,J,K$ are subsets of $\{1,\dots,n\}$ with some unspecified constraints among them, where $\alpha(I,J,K)$ is a real number, and where $\tilde\eta$ is an element of $\caS(\gB M)$ with non-vanishing norm. We then can make the following estimates:
\begin{align}
|\langle\tilde\varphi,\Omega(f)\tilde\psi\rangle|_j \leq &\sum_{I,J,K}\frac{|\alpha(I,J,K)|}{C^2\norm\tilde\eta\norm_2^4}\int\dd y_1\dd y_2 |(\tilde\varphi_I,\tilde\eta_{y_1})||(\tilde\eta_{y_1},\Omega(f_J)\tilde\eta_{y_2})|_j |(\tilde\eta_{y_2},\tilde\psi_K)|\nonumber\\
\leq &\sum_{I,J,K}\frac{|\alpha(I,J,K)|}{C^2\norm\tilde\eta\norm_2^4} \left(\int\dd y_1\dd y_2 |(\tilde\varphi_I,\tilde\eta_{y_1})|^2|(\tilde\eta_{y_1},\Omega(f_J)\tilde\eta_{y_2})|_j\right)^{\frac12}\nonumber\\
&\left(\int\dd y_1\dd y_2 |(\tilde\eta_{y_1},\Omega(f_J)\tilde\eta_{y_2})|_j |(\tilde\eta_{y_2},\tilde\psi_K)|^2\right)^{\frac12}\nonumber\\
\leq &\sum_{I,J,K}\frac{|\alpha(I,J,K)|}{C^2\norm\tilde\eta\norm_2^4} \left(\int\dd y_1 |(\tilde\varphi_I,\tilde\eta_{y_1})|^2\right)^{\frac12} \sup_{y_1}\left(\int\dd y_2|(\tilde\eta_{y_1},\Omega(f_J)\tilde\eta_{y_2})|_j\right)^{\frac12}\nonumber\\
&\sup_{y_2}\left(\int\dd y_1 |(\tilde\eta_{y_1},\Omega(f_J)\tilde\eta_{y_2})|_j\right)^{\frac 12} \left(\int\dd y_2|(\tilde\eta_{y_2},\tilde\psi_K)|^2\right)^{\frac12}\label{eq-cst-inter}
\end{align}
Looking at the factor $\int\dd y_1 |(\tilde\varphi_I,\tilde\eta_{y_1})|^2$ we have
\begin{equation*}
\int\dd y_1 |(\tilde\varphi_I,\tilde\eta_{y_1})|^2=C'\norm\tilde\eta\norm_2^2\norm\tilde\varphi_I\norm^2_2\leq C'\norm\tilde\eta\norm^2_2\norm\tilde\varphi\norm^2_2,
\end{equation*}
where we used the resolution of the identity just on $\gB M$ (with another constant $C'$) and the fact that $\norm\tilde\varphi\norm_2^2=\sum_I\norm\tilde\varphi_I\norm_2^2$. In the same way one can show that $\int\dd y_2|(\tilde\eta_{y_2},\tilde\psi_K)|^2\leq C'\norm\tilde\eta\norm^2_2\norm\tilde\psi\norm^2_2$.

Next, by definition of $\tilde\eta_y$ and $\Omega$, we have
\begin{equation*}
(\tilde\eta_{y_1},\Omega(f_J)\tilde\eta_{y_2})=(\tilde\eta_{y_1},\tilde U(y_2)\Omega(L^\ast_{y_2}f_J)\tilde\eta)=(\tilde\eta_{y_2^{-1}y_1},\Omega(L^\ast_{y_2}f_J)\tilde\eta).
\end{equation*}
Using point 4 of Theorem \ref{thm-reg-quant}, we deduce that, for all $j$ and for all polynomial functions $\nu$ on $\gB M$, there exists constants $C'_\alpha>0$ (where $\alpha$ runs through a finite set of multi-indices) such that
\begin{equation*}
|(\tilde\eta_{y_1},\Omega(f_J)\tilde\eta_{y_2})|_j\leq\sum_\alpha C'_\alpha\frac{1}{|\nu(y_1-y_2)|}|f|_{j,\alpha}^{(1)},
\end{equation*}
simply because $\tilde\mu=1$ when $\mu=1$. When we plug all these inequalities in \eqref{eq-cst-inter} we obtain that, for all $j$, there exist constants $C_\alpha>0$ (and remember, $\alpha$ takes values in a finite set of multi-indices) such that 
\begin{equation*}
\forall f\in\caB_E^1(M)\ ,\ \forall\tilde\varphi,\tilde\psi\in L^2(Q)
\quad:\quad
|\left(\tilde\varphi,\Omega(f)\tilde\psi\right)|_j\leq \sum_\alpha C_\alpha\norm \tilde\varphi\norm_2\norm\tilde\psi\norm_2 |f|_{j,\alpha}.
\end{equation*}
\end{proof}

\subsection{Construction of the deformed product}
\label{subsec-product}

First, we consider the space $\caB^1_\gC(M)$ which contains the constant function $z\mapsto 1$. We assume in the following that the map $\Omega$ defined on this space (see Theorem \ref{thm-reg-contin}) is compatible with the unit, i.e. $\Omega(1)=1$. This corresponds to fix the constant $\gamma$ as
\begin{equation}
\gamma=\frac{(-1)^n}{r_0 r_1(1+\alpha)^n}\label{eq-ext-condgamma}
\end{equation}
in the expression of $\Omega$ \eqref{eq-qu-omf}. Remember that $r_0$ and $r_1$ have been defined in \eqref{eq-qu-fourier}.

Using the notation of the previous subsection with the Fr\'echet algebra $\algA$, we now introduce the deformed noncommutative product.

\begin{proposition}
\label{prop-prod-moy}
Using two independent parameters $\lambda$ and $\kappa$, we define the bilinear map $\star:\caD_\algA(M)\times\caD_\algA(M)\to C^\infty(M,\algA\otimes\superA)$ by 
\begin{multline}
(f_1\star f_2)(z)=\kappa\int\dd z_1\dd z_2\ f_1(z_1)f_2(z_2) e^{2ia_0(\omega_0(x_2-x,w_1)+\omega_0(x-x_1,w_2)+\omega_0(x_1-x_2,w))}\\
e^{2ia_0\lambda(\omega_1(\xi,\xi_1)+\omega_1(\xi_1,\xi_2)+\omega_1(\xi_2,\xi))},\label{eq-prod-moy}
\end{multline}
for any $f_1,f_2\in\caD_\algA(M)$ and any $ z\in M$, using the notation \eqref{eq-prel-var}. This map can be extended:
\begin{itemize}
\item to Schwartz functions $\star:\caS_\algA(M)\times\caS_\algA(M)\to\caS_\algA(M)$.

\item to the symbol calculus $\star:\caB^\mu_\algA(M)\times\caB_\algA^{\mu'}(M)\to\caB_\algA^{\tilde\mu\tilde\mu'}(M)$ by using the oscillating integral with $\mu$ and $\mu'$ weights bounded by a polynomial and $\tilde\mu$ and $\tilde\mu'$ the associated weights according to Lemma \ref{lem-reg-trans}.

\item as $\star:\caB^\mu_\algA(M)\times\caS_\algA(M)\to\caS_\algA(M)$ and $\star:\caS_\algA(M)\times\caB_\algA^{\mu}(M)\to\caS_\algA(M)$.
\end{itemize}
Moreover, in each case the extended bilinear map is continuous for the topologies involved.
\end{proposition}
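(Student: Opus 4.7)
The plan is to reduce the product $f_1 \star f_2$ to a purely even oscillating integral by integrating out the odd variables explicitly, and then to apply Theorem \ref{thm-reg-oscil} to the resulting even integral. First, I would decompose $f_i(x_i, \xi_i, w_i) = \sum_{I_i} f_{i, I_i}(x_i, w_i)\, \xi_i^{I_i}$ and expand the odd phase factor $e^{2ia_0\lambda(\omega_1(\xi,\xi_1) + \omega_1(\xi_1,\xi_2) + \omega_1(\xi_2,\xi))}$ as a finite polynomial in $(\xi, \xi_1, \xi_2)$. The Berezin integrations over $\xi_1$ and $\xi_2$ then produce a finite linear combination, indexed by subsets $I_1, I_2, L \subset \{1, \dots, n\}$ subject to certain constraints, of terms of the shape
\begin{equation*}
c_{I_1,I_2,L}\, \xi^L \int \dd y_1\, \dd y_2\ f_{1, I_1}(y_1)\, f_{2, I_2}(y_2)\, e^{2ia_0\, \Phi(y, y_1, y_2)},
\end{equation*}
with $y_i = (x_i, w_i) \in \gB M$ and $\Phi(y, y_1, y_2) = \omega_0(x_2 - x, w_1) + \omega_0(x - x_1, w_2) + \omega_0(x_1 - x_2, w)$. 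A translation $y_i \mapsto y + y_i$ reduces $\Phi$ to the $y$-independent form $\omega_0(x_2, w_1) - \omega_0(x_1, w_2)$, with integrand $f_{1, I_1}(y + y_1)\, f_{2, I_2}(y + y_2)$.

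\textbf{Applying the oscillating integral.} For $f_1, f_2 \in \caD_\algA(M)$ the inner integral runs over a compact set and the formula is classically well-defined. In the symbol case $f_i \in \caB^{\mu_i}_\algA(M)$, Lemma \ref{lem-reg-trans} yields, for fixed $y \in \gB M$, the estimate $|D^\alpha f_{i, I_i}(y + y_i)|_j \leq C_{j, \alpha, I_i}\, \tilde\mu_i(y)\, \tilde\mu_i(y_i)$, so the $(y_1, y_2)$-integrand sits in a symbol space with weight $\tilde\mu_1(y_1)\, \tilde\mu_2(y_2)$ and factors $\tilde\mu_1(y)\, \tilde\mu_2(y)$ polynomially in $y$. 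Since the remaining phase is a non-degenerate bilinear form on $(y_1, y_2)$, matching (up to relabeling of the even coordinates) the phase $\omega_0(x,w)$ appearing in Theorem \ref{thm-reg-oscil}, the oscillating integral in $(y_1, y_2)$ is well-defined and continuous with values in $\algA$, and the resulting function of $y$ belongs to $\caB^{\tilde\mu_1 \tilde\mu_2}_\algA(\gB M)$. For $f_i \in \caS_\algA(M)$, the integrand is absolutely integrable and the integral is a symplectic Fourier-type convolution, which preserves Schwartz regularity by standard arguments; the mixed cases combine absolute integrability of the Schwartz factor with the oscillating-integral control of the symbol factor.

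\textbf{Continuity and main obstacle.} To obtain the seminorm estimates, I would commute derivatives under the integral sign (justified by the continuity assertion of Theorem \ref{thm-reg-oscil}) and distribute $D^\beta_z$ via the Leibniz rule onto the translated factors $f_{i, I_i}(y + y_i)$, producing a finite linear combination of oscillating integrals of the same structural form with higher-order derivatives of $f_{i, I_i}$. Theorem \ref{thm-reg-oscil} then bounds each term by a finite sum of products $|f_1|^{(\mu_1)}_{j, \alpha_1}\, |f_2|^{(\mu_2)}_{j, \alpha_2}$ times $\tilde\mu_1(y)\, \tilde\mu_2(y)$, yielding the desired seminorm bounds $|f_1 \star f_2|^{(\tilde\mu_1 \tilde\mu_2)}_{j, \beta}$. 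The main obstacle I anticipate is the careful bookkeeping of the odd combinatorics (signs $\eps(I, J)$ and the constraints between the index sets $I_1, I_2, L$ produced by the Berezin integration) so that the estimate is uniform across these indices and compatible with the Fr\'echet algebra structure of $\algA$; the uniqueness assertion in Theorem \ref{thm-reg-oscil} then ensures that the three extensions agree on the common dense subspace $\caD_\algA(M)$, so the formula \eqref{eq-prod-moy} defines a consistent bilinear product across the stated functional settings.
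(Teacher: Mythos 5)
Your proposal is correct and follows essentially the same route as the paper's proof: the change of variables $z_i\mapsto z_i+z$ to render the phase $z$-independent, the estimate $|\partial^\gamma (f_i)_{I}(y+y_i)|_j\le |f_i|^{(\mu_i)}_{j,\gamma}\tilde\mu_i(y)\tilde\mu_i(y_i)$ from Lemma \ref{lem-reg-trans}, and the integration-by-parts/oscillating-integral machinery of Theorem \ref{thm-reg-oscil} (the paper phrases this as inserting the operators $O^k_{x_1}$, $O^l_{w_1}$ on the phase) to obtain the seminorm bounds in each of the three settings. The only cosmetic difference is that you make the Berezin integration over the odd variables and the resulting index combinatorics explicit up front, which the paper leaves implicit here (having carried it out in the proofs of Lemma \ref{lem-qu-cont} and Theorem \ref{thm-reg-quant}).
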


\begin{proof}
\begin{itemize}
\item By using the continuity of the undeformed product of $\algA$ and operators $O^k_{x_1}$ and $O^l_{w_1}$ on the phase as in Theorem \ref{thm-reg-quant}, one finds that $|\int\dd z P(z) D^\beta_z (f_1\star f_2(z))|_j$ is bounded by a linear combination of seminorms of $f_1$ and $f_2$ in $\caS_\algA(M)$, for any polynomial function $P$ on $\gB M$ and any multi-index $\beta$. This proves the existence of the product $f_1\star f_2$, the fact that it is in $\caS_\algA(M)$, and the continuity of $\star$.

\item By a change of variables, we can express the product as:
\begin{equation*}
(f_1\star f_2)(z)=\kappa\int\dd z_1\dd z_2\ f_1(z_1+z)f_2(z_2+z) e^{2ia_0(\omega_0(x_2,w_1)-\omega_0(x_1,w_2)+\lambda\omega_1(\xi_1,\xi_2))}.
\end{equation*}
As we have the estimate $|\partial^\gamma(f_1)_J(x_1+x,w_1+w)|_{k}\leq |f_1|_{k,\gamma}^{(\mu)}\tilde\mu(x_1,w_1)\tilde\mu(x,w)$, it follows that $|f_1\star f_2|_{j,\beta}^{(\tilde\mu\tilde\mu')}$ is bounded by a linear combination of seminorms of $f_1$ and $f_2$.

\item This follows in the same way as the first extension.
\end{itemize}
\end{proof}

\begin{proposition}
\label{prop-product-idoper}
If we choose the values $\lambda=-\frac{(\alpha+1)^2}{4\alpha}$ and $\kappa=\frac{\gamma\alpha^n}{r_0(1+\alpha)^n}$, then the product defined in Proposition \ref{prop-prod-moy} corresponds to the product of operators via the quantization map $\Omega$: 
\begin{equation*}
\forall f_1\in\caB^\mu_\algA(M)\ \forall f_2\in\caB^{\mu'}_\algA(M)
\quad:\quad
\Omega(f_1\star f_2)=\Omega(f_1)\Omega(f_2),
\end{equation*}
In the sequel we will always assume that $\lambda$ and $\kappa$ have these values and we recall that $\gamma$ is given by \eqref{eq-ext-condgamma}.
\end{proposition}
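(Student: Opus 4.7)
The strategy is direct: compute the composition $\Omega(f_1)\Omega(f_2)\tilde\varphi(q_0)$ by applying the explicit kernel formula of Proposition \ref{prop-qu-omega} twice, and then reorganize the resulting integral so as to recognize it as $\Omega(F)\tilde\varphi(q_0)$ for a function $F$ that matches the definition of $f_1 \star f_2$ in \eqref{eq-prod-moy}, reading off the constants $\lambda$ and $\kappa$ in the process. It suffices to prove the identity on test inputs $f_1,f_2 \in \caD_\algA(M)$ and $\tilde\varphi \in \caD(Q)$, since both sides extend uniquely by continuity thanks to Theorem \ref{thm-reg-contin} and Proposition \ref{prop-prod-moy}.

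First I would write out $\Omega(f_1)(\Omega(f_2)\tilde\varphi)(q_0)$ as an iterated integral over $z_1=(x_1,\xi_1,w_1)$ and $z_2=(x_2,\xi_2,w_2)$, with the intermediate variable $q'=(x',\xi')\in Q$ from the first composition and with two auxiliary odd integration variables $\eta_1,\eta_2$ produced by the two copies of the $\caF_\alpha$ in $\Omega$. The even part of the exponent is, after using the bilinearity of $\omega_0$, of the form $2ia_0[\omega_0(x'-x_0,w_1)+\omega_0(x-x',w_2)]$; integrating first over $x'$ and $w_2$ (or equivalently over the conjugate variable using \eqref{eq-qu-fourier}) produces a delta-function forcing $x'=x_1+x_2-x_0$ modulo a translation, after which a simple change of even variables $x\mapsto \tfrac12(x_1+x_2)$, $w\mapsto w_1+w_2-\cdots$ yields exactly the even cocycle phase $\exp\bigl(2ia_0[\omega_0(x_2-x,w_1)+\omega_0(x-x_1,w_2)+\omega_0(x_1-x_2,w)]\bigr)$ of \eqref{eq-prod-moy}. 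This step also produces the factor $r_0$ from \eqref{eq-qu-fourier}.

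Next, and this is the crux, I would handle the odd variables. The composition carries odd-exponential factors with the two bilinear phases $-\tfrac12(\alpha+1)\omega_1(\xi_i,\eta_i)-\tfrac{\alpha}{2}\omega_1(\eta_i,\cdot)$ coming from each $\Omega$. The two Berezin integrations over $\eta_1,\eta_2$ are Gaussian-like in the odd sense: by Lemma \ref{lem-superman-exp} they evaluate to a monomial of top degree in the remaining odd variables, times explicit powers of $\alpha$ and $(\alpha+1)$. After these Berezin integrations and the change of variables $\xi_i\mapsto \xi_i-\xi$ (to center on $q$), the odd phase collapses to $\exp\bigl(2ia_0\lambda[\omega_1(\xi,\xi_1)+\omega_1(\xi_1,\xi_2)+\omega_1(\xi_2,\xi)]\bigr)$ with a single auxiliary odd variable remaining (playing the role of the $\caF_\alpha$-variable of the outer $\Omega$ applied to $f_1\star f_2$). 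Matching the coefficient of the triangle-cocycle $\omega_1(\xi,\xi_1)+\omega_1(\xi_1,\xi_2)+\omega_1(\xi_2,\xi)$ on both sides forces $\lambda=-\tfrac{(\alpha+1)^2}{4\alpha}$ by an elementary algebraic identity involving only $\alpha$ and $\alpha+1$. The residual overall scalar, combining $\gamma^2$ from the two $\Omega$'s, $r_0^{-1}$ from the even delta, $\alpha^n$ from the Berezin integration, and $(1+\alpha)^{-n}$ from the normalization, is precisely $\kappa=\frac{\gamma\alpha^n}{r_0(1+\alpha)^n}$.

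The main obstacle is the bookkeeping of signs and powers in the odd Berezin integrations: one must keep track of the anticommutativity of the $\xi_i$'s, of the sign conventions from the Hodge operation hidden in the scalar product, and of the $(-1)^{n(n+1)/2}$ factors in $r_1$. Doing this rigorously is best organized by using the basis of Proposition \ref{prop-sympl-basis} so that $\omega_1(\xi_1,\xi_2)=2\xi_1\cdot\xi_2$, and by separating the even computation (already essentially the non-graded Weyl calculation of \cite{Bieliavsky:2010kg}) from the odd computation. Once the identity $\Omega(f_1\star f_2)=\Omega(f_1)\Omega(f_2)$ is verified on $\caD_\algA(M)\times\caD_\algA(M)$, Proposition \ref{prop-prod-moy} and Theorem \ref{thm-reg-contin} imply it holds on $\caB^\mu_\algA(M)\times\caB^{\mu'}_\algA(M)$ as claimed.
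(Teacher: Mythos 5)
Your strategy is the same as the paper's: write out both $\Omega(f_1\star f_2)\tilde\varphi(q_0)$ and $\Omega(f_1)\Omega(f_2)\tilde\varphi(q_0)$ from the explicit formulas \eqref{eq-qu-omf} and \eqref{eq-prod-moy}, let the even integrations against the pure phases produce the $\delta$-function and the factor $r_0$ via \eqref{eq-qu-fourier}, and read off $\lambda$ and $\kappa$ from the matching. Two remarks on where your sketch diverges from (or is less careful than) the paper. First, the composition $\Omega(f_1)\Omega(f_2)$ introduces no intermediate integration variable $q'$ at all: the kernel of $\Omega(f_2)$ evaluates $\tilde\varphi$ pointwise at $2x_1-x_0$, so there is no delta-function step on that side, and your stated constraint $x'=x_1+x_2-x_0$ is off (the relevant collapse happens on the \emph{other} side, where integrating the outer symbol variables $x,w$ of $\Omega(f_1\star f_2)$ forces $x=x_0-x_1+x_2$, hence $2x-x_0=2x_2-2x_1+x_0$); also one cannot integrate out $w_2$, since $f_2$ depends on it. Second, your treatment of the odd auxiliaries is internally inconsistent as written: you claim to perform both Berezin integrations over $\eta_1,\eta_2$ yet to be left with one auxiliary odd variable. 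The paper avoids the odd Gaussian integrations entirely: it keeps the auxiliaries on both sides ($\xi,\xi_5$ versus $\xi_3,\xi_4$) and matches the two integrands by the single linear substitution $\xi_3=-\frac1\alpha\xi+\frac1\alpha\xi_1+\xi_5$, $\xi_4=\xi_5+\xi-\xi_2$, whose Berezinian $(-1)^n(1+\alpha)^n/\alpha^n$ is precisely what converts $(-1)^n\gamma^2$ into $\gamma\kappa r_0$ and fixes $\lambda=-(\alpha+1)^2/4\alpha$. This is cleaner and less error-prone than your route; if you do insist on integrating out the odd Gaussians, you must only eliminate one auxiliary per factor and track the $(-1)^{n(n+1)/2}$ signs in $r_1$ explicitly. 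Your closing density/continuity argument (reduce to $\caD_\algA(M)$ and extend via Theorem \ref{thm-reg-contin} and Proposition \ref{prop-prod-moy}) is a legitimate supplement that the paper leaves implicit.
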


\begin{proof}
After some elementary computations, by using Equations \eqref{eq-qu-omf} and \eqref{eq-prod-moy}, we obtain, for $\tilde\varphi\in \caS(Q)$,
\begin{multline*}
\Omega(f_1\star f_2)\tilde\varphi(q_0)=\gamma\kappa r_0\int\dd\xi\dd z_1\dd z_2f_1(z_1)f_2(z_2)\dd\xi_5\ e^{ia_0(2\omega_0(x_1-x_0,w_1)+2\omega_0(x_2-2x_1+x_0,w_2))}
\\
e^{ia_0(2\lambda\omega_1(\xi,\xi_1)+2\lambda\omega_1(\xi_1,\xi_2)+2\lambda\omega_1(\xi_2,\xi)+\frac12\omega_1(\xi,\xi_0)-\frac{\alpha}{2}\omega_1(\xi_5,\xi_0)-\frac12(\alpha+1)\omega_1(\xi,\xi_5))}
\\ 
\tilde\varphi(2x_2-2x_1+x_0,\xi+\xi_5),
\end{multline*}
and
\begin{multline*}
\Omega(f_1)\Omega(f_2)\tilde\varphi(q_0)=(-1)^n\gamma^2\int\dd\xi_3\dd z_1\dd z_2 f_1(z_1)f_2(z_2)\dd\xi_4\ e^{ia_0(2\omega_0(x_1-x_0,w_1)+2\omega_0(x_2-2x_1+x_0,w_2))}\\
e^{ia_0(\frac 12\omega_1(\xi_1,\xi_0)-\frac{\alpha}{2}\omega_1(\xi_3,\xi_0)-\frac12(\alpha+1)\omega_1(\xi_1,\xi_3)+\frac12\omega_1(\xi_2,\xi_1+\xi_3)-\frac{\alpha}{2}\omega_1(\xi_4,\xi_1+\xi_3)-\frac12(\alpha+1)\omega_1(\xi_2,\xi_4))}\\
\tilde\varphi(2x_2-2x_1+x_0,\xi_2+\xi_4).
\end{multline*}
In the above expressions, we used the notation $z_i=(x_i,\xi_i,w_i)\in M$ (for $i=1,2$), $\xi,\xi_3,\xi_4,\xi_5$ are odd coordinates of $M$ and $q_0=(x_0,\xi_0)\in Q$. It then suffices to apply the change of variables $(\xi,\xi_5) \mapsto (\xi_3=-\frac{1}{\alpha}\xi+\frac{1}{\alpha}\xi_1+\xi_5,\xi_4=\xi_5+\xi-\xi_2)$, whose Berezinian is $(-1)^n\frac{(1+\alpha)^n}{\alpha^n}$, to obtain the result.
\end{proof}

Note that if $\algA$ is unital, then $1\star 1=1$ (of course for the given values of $\lambda,\kappa,\gamma$). If $\alpha=1$, the product is given by the formula
\begin{equation*}
(f_1\star f_2)(z)=\kappa\int\dd z_1\dd z_2\ f_1(z_1)f_2(z_2) e^{-2ia_0(\omega(z_1,z_2)+\omega(z_2,z)+\omega(z,z_1))},
\end{equation*}
which is a unified expression in the even and odd variables.

\begin{proposition}
\label{prop-product-invol}
If $\algA$ is endowed with an involution (satisfying $\overline{a\fois b}=\overline{b}\fois\overline{a}$), then: 
\begin{equation*}
\forall f_1\in\caB^\mu_\algA(M)\ \forall f_2\in\caB^{\mu'}_\algA(M)
\quad:\quad
\overline{f_1\star f_2}=(-1)^{|f_1||f_2|}\overline{f_2}\star\overline{f_1},\qquad \Omega(f_1)^\dag=\Omega(\overline{f_1}).
\end{equation*}
\end{proposition}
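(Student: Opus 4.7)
\emph{Plan.} Both identities reduce to direct calculations on the integral formulas \eqref{eq-prod-moy} and \eqref{eq-qu-omf}; the only real work is the careful bookkeeping of the signs produced by the super-algebraic setting.

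\emph{First identity.} I apply complex conjugation term by term to the integrand of \eqref{eq-prod-moy}. Three contributions arise: the super-involution on $\algA$ reverses graded products, giving $\overline{f_1(z_1)f_2(z_2)}=(-1)^{|f_1||f_2|}\overline{f_2(z_2)}\,\overline{f_1(z_1)}$ (the even exponential commutes freely, contributing no sign); the prefactor contributes $\overline\kappa=(-1)^n\kappa$, as a consequence of $\overline\gamma=(-1)^n\gamma$ fixed by \eqref{eq-ext-condgamma} together with the fact that $a_0,\alpha,r_0$ are real; and the phase $\phi(z,z_1,z_2)$ lies in the fixed locus of the super-involution, so $\overline{e^{i\phi}}=e^{-i\phi}$. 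I then perform the substitution $z_1\leftrightarrow z_2$. The Berezin measure picks up a factor $(-1)^{n^2}=(-1)^n$ from reordering the two odd $n$-tuples $d\xi_1$ and $d\xi_2$. The $\omega_0$-part of $\phi$ is manifestly antisymmetric in $z_1\leftrightarrow z_2$; the $\omega_1$-part is \emph{also} antisymmetric, because in the component/Berezin formalism used in \eqref{eq-prod-moy} one has $\omega_1(\xi_i,\xi_j)=2\sum_k\xi_i^k\xi_j^k$, a Grassmann product satisfying $\xi_i^k\xi_j^k=-\xi_j^k\xi_i^k$. The two $(-1)^n$ factors cancel, the two sign-flips of $\phi$ cancel, and the result is
\[
\overline{(f_1\star f_2)(z)}=(-1)^{|f_1||f_2|}\,(\overline{f_2}\star\overline{f_1})(z).
\]

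\emph{Second identity.} Consistency with the first identity is automatic: Proposition~\ref{prop-product-idoper} together with the super-adjoint rule on a composition in $\caB(L^2(Q))$, namely $(AB)^\dag=(-1)^{|A||B|}B^\dag A^\dag$ (immediate from the defining property of $\dag$ in Proposition~\ref{prop-superhilbert-adjoint}), yields $\Omega(f_1\star f_2)^\dag=(-1)^{|f_1||f_2|}\Omega(f_2)^\dag\Omega(f_1)^\dag$, which matches $\Omega(\overline{f_1\star f_2})$ via the first identity \emph{provided} $\Omega(f)^\dag=\Omega(\overline f)$. To establish this last equality, I work at the level of integral kernels: from \eqref{eq-qu-omf}, I read off the kernel $K_f(q_0,q')$ of $\Omega(f)$, apply the super-adjoint formula \eqref{eq-superhilbert-dagast} with $J=\ast_\xi$ to obtain the kernel of $\Omega(f)^\dag$, and—after a change of variables in the auxiliary odd integral $d\xi_1$ of the same type used in the first part—compare it with the kernel of $\Omega(\overline f)$ supplied by \eqref{eq-qu-omf} applied to $\overline f$. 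The sign contributions (super-involution reversing the order of $\algA$-products, Berezin integration of odd $n$-tuples giving $(-1)^n$, the $(-1)^n$ hidden in $\overline\gamma$, and the Hodge operations embedded in \eqref{eq-superhilbert-dagast}) conspire to equate the two kernels, and the identity follows.

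\emph{Main obstacle.} The whole argument is essentially sign tracking; the one conceptual subtlety to watch is the dual role of $\omega_1$ on odd arguments. As an abstract super-bilinear form it is super-skew-symmetric, hence symmetric on odd pairs; but in the Berezin integrals of \eqref{eq-prod-moy} and \eqref{eq-qu-omf} it is being evaluated as the Grassmann expression $2\sum_k\xi^k\xi'^k$, which is antisymmetric in $\xi\leftrightarrow\xi'$. It is this latter antisymmetry that is operative and that provides the crucial sign-flip of the odd phase under $z_1\leftrightarrow z_2$ matching the sign-flip of the even phase, so that all signs combine into the clean identities claimed.
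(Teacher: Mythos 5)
Your proposal is correct and follows essentially the same route as the paper, which records the result only as ``a direct computation using the given expressions for the product \eqref{eq-prod-moy} and for the quantization map \eqref{eq-qu-omf}''; you have merely made explicit the sign bookkeeping the paper leaves implicit --- the cancellation of the two $(-1)^n$ factors coming from $\overline{\kappa}=(-1)^n\kappa$ and from reordering the Berezin measure under $z_1\leftrightarrow z_2$, and the fact that the operative antisymmetry of the odd phase is the Grassmann antisymmetry of $\omega_1(\xi_i,\xi_j)=2\sum_k\xi_i^k\xi_j^k$ rather than the symmetry of the abstract form on odd basis vectors. Nothing further is needed.
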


\begin{proof}
This is a direct computation using the given expressions for the product \eqref{eq-prod-moy} and for the quantization map \eqref{eq-qu-omf}.
\end{proof}

\begin{lemma}
\label{lem-product-trcl}
For $f\in\caB_\algA^\mu(M)$, with $\mu$ a weight bounded by a polynomial, and for $z_1\in M$, the operator $\Omega(f)\Omega(z_1)$ is trace-class (see Definition \ref{def-prel-supertrace}).
\end{lemma}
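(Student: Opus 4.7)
The strategy is to verify the three conditions of Definition \ref{def-prel-supertrace} for $T = \Omega(f)\Omega(z_1)$. The first two, concerning the domains of $T$ and of its adjoint, follow from the explicit form of the quantization map combined with the structural properties of $\Omega(z_1)$, while the integrability condition (3), which is the core of the statement, will be reduced to the oscillatory-integral Theorem \ref{thm-reg-oscil}.

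For conditions (1) and (2), the formula of Proposition \ref{prop-qu-omega} exhibits $\Omega(z_1)$ as an explicit composition of a translation, an even reflection $x_0 \mapsto 2x_1 - x_0$, multiplication by a smooth phase, and an odd Fourier transform in the auxiliary variable $\xi_1$, each of which sends $\caS(Q)$ into $\caS(Q) \otimes \superA$; hence $\Omega(z_1)\caS(Q) \subset \caS(Q) \otimes \superA$. Theorem \ref{thm-reg-quant} then guarantees that $\Omega(f)$ acts on such vectors, so $\caS(Q) \subset \Dom(T)$. Applying the same analysis to $T^\dag = \pm\,\Omega(z_1)^\dag \Omega(f)^\dag = \pm\,r\,\Omega(z_1)\Omega(\bar f)$, which uses Proposition \ref{prop-qu-omega}(2) together with $\Omega(f)^\dag = \Omega(\bar f)$ from Proposition \ref{prop-product-invol}, yields $\caS(Q) \subset \Dom(T^\dag)$, and hence $\caS(Q) \subset \Dom(T^*)$ via the relation \eqref{eq-superhilbert-dagast}.

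For condition (3), fix $\tilde\eta \in \caS(\gB Q)$ of nonzero norm and set $z = (x,\xi,w)$. Writing
\begin{equation*}
\Omega(f)\Omega(z_1)\tilde\eta_z = \int dz'\, f(z')\,\Omega(z')\Omega(z_1)\tilde\eta_z
\end{equation*}
and substituting the explicit formula of Proposition \ref{prop-qu-omega} for the composition $\Omega(z')\Omega(z_1)$, we carry out all Berezin integrations over the auxiliary odd variables. After an appropriate shift in the $z'$-integration, the inner matrix element $\int d\xi\,\langle\tilde\eta_z,\Omega(f)\Omega(z_1)\tilde\eta_z\rangle$ takes the form of an oscillatory integral over $z'$ with even phase $e^{2i a_0 \omega_0(\cdot,\cdot)}$ and amplitude belonging to $\caB^{\tilde\mu}_\algA(M)$, where $\tilde\mu$ is the polynomially bounded weight associated to $\mu$ by Lemma \ref{lem-reg-trans}; Theorem \ref{thm-reg-oscil} then produces an element of $\algA$ that depends continuously on the amplitude. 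To obtain the integrability in $(x,w)$ of the outer integral, we invoke the integration-by-parts technique from the proof of Theorem \ref{thm-reg-quant}, inserting powers of the operators $O^k_{(x',w')}$ and $O^\ell_{x_0}$ on the even exponential so as to trade polynomial decay in $(x,w)$ against additional derivatives of $f$ and $\tilde\eta$; these are under control thanks to the weight bound on $f$ and the Schwartz property of $\tilde\eta$.

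The main obstacle is the bookkeeping of the combined phase of $\Omega(z')\Omega(z_1)$ once the Berezin integrations have been carried out: one must check that the resulting even exponential is precisely of the type to which Theorem \ref{thm-reg-oscil} applies, and that after the change of variables the remaining $z'$-amplitude still lies in the class $\caB^{\tilde\mu}_\algA(M)$ with an estimate uniform in the parameter $(x,w)$. This is of the same combinatorial nature as the manipulation performed in the proof of Theorem \ref{thm-reg-quant} for $(\Omega(f)\tilde\eta,\tilde\eta_z)$, so the same integration-by-parts devices should close the argument here as well.
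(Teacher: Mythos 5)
Your proposal is correct and follows essentially the same route as the paper: compute $\int\dd\xi\,\langle\tilde\varphi_z,\Omega(f)\Omega(z_1)\tilde\varphi_z\rangle$ explicitly after the Berezin integrations, then establish integrability over $(x,w)$ by the integration-by-parts devices of Theorem \ref{thm-reg-quant} (the operators $O_{x_2}$, $O_{w_2}$, $O_{x_0}$ in the paper's notation, matching your $O^k_{(x',w')}$ and $O^\ell_{x_0}$) together with a change of variables. Your explicit verification of the two domain conditions via $\Omega(z_1)\caS(Q)\subset\caS(Q)\otimes\superA$ and the superadjoint relations is a welcome addition that the paper leaves implicit, but it does not change the substance of the argument.
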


\begin{proof}
For $\tilde\varphi\in\caS(Q)$ and $z=(x,\xi,w)\in M$, an elementary computation gives
\begin{multline*}
\int\dd\xi\langle\tilde\varphi_z,\Omega(f)\Omega(z_1)\tilde\varphi_z\rangle= \sum_{I,J,K}\beta(I,J,K)\int\dd x_0\dd x_2\dd w_2\overline{\tilde\varphi_I(x_0-x)} f_J(x_2,w_2)\\
\tilde\varphi_K(x_0-x+2x_1-2x_2) e^{2ia_0(\omega_0(x_2-x_0,w_2)+\omega_0(x_1-2x_2+x_0,w_1)+\omega_0(x_1-x_2,w))}.
\end{multline*}
We then apply the same techniques as used in the proof of Theorem \ref{thm-reg-quant}: using the operators $O_{x_2}$, $O_{w_2}$ and $O_{x_0}$, and applying the change of variables $x\mapsto x_0-x$ in
\begin{equation*}
\int\dd x\dd w|\int\dd\xi\langle\tilde\varphi_z,\Omega(f)\Omega(z_1)\tilde\varphi_z\rangle|_j,
\end{equation*}
we can insert arbitrary powers of $\frac{1}{1+w^2}$, $\frac{1}{1+x_0^2}$ and $\frac{1}{1+w_2^2}$ into the above integral, allowing us to show that it is finite.
\end{proof}

If one computes the supertrace (see Equation \eqref{eq-prel-supertr}) of the operator $\Omega(f)\Omega(z_1)$ (with $f\in\caB_\algA^\mu(M)$ and $z_1\in M$), one finds zero. Therefore, we will twist the supertrace by an odd Fourier transformation to define the Berezin transformation.

\begin{definition}
We define the Berezin transformation by $\tr(\Omega(f)\Omega(z_1)\caF_\beta)$, for $f\in\caB^\mu_\algA(M)$ and $z_1\in M$. This expression makes sense because of Lemma \ref{lem-product-trcl}. Here $\caF_\beta$ denotes the odd Fourier transformation with parameter $\beta$: 
\begin{equation}
\forall \tilde\varphi\in L^2(Q)
\quad:\quad
\caF_\beta\tilde\varphi(x_0,\xi_0)=\int\dd\xi\ e^{-\frac{ia_0\beta}{2}\omega_1(\xi,\xi_0)}\tilde\varphi(x_0,\xi).\label{eq-product-fourier}
\end{equation}
\end{definition}

\begin{proposition}
\label{prop-product-ber}
The Berezin transformation is equal, up to a numerical factor, to the identity. More precisely, if $\mu$ is a weight bounded by a polynomial, then
\begin{equation*}
\forall f\in\caB^\mu_\algA(M)
\quad:\quad
\tr(\Omega(f)\Omega(z_1)\caF_\beta) =r_1\left(\frac{\alpha-\beta}{1+\alpha}\right)^n (-1)^{n|f|}f(z_1).
\end{equation*}
\end{proposition}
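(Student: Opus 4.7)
The strategy is first to reduce by linearity to a pointwise statement, and then to evaluate the resulting trace via its integral kernel. By the defining integral $\Omega(f)=\int_{M} f(z)\,\Omega(z)\,dz$ together with the linearity and continuity of the supertrace on the trace-class ideal (with trace-class inclusion secured by Lemma \ref{lem-product-trcl}), we can write
$$\tr\bigl(\Omega(f)\Omega(z_1)\caF_\beta\bigr)=\int_M dz\,f(z)\,\tr\bigl(\Omega(z)\Omega(z_1)\caF_\beta\bigr).$$
It therefore suffices to show that the distribution $z\mapsto\tr\bigl(\Omega(z)\Omega(z_1)\caF_\beta\bigr)$ acts by evaluation at $z_1$, with proportionality constant $r_1\bigl(\frac{\alpha-\beta}{1+\alpha}\bigr)^{n}$, and that the sign $(-1)^{n|f|}$ arises from the Berezin integration in the odd coordinates.

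Composing the formulas \eqref{eq-qu-omf} for $\Omega(z)$ and $\Omega(z_1)$ with the definition \eqref{eq-product-fourier} of $\caF_\beta$ yields an explicit integral kernel $K(q,q_0)$ for $\Omega(z)\Omega(z_1)\caF_\beta$. Applying the supertrace-via-kernel lemma at the end of Subsection \ref{subsec-prel} gives $\tr(\Omega(z)\Omega(z_1)\caF_\beta)=C\int_Q K(q,q)\,dq$. Restricting to the diagonal produces, in the even variables, a phase of the form $e^{2ia_0\omega_0(x-x_1,\tilde w)}$ linear in the even coordinate $\tilde w$ of $z$; integration against $\tilde w$ collapses this phase to $\delta(x-x_1)$ by the first identity of \eqref{eq-qu-fourier}, up to the prefactor $r_0$.

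In the odd variables, a suitable linear change of the auxiliary $\xi$-variables absorbs the cross-terms $\omega_1(\cdot,\cdot)$ coming respectively from $\caF_\beta$, $\Omega(z_1)$ and $\Omega(z)$, and the remaining odd integration reduces to a Berezin-Gaussian of the form $\int d\eta\,\exp\bigl(-\tfrac{ia_0}{2}\cdot\tfrac{\alpha-\beta}{1+\alpha}\,\omega_1(\eta,\xi-\xi_1^{\mathrm{odd}})\bigr)$, where $\xi_1^{\mathrm{odd}}$ denotes the odd component of $z_1$. The second identity of \eqref{eq-qu-fourier}, applied with the parameter $\frac{\alpha-\beta}{1+\alpha}$, evaluates this Gaussian to $r_1\bigl(\frac{\alpha-\beta}{1+\alpha}\bigr)^{n}(\xi-\xi_1^{\mathrm{odd}})^{\{1,\dots,n\}}$. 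Combining with the even factor $\delta(x-x_1)$ and integrating against $f(z)$ via the standard Berezin evaluation $\int d\xi\,g(\xi)\,(\xi-\xi_0)^{\{1,\dots,n\}}=(-1)^{n|g|}g(\xi_0)$ yields exactly $r_1\bigl(\frac{\alpha-\beta}{1+\alpha}\bigr)^{n}(-1)^{n|f|}f(z_1)$.

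The principal difficulty is bookkeeping. One must carefully track (i) the Hodge operator $\ast_\xi$ implicit in Definition \ref{def-prel-supertrace} and the passage between the superhermitian and positive-definite pairings used to define $\tr$, (ii) the accumulated absorption of the constants $\gamma$, $\kappa$, $r_0$, $C$ originating in Propositions \ref{prop-qu-omega} and \ref{prop-product-idoper} and in Theorem \ref{thm-prel-resol}, and (iii) the parity sign $(-1)^{n|f|}$ coming from commuting the odd component of $f$ through the degree-$n$ operator $\caF_\beta$. Beyond this numerical and sign bookkeeping no new conceptual ingredient is needed: the announced prefactor $r_1\bigl(\frac{\alpha-\beta}{1+\alpha}\bigr)^{n}$ is the precise remnant left after the cancellations implied by the choice $\gamma=\frac{(-1)^n}{r_0r_1(1+\alpha)^n}$ made in \eqref{eq-ext-condgamma}.
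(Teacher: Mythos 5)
The paper states this proposition without proof (it belongs to the family of ``direct computation'' statements, like Propositions \ref{prop-product-invol} and \ref{prop-product-tracial}), so there is no written argument to compare against; your proposal is essentially the direct computation the authors intend, organized through the kernel lemma at the end of subsection \ref{subsec-prel}. The structure is right: compose \eqref{eq-qu-omf}, the formula of Proposition \ref{prop-qu-omega} and \eqref{eq-product-fourier}, restrict to the diagonal, collapse the even phase to a delta by the first identity of \eqref{eq-qu-fourier}, and evaluate the residual odd Gaussian by the second. In particular the parameter $\alpha-\beta$ in your Berezin--Gaussian is structurally correct: on the diagonal the term $-\frac{\alpha}{2}\omega_1(\eta,\xi_0)$ coming from $\Omega(z_1)$ combines with the term $-\frac{\beta}{2}\omega_1(\xi_0,\eta)$ coming from $\caF_\beta$, and since $\omega_1(\xi,\eta)=-\omega_1(\eta,\xi)$ for points $\xi,\eta\in\gR^{0|n}$ (these are \emph{even} elements of $\modE$), the two add to $-\frac{\alpha-\beta}{2}\omega_1(\eta,\xi_0)$; the factor $(1+\alpha)^{-n}$ can equivalently be seen as coming out of $C\gamma$, with $C=r_0r_12^{m/2}(-1)^n$ from Theorem \ref{thm-prel-resol} and $\gamma$ as in \eqref{eq-ext-condgamma}, rather than from a change of odd variables.

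Two things still have to be supplied before this is a proof rather than a plan. First, the opening reduction $\tr(\Omega(f)\Omega(z_1)\caF_\beta)=\int_M\dd z\,f(z)\,\tr(\Omega(z)\Omega(z_1)\caF_\beta)$ is only formal when $\mu\notin L^1(\gB M)$: there $\Omega(f)$ is defined through the oscillatory integral, and $z\mapsto\tr(\Omega(z)\Omega(z_1)\caF_\beta)$ is a distribution concentrated at $z_1$ in the even variables, so the interchange must be justified exactly as in Lemma \ref{lem-product-trcl} and Theorem \ref{thm-reg-oscil} --- compute $\frac{1}{\norm\tilde\varphi\norm_2^2}\int_M\dd z\,\langle\tilde\varphi_z,\Omega(f)\Omega(z_1)\caF_\beta\tilde\varphi_z\rangle$ as a single expression, insert powers of the operators $O_{x_2}$, $O_{w_2}$, $O_{x_0}$ on the phases and integrate by parts to get absolute convergence. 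Second, the entire quantitative content of the proposition --- that $\gamma^2$, $C$, $r_0$, the Berezinian of your odd change of variables and the parity signs assemble to exactly $r_1\bigl(\frac{\alpha-\beta}{1+\alpha}\bigr)^n(-1)^{n|f|}$ --- is deferred as ``bookkeeping'' and never performed; since the proposition \emph{is} this identity, the computation must actually appear (Lemma \ref{lem-superman-exp} does it uniformly in $n$). Be careful in particular with your stated evaluation $\int\dd\xi\,g(\xi)(\xi-\xi_0)^{\{1,\dots,n\}}=(-1)^{n|g|}g(\xi_0)$: with the paper's left-coefficient convention the sign depends on which side of $g$ the factor $(\xi-\xi_0)^{\{1,\dots,n\}}$ sits, and this is precisely where the $(-1)^{n|f|}$ lives.
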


\begin{corollary}
\label{cor-prod-inj}
\begin{itemize}
\item The quantization map $\Omega$ is injective on $\caB^\mu_\algA(M)$, for any weight $\mu$ bounded by a polynomial.

\item The product $\star:\caB^\mu_\algA(M)\times\caB_\algA^{\mu'}(M)\to\caB_\algA^{\tilde\mu\tilde\mu'}(M)$ is associative (needs Proposition \ref{prop-product-idoper} and the injectivity of $\Omega$).

\end{itemize}
\end{corollary}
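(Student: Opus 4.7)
The plan is to derive both claims directly from Proposition~\ref{prop-product-ber} together with Proposition~\ref{prop-product-idoper}. The reconstruction formula
\begin{equation*}
\tr\bigl(\Omega(f)\Omega(z_1)\caF_\beta\bigr)=r_1\Bigl(\frac{\alpha-\beta}{1+\alpha}\Bigr)^n(-1)^{n|f|}f(z_1)
\end{equation*}
essentially inverts the quantization map pointwise, so injectivity should be immediate, and associativity will then follow from the fact that $\Omega$ intertwines $\star$ with operator composition.

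First I would prove injectivity. Suppose $f\in\caB^\mu_\algA(M)$ satisfies $\Omega(f)=0$. Then for every $z_1\in M$ the operator $\Omega(f)\Omega(z_1)\caF_\beta$ is identically zero (the composition with $\Omega(z_1)\caF_\beta$ on the right is well defined by Lemma~\ref{lem-product-trcl}), hence its supertrace vanishes. Applying Proposition~\ref{prop-product-ber} yields
\begin{equation*}
r_1\Bigl(\frac{\alpha-\beta}{1+\alpha}\Bigr)^n(-1)^{n|f|}f(z_1)=0
\qquad\text{for all }z_1\in M.
\end{equation*}
It suffices to pick any free parameter $\beta\neq\alpha$ (we already assumed $\alpha\notin\{-1,0\}$, and $r_1=(ia_0)^n(-1)^{n(n+1)/2}\neq 0$); for this choice the scalar prefactor is nonzero, so $f=0$. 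The same argument applies to any homogeneous component of $f$, which handles the non-homogeneous case.

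Next I would deduce associativity. Fix $f_1\in\caB^\mu_\algA(M)$, $f_2\in\caB^{\mu'}_\algA(M)$, $f_3\in\caB^{\mu''}_\algA(M)$. By Proposition~\ref{prop-prod-moy} all the iterated products $(f_1\star f_2)\star f_3$ and $f_1\star (f_2\star f_3)$ lie in some space $\caB^{\tilde\mu\tilde\mu'\tilde\mu''}_\algA(M)$ (after iterating Lemma~\ref{lem-reg-trans}) to which the first part of the corollary still applies. By Proposition~\ref{prop-product-idoper} applied twice,
\begin{equation*}
\Omega\bigl((f_1\star f_2)\star f_3\bigr)
=\Omega(f_1\star f_2)\Omega(f_3)
=\Omega(f_1)\Omega(f_2)\Omega(f_3)
=\Omega(f_1)\Omega(f_2\star f_3)
=\Omega\bigl(f_1\star (f_2\star f_3)\bigr),
\end{equation*}
where the middle equality uses associativity of the composition of operators on $\caS(Q)$. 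Since $\Omega$ is injective on the ambient symbol space, $(f_1\star f_2)\star f_3=f_1\star(f_2\star f_3)$, as required.

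The only place where there is anything to worry about is verifying that the compositions $\Omega(f)\Omega(z_1)\caF_\beta$ and $\Omega(f_1)\Omega(f_2)\Omega(f_3)$ really make sense as (unbounded) operators admitting the manipulations above; both points are controlled by Theorem~\ref{thm-reg-quant} (all operators send $\caS(Q)$ into itself) and Lemma~\ref{lem-product-trcl} (trace-class property needed to apply Proposition~\ref{prop-product-ber}). Thus no serious obstacle arises; the corollary is essentially a formal consequence of the Berezin inversion formula and the intertwining property of $\Omega$.
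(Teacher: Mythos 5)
Your argument is correct and is exactly the route the paper intends: the paper leaves the proof implicit, relying on the Berezin inversion formula of Proposition \ref{prop-product-ber} (with $\beta\neq\alpha$ making the prefactor nonzero) for injectivity, and on Proposition \ref{prop-product-idoper} plus that injectivity for associativity. Your attention to the trace-class and domain issues via Lemma \ref{lem-product-trcl} and Theorem \ref{thm-reg-quant} fills in precisely the details the paper omits.
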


\begin{example}
We compute here the deformed product at low odd dimensions. On the right hand side, the product $f_i\star g_j$ (for functions $f_i$ and $g_j$ of the even variables only) denotes the usual Moyal deformed product (see \eqref{eq-action-moyalprod}).

\begin{itemize}
\item For $n=1$, we have for any $f\in\caS(M)$ the decomposition $f(x,\xi)=f_0(x)+f_1(x)\xi$. And then the deformed product is given by the formula
\begin{equation}
(f\star g)(x,\xi)=(f_0\star g_0)(x)+\frac{i\alpha}{a_0(1+\alpha)^2}(f_1\star g_1)(x)+\big((f_0\star g_1)(x)+(f_1\star g_0)(x)\big)\xi.\label{eq-product-onedim}
\end{equation}

\item For $n=2$, we have for any $ f\in\caS(M)$ the decomposition $f(x,\xi)=f_0(x)+f_1(x)\xi^1+f_2(x)\xi^2+f_3(x)\xi^1\xi^2$. In this case the deformed product is given by the formula
\begin{multline*}
(f\star g)(x,\xi)=\Big[f_0\star g_0+\frac{i\alpha}{a_0(1+\alpha)^2}(f_1\star g_1 +f_2\star g_2)+\frac{\alpha^2}{a_0^2(1+\alpha)^4}f_3\star g_3+\big(f_0\star g_1+f_1\star g_0\\
-\frac{i\alpha}{a_0(1+\alpha)^2}(f_2\star g_3-f_3\star g_2)\big)\xi^1 +\big(f_0\star g_2+f_2\star g_0-\frac{i\alpha}{a_0(1+\alpha)^2}(f_3\star g_1-f_1\star g_3)\big)\xi^2\\
+\big(f_0\star g_3+f_3\star g_0+f_1\star g_2-f_2\star g_1\big)\xi^1\xi^2\Big](x).
\end{multline*}
Note that the algebra $(\caS(M),\star)$ has a $(\gZ_2)^2$-grading, which is isomorphic to the space of quaternions $\mathbb H$ when the even dimension $m$ is zero. This result can be generalized for arbitrary $n$, as we will see in the next Theorem.
\end{itemize}
\end{example}

\begin{theorem}
\label{thm-product-form}
The product introduced in Proposition \ref{prop-prod-moy} generates a Clifford algebra, when restricted to the odd coordinates of $M$. More precisely, if $\caS(M)$ is endowed with the deformed product, we have the following isomorphism of graded algebras:
\begin{equation*}
\caS(M)\approx Cl(n,\gC)\otimes \caS(\gB M),
\end{equation*}
where the grading of $\caS(M)$ corresponds to the usual $\gZ_2$-grading of $Cl(n,\gC)$, and where $\caS(\gB M)$ is endowed with the Moyal product.
\end{theorem}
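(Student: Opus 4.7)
My plan is to exhibit a factorization of the deformed product \eqref{eq-prod-moy} into an even piece that produces the Moyal product on $\caS(\gB M)$ and an odd piece that produces a Clifford structure on $\bigwedge\gR^n$. The starting observation is that the phase splits multiplicatively as
\begin{equation*}
e^{2ia_0(\omega_0(x_2-x,w_1)+\omega_0(x-x_1,w_2)+\omega_0(x_1-x_2,w))}\cdot e^{2ia_0\lambda(\omega_1(\xi,\xi_1)+\omega_1(\xi_1,\xi_2)+\omega_1(\xi_2,\xi))},
\end{equation*}
and the Berezin measure factors as $\dd z_i=\dd x_i\,\dd w_i\,\dd\xi_i$. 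Using the identification $\caS(M)\cong\caS(\gB M)\otimes\bigwedge\gR^n$ coming from the triviality of $M$, I expand $f=\sum_I F_I\otimes\xi^I$ and $g=\sum_J G_J\otimes\xi^J$ and apply Fubini to obtain
\begin{equation*}
(f\star g)(x,\xi,w)=\sum_{I,J}(F_I\star_M G_J)(x,w)\cdot(\xi^I\star_C\xi^J)(\xi),
\end{equation*}
for two bilinear products $\star_M$ on $\caS(\gB M)$ and $\star_C$ on $\bigwedge\gR^n$; separate associativity of each follows from that of $\star$ (Corollary \ref{cor-prod-inj}) together with the linear independence of the monomials $\xi^I$.

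Next I would identify $\star_M$ with the Moyal product associated to $\omega_0$, essentially repeating the non-graded computation of \cite{Bieliavsky:2010kg}: after suitable changes of variable in the even oscillatory integral one recognizes the standard Moyal kernel, with the correct normalization supplied by the value of $\kappa$ fixed in Proposition \ref{prop-product-idoper}. I would then establish that $\star_C$ endows $\bigwedge\gR^n$ with the structure of the complex Clifford algebra $Cl(n,\gC)$. Since odd variables square to zero, the odd exponential is a polynomial of bounded total degree in $\xi_1,\xi_2$, so every Berezin integral $\int\dd\xi_1\dd\xi_2\,\xi_1^I\xi_2^J\,e^{(\cdot)}$ is a finite closed-form sum. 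Using the diagonal form of $\omega_1$ from Proposition \ref{prop-sympl-basis}, I would compute $\xi^i\star_C\xi^j$ and its symmetrization on the generators: the expected outcome is that $\xi^i\star_C\xi^j+\xi^j\star_C\xi^i$ is a non-zero scalar proportional to $\pm\delta_{ij}$ (with the sign distinguishing the $n_+$ and $n_-$ sectors), generalising the identity $\xi\star\xi=\frac{i\alpha}{a_0(1+\alpha)^2}$ from the $n=1$ worked example. This yields the Clifford relations; and since any non-degenerate Clifford algebra on $n$ complex generators is isomorphic to $Cl(n,\gC)$ and $\dim_\gC(\bigwedge\gR^n\otimes\gC)=2^n=\dim_\gC Cl(n,\gC)$, the identification follows, while the $\gZ_2$-grading on both sides coincides with the parity of $|I|$.

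The principal obstacle is the odd computation just sketched: the Berezin integration requires careful sign bookkeeping, and the value $\lambda=-\frac{(\alpha+1)^2}{4\alpha}$ fixed in Proposition \ref{prop-product-idoper} must interact with the expansion of the odd exponential so as to produce a genuinely non-degenerate quadratic form on the odd generators. A useful simplification is to first observe, by a parity/symmetry argument on the odd integrand, that $\xi^i\star_C\xi^j+\xi^j\star_C\xi^i$ is automatically a scalar element of $\bigwedge^0\gR^n$; this reduces the task to evaluating this scalar in the diagonal case $i=j$, which is essentially a one-dimensional Berezin computation of the same kind as the $n=1$ example and is non-vanishing for the same reason. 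The tensor-product structure with $\caS(\gB M)$ is then immediate from the factorisation established at the start.
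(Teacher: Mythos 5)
Your proof is correct in substance but follows a genuinely different route from the paper's. The paper (Appendix \ref{sec-finealg}) computes the \emph{complete} set of structure constants of the odd factor by a multinomial expansion of the odd exponential under the Berezin integral, obtaining $\xi^I\star\xi^J=c_{IJ}$ with $c_{IJ}$ proportional to the single monomial $\xi^{(I\cup J)\setminus(I\cap J)}$; it then observes that this makes the odd factor a fine $(\gZ_2)^n$-graded division algebra and invokes the classification of such algebras by their multiplier (Proposition \ref{prop-finealg-equiv}), exhibiting the explicit coboundary $\rho(I)=(ic)^{\frac{|I|}{2}-n}(-1)^{\frac{n(n+1)}{2}}$ identifying its factor set with that of $Cl(n,\gC)$. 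You instead verify the Clifford relations only on the $n$ generators and conclude via the universal property of the Clifford algebra plus a dimension count. Your preliminary even/odd factorization of the kernel (which the paper leaves implicit, its appendix treating only the case $m=0$) is valid, since the odd phase is an even element of $\superA_\gC$ and commutes with everything; and your observation that the diagonal form of $\omega_1$ splits the odd Berezin integral into $n$ commuting one-dimensional computations is a genuine simplification over the paper's multinomial bookkeeping. What the paper's method buys is the isomorphism in fully explicit form; what yours buys is brevity and independence from the graded-division-algebra machinery of the appendix. (A minor point: under the standing assumption that $B$ has odd dimension zero, $\omega_1$ is definite, so there is no $n_+/n_-$ sign splitting to track --- and over $\gC$ it would be immaterial anyway. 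The anticommutator of generators comes out as $\frac{2i\alpha}{a_0(1+\alpha)^2}\delta_{ij}$, non-zero precisely because $\alpha\notin\{-1,0\}$.)

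One step should be made explicit before the argument closes. The universal property only yields a surjection of $Cl(n,\gC)$ onto the $\star_C$-subalgebra \emph{generated} by $\xi^1,\dots,\xi^n$, so the equality $\dim_\gC(\bigwedge\gR^n\otimes\gC)=2^n=\dim_\gC Cl(n,\gC)$ does not by itself exclude the possibility that the generators span only a proper quotient of $Cl(n,\gC)$ sitting inside $\bigwedge\gR^n\otimes\gC$. You need the $2^n$ iterated products $\xi^{i_1}\star_C\cdots\star_C\xi^{i_k}$ ($i_1<\cdots<i_k$) to be linearly independent. This is a two-line remark --- each contraction in the odd exponential lowers the exterior degree by two, so $\star_C$ respects the filtration by exterior degree and induces the wedge product (up to a non-zero constant) on the associated graded, whence each iterated product is a non-zero multiple of $\xi^{i_1}\wedge\cdots\wedge\xi^{i_k}$ plus lower-order terms --- but it is the one place where your argument, as written, has a hole that the paper's computation of all $c_{IJ}$ automatically avoids.
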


\begin{proof}
The proof of this Theorem will be given in Appendix \ref{sec-finealg}.
\end{proof}

Note that this result is already given at the formal deformation level in older papers (see for instance \cite{Musson:2009}, where the deformed algebra is isomorphic to a Clifford-Weyl algebra). If the parameter $\alpha$ goes to 0, the part $\bigwedge\gR^n$ in $\caS(M)\simeq \bigwedge\gR^n\otimes \caS(\gB M)$ remains undeformed while the part $\caS(\gB M)$ is endowed with the Moyal product.

\begin{proposition}[Tracial identity]
\label{prop-product-tracial}
Let $\mu$ and $\mu'$ be weights bounded by a polynomial and let $f\in\caB^\mu_\algA(M)$ and $g\in\caB^{\mu'}_\algA(M)$ be such that $f\star g\in L^1_\algA(M)$. Then we have the equality
\begin{equation*}
\int_M\dd z\, (f\star g)(z)= \int_M\dd z\, f(z)g(z).
\end{equation*}
\end{proposition}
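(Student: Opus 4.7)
The strategy is direct: apply Fubini to \eqref{eq-prod-moy} in order to exchange the $z$-integration with the $z_1,z_2$-integrations, then evaluate the resulting inner $z$-integral explicitly against the oscillating phase. The hypothesis $f\star g\in L^1_\algA(M)$, combined with the continuity of the oscillating integral (Theorem \ref{thm-reg-oscil}) and of $\star$ (Proposition \ref{prop-prod-moy}), allows this interchange. Everything that follows reduces to applications of the Fourier identities \eqref{eq-qu-fourier} and to a purely Grassmann-algebraic cancellation.

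Concretely, the first step is to separate the phase of \eqref{eq-prod-moy} into a part linear in $z=(x,\xi,w)$ and a part independent of $z$. Using the explicit formula $\omega_1(\xi,\xi')=2\xi\,\xi'$ recalled after Proposition \ref{prop-sympl-basis}, the $\xi$-linear part $\omega_1(\xi,\xi_1)+\omega_1(\xi_2,\xi)$ rewrites as $\omega_1(\xi,\xi_1-\xi_2)$. Applying \eqref{eq-qu-fourier} in each group of variables then produces the even delta constraints $\int\dd x\,e^{2ia_0\omega_0(x,w_2-w_1)}=r_0\delta(w_1-w_2)$ and $\int\dd w\,e^{2ia_0\omega_0(x_1-x_2,w)}=r_0\delta(x_1-x_2)$, and the odd Fourier $\int\dd\xi\,e^{2ia_0\lambda\omega_1(\xi,\xi_1-\xi_2)}=r_1(-4\lambda)^n(\xi_1-\xi_2)^{\{1,\dots,n\}}$. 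The residual $z$-independent even phase collapses to $1$ on the delta support, and the overall numerical constant $\kappa\,r_0^2\,r_1(-4\lambda)^n$ simplifies to $(-1)^n$ using the explicit values of $\lambda,\kappa,\gamma$ fixed in Proposition \ref{prop-product-idoper} and \eqref{eq-ext-condgamma}.

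What remains is the Grassmann-algebraic identity
\begin{equation*}
(-1)^n\!\int\!\dd\xi_1\dd\xi_2\, f(x,\xi_1,w)g(x,\xi_2,w)(\xi_1-\xi_2)^{\{1,\dots,n\}} e^{2ia_0\lambda\omega_1(\xi_1,\xi_2)} = \int\!\dd\xi\, f(x,\xi,w)g(x,\xi,w).
\end{equation*}
The key observation is that each monomial $\xi_1^i\xi_2^i$ annihilates $(\xi_1-\xi_2)^{\{1,\dots,n\}}$ (the factor $\xi_1^i-\xi_2^i$ is wiped out), hence $\omega_1(\xi_1,\xi_2)=2\sum_i\xi_1^i\xi_2^i$ does too, so the odd exponential reduces to $1$. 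I would then perform the odd linear substitution $\xi_1=\xi+\eta$, $\xi_2=\xi$, whose Berezinian equals $(-1)^n$. The top form $\eta^{\{1,\dots,n\}}$ annihilates every $\eta$-dependent monomial in $f(x,\xi+\eta,w)$, leaving only $f(x,\xi,w)\,\eta^{\{1,\dots,n\}}$; the final $\eta$-integration uses $\int\dd\eta\,\eta^{\{1,\dots,n\}}=1$, and the two factors of $(-1)^n$ cancel, yielding the right-hand side.

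The main obstacle is the careful bookkeeping of signs: both the Berezinian of the odd change of variables in the $2n$-dimensional odd space and the conventions for iterated Berezin integration must be tracked exactly. Once these are in order, the argument is a routine combination of \eqref{eq-qu-fourier} with the cancellation described above.
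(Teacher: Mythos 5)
Your proposal is correct and follows exactly the route the paper intends: its proof of Proposition \ref{prop-product-tracial} is the one-line remark that the identity follows ``from direct computation using the identities \eqref{eq-qu-fourier} and the given value of the coefficient $\kappa$'', and your computation is precisely that — the even Fourier identities produce $\delta(w_1-w_2)\,\delta(x_1-x_2)$ killing the residual even phase, the odd one produces $r_1(-4\lambda)^n(\xi_1-\xi_2)^{\{1,\dots,n\}}$ which absorbs the remaining odd exponential, and the constants collapse to $(-1)^n$, leaving the Grassmann identity you verify by the shift $\xi_1=\xi+\eta$. The only point you gloss over (the justification of the interchange of the $z$-integral with the oscillatory $z_1,z_2$-integral) is equally glossed over by the paper, so this is consistent with the intended level of rigour.
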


\begin{proof}
This follows from direct computation using the identities \eqref{eq-qu-fourier} and the given value of the coefficient $\kappa$.
\end{proof}

\begin{corollary}
\label{cor-product-trace}
$(\caD_{L^1}(M),\star)$ is an associative Fr\'echet $\gZ_2$-graded algebra, endowed with the integration as a supertrace: $\text{str}(f)=\int_M\dd z\, f(z)$ for $f\in \caD_{L^1}(M)$.
\end{corollary}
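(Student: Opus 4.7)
The plan is to extract each structural property from the preceding propositions, as the corollary mainly packages results already established in this section.

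First, I would handle the algebra structure. Associativity of $\star$ is inherited from Corollary \ref{cor-prod-inj}, via the embedding $\caD_{L^1}(M) \hookrightarrow \caB^1_\gC(M)$. The main non-trivial point is stability under $\star$: although Proposition \ref{prop-prod-moy} shows that $\star$ maps $\caS\times\caS$ into $\caS$, one needs control in the $L^1$-topology. To obtain this, I would estimate $\norm f\star g\norm_1 = \int_M |(f\star g)(z)|\,\dd z$ by combining the explicit formula \eqref{eq-prod-moy} with the tracial identity of Proposition \ref{prop-product-tracial}, bounding the $L^1$-norm of $f\star g$ in terms of $\norm f\norm_1$ times a Schwartz-type seminorm of $g$ (or symmetrically). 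The same estimate yields the joint continuity of $\star$ for the induced Fr\'echet topology.

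Second, the $\gZ_2$-grading is immediate: $C^\infty(M)$ carries a grading by parity in the odd generators, and inspection of \eqref{eq-prod-moy} shows that the phase factor $e^{2ia_0\lambda(\omega_1(\xi,\xi_1)+\omega_1(\xi_1,\xi_2)+\omega_1(\xi_2,\xi))}$ is even (being a sum of bilinear forms in the odd variables), so the total parity $|f|+|g|$ is preserved by $\star$.

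Third, for the supertrace property I would compute
\begin{equation*}
\text{str}(f\star g) = \int_M (f\star g)(z)\,\dd z = \int_M f(z)g(z)\,\dd z = (-1)^{|f||g|}\int_M g(z)f(z)\,\dd z = (-1)^{|f||g|}\text{str}(g\star f),
\end{equation*}
applying the tracial identity of Proposition \ref{prop-product-tracial} in the second and fourth equalities, and the supercommutativity of pointwise multiplication on $C^\infty(M)$ in the middle. The main obstacle in the whole argument is really just the $L^1$-stability and continuity of $\star$; once that is established via the tracial identity, the remaining parts of the statement are essentially formal consequences of the preceding results.
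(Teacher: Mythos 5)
Your overall architecture matches the paper's: associativity is pulled back from Corollary \ref{cor-prod-inj} through the inclusion $\caD_{L^1}(M)\subset\caB^1_\gC(M)$, the $\gZ_2$-grading is read off the phase in \eqref{eq-prod-moy}, and the supertrace property is exactly the two-line computation you give, combining Proposition \ref{prop-product-tracial} with supercommutativity of the pointwise product. Those parts are fine (the paper's own proof is even terser and leaves the supertrace computation implicit).

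The gap is in your stability step, which is the one genuinely non-formal point. First, the tracial identity only evaluates $\int_M (f\star g)(z)\,\dd z$; it gives no control whatsoever on $\int_M |(f\star g)(z)|\,\dd z$, so it cannot by itself produce an $L^1$ bound --- indeed Proposition \ref{prop-product-tracial} is stated under the hypothesis that $f\star g$ is \emph{already} in $L^1_\algA(M)$, so using it to prove membership in $L^1$ is circular. Second, a bound of the form $\norm f\norm_1$ times a Schwartz-type seminorm of $g$ is the wrong target: a general element of $\caD_{L^1}(M)$ has no pointwise decay, so its Schwartz seminorms are generically infinite, and such an estimate would not close the argument. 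The ingredient that actually does the work is the inclusion $\caD_{L^1}(M)\subset\caD_{L^2}(M)$ with which the paper opens its proof: writing $(f_1\star f_2)(z)=\kappa\int\dd z_1\dd z_2\, f_1(z_1+z)f_2(z_2+z)\,e^{2ia_0(\omega_0(x_2,w_1)-\omega_0(x_1,w_2)+\lambda\omega_1(\xi_1,\xi_2))}$ as in Proposition \ref{prop-prod-moy}, inserting powers of the operators $O$ from Theorem \ref{thm-reg-oscil} to render the $(z_1,z_2)$-integrand absolutely integrable, and then applying Cauchy--Schwarz in the $z$-integration yields $\norm D^\alpha(f\star g)\norm_1\leq C\sum_{\beta,\gamma}\norm D^\beta f\norm_2\,\norm D^\gamma g\norm_2$ (the reduction to derivatives uses the Leibniz rule for $\star$, valid because the product is translation-covariant). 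This single estimate gives both stability of $\caD_{L^1}(M)$ under $\star$ and the joint continuity you assert at the end of your first paragraph.
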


\begin{proof}
We refer to Appendix \ref{sec-funcspaces} for the definition of $\caD_{L^1}(M)$. Since $\caD_{L^1}(M)\subset \caD_{L^2}(M)$, $\caD_{L^1}(M)$ is an algebra for the usual supercommutative product. The deformed product is defined on it and the tracial identity of Proposition \ref{prop-product-tracial} shows that this space is stable under the deformed product. Moreover, the product is continuous for the topology of $\caD_{L^1}(M)$.
\end{proof}

\section{Universal Deformation Formula}
\label{sec-univ}

The deformed product introduced in section \ref{sec-defqu} will allow to deform algebras $\algA$ on which the supergroup $M$ acts, and the deformed product on $\algA$ (or rather on the subspace of its smooth vectors) will be called the Universal Deformation Formula (UDF). We first give in subsection \ref{subsec-act} the conditions that an action of the supergroup on a Fr\'echet algebra $\algA$ has to satisfy for the UDF to be applicable, and we show that the smooth vectors of $\algA$ for this action are dense in $\algA$. Next we associate in \ref{subsec-deffrech} to each smooth vector an element of $\caB^1_\algA(M)$. Using the deformed product of section \ref{sec-defqu}, we then construct a deformed product for the space of smooth vectors of $\algA$. Finally, in \ref{subsec-fam}, from a C${}^\ast$-superalgebra $\algA$, we can complete the smooth vectors' space endowed with the deformed product, into a new C${}^\ast$-superalgebra.

\subsection{Action of the Heisenberg supergroup}
\label{subsec-act}

Recall that $M=G/(\superA_0 Z)\simeq \gR^{m|n}$ (see subsection \ref{subsec-qu}). As $\superA_0 Z\subset G$ is normal, $M$ has a group structure which turns out to be abelian: $\forall z,z'\in M$, $z\fois z'= z+z'$.

Now let $\algA$ be a complex Fr\'echet algebra and let $\rho:M\times(\algA\otimes\superA)\to(\algA\otimes\superA)$ be an action of $M$, i.e.:
\begin{equation*}
\rho_{0}=\text{id}
\quad\mbox{and}\quad
\forall z_1,z_2\in M,\quad \rho_{z_1\fois z_2}=\rho_{z_1}\rho_{z_2},
\end{equation*}
such that $\forall z\in M$, $\rho_z:(\algA\otimes\superA)\to\algA\otimes\superA$ is an $\superA$-linear algebra-automorphism. By writing $z=(y,\xi)$ and expanding into powers of $\xi$: $\rho_{(y,\xi)}(a)=\sum_I\rho_y(a)_I\xi^I$ for $a\in\algA$, we assume also that $\rho$ is strongly continuous in the sense that
\begin{align*}
&\forall y\in\gB M,\quad \forall I,\quad\forall a\in\algA,\quad\rho_y(a)_I\in\algA\\
&\forall a\in\algA,\quad \rho^a:\ z\mapsto\rho_z(a)\text{ is continuous (for the DeWitt topology)},
\end{align*}
and that $\rho$ is subisometric:
\begin{equation*}
\exists C>0,\quad\forall a\in\algA,\quad \forall I,\quad\forall j,\quad \exists k,\quad \forall y\in\gB M, \quad |\rho_y(a)_I|_j\leq C|a|_k.
\end{equation*}
Note that one could interpret $\rho$ as an action of the Heisenberg supergroup $G$ on $\algA\otimes\superA$ with a trivial action of $K=\superA_0 Z$. See \cite{Warner:1972} for the non-graded case.

\begin{definition}
The set $\algA^\infty$ of smooth vectors of $\algA$ is defined as
\begin{equation*}
\algA^\infty=\{a\in\algA,\quad z\mapsto \rho_z(a)\in C^\infty(M,\algA\otimes\superA)\}.
\end{equation*}
\end{definition}

\begin{proposition}
The set of smooth vectors $\algA^\infty$ is dense in $\algA$ (for the topology of $\algA$).
\end{proposition}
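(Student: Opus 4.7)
The plan is to adapt the classical Gårding smoothing argument to the graded setting. For any $a\in\algA$ and any test function $\varphi\in\caD(M)$, I would define the Gårding-type vector
\[
a_\varphi=\int_M\dd z\ \varphi(z)\rho_z(a).
\]
Expanding $\varphi(y,\xi)=\sum_J\varphi_J(y)\xi^J$ with $\varphi_J\in\caD(\gB M)$ and $\rho_{(y,\xi)}(a)=\sum_I\rho_y(a)_I\xi^I$ with $\rho_y(a)_I\in\algA$, Berezin integration over the odd variables reduces $a_\varphi$ to a finite linear combination (indexed by complementary pairs $J\sqcup I=\{1,\dots,n\}$) of ordinary Bochner integrals $\int_{\gB M}\dd y\ \varphi_J(y)\rho_y(a)_I$ with values in $\algA$. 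Each such integral exists: $\varphi_J$ has compact support, $y\mapsto\rho_y(a)_I$ is continuous by strong continuity, and the subisometric bound controls every seminorm uniformly on the support, so Bochner integrability in the Fréchet space $\algA$ follows.

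Next I would check that $a_\varphi$ is itself a smooth vector. Using that $M$ is an abelian supergroup and that Berezin--Lebesgue integration is translation invariant (the super-Jacobian of a translation is $1$), for every $z_0\in M$ one has
\[
\rho_{z_0}(a_\varphi)=\int_M\dd z\ \varphi(z)\rho_{z_0+z}(a)=\int_M\dd z\ \varphi(z-z_0)\rho_z(a).
\]
Since $\varphi\in\caD(M)$ is smooth, the map $z_0\mapsto\varphi(\fois-z_0)$ is smooth as a $\caD(M)$-valued map in the super sense, and the subisometric hypothesis allows differentiation under the integral in both even and odd directions. Hence $z_0\mapsto\rho_{z_0}(a_\varphi)$ lies in $C^\infty(M,\algA\otimes\superA)$, which is exactly the condition $a_\varphi\in\algA^\infty$.

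To approximate a given $a\in\algA$, I would pick an ordinary approximate identity $\chi_\lambda\in\caD(\gB M)$ with $\int\chi_\lambda=1$ and shrinking supports around $0\in\gB M$, and set $\varphi_\lambda(y,\xi):=\chi_\lambda(y)\xi^{\{1,\dots,n\}}$. Since $\xi^{\{1,\dots,n\}}\xi^I$ vanishes in the Berezin integral unless $I=\emptyset$, only the term $J=\{1,\dots,n\}$, $I=\emptyset$ of the above expansion survives, giving
\[
a_{\varphi_\lambda}=\int_{\gB M}\dd y\ \chi_\lambda(y)\rho_y(a)_\emptyset.
\]
Because $\rho_{(y,0)}(a)=\rho_y(a)_\emptyset$, strong continuity of $\rho^a$ at $z=0$ yields $\rho_y(a)_\emptyset\to a$ in $\algA$ as $y\to 0$ in $\gB M$; combined with the uniform subisometric control on the shrinking supports, this gives seminorm convergence $a_{\varphi_\lambda}\to a$ in $\algA$, proving density.

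In my view the main technical hurdle lies in the second step, namely verifying that super-smoothness of $z_0\mapsto\rho_{z_0}(a_\varphi)$ in the concrete/DeWitt sense follows purely from the smoothness of $\varphi$ and the subisometric continuity of $\rho^a$. This requires expanding $(\xi-\xi_0)^K=\sum_{L\subset K}\pm\ \xi_0^L\xi^{K\setminus L}$ and super-Taylor-expanding the body-smooth components $\varphi_K(y-y_0)$ in the nilpotent part of $y_0$, then identifying the resulting coefficients in $\xi_0$ and in the nilpotent directions of $y_0$ as smooth $\algA$-valued functions of $\gB y_0$. The argument is routine but demands careful bookkeeping between body integration and the super-extension machinery of Lemma \ref{lem-superextofsmoothfunc}.
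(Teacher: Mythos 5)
Your proposal is correct and follows essentially the same route as the paper: the Gårding-type vector $\int_M\dd z\,\varphi(z)\rho_z(a)$, smoothness via translating the test function, and the approximate identity $\chi_\lambda(y)\xi^{\{1,\dots,n\}}$ whose Berezin integral isolates $\rho_{(y,0)}(a)$ are exactly the paper's steps (the paper uses the explicit dilations $\chi_{1/p}(y)=p^m\chi(py)$). The only cosmetic difference is that the paper keeps track of a sign $(-1)^{|z|(n+|f|)}$ when commuting $\rho_z$ past the integrand in the graded setting, which does not affect the argument.
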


\begin{proof}
\begin{itemize}
\item For any $a\in\algA$ and $f\in\caD(M)$, we define
\begin{equation*}
\tilde a=\int_M\dd z_0\ f(z_0)\rho_{z_0}(a).
\end{equation*}
As we integrate in particular over the odd variables in $z_0$, $\tilde a$ belongs to $\algA$. And then, for all $ z\in M$,
\begin{align*}
\rho_z(\tilde a)&=\int_M\dd z_0\ f(z_0)(-1)^{|z|(n+|f|)}\rho_z\rho_{z_0}(a)= \int_M\dd z_0\ f(z_0)(-1)^{|z|(n+|f|)}\rho_{z\fois z_0}(a)\\
&=\int_M\dd z_0\ f(z^{-1}\fois z_0)(-1)^{|z|(n+|f|)}\rho_{z_0}(a).
\end{align*}
Since $f\in\caD(M)$, $z\mapsto\rho_z(a)\in C^\infty(M,\algA\otimes\superA)$ and $\tilde a\in\algA^\infty$.

\item Now let $\chi\in\caD(\gB M)$ be such that $\int_{\gB M}\dd y\ \chi(y)=1$. If we define, for $p\in \gN^\ast$, the function $\chi_{\frac 1p}$ by $\chi_{\frac 1p}(y)=p^m\chi(py)$, then it gives us an approximation of the identity: $\chi_{\frac 1p}(y)\to_{p\to\infty}\delta(y)$.

If we define $f_p(y,\xi)=\chi_{\frac 1p}(y)\xi^{\{1,\dots,n\}}$ with $(y,\xi)\in M$, then $f_p\in\caD(M)$ and the elements
\begin{equation*}
\tilde a_p:=\int_M\dd y\dd\xi\ f_p(y,\xi)\rho_{(y,\xi)}(a)=\int_{\gB M}\dd y\ \chi_{\frac 1p}(y)\rho_{(y,0)}(a)
\end{equation*}
have the property $\lim_{p\to\infty}\tilde a_p=\rho_{(0,0)}(a)=a$ (in the topology of $\algA$) because $\rho$ is strongly continuous. For any $a\in\algA$ we thus have constructed a sequence $(\tilde a_p)$ of smooth vectors converging to $a$.
\end{itemize}
\end{proof}

\subsection{Deformation of Fr\'echet algebras}
\label{subsec-deffrech}

With notation as in subsection \ref{subsec-act} and using the action of $M$, we will construct a deformed product on $\algA^\infty$.

\begin{lemma}
For any smooth vector $a\in\algA^\infty$, the map $\rho^a=z\mapsto\rho_z(a)$ belongs to $\caB^1_{\algA^\infty}(M)$.
\end{lemma}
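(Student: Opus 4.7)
The plan is to verify the two requirements for membership in $\caB^1_{\algA^\infty}(M)$: first that the values of $\rho^a$ really lie in the smooth-vector subspace $\algA^\infty$ (and not merely in $\algA$), and second that all derivatives of $\rho^a$ are bounded uniformly on $\gB M$ in every seminorm of $\algA^\infty$. Throughout I will exploit two crucial structural features: the supergroup $M=\gR^{m|n}$ is \emph{abelian}, so the action satisfies $\rho_{z_1+z_2}=\rho_{z_1}\rho_{z_2}$, and the representation is \emph{subisometric}, which gives uniform control on $\gB M$.

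First I would show that $\rho_z(a)\in\algA^\infty$ for every $z\in M$. Writing $z=(y,\xi)$ and expanding $\rho_z(a)=\sum_I\rho_z(a)_I\xi^I$ with coefficients in $\algA$, one must check that each such coefficient is itself a smooth vector. For any $w\in M$, the abelianness of $M$ gives $\rho_w(\rho_z(a))=\rho_{w+z}(a)$, which as a function of $w$ inherits smoothness in $C^\infty(M,\algA\otimes\superA)$ from $a\in\algA^\infty$ by a simple translation. Decomposing into $\xi$-components preserves this smoothness, so all components lie in $\algA^\infty$.

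Next comes the bound on derivatives. For a multi-index $\alpha$ (including possibly odd indices) define $a^{(\alpha)}:=D^\alpha_z\rho_z(a)\bigm|_{z=0}\in\algA\otimes\superA$; this exists since $a\in\algA^\infty$. The abelian group law implies the key identity
\begin{equation*}
D^\alpha_z\rho_z(a)=\rho_z\bigl(a^{(\alpha)}\bigr),
\end{equation*}
valid at every $z\in M$ (up to the standard Koszul signs coming from moving odd derivations past $\rho_z$). Expanding $a^{(\alpha)}=\sum_I a^{(\alpha)}_I\otimes\theta^I$ with $a^{(\alpha)}_I\in\algA$, the same argument as in the first step (applied now to $a^{(\alpha)}_I$ instead of $a$) shows $a^{(\alpha)}_I\in\algA^\infty$. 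The subisometry hypothesis then yields, for each seminorm $|\fois|_j$ of $\algA$ and each $I,\alpha$, an index $k$ and a constant $C>0$ such that
\begin{equation*}
\sup_{y\in\gB M}\bigl|\rho_y\bigl(a^{(\alpha)}\bigr)_I\bigr|_j\leq C\,\bigl|a^{(\alpha)}\bigr|_k,
\end{equation*}
which is exactly the uniform bound required for membership in the symbol space with weight $\mu=1$. To promote this bound to seminorms of $\algA^\infty$, one applies the same argument with $a$ replaced by $X^\beta a$ for arbitrary multi-indices $\beta$ of infinitesimal generators of $\rho$: the commutation $X^\beta D^\alpha_z\rho_z(a)=\rho_z(X^\beta a^{(\alpha)})$ (again from abelianness) reduces every $\algA^\infty$-seminorm of $D^\alpha_z\rho_z(a)$ to an $\algA$-seminorm of $X^\beta a^{(\alpha)}$, which is controlled subisometrically.

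The main obstacle is bookkeeping of the $\gZ_2$-grading: the odd components of $a^{(\alpha)}$ live in $\algA\otimes\superA_1$ rather than $\algA$, so the meaning of ``bounded in seminorms of $\algA^\infty$'' has to be interpreted in the $I$-component sense built into the definition of $\caB^\mu_E(M)$ (Definition \ref{def-mu-bounded-functions}), and the Koszul signs must be tracked carefully when commuting odd derivatives with $\rho_z$. Once one commits to decomposing all quantities in the basis $\{\theta^I\}$ of $\bigwedge\gR^n$ and verifies the component-wise identities, the proof reduces to the two ingredients above (abelianness producing the translation identity for derivatives, and subisometry producing the uniform bound), and no further analytic input is required.
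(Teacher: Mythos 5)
Your proposal is correct and follows essentially the same route as the paper: the identity $D^\alpha\rho_z(a)=\rho_z\bigl(D^\alpha\rho_\cdot(a)|_{0}\bigr)$ obtained from the homomorphism property of $\rho$, followed by the subisometry hypothesis to get uniform bounds, and iteration of the same argument to control the $\algA^\infty$-seminorms. The only difference is that the Koszul-sign bookkeeping you flag as the main obstacle is sidestepped in the paper by noting that the derivatives $D^\beta$ entering Definition \ref{def-mu-bounded-functions} act only on the components $f_I$ over $\gB M$, i.e. they come from $P\in\caU(\gB\kg_0)$ and involve no odd variables.
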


\begin{proof}
We start by recalling the definition of the set of bounded functions on $M$:
\begin{equation*}
\caB^1_\algA(M)=\{f\in C^\infty(M,\algA\otimes\superA),\, \forall D^\beta,\ \forall j,\ \forall I,\ \exists C_{j,\beta,I}>0,\ \forall y\in \gB M,\ |D^\beta f_I(y)|_j<C_{j,\beta,I}\}.
\end{equation*}
The differential operators $D^\beta$ can be realized as tensor products of vector fields on $M$ generated by elements of $\gB\kg_0$. We note $D^\beta=\tilde P$ with $P\in\caU(\gB \kg_0)$. Then, for any $g=(h,\xi)\in G$ and $X\in\gB \kg_0$, we have
\begin{equation*}
\tilde X_g.\rho^a=\frac{\dd}{\dd t}\rho_{g e^{tX}}(a)|_{t=0}=\rho_g(X.a),
\end{equation*}
where $X.a:=\frac{\dd}{\dd t}\rho_{e^{tX}}(a)|_{t=0}$. In the same way, for $P\in\caU(\gB \kg_0)$, we find $\tilde P_g.\rho^a=\rho_g(P.a)$.

Since $P\in\caU(\gB \kg_0)$ does not contain odd variables, we have for any $h\in\gB G$, $\tilde P_h.(\rho^a{}_I)=(\tilde P_h.\rho^a)_I=\rho_h(P.a)_I$. And thus, $\forall y\in\gB M$,
\begin{align*}
|\tilde P_y\rho^a{}_I|_j\leq \sup_{h\in\gB G}\{|\tilde P_h\rho^a{}_I|_j\} = \sup_{h\in\gB G}\{|\rho_h(P.a)_I|_j\}\leq C|P.a|_k
\end{align*}
for $C$ and a $k$ coming from the subisometry of $\rho$. This shows that $\rho^a\in\caB^1_\algA(M)$.

It is immediate to see that $\forall y\in\gB M$, $\forall a\in\algA$, $\forall I$, we have $\rho_y(a)_I\in\algA^\infty$, and thus $\rho^a\in\caB^1_{\algA^\infty}(M)$.
\end{proof}

\begin{corollary}
\label{cor-deffrech-frech}
$(\algA^\infty,\fois)$ is a Fr\'echet algebra for the seminorms
\begin{equation*}
|a|_{P,j}=\sup_{y\in\gB M,I}\{|\tilde P_y.(\rho^a{}_I)|_j\}=|\rho^a|_{j,\beta}^{(1)},
\end{equation*}
with $a\in\algA^\infty$ and $\tilde P=D^\beta$, where $P\in\caU(\gB \kg_0)$.
\end{corollary}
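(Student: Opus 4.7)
The plan is to establish, in order: (1) that $\algA^\infty$ is a subalgebra of $\algA$; (2) that the family $\{\,|\cdot|_{P,j}\,\}$ is a countable Hausdorff family of seminorms; (3) that it is complete; and (4) that multiplication is jointly continuous. The key technical identity is the one proved in the preceding lemma, namely
\begin{equation*}
\tilde P_y.\rho^a=\rho_y(P.a)\qquad \forall\,a\in\algA^\infty,\ P\in\caU(\gB\kg_0),\ y\in\gB G,
\end{equation*}
which rewrites $|a|_{P,j}=|\rho^a|^{(1)}_{j,\beta}$ and, after taking $y=0$ and $I=\emptyset$, gives the comparison $|P.a|_j\le|a|_{P,j}$.

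For (1): since $\rho_z$ is an $\superA$-linear algebra-automorphism of $\algA\otimes\superA$, one has $\rho^{ab}=\rho^a\fois\rho^b$ in $C^\infty(M,\algA\otimes\superA)$, so $\algA^\infty$ is closed under multiplication. For (2): fixing a PBW basis $\{P_\beta\}$ of $\caU(\gB\kg_0)$ makes the family $\{\,|\cdot|_{P_\beta,j}\,\}$ countable, while Hausdorffness follows from $|a|_{\gone,j}\ge|a|_j$ combined with the Hausdorff property of $\algA$.

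The main technical step is (3). Given a Cauchy sequence $(a_n)$ in $\algA^\infty$, each sequence $\bigl(y\mapsto\rho_y(P.a_n)_I\bigr)_n$ is Cauchy uniformly in $(y,I)\in\gB M\times 2^{\{1,\ldots,n\}}$ in the seminorm $|\cdot|_j$ of $\algA$. In particular $a_n\to a$ in $\algA$, and by subisometry $\rho_y(a_n)_I\to\rho_y(a)_I$ pointwise in $\algA$. Applying this for every $P\in\caU(\gB\kg_0)$ provides uniform convergence of all even derivatives of $y\mapsto\rho_y(a_n)_I$; the classical theorem on uniform convergence of derivatives then yields smoothness of the limit $y\mapsto\rho_y(a)_I$ with derivatives given by the limits of the derivatives. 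Since the odd-variable dependence of $\rho_z(a)$ is polynomial in $\xi$, the full function $z\mapsto\rho_z(a)$ is smooth, whence $a\in\algA^\infty$ and $a_n\to a$ in every seminorm $|\cdot|_{P,j}$. This is the step I expect to be the main obstacle, essentially because it requires identifying the limit of the derivatives with the derivatives of the limit.

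For (4): each $P\in\caU(\gB\kg_0)$ being even acts by the Sweedler-type Leibniz rule
\begin{equation*}
P.(ab)=\sum_{(P)}(P_{(1)}.a)\fois(P_{(2)}.b),
\end{equation*}
which follows from the fact that $\rho_z$ is an algebra morphism. Combining this with joint continuity of the product in $\algA$ (giving seminorm indices $k_1(j),k_2(j)$), with the subisometry estimate of subsection \ref{subsec-act}, and with the comparison $|P.c|_\ell\le|c|_{P,\ell}$, one obtains an estimate of the form
\begin{equation*}
|ab|_{P,j}\ \le\ \sum_{(P)} C\,C'\,|a|_{P_{(1)},k_1(j)}\,|b|_{P_{(2)},k_2(j)}\,,
\end{equation*}
which expresses the required joint continuity and finishes the proof.
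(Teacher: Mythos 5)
Your proof is correct and is essentially the argument the paper has in mind: the paper's own proof consists of the single sentence ``adapt the proof in Warner to the graded case,'' and what you have written out (completeness via uniform convergence of the $\caU(\gB\kg_0)$-derivatives together with the identity $\tilde P_y.\rho^a=\rho_g(P.a)$, plus the Leibniz/subisometry estimate for joint continuity of the product) is precisely that adaptation. No gaps; you have merely supplied the details the paper delegates to the reference.
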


\begin{proof}
It is straightforward to adapt the proof in \cite{Warner:1972} to the graded case.
\end{proof}

\begin{lemma}
\label{lem-deffrech-ident}
We have the following identities (using notation as before):
\begin{itemize}
\item $\forall f_i\in\caB^1_\algA(M)$, $\forall z,z_0\in M$, we have $\rho_{z_0}((f_1\star f_2)(z))=(\rho_{z_0}(f_1)\star \rho_{z_0}(f_2))(z)$, where we define $\rho_{z_0}(f)(z)=\rho_{z_0}(f(z))$.

\item $\forall a,b\in\algA^\infty$, $\forall z\in M$, we have $(\rho^a\star\rho^b)(z)=((\rho_z\rho^a)\star(\rho_z\rho^b))(0)$.

\end{itemize}
\end{lemma}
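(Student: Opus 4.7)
The plan is to prove the two identities in sequence: the first is essentially a bookkeeping argument about $\rho_{z_0}$ passing through the integral, while the second reduces, via a change of variables, to a translation-invariance property of the phase of $\star$. For identity~1, I would apply $\rho_{z_0}$ directly to the integral formula
\begin{equation*}
(f_1\star f_2)(z)=\kappa\int\dd z_1\dd z_2\ f_1(z_1)f_2(z_2)\,e^{\phi(z_1,z_2;z)}
\end{equation*}
of Proposition \ref{prop-prod-moy}, where $\phi$ denotes the full exponent. Since $\rho_{z_0}$ is a continuous $\superA$-linear algebra automorphism of $\algA\otimes\superA$, it commutes with Berezin--Lebesgue integration over $M$, fixes the $\superA_\gC$-valued scalar phase, and distributes over the product $f_1(z_1)f_2(z_2)$. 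Using the pointwise convention $\rho_{z_0}(f_i)(z_i)=\rho_{z_0}(f_i(z_i))$, the result is exactly $(\rho_{z_0}(f_1)\star\rho_{z_0}(f_2))(z)$.

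For identity~2, I would unfold the right-hand side using the abelianity of $M$: $(\rho_z\rho^a)(z_i)=\rho_z\rho_{z_i}(a)=\rho_{z+z_i}(a)$. Performing the translation $z_i\mapsto z_i-z$ in the integration variables (which preserves both the Lebesgue measure on the even coordinates and the Berezin measure on the odd ones) reduces the equality to the phase identity $\phi(z_1-z,z_2-z;0)=\phi(z_1,z_2;z)$. The even part of this identity is a direct telescoping computation using bilinearity of $\omega_0$ that turns $\omega_0(x_2-x,w_1-w)-\omega_0(x_1-x,w_2-w)$ into $\omega_0(x_2-x,w_1)+\omega_0(x-x_1,w_2)+\omega_0(x_1-x_2,w)$.

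The main obstacle will be the odd part of the phase identity, namely checking that
\begin{equation*}
\omega_1(\xi_1-\xi,\xi_2-\xi)=\omega_1(\xi,\xi_1)+\omega_1(\xi_1,\xi_2)+\omega_1(\xi_2,\xi).
\end{equation*}
The key observation is that the $\xi$ and $\xi_i$ are odd-coefficient combinations of odd basis vectors and hence lie in $\modE_0$; consequently $\omega_1$ restricted to them is (honestly) skew-symmetric and satisfies $\omega_1(\xi,\xi)=0$. Expanding the left-hand side by bilinearity and using $\omega_1(\xi_i,\xi)=-\omega_1(\xi,\xi_i)$ yields the right-hand side, finishing the argument.
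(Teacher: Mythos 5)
Your proposal is correct and fills in exactly the computation the paper leaves implicit: the paper's proof is just ``direct computation using the explicit form of the product \eqref{eq-prod-moy}'', and the translated form of $\star$ already recorded in the proof of Proposition \ref{prop-prod-moy} is precisely your phase identity $\phi(z_1-z,z_2-z;0)=\phi(z_1,z_2;z)$. Your treatment of the odd part (the $\xi$'s lie in $\modE_0$, so $\omega_1$ is genuinely skew-symmetric there and $\omega_1(\xi,\xi)=0$) is the right justification.
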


\begin{proof}
This is a direct computation using the explicit form of the product \eqref{eq-prod-moy}.
\end{proof}

\begin{proposition}
\label{prop-deffrech-prodalg}
Using the product $\star$ \eqref{eq-prod-moy} on $\caB^1_{A^\infty}(M)$, we define a product $\star_\rho$  on $\algA^\infty$ by: 
\begin{equation*}
\forall a,b\in\algA^\infty
\quad:\quad
a\star_\rho b:=(\rho^a\star\rho^b)(0).
\end{equation*}
This product is associative and $(\algA^\infty,\star_\rho)$ is a Fr\'echet algebra. It is called the Universal Deformation Formula (UDF) of the Heisenberg Supergroup for Fr\'echet algebras $\algA$.
\end{proposition}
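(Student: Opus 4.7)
The plan is to reduce everything to the properties of $\star$ on $\caB^1_\algA(M)$ already established in Section~\ref{sec-defqu}, via the key covariance identity
\[
\rho^{a\star_\rho b}\;=\;\rho^a\star\rho^b.
\]
To prove this identity, I would start from the definition $a\star_\rho b=(\rho^a\star\rho^b)(0)$ and apply $\rho_z$ to both sides. By part~1 of Lemma~\ref{lem-deffrech-ident} (applied with its parameter $z$ set to $0$), I get
\[
\rho_z(a\star_\rho b)=\rho_z\bigl((\rho^a\star\rho^b)(0)\bigr)=\bigl(\rho_z(\rho^a)\star\rho_z(\rho^b)\bigr)(0).
\]
Then part~2 of Lemma~\ref{lem-deffrech-ident} turns the right-hand side into $(\rho^a\star\rho^b)(z)$, i.e.\ exactly $\rho^{a\star_\rho b}(z)$ by definition, giving the claim. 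Since the previous lemma shows $\rho^a,\rho^b\in\caB^1_{\algA^\infty}(M)\subset\caB^1_\algA(M)$, and since Proposition~\ref{prop-prod-moy} (with $\mu=\mu'=1$) asserts $\caB^1_\algA(M)\times\caB^1_\algA(M)\to\caB^1_\algA(M)$, the function $\rho^a\star\rho^b$ is smooth on $M$. Consequently $\rho^{a\star_\rho b}$ is smooth, so $a\star_\rho b\in\algA^\infty$ and $\star_\rho$ is well defined.

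Associativity is then immediate: iterating the covariance identity and using associativity of $\star$ on $\caB^1_\algA(M)$ (Corollary~\ref{cor-prod-inj}), one gets
\[
(a\star_\rho b)\star_\rho c=\bigl((\rho^a\star\rho^b)\star\rho^c\bigr)(0)=\bigl(\rho^a\star(\rho^b\star\rho^c)\bigr)(0)=a\star_\rho(b\star_\rho c).
\]

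For the Fr\'echet algebra structure, Corollary~\ref{cor-deffrech-frech} identifies the seminorms on $\algA^\infty$ with those pulled back from $\caB^1_\algA(M)$ via $a\mapsto\rho^a$: $|a|_{P,j}=|\rho^a|^{(1)}_{j,\beta}$. The covariance identity therefore yields
\[
|a\star_\rho b|_{P,j}=|\rho^{a\star_\rho b}|^{(1)}_{j,\beta}=|\rho^a\star\rho^b|^{(1)}_{j,\beta},
\]
and continuity of $\star:\caB^1_\algA(M)\times\caB^1_\algA(M)\to\caB^1_\algA(M)$ (Proposition~\ref{prop-prod-moy}) delivers an estimate of the form $|a\star_\rho b|_{P,j}\leq C\sum_\ell |a|_{P_\ell,j_\ell}\,|b|_{P'_\ell,j'_\ell}$, which is the required joint continuity; $\algA^\infty$ being already complete (Corollary~\ref{cor-deffrech-frech}), $(\algA^\infty,\star_\rho)$ is a Fr\'echet algebra.

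The main obstacle is precisely the covariance identity $\rho^{a\star_\rho b}=\rho^a\star\rho^b$: once it is in hand the associativity and continuity are formal consequences of the already-proved properties of $\star$ on $\caB^1_\algA(M)$. The subtle point is that one needs \emph{both} items of Lemma~\ref{lem-deffrech-ident} and that they play complementary roles: part~1 moves $\rho_z$ past $\star$ on the target algebra $\algA$ (using that each $\rho_z$ is a superalgebra automorphism), whereas part~2 reflects the translation invariance of $\star$ in the source variable $z\in M$ (coming from the abelian group structure of $M$). The interplay of these two covariances is what makes the definition $a\star_\rho b:=(\rho^a\star\rho^b)(0)$ a genuine associative deformation of the product on $\algA^\infty$.
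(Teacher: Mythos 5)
Your proposal is correct and follows essentially the same route as the paper: the heart of the matter is the covariance identity $\rho^{a\star_\rho b}=\rho^a\star\rho^b$, obtained exactly as in the paper's Equation \eqref{eq-deffrech-interm} by chaining the two parts of Lemma \ref{lem-deffrech-ident}, after which associativity follows from Corollary \ref{cor-prod-inj}. Your additional remarks on well-definedness (via $\tilde\mu=1$ when $\mu=1$ in Proposition \ref{prop-prod-moy}) and on the continuity of $\star_\rho$ for the seminorms of Corollary \ref{cor-deffrech-frech} merely make explicit what the paper leaves implicit.
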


\begin{proof}
By using successively the two identities of Lemma \ref{lem-deffrech-ident}, one can compute that we have, $\forall a,b\in\algA^\infty$, $\forall z\in M$ ,
\begin{equation}
\rho_z(\rho^a\star\rho^b(0))=((\rho_z\rho^a)\star(\rho_z\rho^b))(0)=(\rho^a\star\rho^b)(z).\label{eq-deffrech-interm}
\end{equation}
We thus have, $\rho^{(\rho^a\star\rho^b)(0)}=\rho^a\star\rho^b$. To prove associativity we write, $\forall a,b,c\in\algA^\infty$,
\begin{equation*}
(a\star_\rho b)\star_\rho c=(\rho^{(\rho^a\star\rho^b)(0)}\star\rho^c)(0)=(\rho^a\star\rho^b\star\rho^c)(0),
\end{equation*}
where we used the associativity of the product \eqref{eq-prod-moy}.
\end{proof}

\subsection{\texorpdfstring{Deformation of C${}^\ast$-superalgebras}{Deformation of C*-superalgebras}}
\label{subsec-fam}

Let $\algA$ be a C${}^\ast$-superalgebra. It is in particular a Fr\'echet algebra so that we can use the results of subsection \ref{subsec-deffrech} for $\algA$.

We will consider the minimal tensor product $\caB(L^2(Q))\widehat{\otimes}\algA$, which means the completion with respect to the operator norm of $\caB(L^2(Q)\otimes\ehH)$ if $\algA$ is embedded in $\caB(\ehH)$. It is a C${}^\ast$-superalgebra since $L^2(Q)\otimes\ehH$ has the structure of Hilbert superspace (see Proposition \ref{prop-superhilbert-tensprod}).

We consider $\algA\otimes\superA$ as a graded vector space for the total degree (sum of the degree of $\algA$ and the one of $\superA$) of this tensor product. Then, we can endow it with a product and a superinvolution (see Definition \ref{def-supercst}): $\forall a,b\in\algA$, $\forall\eta,\eta'\in\superA$,
\begin{equation*}
(a\otimes\eta)\fois (b\otimes\eta')=(-1)^{|\eta||b|}(a\fois b)\otimes(\eta\eta'),\qquad (a\otimes\eta)^\dag=(a^\dag)\otimes\eta.
\end{equation*}

\begin{lemma}
$(\caB^1_\algA(M),\star)$ is a graded associative algebra for the total degree: if $f(y,\xi)=\sum_{I,j}f_{I,j}(y)\xi^I$ with $f_{I,j}(y)\in\algA_j$, then $|f_{I,j}|=|I|+j$. Moreover, the superinvolution on $\algA$ can be extended to a superinvolution for the total degree on $\caB^1_\algA(M)$.
\end{lemma}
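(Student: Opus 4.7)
The statement combines three assertions: closure of $\star$ on $\caB^1_\algA(M)$ together with associativity and total-degree compatibility, plus extension of the superinvolution from $\algA$ to $(\caB^1_\algA(M),\star)$.

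For closure, I would invoke Proposition \ref{prop-prod-moy} with $\mu=\mu'=1$. The associated weight furnished by Lemma \ref{lem-reg-trans} for the constant weight $1$ is again $1$, since $L_y^\ast 1=1$; thus $\star$ maps $\caB^1_\algA(M)\times\caB^1_\algA(M)$ continuously into $\caB^1_\algA(M)$. Associativity is then nothing but the second bullet of Corollary \ref{cor-prod-inj}.

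To check that $\star$ respects the total $\gZ_2$-grading, I would expand a homogeneous $f_i$ of degree $|f_i|$ as $f_i(z)=\sum_{|I|+k=|f_i|}f_{i,I,k}(y)\,\xi^I$ with $f_{i,I,k}\in\algA_k$, and inspect the integral formula \eqref{eq-prod-moy}. The even phase $\omega_0$ contributes only to an even scalar factor, and the odd phase $\omega_1(\xi,\xi_1)+\omega_1(\xi_1,\xi_2)+\omega_1(\xi_2,\xi)$ is even (each summand is quadratic in odd variables), so the expansion of $\exp(2ia_0\lambda\,\cdot)$ yields a sum of monomials $\xi^A\xi_1^B\xi_2^C$ with $|A|+|B|+|C|$ even. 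The Berezin integration over $(\xi_1,\xi_2)$ extracts the coefficient of $\xi_1^{\{1,\dots,n\}}\xi_2^{\{1,\dots,n\}}$, so only terms with $|B|+|I_1|=n=|C|+|I_2|$ survive; then the resulting $\xi$-degree satisfies $|A|\equiv 2n-|I_1|-|I_2|\equiv|I_1|+|I_2|\pmod 2$. Since the surviving $\algA$-degree is $k_1+k_2$, the total degree is $(|I_1|+k_1)+(|I_2|+k_2)=|f_1|+|f_2|$.

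For the superinvolution I would define $f^\dag$ pointwise via the superinvolution on $\algA\otimes\superA$ fixed at the start of the subsection, namely $(f^\dag)(z)=f(z)^\dag$. Smoothness and the $\caB^1$ bounds are preserved because the superinvolution of the C${}^\ast$-superalgebra $\algA$ is isometric for the norm (and hence continuous), and it commutes with partial derivatives in $y$. The superinvolution identity $(f_1\star f_2)^\dag=(-1)^{|f_1||f_2|}f_2^\dag\star f_1^\dag$ is verified by direct computation on \eqref{eq-prod-moy}, mimicking Proposition \ref{prop-product-invol}: applying $\dag$ pointwise to $(f_1\star f_2)(z)$ produces the Koszul sign $(-1)^{|f_1(z_1)||f_2(z_2)|}$ from the product in $\algA\otimes\superA$ and complex-conjugates the phase; then the change of variables $z_1\leftrightarrow z_2$ reverses the even part of the phase (by skew-symmetry of $\omega_0$ on the even generators) while leaving the odd part invariant (since $\omega_1$ is supersymmetric, hence symmetric on odd-odd pairs), matching the right-hand side after the compensating signs from the swap of odd Berezin measures are accounted for.

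The main technical difficulty is this sign bookkeeping in the last step: ensuring that the Koszul signs coming from the superinvolution of $\algA\otimes\superA$, the $(-1)^{n^2}$ arising when swapping $d\xi_1\,d\xi_2$, and the sign change produced by the change of variables in the even phase all conspire to yield exactly $(-1)^{|f_1||f_2|}\kappa$ from $\overline\kappa$. Everything else—closure, continuity, associativity, grading—reduces to invoking results already established in Section \ref{sec-defqu}.
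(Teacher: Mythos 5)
Your overall route is the same as the paper's: the paper's proof consists of citing Corollary \ref{cor-prod-inj} for associativity, declaring the compatibility with the total grading ``immediate'', defining $f^\dag(y,\xi)=\sum_{I,j}f_{I,j}(y)^\dag\xi^I$ (which agrees with your pointwise definition via the superinvolution on $\algA\otimes\superA$), and asserting the identity $(f\star g)^\dag=(-1)^{|f||g|}g^\dag\star f^\dag$ by direct computation. Your elaborations of closure (noting $\tilde\mu=1$ when $\mu=1$ in Lemma \ref{lem-reg-trans}, exactly as the paper uses in Theorem \ref{thm-reg-contin}), of associativity, and of the grading via Berezin integration (the parity count $|A|\equiv|I_1|+|I_2| \pmod 2$) are correct and genuinely fill in what the paper leaves implicit.

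The one step that would not close as you describe it is the sign bookkeeping for the odd phase in the superinvolution identity. Applying $\dag$ conjugates the complex coefficients, turning $e^{2ia_0\lambda\omega_1(\cdot,\cdot)}$ into $e^{-2ia_0\lambda\omega_1(\cdot,\cdot)}$, and since $\omega_1$ is \emph{symmetric} on odd arguments the relabelling $\xi_1\leftrightarrow\xi_2$ does not undo this sign flip; you propose to repair it with ``the sign from the swap of odd Berezin measures'', but that is a single global factor $(-1)^{n^2}$ and cannot restore a $J$-dependent discrepancy in the exponent. The actual compensation is term-by-term: expanding the odd exponential as $\sum_J (4ia_0\lambda)^{|J|}(\pm 1)\,\xi_1^J\xi_2^J$ (Lemma \ref{lem-superman-exp}), conjugation contributes $(-1)^{|J|}$ from $\overline{i^{|J|}}$, while the Koszul reordering $\xi_1^J\xi_2^J\mapsto(-1)^{|J|^2}\xi_2^J\xi_1^J$ needed to match the reversed order $f_2^\dag(z_2)\,f_1^\dag(z_1)$ contributes another $(-1)^{|J|}$; these cancel and restore the original phase after relabelling. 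So the conclusion is correct and consistent with Proposition \ref{prop-product-invol}, but the mechanism you name would leave an uncancelled $(-1)^{|J|}$ in each term; the computation must be organised on the expanded exponential, not on the closed-form phase.
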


\begin{proof}
Due to Corollary \ref{cor-prod-inj}, we already know that $(\caB^1_\algA(M),\star)$ is an associative algebra. It is immediate to see that the product $\star$ given by \eqref{eq-prod-moy} is compatible with the total grading. Since $\algA\otimes\superA$ is endowed with a superinvolution (see the beginning of this subsection), we can extend it to the graded algebra $\caB^1_\algA(M)$ by defining: $f^\dag(y,\xi)=\sum_{I,j}f_{I,j}(y)^\dag\xi^I$. Then, a direct computation shows that $\forall f,g\in\caB^1_\algA(M)$, $(f\star g)^\dag=(-1)^{|f||g|}g^\dag\star f^\dag$, for the total degrees $|f|$ and $|g|$ of the functions $f$ and $g$.
\end{proof}

\begin{proposition}
\label{prop-fam-om}
The quantization map
\begin{equation*}
\Omega:\caB_\algA^1(M)\to\caB(L^2(Q))\widehat{\otimes}\algA
\end{equation*}
is a homogeneous superinvolutive injective continuous morphism of algebras of degree 0.
\end{proposition}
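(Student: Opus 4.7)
The plan is to verify the five properties of $\Omega$ --- algebra morphism, degree $0$, injectivity, continuity with target $\caB(L^2(Q))\widehat{\otimes}\algA$, and superinvolutivity --- separately, drawing almost entirely on results already established in Section~\ref{sec-defqu}.

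The algebra morphism property $\Omega(f_1\star f_2)=\Omega(f_1)\Omega(f_2)$ is precisely the content of Proposition~\ref{prop-product-idoper}, whose proof carries over to $\algA$-valued functions without change since it uses only $\superA$-bilinearity. The degree-zero property follows by inspecting the kernel~\eqref{eq-qu-omf}: the exponential phase is even, so the parity of $\Omega(f)$ viewed as an operator on $L^2(Q)\otimes\ehH$ coincides with the total parity of $f$, namely the sum of the $\algA$-degree of its coefficients $f_{I,j}$ and the $\gZ_2$-degree $|I|$ of the monomial $\xi^I$. Injectivity is the first bullet of Corollary~\ref{cor-prod-inj}.

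For continuity I will upgrade Theorem~\ref{thm-reg-contin} from its stated codomain $\caL(L^2(Q),L^2(Q)\otimes_{(h)}\algA)$ to $\caB(L^2(Q))\widehat{\otimes}\algA$. Theorem~\ref{thm-reg-contin} produces, for each seminorm $|\fois|_j$ of $\algA$, an estimate of the form
\begin{equation*}
|\left(\tilde\varphi,\Omega(f)\tilde\psi\right)|_j \leq \sum_\alpha C_\alpha\, \norm\tilde\varphi\norm_2\, \norm\tilde\psi\norm_2\, |f|_{j,\alpha},
\end{equation*}
for all $\tilde\varphi,\tilde\psi\in L^2(Q)$. Fixing the isometric representation $\algA\hookrightarrow\caB(\ehH)$ coming from the C${}^\ast$-superalgebra structure of $\algA$ (Definition~\ref{def-supercst}), the operator norm on $\caB(\ehH)$ appears as one of the seminorms $|\fois|_j$. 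Specialising the estimate above to this particular seminorm and interpreting the matrix coefficient $(\tilde\varphi,\Omega(f)\tilde\psi)$ as an element of $\caB(\ehH)$, the double bound in $\tilde\varphi,\tilde\psi$ becomes an operator-norm bound for $\Omega(f)$ acting on $L^2(Q)\otimes\ehH$, thereby placing $\Omega(f)$ in $\caB(L^2(Q))\widehat{\otimes}\algA$ with continuous dependence on $f$.

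Finally, superinvolutivity $\Omega(f^\dag)=\Omega(f)^\dag$ combines Proposition~\ref{prop-product-invol}, which treats the scalar conjugation identity $\Omega(\overline f)=\Omega(f)^\dag$, with the extension of the superinvolution of $\algA$ to $\caB^1_\algA(M)$ spelled out in the lemma immediately preceding this proposition. By $\superA$-linearity of $\Omega$ the identity reduces to checking it on elementary factors $f_{I,j}\otimes\xi^I$, where it follows term by term from Proposition~\ref{prop-product-invol} applied to the scalar $\xi^I$-piece and from the compatibility $\rho(a^\dag)=\rho(a)^\dag$ of the representation of $\algA$. The main obstacle I anticipate is the bookkeeping of Koszul signs: the superinvolution on $\caB^1_\algA(M)$ is defined with respect to the total $\gZ_2$-degree $|I|+j$, whereas the superadjoint on $\caB(L^2(Q))\widehat{\otimes}\algA$ is computed via~\eqref{eq-superhilbert-dagast} on the Hilbert superspace $L^2(Q)\otimes\ehH$, whose $J$-operator carries the Koszul sign prescribed in Proposition~\ref{prop-superhilbert-tensprod}. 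Verifying that both conventions produce the same global sign on each homogeneous component is the only delicate step.
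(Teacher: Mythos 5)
Your treatment of four of the five properties matches the paper's proof exactly: the morphism property is quoted from Proposition \ref{prop-product-idoper}, injectivity from Corollary \ref{cor-prod-inj}, superinvolutivity from Proposition \ref{prop-product-invol} together with the extension of ${}^\dag$ to $\caB^1_\algA(M)$ (the paper does not carry out the Koszul-sign check either, so you are on par there), and the degree-$0$ statement is read off from \eqref{eq-qu-omf}. The one place where your argument has a genuine gap is the continuity step. The estimate of Theorem \ref{thm-reg-contin} controls the $\algA$-valued matrix coefficients $\left(\tilde\varphi,\Omega(f)\tilde\psi\right)$ only for $\tilde\varphi,\tilde\psi\in L^2(Q)$, which after fixing the representation $\algA\hookrightarrow\caB(\ehH)$ amounts to a bound on \emph{elementary} tensors $\tilde\varphi\otimes u$, $\tilde\psi\otimes v$ of $L^2(Q)\otimes\ehH$. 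A bound of the form $\norm\left(\tilde\varphi,T\tilde\psi\right)\norm_{\caB(\ehH)}\le C\norm\tilde\varphi\norm\,\norm\tilde\psi\norm$ does \emph{not} imply $\norm T\norm\le C$ on the completed Hilbert tensor product: for $T=\sum_{k=1}^N e_{k1}\otimes e_{k1}$ acting on $\gC^N\otimes\gC^N$ one has $\left(x,Ty\right)=y_1\sum_k\bar x_k e_{k1}$, whose operator norm is at most $\norm x\norm\,\norm y\norm$, while $\norm T(e_1\otimes e_1)\norm=\sqrt N$. So the assertion that ``the double bound in $\tilde\varphi,\tilde\psi$ becomes an operator-norm bound for $\Omega(f)$ acting on $L^2(Q)\otimes\ehH$'' is a non sequitur as stated.

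What the paper does instead, and what you need to do, is rerun the resolution-of-identity argument of Theorem \ref{thm-reg-contin} directly for arbitrary $\Phi,\Psi\in L^2(Q)\otimes\ehH$: the scalar pairings $(\tilde\varphi_I,\tilde\eta_{y_1})$ become $\ehH$-valued pairings $(\Phi_I,\tilde\eta_{y_1})\in\ehH$, the Cauchy--Schwarz splitting goes through with absolute values replaced by $\norm\fois\norm_\ehH$ and $\norm\fois\norm_\algA$, and the two key inputs are the vector-valued resolution of identity $\int\dd y_1\,\norm(\Phi_I,\tilde\eta_{y_1})\norm_\ehH^2=C'\norm\tilde\eta\norm_2^2\,\norm\Phi_I\norm^2$ and the decay estimate $\norm(\tilde\eta_{y_1},\Omega(f_J)\tilde\eta_{y_2})\norm_\algA\le\sum_\alpha C'_\alpha\,|\nu(y_1-y_2)|^{-1}|f|_\alpha$ coming from point 4 of Theorem \ref{thm-reg-quant}. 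This produces $|\left(\Phi,\Omega(f)\Psi\right)|\le\sum_\alpha C_\alpha\norm\Phi\norm\,\norm\Psi\norm\,|f|_\alpha$ for \emph{all} $\Phi,\Psi$ in the tensor product, which is the bound actually needed to place $\Omega(f)$ in $\caB(L^2(Q))\widehat{\otimes}\algA$ continuously. The rest of your outline is sound.
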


\begin{proof}
The norm on the algebra $\caB(L^2(Q))\widehat{\otimes}\algA$ is given by
\begin{equation*}
\norm T\norm=\sup_{\Phi,\Psi\in (L^2(Q)\otimes\ehH)\smallsetminus\algzero}\frac{|\left(\Phi,T\Psi\right)|}{\norm\Phi\norm\,\norm\Psi\norm}.
\end{equation*}
We proceed here with the same philosophy as in the proof of Theorem \ref{thm-reg-contin}. We first notice that we can deal with the superhermitian scalar product. Then, we can make use two times of the resolution of the identity. For $\Phi,\Psi\in L^2(Q)\otimes\ehH$ and $f\in\caB_\algA^1(M)$, it gives
\begin{align*}
|\langle\Phi,\Omega(f)\Psi\rangle| = &|\sum_{I,J,K}\frac{\alpha(I,J,K)}{C^2\norm\tilde\eta\norm_2^4}\int\dd y_1\dd y_2 \langle(\Phi_I,\tilde\eta_{y_1}), (\tilde\eta_{y_1},\Omega(f_J)\tilde\eta_{y_2})(\tilde\eta_{y_2},\Psi_K)\rangle_{\ehH}|\\
= &\sum_{I,J,K}\frac{|\alpha(I,J,K)|}{C^2\norm\tilde\eta\norm_2^4}\int\dd y_1\dd y_2 \norm(\Phi_I,\tilde\eta_{y_1})\norm_{\ehH} \norm(\tilde\eta_{y_1},\Omega(f_J)\tilde\eta_{y_2})\norm_{\algA}\norm(\tilde\eta_{y_2},\Psi_K)\norm_{\ehH}
\end{align*}
which corresponds to the first line of Equation \eqref{eq-cst-inter}. By using two analogous arguments:
\begin{align*}
&\int\dd y_1 \norm(\Phi_I,\tilde\eta_{y_1})|_{\ehH}^2 =C'\norm\tilde\eta\norm_2^2\norm\Phi_I\norm^2\leq C'\norm\tilde\eta\norm^2_2\norm\Phi\norm^2,\\
&\norm(\tilde\eta_{y_1},\Omega(f_J)\tilde\eta_{y_2})\norm_{\algA}\leq\sum_\alpha C'_\alpha\frac{1}{|\nu(y_1-y_2)|}|f|_{\alpha},
\end{align*}
where $|f|_\alpha=\sup_{y\in\gB M}\sum_I \norm D^\alpha f_I(y)\norm_\algA$, we arrive at the conclusion:
\begin{equation*}
\forall f\in\caB_\algA^1(M)\ ,\ \forall\Phi,\Psi\in L^2(Q)\otimes\ehH
\quad:\quad
|\left(\Phi,\Omega(f)\Psi\right)|\leq \sum_\alpha C_\alpha\norm \Phi\norm \norm\Psi\norm |f|_{\alpha},
\end{equation*}
which shows the continuity of $\Omega$.

Furthermore, the algebra-morphism property is given by Proposition \ref{prop-deffrech-prodalg} and the injectivity by Corollary \ref{cor-prod-inj}. Proposition \ref{prop-product-invol} shows the superinvolutive property, whereas Equation \eqref{eq-qu-omf} expresses the fact that $\Omega$ is of degree 0.
\end{proof}

Let us now make the additional assumption on the action $\rho:M\times(\algA\otimes\superA)\to(\algA\otimes\superA)$ that it is compatible with the superinvolution and the total degree: 
\begin{equation}
\forall a\in\algA
\quad:\quad
(\rho^a)^\dag=\rho^{(a^\dag)},\qquad |\rho^a|=|a|,\label{eq-fam-supassum}
\end{equation}
just as we did for C${}^\ast$-superalgebra morphism in Definition \ref{def-supercst}.

\begin{lemma}
\label{lem-fam-rho}
With this additional assumption, the map $\rho:(\algA^\infty,\star_\rho)\to(\caB^1_\algA(M),\star)$ defined by: $\rho:a\mapsto\rho^a$, is a superinvolutive (injective) isometric morphism of algebras of degree 0.
\end{lemma}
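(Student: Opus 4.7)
My plan is to assemble the lemma from ingredients that have already been proved, so that essentially no new computation is required. First, I would recall that the map is well defined with target $\caB^1_\algA(M)$: the first lemma of subsection \ref{subsec-deffrech} shows that $\rho^a\in \caB^1_{\algA^\infty}(M)\subset \caB^1_\algA(M)$ for every smooth vector $a$. For multiplicativity, I would invoke the intermediate identity \eqref{eq-deffrech-interm} established during the proof of Proposition \ref{prop-deffrech-prodalg}, which states $\rho^{(\rho^a\star\rho^b)(0)}=\rho^a\star\rho^b$; by the very definition of $\star_\rho$ this reads $\rho^{a\star_\rho b}=\rho^a\star\rho^b$, so $\rho$ is a homomorphism from $(\algA^\infty,\star_\rho)$ to $(\caB^1_\algA(M),\star)$.

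Next, the degree-$0$ property and the compatibility with the superinvolution are the two conditions packaged in the standing hypothesis \eqref{eq-fam-supassum} imposed immediately before the statement, so they require no further proof. Injectivity is then immediate: evaluating $\rho^a$ at the group identity $0\in M$ returns $\rho_0(a)=a$, so $\rho^a=0$ forces $a=0$.

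For the isometry, I would compare the Fr\'echet structures on source and target. The seminorms on $\algA^\infty$ introduced in Corollary \ref{cor-deffrech-frech} were defined precisely by pulling back the seminorms $|\cdot|^{(1)}_{j,\beta}$ of $\caB^1_\algA(M)$, namely $|a|_{P,j}=|\rho^a|^{(1)}_{j,\beta}$ with $\tilde P=D^\beta$. Hence each seminorm on $\algA^\infty$ equals its counterpart on the image, and $\rho$ is tautologically isometric. I do not expect any genuine obstacle here: the content of the lemma is really the observation that the hypotheses \eqref{eq-fam-supassum} were tailored to turn the structural properties already inherent in the orbit map $a\mapsto\rho^a$ into a C${}^\ast$-superalgebraic statement.
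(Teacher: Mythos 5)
Your proposal is correct and follows essentially the same route as the paper: multiplicativity via the identity \eqref{eq-deffrech-interm} giving $\rho^{a\star_\rho b}=\rho^a\star\rho^b$, isometry directly from the definition of the seminorms in Corollary \ref{cor-deffrech-frech}, and the remaining properties from the standing assumption \eqref{eq-fam-supassum}. Your explicit observations on injectivity (evaluation at $0$ returns $\rho_0(a)=a$) and well-definedness are details the paper leaves implicit, but they are accurate and harmless additions.
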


\begin{proof}
Isometry is immediate from the definition of the seminorms on $\algA^\infty$ in Corollary \ref{cor-deffrech-frech}. The algebra-morphism property can be shown in the following way: $\forall a,b\in\algA^\infty$, $\forall z\in M$,
\begin{equation*}
\rho_z(a\star_\rho b)=\rho_z(\rho^a\star\rho^b(0))=(\rho^a\star\rho^b)(z),
\end{equation*}
using Equation \eqref{eq-deffrech-interm}. It follows that we have $\rho^{a\star_\rho b}=\rho^a\star\rho^b$.
\end{proof}

\begin{theorem}
\label{thm-fam-constr}
The map $\Omega\circ\rho:(\algA^\infty,\star_\rho)\to\caB(L^2(Q))\widehat{\otimes}\algA$ is an injective superinvolutive morphism of graded algebras of degree 0, and is continuous with respect to the Fr\'echet topology of $\algA^\infty$. Using this map we can define the norm
\begin{equation*}
\norm a\norm_\rho=\norm\Omega(\rho^a)\norm
\end{equation*}
on $\algA^\infty$. Denoting by $\algA_\rho$ the completion of $\algA^\infty$ with respect to this norm, $(\algA_\rho,\star_\rho,{}^\dag,\norm\fois\norm_\rho)$ is a C${}^\ast$-superalgebra.
\end{theorem}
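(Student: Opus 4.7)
The first assertion is a direct composition. By Lemma \ref{lem-fam-rho}, $\rho:(\algA^\infty,\star_\rho)\to(\caB^1_\algA(M),\star)$ is an injective isometric superinvolutive morphism of graded algebras of degree $0$, and by Proposition \ref{prop-fam-om}, $\Omega:(\caB^1_\algA(M),\star)\to\caB(L^2(Q))\widehat{\otimes}\algA$ shares all these properties (except isometry, which is replaced by continuity with respect to the seminorms $|\fois|_\alpha$). Composing, I get that $\Omega\circ\rho$ is an injective superinvolutive morphism of graded algebras of degree $0$. Continuity with respect to the Fr\'echet topology of $\algA^\infty$ (defined in Corollary \ref{cor-deffrech-frech} via the seminorms $|a|_{P,j}=|\rho^a|_{j,\beta}^{(1)}$) follows from the estimate in the proof of Proposition \ref{prop-fam-om}, which bounds $\norm\Omega(f)\norm$ by a finite linear combination of the seminorms $|f|_\alpha$ evaluated on $f=\rho^a$.

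From here, the definition $\norm a\norm_\rho=\norm\Omega(\rho^a)\norm$ yields a seminorm on $\algA^\infty$, and injectivity of $\Omega\circ\rho$ turns it into a genuine norm. Since $\Omega\circ\rho$ is a morphism of algebras and the target norm is submultiplicative, $\norm\fois\norm_\rho$ is submultiplicative with respect to $\star_\rho$; since $\Omega\circ\rho$ is superinvolutive and $\caB(L^2(Q))\widehat{\otimes}\algA$ is a C${}^\ast$-superalgebra (it acts isometrically on the Hilbert superspace $L^2(Q)\otimes\ehH$, cf.\ Proposition \ref{prop-superhilbert-tensprod}), one has $\norm a^\dag\norm_\rho=\norm\Omega(\rho^a)^\dag\norm=\norm\Omega(\rho^a)\norm=\norm a\norm_\rho$ and $\norm a^\dag\star_\rho a\norm_\rho=\norm\Omega(\rho^a)^\dag\Omega(\rho^a)\norm=\norm\Omega(\rho^a)\norm^2=\norm a\norm_\rho^2$ for every $a\in\algA^\infty$, where the C${}^\ast$-identity on the last line holds in the ambient C${}^\ast$-superalgebra.

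I then pass to the completion. The product $\star_\rho$ and the superinvolution ${}^\dag$, being respectively submultiplicative and isometric for $\norm\fois\norm_\rho$, extend uniquely and continuously to the Banach space $\algA_\rho$, turning it into a Banach $\gZ_2$-graded algebra endowed with a continuous superinvolution. The morphism $\Omega\circ\rho$ extends by continuity to an isometric embedding $\widetilde{\Omega\circ\rho}:\algA_\rho\hookrightarrow\caB(L^2(Q))\widehat{\otimes}\algA$ preserving the product, the superinvolution, and the degree. Since the target is an isometrically represented C${}^\ast$-superalgebra acting on the Hilbert superspace $L^2(Q)\otimes\ehH$, composing the representation with $\widetilde{\Omega\circ\rho}$ exhibits $\algA_\rho$ as isometrically represented on the same Hilbert superspace in a degree-$0$ and superinvolution-preserving way, which is exactly Definition \ref{def-supercst} of a C${}^\ast$-superalgebra.

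The only potentially delicate point — and the one I expect to be the main obstacle — is to ensure that the graded structure and superinvolution indeed survive the completion in the correct way, i.e.\ that $\algA_\rho=(\algA_\rho)_0\oplus(\algA_\rho)_1$ with both summands closed. This follows from the fact that on $\algA^\infty$ the parity operator $\gP$ (acting as $\pm\gone$ on homogeneous elements) is an isometry for $\norm\fois\norm_\rho$ because $\Omega\circ\rho$ is of degree $0$ and the ambient algebra is graded by a continuous involution $\gP$; hence $\gP$ extends to a continuous involution on $\algA_\rho$ whose $\pm 1$-eigenspaces are the required closed homogeneous components.
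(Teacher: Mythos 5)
Your proof follows the same route as the paper: Theorem \ref{thm-fam-constr} is obtained by composing Lemma \ref{lem-fam-rho} with Proposition \ref{prop-fam-om}, transporting the norm of the C${}^\ast$-superalgebra $\caB(L^2(Q))\widehat{\otimes}\algA$ back to $\algA^\infty$ via the injective morphism, checking $\norm a^\dag\norm_\rho=\norm a\norm_\rho$, and completing; your extra care about the grading surviving the completion (via the isometry of the parity operator) is a legitimate refinement of a point the paper leaves implicit.

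One claim in your second paragraph is, however, false: you assert $\norm a^\dag\star_\rho a\norm_\rho=\norm\Omega(\rho^a)^\dag\Omega(\rho^a)\norm=\norm\Omega(\rho^a)\norm^2$, invoking ``the C${}^\ast$-identity in the ambient C${}^\ast$-superalgebra.'' The superadjoint $T^\dag$ is not the Hilbert-space adjoint $T^\ast$ but $\pm J T^\ast J$ (Proposition \ref{prop-superhilbert-adjoint}), and the identity $\norm T^\dag T\norm=\norm T\norm^2$ fails for it: the paper's own Example \ref{ex-supercst-linf} exhibits $(\theta^I)^\dag\fois\theta^I=0$ while $\norm\theta^I\norm=1$. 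Fortunately Definition \ref{def-supercst} of a C${}^\ast$-superalgebra requires no C${}^\ast$-identity whatsoever, only a superinvolutive Banach algebra isometrically represented on a Hilbert superspace; so this erroneous step is superfluous and should simply be deleted, after which the rest of your argument stands.
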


\begin{proof}
This follows directly from Proposition \ref{prop-fam-om}, Lemma \ref{lem-fam-rho} and the fact that $\caB(L^2(Q))\widehat{\otimes}\algA$ is a C${}^\ast$-superalgebra. The isometry property of the superinvolution also holds: 
\begin{equation*}
\forall a\in\algA
\quad:\quad
\norm a^\dag\norm_\rho=\norm\Omega(\rho^{a^\dag})\norm=\norm\Omega(\rho^a)^\dag\norm= \norm\Omega(\rho^a)\norm=\norm a\norm_\rho.
\end{equation*}
\end{proof}

The expression of the product $\star_\rho$ given by Proposition \ref{prop-deffrech-prodalg}, together with the construction of the deformed C${}^\ast$-superalgebra of Theorem \ref{thm-fam-constr}, is called the Universal Deformation Formula (UDF) of the Heisenberg Supergroup for C${}^\ast$-superalgebras.

\section{The quantum supertorus}
\label{sec-torus}

In this section we apply the UDF on some geometric examples. We first consider in subsection \ref{subsec-cpcttriv} an action of the supergroup $M$ on a compact trivial supermanifold $X$ with certain conditions on this action. This induces in a natural way an action of $M$ on the C${}^\ast$-superalgebra $\caC(X)$, which we then deform into another C${}^\ast$-superalgebra but now non-supercommutative. As a particular example we consider the case where $\gB X$ is the 2-dimensional torus in \ref{subsec-deftorus}.

\subsection{Deformation of compact trivial supermanifolds}
\label{subsec-cpcttriv}

We consider a trivial compact supermanifold $X = X_o \times \gR^{0\vert q}$ of dimension $p|q$ (and thus $X_o$ is a supermanifold of dimension $p\vert 0$ completely determined by $\gB X_o = \gB X$ which is compact). We also consider a smooth action $\tau: M\times X\to X$ of the abelian supergroup $M\simeq\gR^{m|n}$ on $X$. As we have the direct product $X=X_o \times \gR^{0\vert q}$, any element $u\in X$ is a couple $u=(v,\eta)$, with $v\in X_o$ and $\eta\in\gR^{0|q}$. If we decompose an element $z\in M=\gR^{m|n}$ in even and odd coordinates $z=(y,\xi)$ with $y\in \gR^{m|0}$ and $\xi \in \gR^{0|n}$, we can decompose the two entries of $\tau_z(u) \in X_o \times \gR^{0|q}$ with respect to powers of the odd coordinates as follows: 
\begin{equation*}
\forall z=(y,\xi)\in M = \gR^{m|n}
\quad:\quad
\tau_z u=\left((\tau_y v)^0_{IJ}\xi^I\eta^J,(\tau_y v)^1_{IJ}\xi^I\eta^J\right),
\end{equation*}
with $(\tau_y v)^i_{IJ}$ smooth functions on $\gB X \times \gR^m$, such that $(\tau_y v)^0_{IJ}$ and $(\tau_y v)^1_{IJ}$ take values respectively in $\gB X$ and in $\gR^q$. Due to parity, we must have in particular $(\tau_y v)^i_{IJ}=0$ whenever $i+\vert I\vert +\vert J\vert = 1$.

We now make the additional assumption that $\forall (I,J)\neq (\emptyset,\emptyset)$ we have $(\tau_y v)^0_{IJ}=0$. We thus have $(\tau_y v)^0_{IJ}\xi^I\eta^J=(\tau_y v)^0_{\emptyset\emptyset}$, which we will shorten to $(\tau_y v)^0\in\gB X$. This means that we assume that the elements of the form $(0,\xi)\in M$ do not act on $X_o$. We also assume that every component $(\tau_y v)^{1,k}_{IJ}$ ($k\in\{1,\dots,q\}$) of $(\tau_y v)^1_{IJ}$ is uniformly bounded in $y$.

We finally define $\caC(X)\simeq C(\gB X)\otimes\bigwedge\gR^q$ to be the completion of $C^\infty(X)$ (complex super smooth functions) with respect to the norm $\norm f\norm=\sum_I\norm f_I\norm_\infty$ (see Remark \ref{rmk-superman-triv}). Note that $\caC(X)$ does not correspond to the space of continuous functions $X\to\superA$ for the DeWitt topology.

\begin{proposition}
\label{prop-cpcttriv-deform}
The algebra $\algA=\caC(X)$ is a C${}^\ast$-superalgebra. If we denote by $\rho:M\times (\algA\otimes\superA)\to (\algA\otimes\superA)$ the natural action of $M$ on $\algA$: 
\begin{equation*}
\forall f\in\caC(X)\ ,\ \forall z\in M\ ,\ \forall u\in X
\quad:\quad
\rho_z (f)(u)=f(\tau_{z^{-1}}u)=f(\tau_{-z}u)
\ ,
\end{equation*}
then we have the inclusion $C^\infty(X)\subset\algA^\infty$, and Theorem \ref{thm-fam-constr} applies, yielding a deformation $(\algA_\rho,\star_\rho,{}^\dag,\norm\fois\norm_\rho)$ of $\algA^\infty$. The deformed product is given by the following explicit formula: for all $ f,g\in\algA^\infty$,
\begin{equation}
(f\star_\rho g)(u)=\kappa\int\dd z_1\dd z_2\ f(\tau_{-z_1}u)g(\tau_{-z_2}u) e^{2ia_0(\omega_0(x_2,w_1)-\omega_0(x_1,w_2)+\lambda\omega_1(\xi_1,\xi_2))},\label{eq-cpcttriv-product}
\end{equation}
where $\kappa$ and $\lambda$ are given by Proposition \ref{prop-product-idoper}.
\end{proposition}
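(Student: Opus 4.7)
The plan is to verify the four claims (C${}^\ast$-superalgebra structure on $\algA=\caC(X)$, the inclusion $C^\infty(X)\subset\algA^\infty$, applicability of Theorem \ref{thm-fam-constr}, and the explicit formula \eqref{eq-cpcttriv-product}) in succession. For the first claim, I would invoke the multiplicative representation $\mu:\caC(X)\to\caB(L^2(X))$ with $\mu(f)\varphi=f\fois\varphi$ on the Hilbert superspace $L^2(X)$ of Example \ref{ex-superhilbert}; this is essentially the argument for $L^\infty(M)$ of Example \ref{ex-supercst-linf} applied to $\caC(X)\simeq C(\gB X)\otimes\bigwedge\gR^q$. One checks that $\mu$ is of degree zero with respect to the $\gZ_2$-grading inherited from $\bigwedge\gR^q$, isometric for the norm $\norm f\norm=\sum_I\norm f_I\norm_\infty$, and intertwines complex conjugation with the superadjoint via $\mu(f)^\dag=\mu(\overline f)$.

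Next, I would verify that $\rho$ satisfies the four hypotheses required by Theorem \ref{thm-fam-constr}: strong continuity, subisometry, compatibility with the superinvolution, and preservation of the total degree \eqref{eq-fam-supassum}. Writing $f(v,\eta)=\sum_K f_K(v)\eta^K$ and substituting the explicit form
\begin{equation*}
\tau_{-z}u=\bigl((\tau_{-y}v)^0,\ \textstyle\sum_{I,J}(\tau_{-y}v)^1_{IJ}(-\xi)^I\eta^J\bigr),
\end{equation*}
the hypothesis $(\tau_y v)^0_{IJ}=0$ for $(I,J)\neq(\emptyset,\emptyset)$ means that $f_K((\tau_{-y}v)^0)$ is a well-defined continuous function on $X_o$ of sup-norm bounded by $\norm f_K\norm_\infty$ independently of $y$. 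Expanding the $K$-th power of the odd part and using the uniform boundedness of the components $(\tau_y v)^{1,k}_{IJ}$ yields a constant $C>0$ (independent of $y$, $f$, and $I$) such that $\norm\rho_y(f)_I\norm\leq C\norm f\norm$, which is the subisometry condition. Strong continuity reduces to the continuity of $\tau$ and the $f_K$; compatibility with $^\dag$ is simply pointwise complex conjugation; and preservation of the total degree follows from the evenness of $\tau$. The inclusion $C^\infty(X)\subset\algA^\infty$ is then immediate from the smoothness of $\tau$ combined with the compactness of $\gB X$, which allow one to identify $z\mapsto\rho_z(f)=f\circ\tau_{-z}$ as a smooth $(\algA\otimes\superA)$-valued map.

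Finally, the explicit product \eqref{eq-cpcttriv-product} is obtained from Proposition \ref{prop-deffrech-prodalg}, which yields $f\star_\rho g=(\rho^f\star\rho^g)(0)$, combined with the explicit formula \eqref{eq-prod-moy} evaluated at $z=0$: the three-point phase collapses to $e^{2ia_0(\omega_0(x_2,w_1)-\omega_0(x_1,w_2)+\lambda\omega_1(\xi_1,\xi_2))}$, and pointwise evaluation at $u\in X$ replaces $\rho_{z_i}(f)$ and $\rho_{z_i}(g)$ by $f\circ\tau_{-z_i}$ and $g\circ\tau_{-z_i}$, reproducing \eqref{eq-cpcttriv-product} verbatim. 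The main obstacle in this plan is the subisometry estimate: because $\caC(X)$ consists only of continuous (not smooth) functions, one must interpret $f_K$ as depending only on the body, so the control of the $\xi$-expansion of $\rho_z(f)$ rests essentially on the assumed uniform boundedness of the odd coefficients $(\tau_y v)^{1,k}_{IJ}$ together with the compactness of $\gB X$; without both of these ingredients the seminorms $\norm\rho_y(f)_I\norm$ need not be bounded uniformly in $y\in\gB M$.
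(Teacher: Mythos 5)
Your proposal is correct and follows essentially the same route as the paper: realizing $\caC(X)$ inside the multiplicatively represented C${}^\ast$-superalgebra $L^\infty(X)$, verifying the action hypotheses of subsections \ref{subsec-act} and \ref{subsec-fam} via the expansion of $\rho_z(f)(u)$ and the uniform boundedness of the odd components $(\tau_y v)^{1,k}_{IJ}$, and invoking Theorem \ref{thm-fam-constr}. The only addition is that you explicitly derive formula \eqref{eq-cpcttriv-product} from Proposition \ref{prop-deffrech-prodalg} and the translated form of \eqref{eq-prod-moy} at $z=0$, a step the paper's proof leaves implicit; your computation of the collapsed phase is correct.
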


\begin{proof}
We first note that we have the inclusion $\caC(X)\subset L^\infty(X)$ and that $L^\infty(X)$ is a C${}^\ast$-superalgebra multiplicatively represented on $L^2(X)$ (see Example \ref{ex-supercst-linf}). Since $\caC(X)$ is a complete subalgebra of $L^\infty(X)$ which is closed with respect to the superinvolution (which here is complex conjugation), $\caC(X)$ is also a C${}^\ast$-superalgebra.

Next we check that the action $\rho:M\times(\algA\otimes\superA)\to(\algA\otimes\superA)$ satisfies the conditions given in subsections \ref{subsec-act} and \ref{subsec-fam}. Using notation as before we have: $\forall z=(y,\xi)\in M$, $\forall u=(v,\eta)\in X$, $\forall f\in \algA$,
\begin{multline}
\rho_z(f)(u)=\sum_K f_K\bigl((\tau_{-y}v)^0\bigr) \Bigl(\sum_{I,J}(\tau_{-y}v)^1_{IJ}(-\xi)^I\eta^J\Bigr)^K\\
= \sum_K f_K\bigl((\tau_{-y}v)^0\bigr) \prod_{k\in K}\sum_{I_k,J_k}(\tau_{-y}v)^{1,k}_{I_kJ_k}(-\xi)^{I_k}\eta^{J_k}\label{eq-cpcttriv-action}.
\end{multline}

\begin{itemize}
\item Since $\tau$ is an action, it follows immediately that we have $\forall z_1,z_2\in M$, $\rho_{z_1+z_2}=\rho_{z_1}\rho_{z_2}$ and $\rho_0=\text{id}$.

\item By definition of the product of functions we have:
\begin{equation*}
\forall f,g\in\algA\ ,\ \forall z\in M\ ,\ \forall u\in X
\quad:\quad
\rho_z(f\fois g)(u)=(f\fois g)(\tau_{-z}u)=f(\tau_{-z}u)g(\tau_{-z}u)=\rho_z(f)\rho_z(g).
\end{equation*}

\item Since $\tau$ is smooth, since $f\in\algA$, and because of \eqref{eq-cpcttriv-action}, the coefficient (a function!) of $\xi^I$ in $\rho_z(f)$: $\rho_y(f)_I$ belongs to $\algA$ for all $y\in \gB M$ and all $I$, and the map $z\mapsto\rho_z(f)$ is continuous with respect to the DeWitt topology.

\item $\forall y\in \gB M$, $\forall I$,
\begin{equation*}
\norm\rho_y(f)_I\norm=\sum_J\norm \sum_K f_K((\tau_{-y}\bullet)^0) \prod_{k\in K}\sum_{I_k,J_k}(\tau_{-y}\bullet)^{1,k}_{I_kJ_k}(-1)^\beta\norm_\infty,
\end{equation*}
where $\beta$ is some integer depending on $I_k,J_k$, and where we have, for the summation, the constraints $\bigsqcup_{k\in K} I_k=I$ and $\bigsqcup_{k\in K} J_k=J$. Then,
\begin{equation*}
\norm\rho_y(f)_I\norm\leq \sum_{J,K}\norm f_K\norm_\infty\norm \prod_{k\in K}\sum_{I_k,J_k}(\tau_{-y}\bullet)^{1,k}_{I_kJ_k}(-1)^\beta\norm_\infty.
\end{equation*}
Using that $(\tau_{-y} v)^{1,k}_{IJ}$ is uniformly bounded in $y,v,I,J,k$, we obtain the subisometry property: there exists $C>0$ such that 
\begin{equation*}
\forall y\in\gB M\ ,\ \forall I\ ,\ \forall f\in\algA
\quad:\quad
\norm\rho_y(f)_I\norm\leq C\sum_K\norm f_K\norm_\infty= C\norm f\norm.
\end{equation*}

\item Careful inspection of \eqref{eq-cpcttriv-action} tells us that the non-vanishing terms must satisfy $\sum_{k\in K}(|I_k|+|J_k|)=|K|$, which immediately implies that we have $|\rho^f|=|f|$.

\item As the terms $(\tau_{-y}v)^{1,k}_{IJ}$ are real, we have $\overline{\rho^f}=\rho^{\overline f}$.
\end{itemize}
We thus have shown that all hypotheses needed for Theorem \ref{thm-fam-constr} are satisfied: conditions on $\rho$ at the beginning of subsection \ref{subsec-act} as well as Equation \eqref{eq-fam-supassum}. Hence we can construct a deformed C${}^\ast$-superalgebra $(\algA_\rho,\star_\rho,{}^\dag,\norm\fois\norm_\rho)$. Note that for $f\in C^\infty(X)$, the map $z\mapsto\rho_z(f)$ is smooth, because the action $\tau$ is smooth.
\end{proof}
Note that in the above proof, if each derivative of $y\mapsto(\tau_{-y}v)^0$ is non-vanishing, the theorem of composition of functions implies that $\algA^\infty=C^\infty(X)$.

\subsection{Deformation of the supertorus}
\label{subsec-deftorus}

Let us now consider the special case of the trivial supertorus $X=\gT^{2|n}$, i.e., the (unique) compact trivial supermanifold of dimension $2|n$ such that $\gB X = \gB X_o = \gT^2 \cong \gR^2/\gZ^2$ and $X= X_o\times \gR^{0\vert n}$. We will describe $X$ by the global ``chart'' $(v^1,v^2,\eta^1, \dots,\eta^n)\in\superA_0^2 \times \superA_1^n$ with periodicity conditions
\begin{equation*}
(v^1+1,v^2,\eta)=(v^1,v^2,\eta)=(v^1,v^2+1,\eta)
\ .
\end{equation*}
On $X$ we define an action of $M=\gR^{2|n}$ by translations: 
\begin{equation*}
\forall z=(y,\xi)\in M\ ,\ \forall u=(v,\eta)\in X
\quad:\quad
\tau_z u=u+z=(v+y,\eta+\xi)
\ .
\end{equation*}
It follows that we have
\begin{equation*}
(\tau_y v)^0_{\emptyset\emptyset}=v+y,\qquad (\tau_y v)^{1,k}_{\{i\}\emptyset}=\delta_{ik},\qquad (\tau_yv)^{1,k}_{\emptyset\{j\}}=\delta_{jk},
\end{equation*}
for $i,j,k\in\{1,\dots,n\}$, while all other terms vanish. Hence the conditions of subsection \ref{subsec-cpcttriv} for $\tau$ are satisfied.
We thus can apply Proposition \ref{prop-cpcttriv-deform} to obtain that $\algA=\caC(\gT^{2|n})\simeq C(\gT^2)\otimes\bigwedge\gR^n$ is a C${}^\ast$-superalgebra and that we have the equality $\algA^\infty=C^\infty(\gT^{2|n})$ (note that each derivative of $y\mapsto v+y$ is non-vanishing). By defining the action $\rho$ of $M$ on $\algA$ by $\rho_z(f)(u)=f(\tau_{-z} u)$, we thus can deform $\algA^\infty$ into a noncommutative C${}^\ast$-superalgebra $(\algA_\rho,\star_\rho,{}^\dag,\norm\fois\norm_\rho)$. We will denote the deformed algebra by $\gT^{2|n}_{\theta}$ and call it the quantum supertorus.

Let us now describe, for $n=1,2$, the deformed product of generators of $C^\infty(\gT^{2|n})$. Our first observation is that $C^\infty(\gT^{2})$ is generated by $e^{2i\pi x}$ and $e^{2i\pi y}$ (changing notation to $x=v^1$ and $y=v^2$), because of the periodic conditions satisfied by elements of $C^\infty(\gT^{2})$.

\begin{itemize}
\item The case $n=1$: the generators are the functions $e^{2i\pi x}$, $e^{2i\pi y}$, and $\xi\in\gR^{0|1}$. A direct computation shows that we have (up to a multiplicative factor and introducing $\theta=1/a_0$):
\begin{align*}
e^{2i\pi y}\star e^{2i\pi x}&=e^{2i\pi\theta}e^{2i\pi x}\star e^{2i\pi y},& \xi\star\xi&=1,\\
e^{2i\pi x}\star\xi&=\xi\star e^{2i\pi x},& e^{2i\pi y}\star \xi&=\xi\star e^{2i\pi y}.
\end{align*}

\item The case $n=2$: here the generators are the functions $e^{2i\pi x}$, $e^{2i\pi y}$, and $\xi,\eta\in\gR^{0|2}$. Using the same notation as for the case $n=1$, we obtain (again up to a multiplicative factor):
\begin{align*}
e^{2i\pi y}\star e^{2i\pi x}&=e^{2i\pi\theta}e^{2i\pi x}\star e^{2i\pi y},& \xi\star\xi&=\eta\star\eta=1,\qquad \xi\star\eta=-\eta\star\xi,\\
e^{2i\pi x}\star\xi&=\xi\star e^{2i\pi x},& e^{2i\pi y}\star \xi&=\xi\star e^{2i\pi y},\\
e^{2i\pi x}\star\eta&=\eta\star e^{2i\pi x},& e^{2i\pi y}\star \eta&=\eta\star e^{2i\pi y}.
\end{align*}

\end{itemize}

\section{An application to a noncommutative Quantum Field Theory}

In this section we will re-interpret the renormalizability of a certain model of noncommutative quantum field theory. After having introduced a trace in subsection \ref{subsec-twtr}, which we will need to define an action functional, we interpret in \ref{subsec-action} the renormalizable action with harmonic term \cite{Grosse:2004yu} as a $\phi^4$-action on a deformed superspace. We show that the universal deformation formula is indeed universal for the $\phi^4$-action with respect to renormalizability only if an odd dimension is added to the space which is to be deformed.

\subsection{A twisted trace}
\label{subsec-twtr}

We have seen in Corollary \ref{cor-product-trace} that $\text{str(f)}=\int_M\dd z\, f(z)$ is a supertrace on the algebra $\caD_{L^1}(M)$. Motivated by subsection \ref{subsec-qu} and Proposition \ref{prop-product-ber}, we will twist this supertrace.

\begin{proposition}
The formula $\text{tr}=\frac{1}{r_1}\text{str}\circ\caF_1$ (see \eqref{eq-product-fourier}) defines a (non-graded) trace on the algebra $\caD_{L^1}(M)$. For $f\in\caD_{L^1}(M)$ the explicit expression is given by:
\begin{equation}
\text{tr}(f)=\int_{\gB M}\dd x\dd w\, f(x,0,w)
\ .
\label{eq-trace-tr}
\end{equation}

\end{proposition}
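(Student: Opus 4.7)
The plan is to proceed in two stages: first derive the explicit formula \eqref{eq-trace-tr}, then use it together with the product formula \eqref{eq-prod-moy} to verify the (non-graded) trace property.

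\emph{Step 1 (explicit formula).} Using $\omega_1(\xi,\xi_0)=2\xi\cdot\xi_0$ in the distinguished basis of Proposition \ref{prop-sympl-basis}, we have
\begin{equation*}
\text{tr}(f)=\frac{1}{r_1}\int dx\,d\xi_0\,dw\,d\xi\ e^{-ia_0\,\xi\cdot\xi_0}\,f(x,\xi,w).
\end{equation*}
Expanding $e^{-ia_0\,\xi\cdot\xi_0}$ by Lemma \ref{lem-superman-exp} and writing $f=\sum_I f_I(x,w)\xi^I$, the Berezin integration over $\xi_0$ vanishes unless the $\xi_0$-monomial has top degree, forcing $J=\{1,\dots,n\}$; the subsequent Berezin integration over $\xi$ then forces $I=\emptyset$. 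A short computation shows that the surviving coefficient is $(-ia_0)^n(-1)^{n(n-1)/2}=r_1$, so division by $r_1$ yields $\text{tr}(f)=\int_{\gB M}dx\,dw\,f(x,0,w)$.

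\emph{Step 2 (reduction to an odd integral).} Evaluating \eqref{eq-prod-moy} at $\xi=0$ annihilates the terms $\omega_1(\xi,\xi_1)$ and $\omega_1(\xi_2,\xi)$. Substituting into $\text{tr}(f\star g)=\int dx\,dw\,(f\star g)(x,0,w)$, the even-variable Gaussian phase is integrated first over $x$ (producing $r_0\delta(w_1-w_2)$ via \eqref{eq-qu-fourier} with $\alpha=1$) and then over $w$ (producing $r_0\delta(x_1-x_2)$). The result is
\begin{equation*}
\text{tr}(f\star g)=\kappa r_0^2\int dx\,dw\,d\xi_1\,d\xi_2\ f(x,\xi_1,w)\,g(x,\xi_2,w)\,e^{2ia_0\lambda\omega_1(\xi_1,\xi_2)}.
\end{equation*}

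\emph{Step 3 (symmetry).} Expanding $f=\sum_I f_I(x,w)\xi_1^I$, $g=\sum_K g_K(x,w)\xi_2^K$, and the exponential via Lemma \ref{lem-superman-exp}, the Berezin integration over $\xi_1$ enforces $J=\complement I$, and the subsequent integration over $\xi_2$ enforces $K=I$. Only the diagonal terms $K=I$ survive, giving
\begin{equation*}
\text{tr}(f\star g)=\kappa r_0^2\sum_I N_{|I|}\int dx\,dw\,f_I(x,w)\,g_I(x,w),
\end{equation*}
with constants $N_{|I|}$ depending only on $|I|, n, a_0, \lambda$. The identical computation applied to $\text{tr}(g\star f)$ produces the same sum, since $f_I,g_I$ are complex-valued and commute pointwise. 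Hence $\text{tr}(f\star g)=\text{tr}(g\star f)$.

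The main technical obstacle lies in the sign bookkeeping in Step 3: one must carefully track $(-1)^{|K||J|}$ arising from commuting $\xi_2^K$ past $\xi_1^J$, the alternating signs $(-1)^{|J|(|J|-1)/2}$ from Lemma \ref{lem-superman-exp}, and the factors $\eps(I,\complement I)^2=1$. Crucially, all of these signs depend only on $|I|$ and $n$, never on whether $f$ or $g$ is written first; this symmetry is precisely what upgrades the identity from a graded trace to a genuine (non-graded) trace.
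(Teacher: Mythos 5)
Your proposal is correct and follows essentially the same route as the paper: evaluate $\text{tr}(f\star g)$ by integrating the even phase to produce $r_0^2$ times delta functions identifying the even arguments, then perform the Berezin integrations to reduce everything to a diagonal sum $\sum_I N_{|I|}\int f_I g_I$ whose coefficients depend only on $|I|$ and $n$, whence symmetry is immediate (the paper even records $N_{|I|}=(4ia_0\lambda)^{n-|I|}(-1)^{\frac{n(n+1)}{2}+\frac{|I|(|I|-1)}{2}}$ explicitly). Your Step 1, verifying the explicit formula \eqref{eq-trace-tr} via Lemma \ref{lem-superman-exp}, is a correct filling-in of a computation the paper leaves implicit.
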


\begin{proof}
For $f,g\in\caD_{L^1}(M)$ we compute, using Lemma \ref{lem-superman-exp}:
\begin{align*}
\text{tr}(f\star g)&= r_0^2\kappa\int \dd z\dd\xi_0\, f(z)g(x,\xi_0,w)e^{2ia_0\lambda\omega_1(\xi,\xi_0)}\\
&=r_0^2 \kappa\sum_I\int\dd x\dd w\, f_I(x,w)g_I(x,w)(4ia_0\lambda)^{n-|I|}(-1)^{\frac{n(n+1)}{2}+\frac{|I|(|I|-1)}{2}},
\end{align*}
It follows immediately that we have $\text{tr}(f\star g)=\text{tr}(g\star f)$.
\end{proof}

Note however that the deformed product does not satisfy a tracial identity as in Proposition \ref{prop-product-tracial} with respect to this non-graded trace.

\subsection{The action of the noncommutative Quantum Field Theory}
\label{subsec-action}

For several years there has been an increasing interest in noncommutative quantum field theories as candidates for new physics beyond the existing theories of particles physics as the Standard Model. In noncommutative quantum field theory one considers an action functional of fields on a noncommutative space. In the case of the Euclidean Moyal space $\gR^m_\theta$ (which, in the context of this paper, corresponds to the case with odd dimension zero), the product $f\star_\theta g$ for $f,g\in\caS(\gR^m)$ is given by:
\begin{equation}
(f\star_\theta g)(x)=\frac{1}{(\pi\theta)^m}\int \dd y\dd z f(y)g(z)e^{\frac{2i}{\theta}(\omega(x,y)+\omega(y,z)+\omega(z,x))},\label{eq-action-moyalprod}
\end{equation}
where $\omega$ denotes the standard symplectic form and where, as before we write $a_0=\frac{1}{\theta}$. Note also that we have $x,y,z\in\gR^m$.

Since all derivatives are inner: $\partial_\mu\phi=[-\frac{i}{2}\wx_\mu,\phi]_\star$, with $\wx=\frac{2}{\theta}\omega(x,\fois)$, the standard action on this noncommutative space given by\footnote{We use the Einstein summation convention.}
\begin{equation*}
S(\phi)=\int\dd x\Big(\frac 12(\partial_\mu\phi)\star_\theta(\partial_\mu\phi)+\frac{M^2}{2}\phi\star_\theta \phi+\lambda\,\phi\star_\theta\phi\star_\theta\phi\star_\theta\phi\Big)
\end{equation*}
can be reexpressed as:
\begin{equation}
S(\phi)=\int\dd x\Big(\frac 12|[-\frac{i}{2}\wx_\mu,\phi]_\star|^2+\frac{M^2}{2}|\phi|^2+\lambda\, |\phi\star_\theta\phi|^2\Big),\label{eq-action-actnaif}
\end{equation}
where $M$ and $\lambda$ are the parameters of the theory. This action is not renormalizable because of a new type of divergence called Ultraviolet-Infrared (UV-IR) mixing. By adding a harmonic term to the action, Grosse and Wulkenhaar solved the problem of UV-IR mixing: the action
\begin{equation}
S(\phi)=\int\dd x\Big(\frac 12(\partial_\mu\phi)\star_\theta(\partial_\mu\phi)+\frac{\Omega^2}{2}(\wx_\mu\phi)\star_\theta(\wx_\mu\phi)+\frac{M^2}{2}\phi\star_\theta \phi+\lambda\,\phi\star_\theta\phi\star_\theta\phi\star_\theta\phi\Big)\label{eq-action-actharm}
\end{equation}
is renormalizable at all orders in perturbation \cite{Grosse:2004yu} in dimension $m=4$. See \cite{deGoursac:2009gh} for an introduction to the subject of renormalization of Euclidean noncommutative quantum field theories. There are several mathematical interpretations of the harmonic term (see \cite{deGoursac:2010zb} for a review), but one of the most promising uses superalgebras \cite{deGoursac:2008bd}. Moreover, it admits a geometrical interpretation of the gauge theory \cite{deGoursac:2007gq,Grosse:2007dm} associated to \eqref{eq-action-actharm}.
\medskip

Let us now provide some details on the interpretation exposed in \cite{deGoursac:2008bd}. We define $\algA=\algA_0\oplus\algA_1$ to be the $\gZ_2$-graded complex algebra given by $\algA_0=\algA_1=\caM$, where $\caM$ is the Moyal-Weyl algebra\footnote{the algebra of polynomials of $\gR^m$ endowed with the Moyal product \eqref{eq-action-moyalprod}} equipped with the following product: 
\begin{equation}
\forall a,b,c,d\in\caM
\quad:\quad
(a,b)\fois(c,d)=(a\star_\theta c+\gamma\, b\star_\theta d,a\star_\theta d+b\star_\theta c),\label{eq-action-superprod}
\end{equation}
where $\gamma$ is a real parameter. In \cite{deGoursac:2008bd} a differential calculus based on the graded derivations of $\algA$ was constructed, and the action \eqref{eq-action-actharm}, as well as its associated gauge action, has been obtained in terms of this differential calculus.

However, there was no explanation to the construction of $\algA$. Here, we will show that it is a deformation of $\gR^{m|1}$. Indeed, it can be shown that $\algA$ is the universal enveloping algebra of the complex Heisenberg superalgebra of dimension $m+1|1$: $\algA=\caU(\kg)$. Then, the deformed product of $\gR^{m|1}$ is given by \eqref{eq-product-onedim} (extended to polynomials), and up to a factor and a redefinition of $\gamma$, it corresponds exactly to \eqref{eq-action-superprod}.

\begin{proposition}
The action \eqref{eq-action-actharm} can be expressed in terms of the deformed product of $\gR^{m|1}$ and of the trace \eqref{eq-trace-tr}:
\begin{equation}
S(\phi)=\text{tr}\Big(\frac12\sum_\mu |[-\frac{i}{2}\wx_\mu\eta,\phi\eta]_\star|^2+\frac{M^2}{2}|\phi\eta|^{2}+\lambda\,|(\phi\eta)\star(\phi\eta)|^2\Big),\label{eq-action-actsuper}
\end{equation}
up to a redefinition of the parameters, and with $\eta=a+b\xi$, $a,b\in\gR$.
\end{proposition}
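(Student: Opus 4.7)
The strategy is a direct expansion: we write out the right-hand side of \eqref{eq-action-actsuper} in components using the explicit $n=1$ product formula \eqref{eq-product-onedim}, then apply the trace \eqref{eq-trace-tr}, which retains only the $\xi^0$-coefficient and integrates it over $\gB M$. In the basis of Proposition~\ref{prop-sympl-basis} the measure $\dd x\dd w$ is Lebesgue on $\gR^m$, so after the trace we sit on the original Moyal space $\gR^m_\theta$ and may match terms directly against \eqref{eq-action-actharm}.

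With $\eta=a+b\xi$ and $\phi$ depending only on the even coordinates, the $\xi$-decomposition of \eqref{eq-product-onedim} reads $(\phi\eta)_0=a\phi$, $(\phi\eta)_1=b\phi$, and likewise $(\wx_\mu\eta)_0=a\wx_\mu$, $(\wx_\mu\eta)_1=b\wx_\mu$. Interpreting $[\cdot,\cdot]_\star$ as the natural $\gZ_2$-graded super-bracket (anti-commutator in the odd-odd sector, commutator elsewhere) and applying \eqref{eq-product-onedim} sector by sector, one gets
\begin{equation*}
\bigl([-\tfrac{i}{2}\wx_\mu\eta,\phi\eta]_\star\bigr)_0 = a^2\bigl[-\tfrac{i}{2}\wx_\mu,\phi\bigr]_{\star_\theta} + \tfrac{i\alpha}{a_0(1+\alpha)^2}\,b^2\bigl\{-\tfrac{i}{2}\wx_\mu,\phi\bigr\}_{\star_\theta}.
\end{equation*}
The first summand equals $a^2\partial_\mu\phi$ by the inner-derivation property of Moyal. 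The second reduces, via the identity $\tfrac12\{\wx_\mu,\phi\}_{\star_\theta}=\wx_\mu\phi$ (valid because $\wx_\mu$ is linear, so the first-order Moyal correction is antisymmetric and drops out of the anti-commutator), to a multiple of $\wx_\mu\phi$. This is precisely the mechanism by which both a kinetic $\partial_\mu\phi$ and a harmonic $\wx_\mu\phi$ emerge from a single graded bracket; it is absent in the Rieffel (purely even) situation, where the correction $\tfrac{i\alpha}{a_0(1+\alpha)^2}f_1\star_\theta g_1$ in \eqref{eq-product-onedim} is identically zero.

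Writing $D:=[-\tfrac{i}{2}\wx_\mu\eta,\phi\eta]_\star$ and computing $|D|^2=\overline{D}\star D$ again through \eqref{eq-product-onedim}, the $\xi^0$-part is a linear combination of $(\partial_\mu\phi)\star_\theta(\partial_\mu\phi)$, $(\wx_\mu\phi)\star_\theta(\wx_\mu\phi)$ and the two cross products $(\partial_\mu\phi)\star_\theta(\wx_\mu\phi)$, $(\wx_\mu\phi)\star_\theta(\partial_\mu\phi)$. After the trace, the cross contributions vanish by the Moyal tracial identity (Proposition~\ref{prop-product-tracial}) together with $\partial_\mu\wx_\mu=0$: an integration by parts yields $\int\partial_\mu\phi\cdot\wx_\mu\phi=\tfrac12\int\wx_\mu\,\partial_\mu(\phi^2)=0$. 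The same expansion applied to $\text{tr}|\phi\eta|^2$ produces a scalar multiple of $\phi\star_\theta\phi$, and applied to $\text{tr}|(\phi\eta)\star(\phi\eta)|^2$ a scalar multiple of $\phi^{\star_\theta 4}$. Collecting the coefficients of the four resulting operators returns \eqref{eq-action-actharm} with $\Omega^2$, $M^2$ and $\lambda$ appearing as explicit polynomials in $a,b,\alpha,a_0^{-1}$, which is the announced redefinition of parameters. The main obstacle will be the careful bookkeeping of signs and of the complex factor $i\alpha/a_0(1+\alpha)^2$, needed to verify that the four coefficients can be independently tuned and that reality of the action can be enforced by a suitable choice of $\alpha$ (or of a phase in $\eta$); and of course the graded super-bracket interpretation of $[\cdot,\cdot]_\star$, without which the $\wx_\mu\phi$ contribution---and hence the whole renormalizing harmonic term that distinguishes this construction from Rieffel's---would fail to appear.
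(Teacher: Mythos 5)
Your proposal is correct and follows essentially the same route as the paper's proof: expand everything through the $n=1$ product formula \eqref{eq-product-onedim}, obtain $a^2\partial_\mu\phi$ from the even--even commutator and $\frac{\alpha\theta b^2}{(1+\alpha)^2}\wx_\mu\phi$ from the odd--odd (anticommutator) sector via $\frac12\{\wx_\mu,\phi\}_{\star_\theta}=\wx_\mu\phi$, kill the cross term using $\int\phi\,\wx_\mu\partial_\mu\phi=0$, and match coefficients against \eqref{eq-action-actharm}. Your explicit identification of $[\cdot,\cdot]_\star$ as the graded bracket, and your remark that the reality of the resulting coefficients (coming from the factor $i\alpha/a_0(1+\alpha)^2$) still requires checking, only make explicit what the paper's own two-line verification leaves implicit.
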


\begin{proof}
One can easily verify that we have
\begin{align*}
[-\frac{i}{2}\wx_\mu\eta,\phi\eta]_\star&=a^2\partial_\mu\phi+2ab(\partial_\mu\phi)\xi+\frac{\alpha\theta b^2}{(1+\alpha)^2}\wx_\mu\phi\\
(\phi\eta)\star(\phi\eta)&=(a^2+2ab\xi+\frac{i\alpha \theta b^2}{(1+\alpha)^2}) (\phi\star_\theta\phi).
\end{align*}
Since one can show, by integration by parts, that $\int\phi\wx_\mu\partial_\mu \phi=0$ for a real field $\phi$, we obtain that the action \eqref{eq-action-actsuper} is given by:
\begin{multline*}
S(\phi)= a^4\int\dd x\Big(\frac12(\partial_\mu\phi)\star_\theta(\partial_\mu\phi) +\frac{\alpha^2\theta^2 b^4}{2a^4(1+\alpha)^4}(\wx_\mu\phi)\star_\theta(\wx_\mu\phi)+\frac{M^2}{2 a^2}\phi\star_\theta \phi\\
+\lambda\left(1+\frac{\alpha^2\theta^2 b^4}{a^4(1+\alpha)^4}\right)\phi\star_\theta\phi\star_\theta\phi\star_\theta\phi\Big)
\end{multline*}
\end{proof}

Let us analyze the consequences of this result. One can easily see that \eqref{eq-action-actsuper} is exactly a $\phi^4$-type action, as in \eqref{eq-action-actnaif}, but now on the deformed superspace $\gR^{m|1}_\theta$. The parameter $\eta$ is the most general element of $L^\infty(\gR^{m|1})$, which is independent from the even variables. Hence $\eta$ does not add any degrees of freedom: the quantum field, as in the theory on $\gR^m_\theta$, is $\phi\in\caD_{L^1}(\gR^m)$.\question{Je ne comprend rien de la dernire phrase!} To summarize, we can say that to renormalize the scalar theory on $\gR^m_\theta$, one usually changes the $\phi^4$-action \eqref{eq-action-actnaif} by adding a harmonic term (in \eqref{eq-action-actharm}). In our approach, we do just the opposite: the framework is changed by the addition of an odd dimension to the deformed space, but the action is still the $\phi^4$-action. It is a kind of universality for the renormalizable $\phi^4$-action. However, the right transition from the commutative setting to the noncommutative one then should be the transition $\gR^m\to\gR^{m|1}_\theta$.

\section{Conclusion}

In this paper, after some recalls about supergeometry, we studied in section 2 the structure of $L^2(M)$ and $L^\infty(M)$, where $M$ is a trivial supermanifold, which led us to introduce the new notions of Hilbert superspaces and C*-superalgebras. These categories possess good and consistent properties for operator algebras. We then computed the coadjoint orbits of the Heisenberg supergroup $G$ and find that a generic orbit is diffeomorphic to $\gR^{m|n}$.

\medskip

We then built in section 3 an induced representation of the Heisenberg supergroup $G$ by using Kirillov's orbits method, and we saw that the notion of Hilbert superspace appears to be natural in the context of harmonic analysis on the Heisenberg supergroup. This representation allowed us to define a quantization map, which is a graded generalization of the Weyl ordering, and which is valued in operators on a Hilbert superspace. By using the oscillatory integral, we could extend the quantization map to functional symbol spaces defined on the coadjoint orbits of $G$ (i.e. on $\gR^{m|n}$). Consequently, we defined a deformed product on these symbol spaces, which corresponds to the operator product via the quantization map. It turns out that the space of Schwartz superfunctions on $\gR^{m|n}$, endowed with this deformed product, is then isomorphic to the tensor product of the (non-graded) Moyal algebra with the Clifford algebra $Cl(n,\gC)$.

\medskip

This non-formal deformation of the Heisenberg supergroup $G$ provides also a universal deformation formula (UDF). We indeed considered in section 4 a strongly continuous action of $G$ on an arbitrary Fr\'echet algebra $\algA$. Then, the regularity of the product introduced in section 3 allowed us to deform the product of the vectors of $\algA$ which are smooth for the action of $G$. Moreover, the deformation is compatible with the structure of C*-superalgebra, but not with the one of C*-algebra: from a C*-superalgebra $\algA$, one can construct a structure of deformed pre-C*-superalgebra on the smooth vectors of $\algA$.

\medskip

As a first application of our construction, we considered in section 5 a Heisenberg trivial supermanifold $X$. We next defined a space of continuous superfunctions $\caC(X)$, which carries a structure of C*-superalgebra and on which $G$ acts. Thanks to the UDF of section 4, we then deformed the supermanifold $X$ via its C*-superalgebra of functions. In particular, the example of the supertorus is described.

\medskip

As a second application, we reexpressed the renormalizable quantum field theory on the Moyal space with harmonic term as a standard scalar action with our deformed product, on the deformation of the superspace $\gR^{m|1}$. This provides a better understanding of the origin of this model with harmonic term. Moreover, our construction may also be applied on other noncommutative spaces to exhibit renormalizable field theories.

\appendix

\section{Functional spaces}
\label{sec-funcspaces}

In this appendix, we recall the notion of the space $L^1_E(M)$ \cite{Bourbaki6:1959}, where $(E,|\fois|_j)$ is a Fr\'echet space $(E,|\fois|_j)$ with its family of seminorms indexed by $j\in\gN$. We adapt it here to the type of supermanifolds $M$ considered in this article. $\caD_E(M)$ will denote the space of smooth $E$-valued functions on $M$. We also introduce the symbol calculus $\caB^\mu_E(M)$ for $M$.

\begin{definition}
Using notation previously introduced, we define the semi-norms:
\begin{equation*}
|f|_{1,j}=\sum_I \int_{\gB M}\dd x\dd w|f_I(x,w)|_j,
\end{equation*}
for $f\in\caD_E(M)$. Then, the space $L^1_E(M)$ can be defined as the completion of $\caD_E(M)$ with respect to these semi-norms (quotiented by the functions vanishing almost everywhere on $\gB M$).

For $f\in\caD_E(M)$ we define the integral $\int_M\dd zf(z)\in E'^{\ast}$ (the algebraic dual of the topological dual of $E$) by:
\begin{equation*}
\forall\phi\in E',\quad \langle \int_M\dd zf(z),\phi\rangle=\int_M\dd z\langle f(z),\phi\rangle,
\end{equation*}
where $\langle\fois,\fois\rangle$ denotes in this context the duality bracket. As $E$ is complete, this integral actually belongs to $E$: $\int_M\dd zf(z)\in E$.\question{C'est vrai que'on a l'galit $E'^* = E$ si $E$ est complet???}
\end{definition}

\begin{proposition}[see \cite{Bourbaki6:1959}]
The integration map $\int:\caD_E(M)\to E$ is continuous for the topology of $L^1_E(M)$. Hence it can be extended in a unique way to $L^1_E(M)$. Since $E$ is a Fr\'echet space, this extension also takes values in $E$:  $\forall f\in L^1_E(M)$~: $\int_M\dd zf(z)\in E$.\question{la dernire partie me semble trivial, car $E$ est complet. Est ce vraiment la peine de le rpter? C'est implicite dans le fait qu'on prolonge l'application $\int$ je pense.}
\end{proposition}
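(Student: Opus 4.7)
The plan is straightforward: establish continuity of the integration map on $\caD_E(M)$ with respect to the $L^1_E$-seminorms, then extend by density and use completeness of $E$.

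First I would unpack the integration on the trivial supermanifold $M$. Writing $z = (x,\xi,w)$ and expanding $f \in \caD_E(M)$ as $f(x,\xi,w) = \sum_I f_I(x,w)\xi^I$, Berezin integration extracts only the top-degree component, so
\begin{equation*}
\int_M \dd z\, f(z) = \int_{\gB M}\dd x\,\dd w\, f_{\{1,\dots,n\}}(x,w).
\end{equation*}
For any continuous seminorm $|\cdot|_j$ on $E$ this gives the estimate
\begin{equation*}
\Bigl|\int_M\dd z\, f(z)\Bigr|_j \le \int_{\gB M}\dd x\,\dd w\, |f_{\{1,\dots,n\}}(x,w)|_j \le \sum_I \int_{\gB M}\dd x\,\dd w\,|f_I(x,w)|_j = |f|_{1,j},
\end{equation*}
where the first inequality is the standard fact that the norm of a Bochner/vector-valued integral (which here makes sense since $f_{\{1,\dots,n\}}$ is compactly supported and smooth with values in $E$) is bounded by the integral of the norm. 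This exhibits $\int_M$ as a continuous linear map $(\caD_E(M),|\cdot|_{1,j}) \to (E,|\cdot|_j)$ for every $j$, hence a continuous linear map $\caD_E(M) \to E$ when both sides are equipped with the families of seminorms indexing their Fr\'echet structures.

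Next, by definition $L^1_E(M)$ is the completion of $\caD_E(M)$ modulo functions vanishing almost everywhere on $\gB M$, endowed with the seminorms $|\cdot|_{1,j}$. Since the integration map factors through this quotient (a function whose top component vanishes a.e.\ integrates to zero) and is uniformly continuous, by the universal property of completions of Fr\'echet spaces into complete Hausdorff locally convex spaces it admits a unique continuous linear extension $\int_M : L^1_E(M) \to E$; here the completeness of $E$ (as a Fr\'echet space) is what guarantees that the image of the extension lies in $E$ rather than merely in some larger completion. Uniqueness of the extension follows from density of $\caD_E(M)$ in $L^1_E(M)$.

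The only mildly delicate point is bookkeeping the Berezin convention so that the resulting integral really is the unique continuous extension (and coincides with the element of $E'^{\ast}$ defined via duality on $\caD_E(M)$). This is routine once one observes that pairing with any $\phi \in E'$ commutes with Berezin integration in the odd variables, so $\langle \int_M\dd z\, f(z),\phi\rangle = \int_M\dd z\,\langle f(z),\phi\rangle$ holds on $\caD_E(M)$ and extends by continuity, confirming that the extended map agrees with the element already denoted $\int_M\dd z\, f(z) \in E$.
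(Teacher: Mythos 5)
Your proof is correct and follows exactly the standard argument that the paper delegates to the Bourbaki citation: the bound $\bigl|\int_M\dd z\,f(z)\bigr|_j\le|f|_{1,j}$ (via the seminorm estimate for vector-valued integrals and the fact that Berezin integration extracts the top component $f_{\{1,\dots,n\}}$) gives continuity for the $L^1_E$-topology, and the unique extension to the completion lands in $E$ by completeness. Nothing further is needed.
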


\begin{definition}
\label{def-mu-bounded-functions}
Let $\mu\in C^{\infty}(\gB M,\gR^\ast_+)$ be arbitrary.
\begin{itemize}
\item The space of $\mu$-bounded functions on $M$ is defined as:
\begin{equation*}
\caB^\mu_E(M)=\{f\in C^\infty(M,E\otimes\superA),\, \forall D^\alpha,\ \forall j,\ \forall I,\ \exists C_{j,\alpha,I}>0,\ \forall y\in \gB M,\ |D^\alpha f_I(y)|_j<C_{j,\alpha,I}\mu(y)\}.
\end{equation*}
If we endow it with the seminorms
\begin{equation*}
|f|^{(\mu)}_{j,\alpha}=\sup_{y\in\gB M}\{\frac{1}{\mu(y)}\sum_I|D^\alpha f_I(y)|_j\},
\end{equation*}
then 
$\caB^\mu_E(M)$ becomes a complex Fr\'echet space, isomorphic to $\caB^\mu_E(\gB M)\otimes\bigwedge \gR^n$. If no confusion is possible, we will denote the norm $|f|^{(\mu)}_{j,\alpha}$ also by $|f|_{j,\alpha}$.

\item A function $\mu\in C^{\infty}(\gB M,\gR^\ast_+)$ is called a weight if $\mu\in\caB^\mu_\gC(M)$.

\item The space of Schwartz functions \cite{Schwartz:1966} is defined by:
\begin{multline*}
\caS_E(M)=\{f\in C^\infty(M,E\otimes\superA),\, \forall D^\alpha,\ \forall j,\ \forall I,\ \forall\nu\in\text{Pol}(\gB M),\ \exists C_{j,\alpha,I,\nu}>0,\\
\forall y\in \gB M,\ |\nu(y)D^\alpha f_I(y)|_j<C_{j,\alpha,I,\nu}\}.
\end{multline*}
If we endow it with the seminorms
\begin{equation*}
|f|_{\nu,j,\beta}=\sup_{y\in\gB M}\{|\nu(y)|\sum_I|D^\beta f_I(y)|_j\},
\end{equation*}
where $\nu$ is a polynomial function on $\gB M$, then $\caS_E(M)$ becomes a Fr\'echet space, isomorphic to $\caS_E(\gB M)\otimes\bigwedge \gR^n$. Note that we also could have used the seminorms:
\begin{equation*}
|f|'_{\nu,j,\beta}=\int_{\gB M}\dd y|\nu(y)|\sum_I|D^\beta f_I(y)|_j.
\end{equation*}
\end{itemize}
\end{definition}

\begin{definition}
We define (see \cite{Schwartz:1966}) $\caD_{L^p}(M)$, for $1\leq p\leq \infty$, to be the space of complex smooth functions whose derivatives all belong to $L^p(M)$. It is a Fr\'echet space when we endow it with the seminorms:
\begin{equation*}
|f|_{p,\alpha}=\sum_I \left(\int_{\gB M}\dd x\dd w|D^\alpha f_I(x,w)|^p\right)^{\frac1p}.
\end{equation*}
Note that we have the the property: $\caD_{L^p}(M)\subset\caD_{L^q}(M)$ for $p\leq q$.

\end{definition}

\section{Fine graded division algebras}
\label{sec-finealg}

In this appendix we recall the concepts associated to fine graded algebras (see \cite{deGoursac:2008bd,deGoursac:2009gh}). Let $\Gamma$ be an abelian group, $\gK$ a field, $\gK^\ast$ its multiplicative group, and $\algA$ a $\Gamma$-graded associative $\gK$-algebra. We denote $\algA=\bigoplus_{\alpha\in\Gamma}\algA_\alpha$.

\begin{definition}
\begin{itemize}
\item $\algA$ is said to be a graded division algebra if any non-zero homogeneous element\question{j'ai chang ``component'' to ``element''. Si je me trompe, il faut clarifier!} of $\algA$ is invertible.

\item $\algA$ is called fine-graded if $\dim_\gK(\algA_\alpha)\leq 1$ for all $\alpha\in\Gamma$. If that is the case, the support of the grading is defined as $\Supp(\algA)=\{\alpha\in\Gamma,\, \algA_\alpha\neq \algzero\}$.
\end{itemize}
\end{definition}

The notion of a (Schur) multiplier will be useful to characterize fine-graded algebras.

\begin{definition}
\begin{itemize}
\item A factor set is an application $\sigma:\Gamma\times\Gamma\to\gK^\ast$ satisfying: 
\begin{equation*}
\forall\alpha,\beta,\gamma\in\Gamma
\quad:\quad
\sigma(\alpha,\beta+\gamma)\sigma(\beta,\gamma)=\sigma(\alpha,\beta)\sigma(\alpha+\beta,\gamma).
\end{equation*}

\item Two factor sets $\sigma$ and $\sigma'$ are equivalent if there exists a map $\rho:\Gamma\to\gK^\ast$ such that 
\begin{equation*}
\forall\alpha,\beta\in\Gamma
\quad:\quad
\sigma'(\alpha,\beta)=\sigma(\alpha,\beta)\rho(\alpha+\beta)\rho(\alpha)^{-1}\rho(\beta)^{-1}.
\end{equation*}
The quotient of the set of factor sets by this equivalence relation is an abelian group, and each equivalence class $[\sigma]$ is called a multiplier.
\end{itemize}
\end{definition}

\begin{remark}
If $\Gamma$ is finitely generated and $\gK$ is algebraically closed, then a factor set $\sigma$ is equivalent to the trivial factor set $1$ if and only if $\sigma$ is symmetric.
\end{remark}

\begin{proposition}
\label{prop-finealg-equiv}
A fine $\Gamma$-graded division algebra $\algA$ is totally characterized by its support $\Supp(\algA)$ which is a subgroup of $\Gamma$, and its factor set $\sigma:\Supp(\algA)\times\Supp(\algA)\to\gK^\ast$ defined by:
\begin{equation*}
e_\alpha\fois e_\beta=\sigma(\alpha,\beta)e_{\alpha+\beta},
\end{equation*}
with $\alpha,\beta\in\Supp(\algA)$ and $(e_\alpha)$ a homogeneous basis of $\algA$. Moreover, two fine $\Gamma$-graded division algebras are isomorphic if and only if their supports coincide and their factor sets are equivalent.
\end{proposition}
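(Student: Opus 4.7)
The plan is to decompose the claim into four independent steps that together yield both assertions of the proposition.

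\emph{Step 1 (Support is a subgroup).} Since $\algA$ is unital and $1$ is homogeneous of degree $0$ (the identity component being stable under $a\mapsto a\cdot 1$ in every graded algebra), we get $0\in\Supp(\algA)$. For $\alpha\in\Supp(\algA)$, the fine-grading assumption gives a nonzero $e_\alpha\in\algA_\alpha$, unique up to a scalar in $\gK^\ast$, and the graded division hypothesis ensures that $e_\alpha$ is invertible. Writing the inverse as a finite sum of homogeneous components and comparing degrees in $e_\alpha\cdot e_\alpha^{-1}=1\in\algA_0$ forces $e_\alpha^{-1}\in\algA_{-\alpha}$, so $-\alpha\in\Supp(\algA)$. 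Finally, for $\alpha,\beta\in\Supp(\algA)$, the element $e_\alpha e_\beta\in\algA_{\alpha+\beta}$ is nonzero (otherwise multiplying by $e_\alpha^{-1}$ would give $e_\beta=0$), hence $\alpha+\beta\in\Supp(\algA)$.

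\emph{Step 2 (The factor set and its equivalence class).} Fixing a homogeneous basis $(e_\alpha)_{\alpha\in\Supp(\algA)}$, by Step 1 the product $e_\alpha e_\beta$ is a nonzero element of the one-dimensional space $\algA_{\alpha+\beta}$, so there is a unique $\sigma(\alpha,\beta)\in\gK^\ast$ with $e_\alpha e_\beta=\sigma(\alpha,\beta)e_{\alpha+\beta}$. Writing out associativity $(e_\alpha e_\beta)e_\gamma=e_\alpha(e_\beta e_\gamma)$ yields the cocycle identity $\sigma(\alpha,\beta)\sigma(\alpha+\beta,\gamma)=\sigma(\beta,\gamma)\sigma(\alpha,\beta+\gamma)$, so $\sigma$ is a factor set on $\Supp(\algA)$. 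Any other homogeneous basis has the form $e'_\alpha=\rho(\alpha)e_\alpha$ with $\rho:\Supp(\algA)\to\gK^\ast$; a direct computation gives $\sigma'(\alpha,\beta)=\sigma(\alpha,\beta)\rho(\alpha)\rho(\beta)\rho(\alpha+\beta)^{-1}$, so the multiplier $[\sigma]$ is intrinsic to $\algA$.

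\emph{Step 3 (Reconstruction).} Conversely, given a subgroup $H\leq\Gamma$ and a factor set $\sigma:H\times H\to\gK^\ast$, defining $\algA(H,\sigma):=\bigoplus_{\alpha\in H}\gK\,e_\alpha$ with $e_\alpha\cdot e_\beta:=\sigma(\alpha,\beta)e_{\alpha+\beta}$ yields a fine $\Gamma$-graded algebra (with support $H$) whose associativity is exactly the cocycle condition; the unit is $\sigma(0,0)^{-1}e_0$, and each $e_\alpha$ is invertible with $e_\alpha^{-1}=\sigma(\alpha,-\alpha)^{-1}\sigma(0,0)^{-1}e_{-\alpha}$, hence $\algA(H,\sigma)$ is a graded division algebra. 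Together with Step 2, this shows that the datum $(\Supp(\algA),[\sigma])$ determines $\algA$.

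\emph{Step 4 (Isomorphism criterion).} A graded $\gK$-algebra isomorphism $\varphi:\algA\to\algA'$ is homogeneous of degree $0$, so it maps $\algA_\alpha$ bijectively onto $\algA'_\alpha$; in particular $\Supp(\algA)=\Supp(\algA')$. Choosing homogeneous bases $(e_\alpha)$, $(e'_\alpha)$, we have $\varphi(e_\alpha)=\rho(\alpha)e'_\alpha$ for some $\rho(\alpha)\in\gK^\ast$, and applying $\varphi$ to $e_\alpha e_\beta=\sigma(\alpha,\beta)e_{\alpha+\beta}$ and comparing with $e'_\alpha e'_\beta=\sigma'(\alpha,\beta)e'_{\alpha+\beta}$ gives $\sigma(\alpha,\beta)\rho(\alpha+\beta)=\rho(\alpha)\rho(\beta)\sigma'(\alpha,\beta)$, i.e.\ $\sigma\sim\sigma'$. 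Conversely, if supports agree and $\sigma\sim\sigma'$ via some $\rho$, the linear map sending $e_\alpha\mapsto\rho(\alpha)e'_\alpha$ is a graded algebra isomorphism by the same computation read backwards.

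There is no single hard step here; the content is entirely bookkeeping around the cocycle identity. The only place that requires a touch of care is Step 1, where one must be sure that "graded division" really forces the inverse of a homogeneous element to be homogeneous — but this is immediate from degree considerations in $e_\alpha e_\alpha^{-1}=1$ combined with the fine-grading assumption.
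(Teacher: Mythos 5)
Your proof is correct. Note that the paper itself gives no proof of this proposition --- it is recalled from the references cited at the start of Appendix B --- so there is nothing to compare against; your four-step argument (unit in degree zero, homogeneity of inverses by degree comparison, the cocycle identity from associativity, basis change and isomorphisms acting by a function $\rho:\Supp(\algA)\to\gK^\ast$) is the standard twisted-group-algebra treatment and fills the gap completely. The only implicit convention worth flagging is that ``isomorphic'' must be read as \emph{graded} (degree-preserving) isomorphism, which is what you use in Step 4 and what the proposition intends.
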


\begin{example}
\label{ex-finealg-cliff}
The complex Clifford algebras $Cl(n,\gC)$ are fine $(\gZ_2)^n$-graded division algebras. If we denote the generators by $e_0=\gone, e_1, \dots, e_n$, then the relations are given by
\begin{equation*}
(e_p)^2=\gone, \quad 1\le p\le n
\qquad\mbox{and}\qquad e_p\fois e_q=-e_q\fois e_p, \quad 1\le p<q\le n.
\end{equation*}
The associated multiplier can be represented the factor set:
\begin{equation*}
\sigma_{Cl}(\alpha,\beta)=\prod_{1\leq p<q\leq n}(-1)^{\alpha_p\beta_q}
\end{equation*}
for $\alpha,\beta\in(\gZ_2)^n$.
\end{example}

\begin{proof}[of Theorem \ref{thm-product-form}]
In order to show the isomorphism $\caS(M)\approx Cl(n,\gC)$ (when $m=0$), it suffices to show that $\caS(M)$ is a fine $(\gZ_2)^n$-graded division algebra whose associated factor set is equivalent to the one of $Cl(n,\gC)$ as described in Example \ref{ex-finealg-cliff}. To do so, we first observe that we have
\begin{align*}
(f\star g)(\xi)&=\kappa\int\dd \xi_1\dd \xi_2\ f(\xi_1)g(\xi_2) e^{ic(\xi\fois\xi_1+ \xi_1\fois\xi_2+\xi_2\fois\xi)}\\
&= \kappa\int\dd \xi_1\dd \xi_2\sum_{I,J} f_I g_J\xi_1^I\xi_2^J\sum_{k=0}^n\frac{(ic)^k}{k!}(\xi\fois\xi_1+\xi_2\fois\xi+ \xi_1\fois\xi_2)^k,
\end{align*}
where $c=4a_0\lambda$, and $I,J$ are summed over subsets of $\{1,\dots,n\}$. We then use the identity
\begin{equation*}
(a+b+c)^k=\sum_{0\leq p+q\leq k}\frac{k!}{p!q!(k-p-q)!}a^p b^q c^{k-p-q},
\end{equation*}
valid for complex numbers $a,b,c$, to express the coefficient $c_{IJ}$ of $\kappa f_I g_J$ in the above expression, giving us
\begin{equation*}
c_{IJ}=\int\dd \xi_1\dd \xi_2\xi_1^I\xi_2^J\sum_{k=0}^n\ \sum_{0\leq p+q\leq k}\frac{(ic)^k}{p!q!(k-p-q)!}(\xi_1\fois\xi_2)^{k-p-q}(\xi\fois\xi_1)^p(\xi_2\fois\xi)^q.
\end{equation*}
Now note that we have
\begin{align*}
(\xi_1\fois\xi_2)^p
&=
\sum_{i_1,\dots,i_p}\xi_1^{i_1}\xi_2^{i_1}\dots\xi_1^{i_p}\xi_2^{i_p}
=
\sum_{L\,:\,|L|=p}\ \sum_\sigma \xi_1^{\sigma(1)}\xi_2^{\sigma(1)}\dots \xi_1^{\sigma(p)}\xi_2^{\sigma(p)}
\\
&= 
\sum_{L\,:\, |L|=p}\ \sum_\sigma (-1)^{\frac{p(p-1)}{2}+2|\sigma|}\xi_1^{L}\xi_2^{L} 
=
p! \sum_{L\,:\, |L|=p} (-1)^{\frac{p(p-1)}{2}} \xi_1^{L}\xi_2^{L},
\end{align*}
where $\sigma$ is summed over all bijections from $\{1,\dots,p\}$ to $L$. Using this, we get
\begin{multline*}
c_{IJ}
=
\int\dd \xi_1\dd \xi_2\sum_{k=0}^n\ \sum_{0\leq p+q\leq k}(ic)^k\sum_{K_i} (-1)^{\frac{k(k-1)}{2}+p+q+k+pk+(k-p+|J|)|I|}\eps(J,K_1\cup K_3)
\\
\eps(K_1,K_3)\eps(I,K_1\cup K_2)\eps(K_1,K_2)\eps(K_2,K_3)\xi_2^{J\cup K_1\cup K_3}\xi_1^{I\cup K_1\cup K_2}\xi^{K_2\cup K_3},
\end{multline*}
where the sum over the subsets $K_i$ are constrained by the conditions $|K_1|=k-p-q$, $|K_2|=p$ and $|K_3|=q$, and where we used the definition of the product $\xi^I\fois\xi^J=\eps(I,J)\xi^{I\cup J}$. When we perform the integration over $\xi_1$ and $\xi_2$, only the terms with $K_1=\complement(I\cup J)$, $K_2=J\setminus(I\cap J)$ and $K_3=I\setminus(I\cap J)$ contribute. If we denote $d=|I\cap J|$, then we finally obtain
\begin{equation*}
c_{IJ}=(ic)^{n-d}(-1)^{\frac{d(d+1)}{2}+\frac{n(n+1)}{2}+|I|d}\eps(I\setminus (I\cap J),J\setminus(I\cap J))\eps(I\cap J,(I\cup J)\setminus(I\cap J))\xi^{(I\cup J)\setminus(I\cap J)}.
\end{equation*}
On the other hand, the multiplier associated to the Clifford algebra $Cl(n,\gC)$ is given by
\begin{equation*}
\sigma_{Cl}(I,J)=(-1)^{\sharp\{(i,j)\in I\times J,\, i<j\}},
\end{equation*}
for $I,J\subset \{1,\dots,n\}$. Using the fact that $\eps(I,J)=(-1)^{\sharp\{(i,j)\in I\times J,\, i>j\}}$ whenever $I\cap J=\emptyset$, we deduce that
\begin{equation*}
\sigma_{Cl}(I,J)=(-1)^{\frac{d(d+1)}{2}+|I||J|}\eps(I\setminus (I\cap J),J\setminus(I\cap J))\eps(I\cap J,(I\cup J)\setminus(I\cap J)).
\end{equation*}
By choosing $\rho(I)=(ic)^{\frac{|I|}{2}-n}(-1)^{\frac{n(n+1)}{2}}$, we obtain 
\begin{equation*}
\forall I,J
\quad:\quad
c_{IJ}=\sigma_{Cl}(I,J)\rho((I\cup J)\setminus(I\cap J))\rho(I)^{-1}\rho(J)^{-1}\xi^{(I\cup J)\setminus(I\cap J)},
\end{equation*}
which shows, using Proposition \ref{prop-finealg-equiv}, that the $(\gZ_2)^n$-graded algebras $\caS(M)$ and $Cl(n,\gC)$ are isomorphic.
\end{proof}

\end{document}